\newtheorem{theorem}{Theorem}[section]
\newtheorem{lemma}[theorem]{Lemma}
\newtheorem{proposition}[theorem]{Proposition}
\newtheorem{definition}[theorem]{Definition}
\newtheorem{corollary}[theorem]{Corollary} 
\newtheorem{claim}[theorem]{Claim}
\theoremstyle{definition}
\newtheorem{remark}[theorem]{Remark} 
\newtheorem{remarks}[theorem]{Remarks}
\newtheorem{example}[theorem]{Example} 
\newtheorem{question}[theorem]{Question} 
\newtheorem{problem}[theorem]{Problem} 
\newtheorem{convention}[theorem]{Convention}
\newtheorem{conjecture}[theorem]{Conjecture}
\newtheoremstyle{cases}
  {12pt plus 6 pt}%       Space above
  {2pt}%       Space below
  {\bfseries}   %       Body font
  {}%          Indent amount (empty = no indent, \parindent = para indent)
  {\bfseries}% Thm head font
  {.}%         Punctuation after thm head
  {.5em}%      Space after thm head: " " = normal interword space;
\theoremstyle{cases}
\numberwithin{subcase}{case} \numberwithin{subsubcase}{subcase}
\numberwithin{equation}{subsection}
\begin{document}

\title[Slope detection and graph manifolds]{Foliations, orders, representations, L-spaces and  graph manifolds \footnotetext{2000 Mathematics Subject
Classification. Primary 57M25, 57M50, 57M99}}

\author[Steven Boyer]{Steven Boyer}
\thanks{Steven Boyer was partially supported by NSERC grant RGPIN 9446-2008}
\address{D\'epartement de Math\'ematiques, Universit\'e du Qu\'ebec \`a Montr\'eal, 201 avenue du Pr\'esident-Kennedy, Montr\'eal, QC H2X 3Y7.}
\email{boyer.steven@uqam.ca}
\urladdr{http://www.cirget.uqam.ca/boyer/boyer.html}

\author{Adam Clay}
\thanks{Adam Clay was partially supported by an NSERC postdoctoral fellowship}
\address{Department of Mathematics, 420 Machray Hall, University of Manitoba, Winnipeg, MB, R3T 2N2. } 
\email{Adam.Clay@umanitoba.ca}
\urladdr{http://server.math.umanitoba.ca/~claya/}

\begin{abstract}
We show that the properties of admitting a co-oriented taut foliation and having a left-orderable fundamental group are equivalent for rational homology $3$-sphere graph manifolds and relate them to the property of not being a Heegaard-Floer L-space. This is accomplished in several steps. First we show how to detect families of slopes on the boundary of a Seifert fibred manifold in four different fashions---using representations, using left-orders, using foliations, and using Heegaard-Floer homology. Then we show that each method of detection determines the same family of detected slopes. Next we provide necessary and sufficient conditions for the existence of a co-oriented taut foliation on a graph manifold rational homology $3$-sphere, respectively a left-order on its fundamental group, which depend solely on families of detected slopes on the boundaries of its pieces. The fact that Heegaard-Floer methods can be used to detect families of slopes on the boundary of a Seifert fibred manifold combines with certain conjectures in the literature to suggest an L-space gluing theorem for rational homology $3$-sphere graph manifolds as well as other interesting problems in Heegaard-Floer theory. 
\end{abstract}

\maketitle
\vspace{-.6cm}
\begin{center}
\today
\end{center}

\section{Introduction}

Much work has been devoted in recent years to examining relationships between the existence of a co-oriented taut foliation in a closed, connected, prime $3$-manifold $W$, the left-orderability of its fundamental group, and the property that it {\it not} be a Heegaard-Floer L-space. Indeed, it has been conjectured that the three conditions coincide. (See Conjecture 1 of \cite{BGW} and Conjecture 5 of \cite{Ju}.) When $W$ has a positive first Betti number, each condition holds (\cite[Theorem 5.5, page 477]{Ga1} and \cite[Theorem 1.1]{BRW}). Further, it follows from \cite[Theorems 1.3 and 1.7]{BRW} that when $W$ is a non-hyperbolic geometric manifold, $W$ has a left-orderable fundamental group if and only if it admits a co-oriented taut foliation. On the other hand, \cite[Theorem 1 and Corollary 1]{BGW} imply that for such manifolds, the latter is equivalent to the condition that $W$ not be an L-space. Thus understanding the relationship between the three conditions reduces to the case when $W$ is a rational homology $3$-sphere which is either hyperbolic or has a non-trivial JSJ decomposition. In this paper we show that the first two conditions are equivalent when $W$ is a graph manifold (cf. Theorem \ref{equiv 1}) and make some steps toward relating them to the third. With regards to the latter, we have

\begin{theorem} \label{LO implies NLS}  
If a graph manifold rational homology $3$-sphere has a left-orderable fundamental group, then it is not an L-space. 
\qed
\end{theorem}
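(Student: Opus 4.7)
The plan is to prove Theorem \ref{LO implies NLS} by factoring the implication through the existence of a co-oriented taut foliation. If $W$ is a graph manifold rational homology $3$-sphere whose fundamental group is left-orderable, then Theorem \ref{equiv 1}---the main equivalence established in this paper---provides a co-oriented taut foliation $\F$ on $W$. The task therefore reduces to showing that existence of such an $\F$ is incompatible with $W$ being a Heegaard-Floer L-space.

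For this second implication I would invoke the theorem of Ozsv\'ath-Szab\'o which asserts that any closed, orientable $3$-manifold admitting a smooth co-oriented taut foliation has $\operatorname{rank} \widehat{HF}(W) > |H_1(W;\Z)|$, and hence cannot be an L-space. Since the foliations produced by the slope-detection machinery used in the proof of Theorem \ref{equiv 1} are a priori only $C^0$, I would additionally appeal to the $C^0$ extension of this result (due to Bowden, and independently to Kazez-Roberts) to ensure the argument goes through regardless of the regularity of the constructed foliation. Combining these two inputs immediately yields the claim.

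The main obstacle in this strategy is entirely concentrated in Theorem \ref{equiv 1}: one must convert an abstract left-order on $\pi_1(W)$ into a genuine co-oriented taut foliation on $W$. The paper accomplishes this by first establishing that the four notions of slope detection on the boundary of a Seifert fibred piece---foliation, order, representation, and Heegaard-Floer detection---yield the same families of detected slopes, and then by giving necessary and sufficient gluing criteria, phrased purely in terms of detected slopes on JSJ tori, under which a family of pieces assembles into a graph manifold carrying a global co-oriented taut foliation (respectively, a left-orderable fundamental group). Granting that machinery, the proof of Theorem \ref{LO implies NLS} reduces to the short two-step concatenation described above; the entire technical weight of the argument lies in the slope-detection equivalences and the corresponding gluing theorems, not in the passage from foliation to non-L-space, which is by now standard.
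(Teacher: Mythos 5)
Your overall structure matches the paper's: factor the implication as ``left-orderable $\Rightarrow$ co-oriented taut foliation $\Rightarrow$ not an L-space,'' with the first arrow being Theorem~\ref{equiv 1}. The difference is entirely in the second arrow. You close the gap by invoking the general theorem of Kazez--Roberts (and, independently, Bowden) that a rational homology sphere admitting a merely $C^0$ co-oriented taut foliation is not an L-space. That result was \emph{not} available to the authors of this paper --- it is mentioned only parenthetically, as a personal communication from Roberts, inside the proof of Proposition~\ref{foliation detected implies l-space} --- and the paper deliberately does not rely on it. Instead, the paper (via its companion \cite{BC}) exploits the explicit control it has over the constructed foliations: they arise from representations into $\widetilde{PSL}(2,\mathbb R)_k$ and are made $k$~interval-hyperbolic on each JSJ torus, so each piece carries a foliation smooth enough to approximate by boundary-transverse contact structures $\xi^{\pm}_j$ in the sense of Eliashberg--Thurston. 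These contact pieces are then interpolated across the gluing tori, a closed $2$-form $\omega$ positive on $\xi^{\pm}$ is assembled from compactly supported contributions, and $\omega_X = p^*\omega + \epsilon\, d(q^*\alpha)$ gives a weak symplectic semi-filling, whence $W$ is not an L-space by \cite[Theorem~1.4]{OSz2004-genus}. The Acknowledgements confirm this is a real subtlety: Bowden pointed out a gap in the authors' original proof precisely at this smoothness step and supplied the technique to repair it.

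In short: your argument is correct and is the cleaner one to give today, but it outsources the hard analytic step to a black-box $C^0$ theorem that postdates the paper. The paper's version is self-contained at the cost of needing Lemmas~\ref{standard are isotopic}--\ref{ready for gluing vertical} and Proposition~\ref{standard realisation} to guarantee that the foliations constructed in the gluing theorem have the specific boundary behaviour required to run the Eliashberg--Thurston approximation piecewise. If you are asked to reproduce the paper's own argument rather than a modernized one, you would need to spell out that contact-approximation-and-gluing step rather than cite Kazez--Roberts/Bowden.
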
 

The proof of Theorem \ref{LO implies NLS} follows from Theorem \ref{equiv 1} below via an argument of Oszv\'ath and Szab\'o (\cite[Theorem 1.4]{OSz2004-genus}), which ultimately depends on a result concerning the approximation of taut foliations by contact structures. Eliashberg and Thurston proved the latter in the $C^2$ case (\cite[Theorem 2.9.1]{ET})  and following a suggestion of Jonathan Bowden, we construct certain smooth foliations on the pieces of the graph manifold $W$ in \cite{BC}  whose existence can be combined with the Eliashberg-Thurston result to produce the appropriate contact structures, even though the foliations do not necessarily glue together to form a smooth foliation on $W$. Alternately, Bowden \cite{Bn} and Kazez and Roberts \cite{KR} have extended the Eliashberg-Thurston theorem to the $C^0$ case, which can then be used to complete the proof of Theorem \ref{LO implies NLS}.

The methods we develop here allow us to formulate an L-space gluing conjecture whose truth implies the inverse implication of Theorem \ref{LO implies NLS}. See Conjecture \ref{hf gluing conjecture} and Problem \ref{special case gluing} below. 

The equivalence between the existence of a co-oriented taut foliation on a graph manifold and the left-orderability of its fundamental group takes on various forms, depending on the types of foliations considered (cf. Theorems \ref{equiv 1}, \ref{equiv 2} and \ref{equiv 3}). And, since the fundamental group of a connected, prime, orientable $3$-manifold $W$ is left-orderable if and only if it admits a homomorphism to $\hbox{Homeo}_+(\mathbb R)$ with non-trivial image (\cite{Linnell}, \cite[Theorem 1.1(1)]{BRW}), we translate the order-theoretic statements Theorems \ref{equiv 1}(2), \ref{equiv 2}(2) and \ref{equiv 3}(2) into their representation-theoretic counterparts (Theorems \ref{equiv 1}(3), \ref{equiv 2}(3) and \ref{equiv 3}(3)), which are of independent interest.

\begin{theorem} \label{equiv 1}
Let $W$ be a graph manifold rational homology $3$-sphere. The following statements are equivalent. 

$(1)$ $W$ admits a co-oriented taut foliation. 

$(2)$ $\pi_1(W)$ is left-orderable.

$(3)$ There is a homomorphism $\rho: \pi_1(W) \to \hbox{Homeo}_+(\mathbb R)$ with non-trivial image.

\end{theorem}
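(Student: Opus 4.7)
The plan is to prove the equivalence $(1) \Longleftrightarrow (2)$; the equivalence $(2) \Longleftrightarrow (3)$ is the Linnell/\cite[Theorem 1.1(1)]{BRW} statement quoted just before the theorem. The overall strategy is to decompose $W$ along its JSJ tori as $W = \bigcup_{v \in V} M_v$, where each $M_v$ is Seifert fibered, and to reduce each of $(1)$ and $(2)$ to the existence of a single combinatorial object: a choice, for every boundary torus $T$ of every piece $M_v$, of a slope $\alpha_v(T) \subset T$ that is \emph{detected} by the structure in question (foliation or left-order) on $M_v$, and that is coherent across each JSJ gluing $\phi: T \to T'$ in the sense that $\phi_*(\alpha_v(T)) = \alpha_{v'}(T')$.

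To carry this out, I would first invoke the earlier sections of the paper, which introduce four parallel notions of slope detection on $\partial M_v$---by taut foliations, by left-orders on $\pi_1(M_v)$, by representations to $\hbox{Homeo}_+(\mathbb R)$, and by Heegaard-Floer homology---and prove that at least the first three produce the same family of detected slopes on each boundary component. I would then invoke the paper's gluing theorems for graph manifold rational homology spheres: $W$ admits a co-oriented taut foliation if and only if one can coherently label the JSJ tori by foliation-detected slopes on both sides, and $\pi_1(W)$ is left-orderable if and only if the same coherent labelling exists with order-detected slopes. Since the two families of detected slopes coincide on each piece by the equivalence just cited, the two coherent-labelling problems are literally the same combinatorial problem, and $(1) \Longleftrightarrow (2)$ falls out with no direct passage between foliations and left-orders.

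The main obstacle lurks in the left-ordered gluing step. Given coherent order-detected slopes $\alpha_v(T)$ on the pieces, one must manufacture a left-order on the iterated amalgamated product $\pi_1(W)$ from left-orders on each $\pi_1(M_v)$, arranged so that the orders induced on the shared $\pi_1(T) \cong \mathbb Z^2$ agree across each JSJ torus. The role of the detected slope is to pin down a convex rank-one subgroup of $\pi_1(T)$ inside the chosen order on $\pi_1(M_v)$; matching the slopes across the gluing forces the convex subgroups to coincide on the two sides, which is what makes the restricted orders on $\pi_1(T)$ compatible and what drives the amalgamation-of-left-ordered-groups argument. Parallel care is needed in the foliation direction, where horizontal or transverse foliations built on each piece via Eisenbud--Hirsch--Neumann / Jankins--Neumann / Naimi type criteria must be spliced along JSJ tori; verifying that iterating these gluings around cycles in the JSJ graph produces no global obstruction is where the rational homology $3$-sphere hypothesis on $W$ enters the argument.
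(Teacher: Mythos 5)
Your proposal is correct and is essentially the paper's own proof: the authors dispose of $(2)\Leftrightarrow(3)$ by Linnell/\cite[Theorem 1.1(1)]{BRW}, and obtain $(1)\Leftrightarrow(2)$ by combining the case $J=\emptyset$ of the slope detection theorem (Theorem \ref{theorem: detection}), which shows foliation detection and order detection agree on each Seifert piece, with the case $K=\emptyset$ of the gluing theorem (Theorem \ref{theorem: gluing}), which reduces both the foliation and the order questions on $W$ to the same coherent slope-labelling problem on the JSJ tori. Your description of the underlying machinery --- amalgamation of left-ordered groups via matched convex subgroups, and splicing of EHN/JN/Naimi-type foliations along the JSJ tori, with the rational homology sphere hypothesis controlling the JSJ graph --- accurately reflects the technical content of those two theorems.
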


Theorem \ref{equiv 1} is known to hold when $W$ is Seifert fibred \cite{BRW}, \cite{BGW} (and that (1), (2), and (3) are equivalent to $W$ not being an L-space \cite{LS}). In this case the foliations can be chosen to be horizontal. In other words, they are transverse to the Seifert fibres of $W$. More generally, a co-dimension $1$ foliation in a graph manifold $W$ is called {\it horizontal} if it is transverse to the Seifert fibres in each piece of $W$. We can refine Theorem \ref{equiv 1} by restricting our attention to this important family of foliations. Let $\hbox{sh}(\pm 1): \mathbb R \to \mathbb R$ denote the homeomorphism $\hbox{sh}(\pm 1)(x) = x \pm 1$. A homeomorphism of the reals is fixed point free if and only if it is conjugate to $\hbox{sh}(\pm 1)$

\begin{theorem} \label{equiv 2}
Let $W$ be a graph manifold rational homology $3$-sphere. The following statements are equivalent. 

$(1)$ $W$ admits a co-oriented horizontal foliation. 

$(2)$ $\pi_1(W)$ admits a left-order in which the class of any Seifert fibre in any piece of $W$ is cofinal.

$(3)$ There is a homomorphism $\rho: \pi_1(W) \to \hbox{Homeo}_+(\mathbb R)$ such that the image of the class of any Seifert fibre in any piece of $W$ is conjugate in $\hbox{Homeo}_+(\mathbb R)$ to $\hbox{sh}(\pm 1)$.

\end{theorem}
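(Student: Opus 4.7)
The plan is to prove (2) $\Leftrightarrow$ (3) via the standard dynamic-realization dictionary between left orders and actions on $\mathbb R$, and to prove (1) $\Leftrightarrow$ (3) by decomposing along the JSJ tori and invoking the per-piece slope-detection results developed earlier in the paper.

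For (2) $\Leftrightarrow$ (3), any left order on a countable group $G$ admits a dynamic realization $\rho: G \to \hbox{Homeo}_+(\mathbb R)$, and in such a realization an element $g$ is cofinal in the order if and only if $\rho(g)$ acts freely on $\mathbb R$, which in turn happens if and only if $\rho(g)$ is conjugate in $\hbox{Homeo}_+(\mathbb R)$ to $\hbox{sh}(+1)$ or $\hbox{sh}(-1)$ according to whether it displaces points to the right or the left. Conversely, given a representation as in (3), one pulls back the natural order of $\mathbb R$ through a generic orbit; the cofinal property of the Seifert fibre classes follows because their images have no fixed points.

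For (1) $\Rightarrow$ (3), a horizontal co-oriented foliation $\mathcal F$ of $W$ restricts to a horizontal co-oriented foliation on each Seifert piece $M_i$. By the representation-theoretic detection results for Seifert fibred manifolds established earlier in the paper, each such restriction yields a representation $\rho_i: \pi_1(M_i) \to \hbox{Homeo}_+(\mathbb R)$ in which the Seifert fibre class of $M_i$ is sent to a fixed-point-free element. Because the $\rho_i$ all arise from the single global foliation $\mathcal F$, their restrictions to the JSJ edge groups agree up to conjugation by a reparametrization of $\mathbb R$ and therefore amalgamate to a global $\rho: \pi_1(W) \to \hbox{Homeo}_+(\mathbb R)$ with the required property.

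For (3) $\Rightarrow$ (1), the hypothesis forces $\rho |_{\pi_1(M_i)}$ to send the Seifert fibre class of $M_i$ to a fixed-point-free element; by the representation-detection theorem for Seifert fibred pieces, this restriction detects a slope on each boundary torus of $M_i$ distinct from the fibre slope (a horizontal slope), and by the equivalence of representation-detection with foliation-detection, each $M_i$ supports a horizontal co-oriented foliation realizing that boundary slope. The detected slopes agree on the two sides of each JSJ torus $T$ because they are both read off from $\rho |_{\pi_1(T)}$, so the graph-manifold gluing theorem for horizontal foliations established in the paper then assembles the pieces into a global horizontal co-oriented foliation on $W$. The main obstacle is the gluing: having matching boundary slopes is not automatically enough, and one must also match co-orientations and holonomies across each JSJ torus, which is exactly what the paper's full detection-and-gluing apparatus—the fourfold equivalence of detection methods plus the slope-matching gluing theorem—is designed to provide.
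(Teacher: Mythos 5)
Your (2) $\Leftrightarrow$ (3) and (3) $\Rightarrow$ (1) arguments track the paper: (2) $\Leftrightarrow$ (3) via dynamical realization and cofinality $\leftrightarrow$ fixed-point-freeness (the paper's Remark \ref{orders versus reps}), and (3) $\Rightarrow$ (1) by restricting $\rho$ to each piece, extracting detected slopes via Proposition \ref{reps to foliations}, observing they match across JSJ tori, and applying the gluing theorem.

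The gap is in (1) $\Rightarrow$ (3). You restrict $\mathcal{F}$ to each piece $M_i$, obtain $\rho_i: \pi_1(M_i) \to \hbox{Homeo}_+(\mathbb{R})$ from the leaf space of $\widetilde{\mathcal{F}|_{M_i}}$ in $\widetilde{M_i}$, and assert that the $\rho_i$ agree on the JSJ edge groups up to conjugation and hence amalgamate. This does not follow. The $\rho_i$ are built from the leaf spaces of the restricted foliations on the universal covers of the \emph{pieces}, and these leaf spaces are in general not identified with the leaf space of the lift of $\mathcal{F}$ on a collar of $T_j$: a single leaf of $\widetilde{\mathcal{F}}$ can enter and exit $\widetilde{M_i}$ in several components, so the leaf space of $\widetilde{\mathcal{F}|_{M_i}}$ is a genuinely different line from the one $M_k$ sees across $T_j$. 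Two actions of $\mathbb{Z}^2$ on lines that both detect the same slope on $T_j$ need not be conjugate, so there is nothing forcing $\rho_i|_{\pi_1(T_j)}$ and $\rho_k|_{\pi_1(T_j)}$ to be conjugate, and the amalgamation you propose can fail. The paper sidesteps this entirely: Brittenham's theorem (that horizontal foliations on graph manifolds are $\mathbb{R}$-covered, and moreover each leaf of $\widetilde{\mathcal{F}}$ meets the preimage $\widetilde{L}$ of any Seifert fibre $L$ in exactly one point) yields a single global leaf space $\cong \mathbb{R}$ on which $\pi_1(W)$ acts, producing $\rho$ directly with no amalgamation step; fixed-point-freeness of the fibre classes is read off from the same Brittenham statement. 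To repair your argument you would either need to invoke Brittenham's $\mathbb{R}$-covered result, or route (1) $\Rightarrow$ (3) through (1) $\Rightarrow$ the gluing-coherent slope condition $\Rightarrow$ (2) $\Rightarrow$ (3) using the detection and gluing machinery, rather than attempting a direct amalgamation of the per-piece leaf-space actions.
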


Here is a consequence of the proofs of these results. Call a co-oriented taut foliation {\it rational} if up to isotopy it intersects each JSJ torus of $W$ in a fibration with a compact leaf. 

\begin{proposition} \label{rational leaves}
If $W$ admits a co-oriented taut foliation, respectively a horizontal co-oriented foliation, it admits a co-oriented taut rational foliation, respectively a horizontal co-oriented rational foliation. 
\end{proposition}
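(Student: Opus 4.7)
The plan is to read Proposition \ref{rational leaves} as a by-product of the proof strategies for Theorems \ref{equiv 1} and \ref{equiv 2}. Those theorems will be proved by reducing the existence of a co-oriented taut (respectively horizontal) foliation on $W$ to the existence of a compatible family $\{r_T\}$ of ``detected slopes,'' one slope $r_T$ on each JSJ torus $T$, with $r_T$ detected from each of the two Seifert pieces adjacent to $T$ and with the slopes on the two sides matching under the gluing identification. The foliation is then assembled piece by piece so that its intersection with each $T$ is a linear $1$-foliation of slope $r_T$; conversely, restricting an existing foliation to each JSJ torus produces such a family. After the construction one may isotope so that the realized intersection pattern is genuinely linear of slope $r_T$, in which case rationality of $r_T$ is exactly the condition that the induced $1$-foliation on $T$ is a fibration by parallel circles, i.e. has a compact leaf.

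Given this dictionary, the proposition reduces to showing that a compatible family $\{r_T\}$ can always be taken to consist of rational slopes. The argument rests on two structural facts about the slope-detection machinery developed earlier in the paper: first, the set $\D_M(T)$ of slopes detected by a Seifert piece $M$ on a boundary component $T$ is a closed subset of the circle of slopes of $T$; and second, rationals are dense in $\D_M(T)$, since $\D_M(T)$ turns out, in line with the classical Eisenbud--Hirsch--Neumann and Jankins--Naimi--Neumann analyses, to be a union of closed arcs away from the fibre slope. Granted these, I would take any compatible family $\{r_T\}$ produced by the hypothesized foliation and approximate each $r_T$ by a rational slope $r_T' \in \D_M(T) \cap \D_{M'}(T)$, where $M$ and $M'$ are the two pieces meeting along $T$. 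Carrying this out simultaneously at every JSJ torus and then reassembling via the construction of the relevant theorem produces a co-oriented taut (resp. horizontal) foliation that is rational in the sense of the definition.

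The main obstacle is verifying that the rational approximation can be performed compatibly at all JSJ tori at once while preserving the matching condition between adjacent pieces. In the horizontal case this is the cleanest: the fibre-cofinality condition of Theorem \ref{equiv 2}(2) keeps $\D_M(T)$ off the fibre slope, and the matching condition on the two sides of $T$ is open, so a sufficiently small rational perturbation inside $\D_M(T) \cap \D_{M'}(T)$ survives. In the taut case, the compatibility condition amounts to $r_T \in \D_M(T) \cap \D_{M'}(T)$, a nonempty closed set; since one may approximate within its interior (whenever non-empty) by density of rationals in a closed arc, a uniform rational choice exists. The degenerate possibility that $\D_M(T) \cap \D_{M'}(T)$ reduces to a single irrational slope must be excluded by the arc structure of Seifert-piece detected sets, or handled by a one-sided approximation argument using the closedness of the two sets and the stability of the gluing under small perturbations. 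Once a rational compatible family is produced the foliation-construction machinery of Theorems \ref{equiv 1} and \ref{equiv 2} delivers the desired rational foliation.
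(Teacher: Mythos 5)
Your high-level plan matches the paper's strategy: use the gluing theorem to extract a gluing-coherent family $[\alpha_*]$ of detected slopes from an existing foliation, perturb it to a nearby rational family while remaining gluing-coherent, and re-run the gluing theorem to produce a rational foliation. The paper's proof is literally a three-line appeal to Theorem \ref{theorem: gluing} plus Lemma \ref{ready for gluing foliations}, so at the level of architecture you are aligned.

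However there is a genuine gap, and it lies precisely in the spot you flag but do not resolve. Detection is a \emph{joint} condition: it is the whole tuple $[\alpha_*^{(i)}]$ of slopes on $\partial M_i$ that must lie in $\mathcal{D}_{fol}(M_i)$, not each coordinate lying in some projected set $\mathcal{D}_M(T)$. Treating the problem per-torus, as you do when you write $r'_T \in \mathcal{D}_M(T) \cap \mathcal{D}_{M'}(T)$, suppresses this coupling, and the coupling is exactly where the approximation can fail. Corollary \ref{irrational to rational - reps} / Proposition \ref{irrational to rational - foliations} in the paper make this precise: for each irrational component one gets an open sector $U_j$, but in the exceptional case (2) the piece has no singular fibres, the two irrational boundary slopes move through a \emph{correlated} homeomorphism $\varphi : U_1 \to U_2$, and the remaining slopes are pinned to integers. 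In that situation you cannot perturb the slope on $T_1$ and leave the slope on $T_2$ fixed; nor is the joint detected set open (it is closed, often degenerate, so "small perturbation inside the intersection survives" is not justified). Worse, such exceptional pieces can concatenate across the JSJ graph into linear subtrees along which the correlation propagates, so even the per-piece fix must be globally coherent. This is exactly what Lemma \ref{can assume rational 1} (via the linear-subtree observation) and Lemma \ref{deform} (a one-parameter family of gluing-coherent tuples, rational for rational parameter, built inductively over the pieces) are engineered to handle, and it is the technical heart of the paper's proof. Without an argument that either excludes the correlated case or shows how to perform a simultaneous, mutually consistent rational perturbation along the JSJ tree, your proposal does not close.

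A secondary point: for the full $K$-type version of the gluing theorem one must also preserve "gluing unobstructed," not merely "gluing coherent," under perturbation, which is a further condition checked in Lemma \ref{can assume rational 1}. For Proposition \ref{rational leaves} itself (the $K=\emptyset$ case) this distinction collapses, so it is not a gap for the statement at hand, but it is worth knowing it exists if you want to quote the general lemma.
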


A fundamental open problem in $3$-manifold topology is to determine whether a $3$-manifold which admits a co-oriented taut foliation admits a smooth co-oriented taut foliation. The problem is open even in the case of graph manifolds which are rational homology $3$-spheres. (See the discussion after Question \ref{smooth question} below.) Though the constructions which lead to the proofs of Theorem \ref{equiv 1} and Theorem \ref{equiv 2} do not yield smooth foliations in general, we can strengthen Proposition \ref{rational leaves} and guarantee smoothness under suitable hypotheses. 

Call a co-oriented taut foliation {\it strongly rational} if up to isotopy it intersects each JSJ torus of $W$ in a fibration by simple closed curves. Since no co-oriented taut foliation on a graph manifold rational homology $3$-sphere $W$ can intersect a JSJ-torus $T$ in a fibration by simple closed curves representing the fibre slope in a piece of $W$ incident to $T$, at least up to assuming that the Seifert structures on pieces homeomorphic to twisted $I$-bundles over the Klein bottle have orientable base orbifolds (Lemma \ref{strongly rational not vertical}), a strongly rational co-oriented taut foliation is necessarily horizontal. Boileau and Boyer have shown that a graph manifold integer homology $3$-sphere admits a strongly rational co-oriented taut foliation if and only if it is neither $S^3$ nor the Poincar\'e homology $3$-sphere $\Sigma(2,3,5)$ \cite{BB}. We will see below that if $W$ admits a strongly rational co-oriented taut foliation, it admits a smooth strongly rational co-oriented taut foliation, so it is of interest to prove a version of the theorems above for this category of foliations. 

\begin{theorem} \label{equiv 3}
Let $W$ be a graph manifold rational homology $3$-sphere. The following statements are equivalent. 

$(1)$ $W$ admits a strongly rational co-oriented taut foliation. 

$(2)$ $\pi_1(W)$ admits a left-order $\mathfrak{o}$ in which the class of any Seifert fibre in any piece of $W$ is cofinal and there is an $\mathfrak{o}$-convex 
normal subgroup $C$ of $\pi_1(W)$ such that $C \cap \pi_1(T) \cong \mathbb Z$ for each JSJ-torus $T$ in $W$.

$(3)$  There is a homomorphism $\rho: \pi_1(W) \to \hbox{Homeo}_+(\mathbb R)$ such that the image of the class of any Seifert fibre in any piece of $W$ is conjugate in $\hbox{Homeo}_+(\mathbb R)$ to $\hbox{sh}(\pm 1)$ and $\hbox{ker}(\rho|\pi_1(T))  \cong \mathbb Z$ for each JSJ-torus $T$ in $W$.

\end{theorem}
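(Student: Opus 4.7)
The plan is to refine the three-way equivalence of Theorem \ref{equiv 2} by interpreting the ``strongly rational'' condition on both the order and the representation side. A strongly rational foliation is in particular horizontal, so Theorem \ref{equiv 2} does most of the work; the extra content is that on each JSJ torus $T$ the foliation meets $T$ in a circle fibration, singling out a primitive slope $\alpha_T \in \pi_1(T)$. This $\alpha_T$ should generate $C \cap \pi_1(T)$ in (2) and $\ker(\rho|\pi_1(T))$ in (3). I would run the proof as $(1)\Rightarrow(3)\Rightarrow(2)\Rightarrow(1)$.

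\textbf{$(1)\Rightarrow(3)$.} Let $\mathcal F$ be a strongly rational foliation. It is horizontal, so Theorem \ref{equiv 2}(3) supplies a homomorphism $\rho:\pi_1(W)\to \hbox{Homeo}_+(\mathbb R)$ sending every Seifert fibre to a conjugate of $\hbox{sh}(\pm 1)$; this $\rho$ is the action of $\pi_1(W)$ on the (now Hausdorff, since $\mathcal F$ is horizontal and taut) leaf space of the pulled-back foliation on $\widetilde W$. Fix a JSJ torus $T$ and let $\alpha_T$ be the class of a circle of $\mathcal F \cap T$. Each such circle lifts into a single leaf, so $\alpha_T$ fixes every leaf meeting a chosen lift of $T$; hence $\alpha_T\in\ker(\rho|\pi_1(T))$. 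The Seifert fibre class in $\pi_1(T)$ already maps to a non-trivial translation, so this kernel has rank at most one, and primitivity of $\alpha_T$ forces $\ker(\rho|\pi_1(T))=\langle\alpha_T\rangle\cong\mathbb Z$.

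\textbf{$(3)\Rightarrow(2)$.} Pick a countable dense set $\{x_i\}\subset\mathbb R$, partially order $\pi_1(W)$ by declaring $g>1$ when $\rho(g)$ moves the smallest unfixed $x_i$ upward, and lexicographically extend using an arbitrary left-order on $\ker\rho$ (which is left-orderable as a subgroup of a left-orderable group). The resulting total order $\mathfrak o$ has $C:=\ker\rho$ convex and normal, and $C\cap\pi_1(T)\cong\mathbb Z$ by hypothesis. Because each Seifert fibre acts as $\hbox{sh}(\pm1)$ and therefore translates every $x_i$ unboundedly, fibres are cofinal in $\mathfrak o$.

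\textbf{$(2)\Rightarrow(1)$.} This is the main obstacle. The strategy is to first promote $(2)$ back to a representation of type (3). Using the dynamical realization of the left-ordered pair $(\pi_1(W),\mathfrak o)$ compatibly with the convex quotient $\pi_1(W)\to\pi_1(W)/C$, produce $\rho':\pi_1(W)\to \hbox{Homeo}_+(\mathbb R)$ such that $\ker\rho'\supseteq C$; cofinality of the Seifert fibres forces their images to act without global fixed points and, after a standard conjugation, to be conjugate to $\hbox{sh}(\pm 1)$, while the rank-one constraint on $C\cap\pi_1(T)$ together with the fibre class mapping non-trivially forces equality $\ker(\rho'|\pi_1(T))=C\cap\pi_1(T)\cong\mathbb Z$. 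With these properties of $\rho'$ in hand, feed the primitive generator $\alpha_T$ of $\ker(\rho'|\pi_1(T))$ into the slope-detection and gluing machinery of the paper: on each JSJ piece $M_v$, the prescribed boundary slopes are jointly realized by a horizontal foliation meeting each boundary torus in circles of the prescribed slope (this is the Seifert-fibred case, where the equivalence is already known and where rational boundary slopes give rise to circle boundary fibrations). The slope $\alpha_T$ coming from the two sides of any JSJ torus agrees because $C\cap\pi_1(T)$ is intrinsic to $T$, so these local foliations glue to a global strongly rational foliation $\mathcal F$ on $W$. The hard part is the simultaneous realization of all the prescribed circle-boundary slopes on a single Seifert piece and the matching of slopes across JSJ tori; this is precisely where the normality and $\mathfrak o$-convexity of $C$ are used, since together they guarantee both a well-defined slope $\alpha_T$ on every JSJ torus and consistency under the gluing.
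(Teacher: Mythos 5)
Your proof takes a somewhat different route from the paper's: the paper establishes $(2)\Leftrightarrow(3)$ group-theoretically via Remark \ref{orders versus reps} and $(1)\Leftrightarrow(2)$ via Theorems \ref{theorem: detection} and \ref{theorem: gluing} with $K=\{1,\ldots,m\}$, whereas you prove $(1)\Rightarrow(3)$ directly from the leaf-space action. Your $(3)\Rightarrow(2)$ is the same group-theoretic construction the paper uses, and your $(2)\Rightarrow(1)$ is the paper's detection/gluing argument (though the detour through a dynamical realization $\rho'$ is unnecessary: the restrictions $\mathfrak o_i=\mathfrak o|\pi_1(M_i)$ already strongly order-detect $[\alpha_*^{(i)}]$ with $C_i=C\cap\pi_1(M_i)$ as the required $\mathfrak o_i$-convex normal subgroup, so Theorems \ref{theorem: detection} and \ref{theorem: gluing} with $K=\{1,\ldots,m\}$ apply directly; note also that gluing-unobstructedness is automatic when $K$ is the full set).

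There is, however, a genuine gap in $(1)\Rightarrow(3)$. You observe that $\alpha_T$ stabilizes each leaf of $\widetilde{\mathcal F}$ meeting the $\pi_1(T)$-invariant lift $\tilde T$ of $T$, and then conclude ``hence $\alpha_T\in\ker(\rho|\pi_1(T))$''. But a priori $\rho(\alpha_T)$ only acts trivially on the subset $I_{\tilde T}\subseteq\mathcal L$ of leaves meeting $\tilde T$; $\ker(\rho|\pi_1(T))\cong\mathbb Z$ requires $\rho(\alpha_T)=\mathrm{id}$ on all of $\mathcal L\cong\mathbb R$. To close the gap you must show $I_{\tilde T}=\mathcal L$, i.e.\ that \emph{every} leaf of $\widetilde{\mathcal F}$ meets $\tilde T$. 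This is true but requires an argument: choose a regular Seifert fibre $L$ of a piece incident to $T$ that is actually contained in $T$ (possible since $T$ is vertical), and a component $\tilde L\subseteq\tilde T$ of its preimage; Brittenham's result as quoted in the paper's proof of Theorem \ref{equiv 2} says every leaf of $\widetilde{\mathcal F}$ meets $\tilde L$, hence meets $\tilde T$. Without this observation your ``hence'' does not follow --- $\alpha_T$ could fix $I_{\tilde T}$ pointwise yet still have non-trivial holonomy on leaves outside that interval, in which case it would not be in $\ker\rho$.
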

The hypotheses of (i) admitting a co-oriented taut foliation, (ii) admitting a horizontal co-oriented taut foliation, and (iii) admitting a strongly rational co-oriented taut foliation are successively more constraining. (See \S \ref{examples and smoothness}.) In particular, not every graph manifold rational homology $3$-sphere which admits a co-oriented taut foliation also admits a strongly rational co-oriented taut foliation. On the other hand, the results above combine with those of the Appendix to imply that this is true generically, at least in terms of the gluing of its pieces.

Our strategy for establishing these theorems is based on the two main technical results of the paper: Theorem \ref{theorem: detection}, a slope detection theorem, and Theorem \ref{theorem: gluing}, a gluing theorem. More precisely, we introduce four different methods of detecting a family of slopes on the boundary of a Seifert fibred manifold $M$: using representations (\S \ref{section: detecting via reps}), using left-orders (\S \ref{section: detecting via orders}), using foliations (\S \ref{section: detecting via foliations}), and using Heegaard-Floer homology  (\S \ref{section: detecting via L-spaces})\footnote{It is also possible to detect slopes via contact structures, but we do not pursue this here.}. Each form of detection has a more restrictive form which is easy to understand for rational slopes. Thus a family of rational slopes on the boundary of a Seifert fibred manifold $M$ is {\it strongly representation detected} if it is horizontal and there is a homomorphism from the fundamental group of the associated Dehn filled manifold $W$ to $\widetilde{\hbox{Homeo}}_+(S^1)$ which sends the fibre class to $\hbox{sh}(1)$. It is {\it strongly order detected} if $W$ has a left-orderable fundamental group. It is {\it strongly foliation detected} if there is a co-oriented taut foliation on $M$ which restricts to a linear foliation of the given slope on each boundary component of $M$. Finally, it is {\it strongly NLS detected} if $W$ is not an L-space. Thus the various forms of strong detection of rational families of slopes are essentially characterised by whether the associated Dehn filled manifold admits a co-oriented taut foliation or is not an L-space, and whether its fundamental group admits a left-order or an appropriate representation with values in $\widetilde{\hbox{Homeo}}_+(S^1)$. On the other hand, strong detection is not well-adapted to understanding whether manifolds obtained by gluing Seifert manifolds along their boundaries admit co-oriented taut foliations or are L-spaces, or whether their fundamental groups admit a left-order or a non-trivial representation with values in $\widetilde{\hbox{Homeo}}_+(S^1)$. To make progress on these problems we are led to loosening the concept of strong detection. An instructive example is provided by $+4$-surgery on the figure-eight knot, which we denote by $W$. 

It is known that $W$ is the union of the trefoil exterior $M$ and a twisted $I$-bundle over the Klein bottle $N_2$ where the gluing map identifies the meridional slope $[\mu]$ of $M$ with the rational longitude of $N_2$. Delman \cite{De} and Roberts \cite{Ro} have shown that $W$ admits a co-oriented taut foliation, and it is possible to choose one which intersects $\partial M = \partial N_2$ transversely in a foliation with some circular leaves of slope $[\mu]$. This is what it means for $[\mu]$ to be {\it foliation detected} in $M$. On the other hand, since $S^3$ admits no co-oriented taut foliation, there is no co-oriented taut foliation on $M$ which is linear of slope $[\mu]$. In other words, $[\mu]$ is not strongly foliation detected in $M$. A similar situation arises when investigating in what way $[\mu]$ is or is not NLS detected. First, $[\mu]$ is not strongly NLS detected in $M$ because $S^3$ is an L-space. On the other hand, $W$ is not an L-space (\cite[Proposition 4.1]{OSz2005-lens}), which is a key point in showing that $[\mu]$ is NLS detected. Indeed, we introduce a family of {\it Heegaard-Floer solid tori} $N_t$ ($t \geq 2$) in \S \ref{N_t} and say that a rational slope $[\alpha]$ on $\partial M$ is {\it NLS detected} if for each $t$, a manifold obtained by gluing $M$ and $N_t$ in such a way that $[\alpha]$ is identified with the rational longitude of $N_t$ is not an L-space. 

Theorem \ref{theorem: detection} states that any two notions of (strong) detection coincide when both are defined. Theorem \ref{theorem: gluing} provides necessary and sufficient conditions for the existence of co-oriented taut foliations on a graph manifold rational homology $3$-sphere, respectively  a left-order on its fundamental group, from families of appropriately detected slopes on the boundaries of its pieces. The gluing conditions depend only on slope detectability, which leads to the equivalences of Theorems \ref{equiv 1}, \ref{equiv 2}, and \ref{equiv 3}. Here are special cases of the slope detection and gluing theorems.  

\begin{theorem} \label{theorem: special detection}
Let $M$ be a Seifert manifold with base orbifold $P(a_1, \ldots , a_n)$ or $Q(a_1, \ldots , a_n)$ where $P$ is a punctured $2$-sphere and $Q$ is a punctured projective plane. Let $\emptyset \ne \partial M = T_1\cup \ldots \cup T_r$ be the decomposition of $\partial M$ into its toral boundary components. Let $[\alpha_j]$ be a slope on $T_j$ and set $[\alpha_*] = ([\alpha_1], [\alpha_2], \ldots , [\alpha_r])$. The  following statements are equivalent.

$(1)$ $[\alpha_*]$ is detected by some co-oriented taut foliation on $M$.

$(2)$ $[\alpha_*]$ is detected by some left-order on $\pi_1(M)$.  

$(3)$ If no $[\alpha_j]$ is vertical, $[\alpha_*]$ is 
detected by some homomorphism $\rho: \pi_1(M) \to \widetilde{\hbox{Homeo}}_+(S^1)$. 

$(4)$ If $[\alpha_*]$ is rational, then for all integers $t \geq 2$, no manifold obtained by attaching $r$ copies of $N_t$ to $M$ such that the rational longitude of $N_t$ is identified with $[\alpha_j]$ is an L-space. 

\end{theorem}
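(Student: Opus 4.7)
The plan is to derive this theorem as a direct specialization of the paper's two main technical tools: the slope detection theorem (Theorem \ref{theorem: detection}), which asserts that foliation-, order-, representation-, and Heegaard-Floer-detection of a slope system on the boundary of a Seifert fibred piece all coincide when they are defined, and the gluing theorem (Theorem \ref{theorem: gluing}), which upgrades detected boundary slopes into global foliations, left-orders, and non-L-space fillings. Since $M$ is presented as a single Seifert fibred piece, no JSJ bookkeeping is needed for (1)--(3); these equivalences should follow by restricting Theorem \ref{theorem: detection} to $M$. Statement (4) is then the L-space side of the gluing theorem applied to the graph manifold $W$ obtained by attaching $r$ copies of $N_t$.

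For (1) $\Leftrightarrow$ (2), I would quote the foliation-order half of Theorem \ref{theorem: detection}: the leaf space in the universal cover of a co-oriented taut foliation on $M$ with boundary trace $[\alpha_*]$ produces a $\pi_1(M)$-action on $\mathbb R$ that, via pullback, yields a detecting left-order on $\pi_1(M)$; conversely, the order data is used to construct a branched surface carrying a co-oriented taut foliation whose boundary trace realises $[\alpha_*]$. For (2) $\Leftrightarrow$ (3), I would apply the standard correspondence between left-orders on a countable group and fixed-point-free orientation-preserving actions on $\mathbb R$; the non-verticality hypothesis is forced because a homomorphism to $\widetilde{\hbox{Homeo}}_+(S^1)$ sends each regular fibre class to a deck translation of $\mathbb R$, which is incompatible with the fixed-point requirement for detection whenever some $[\alpha_j]$ coincides with the fibre slope. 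For (1)--(3) $\Leftrightarrow$ (4), I would invoke Theorem \ref{theorem: gluing} together with Theorem \ref{LO implies NLS}: given that $[\alpha_*]$ is rational and detected by a foliation on $M$, pick $t \geq 2$ large enough that each $N_t$ admits a co-oriented taut foliation detecting its rational longitude, then glue to produce a co-oriented taut foliation on $W$; hence $\pi_1(W)$ is left-orderable and $W$ is not an L-space. Conversely, a non-L-space $W$ propagates back through the $N_t$ pieces, via the Heegaard-Floer side of Theorem \ref{theorem: detection}, to a Heegaard-Floer detection of $[\alpha_*]$ on $M$, which is equivalent to (1)--(3) by the detection theorem.

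The main obstacle is the selection of the integer $t$. It must be large enough that for each boundary torus $T_j$ the slope $[\alpha_j]$ actually arises as the rational longitude of $N_t$ under the prescribed identification, and simultaneously large enough that $N_t$ carries the requisite foliation/order/Heegaard-Floer structure detecting its own rational longitude. Pinning down the arithmetic constraint on $t$ in terms of the primitive representatives of the $\alpha_j$, and then verifying that the detection data on the $N_t$ pieces glues coherently with the detection data on $M$ under the hypotheses of Theorem \ref{theorem: gluing}, is where the technical bulk of the proof lives; this is delegated to the $N_t$-specific analysis of \S \ref{N_t}.
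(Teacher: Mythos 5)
Your overall instinct---specialize Theorem \ref{theorem: detection} to $J = \emptyset$---is exactly what the paper does, and for statements (1)--(3) your plan is essentially the paper's proof. But for statement (4) you have gone astray, and the discussion of the integer $t$ contains a genuine misunderstanding of the setup.

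Statement (4) is, word for word, the definition of ``$(\emptyset; [\alpha_*])$ is NLS detected'' from Definition \ref{NLS detection} (with $\mathcal{M}_t(\emptyset; [\alpha_*])$ being precisely the set of manifolds obtained by attaching $r$ copies of $N_t$ so that $h_0$ is identified with $[\alpha_j]$ on each $T_j$). It is therefore part (4) of Theorem \ref{theorem: detection} at $J = \emptyset$, and nothing more needs to be said: the whole theorem, including (4), is the $J = \emptyset$ specialization of the detection theorem. Your proposed route for (4)---glue via Theorem \ref{theorem: gluing} to get a foliation on $W$, apply Theorem \ref{LO implies NLS} to conclude $W$ is not an L-space, and then ``propagate back via the Heegaard-Floer side of Theorem \ref{theorem: detection}''---is a detour that in the converse direction is actually circular. ``Heegaard-Floer detection of $[\alpha_*]$ on $M$'' \emph{is} statement (4), and the assertion that it is ``equivalent to (1)--(3) by the detection theorem'' is the very content you are trying to establish. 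You cannot invoke the NLS equivalence of Theorem \ref{theorem: detection} as an external resource while claiming to derive its $J=\emptyset$ case by a different argument.

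The discussion of $t$ is also mistaken. You write that $t$ ``must be large enough that for each boundary torus $T_j$ the slope $[\alpha_j]$ actually arises as the rational longitude of $N_t$ under the prescribed identification,'' and that this forces an arithmetic constraint on $t$ depending on the primitive representatives of $\alpha_j$. This is not so: for any $t \geq 2$ and any rational slope $[\alpha_j]$ on $T_j$, one is free to choose a gluing map $\partial N_t \to T_j$ that identifies $[h_0]$ with $[\alpha_j]$; that is simply a choice of homeomorphism of tori. No slope-matching constraint on $t$ ever arises. The real content of ``there exists $t \geq 2$'' in Definition \ref{NLS detection} is Heegaard-Floer-theoretic: it is the existence, established in the proof of Proposition \ref{planar piece} (see Claim \ref{dt} and Lemma \ref{fibre surgery}), of an integer $t$ for which the triad/L-space arithmetic closes up. The correct place to look for the role of $t$ is Corollary \ref{independent} (independence of the choice within $\mathcal{M}_t$) and the L-space surgery inequalities, not a compatibility condition between $t$ and the coordinates of $\alpha_j$.
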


Brittenham, Naimi and Roberts showed that taut foliations in graph manifolds are built from their counterparts in their JSJ pieces (\cite{BNR}) and used this to construct, for instance, graph manifolds with no co-oriented taut foliations. Part (1) of the next theorem refines this work. 

\begin{theorem} \label{theorem: special gluing} 
Let $W$ be a graph manifold rational homology $3$-sphere with JSJ pieces $M_1, \ldots, M_n$. For each piece $M_i$ and 
$m$-tuple of slopes $[\alpha_*] = ([\alpha_1], [\alpha_2], \ldots , [\alpha_m])$, one for each of the JSJ tori, let $[\alpha_*^{(i)}]$ be the sub-tuple of $[\alpha_*]$ corresponding to the boundary components of $M_i$. Then, 

$(1)$ $W$ admits a co-oriented taut foliation if and only if there is an $m$-tuple of slopes $[\alpha_*]$ such that for each $i$, $[\alpha_*^{(i)}]$ is detected by some co-oriented taut foliation on $M_i$. 

$(2)$ $\pi_1(W)$ is left-orderable if and only if there is an $m$-tuple of slopes $[\alpha_*]$ such that for each $i$, $[\alpha_*^{(i)}]$ is detected by some left-order on $\pi_1(M_i)$.
 
\end{theorem}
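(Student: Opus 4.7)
The plan is to derive Theorem \ref{theorem: special gluing} from the two principal technical results of the paper identified in the introduction: the slope detection theorem (Theorem \ref{theorem: detection}), which asserts that detection by foliations and detection by left-orders coincide when both are defined, and the gluing theorem (Theorem \ref{theorem: gluing}), which characterises the existence of co-oriented taut foliations and left-orders on a graph manifold rational homology $3$-sphere purely in terms of compatibly detected slopes on the boundaries of its JSJ pieces. Once those results are in hand, Theorem \ref{theorem: special gluing} is essentially an unpacking of the definitions.

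For the forward direction of (1), I would start with a co-oriented taut foliation $\mathcal{F}$ on $W$ and isotope it into a standard position relative to the JSJ tori $T_1,\ldots,T_m$, using the standard repertoire of techniques for taut foliations on graph manifolds to arrange that each intersection $\mathcal{F}\cap T_j$ is either a suspension foliation or a fibration by circles. In either case this intersection singles out a well-defined slope $[\alpha_j]$ on $T_j$, and the restriction of $\mathcal{F}$ to a piece $M_i$ then detects $[\alpha_*^{(i)}]$ in the sense of \S\ref{section: detecting via foliations}. The resulting $m$-tuple $[\alpha_*]$ is the required one. For the forward direction of (2), I would pass from a left-order on $\pi_1(W)$ to a homomorphism $\pi_1(W)\to\hbox{Homeo}_+(\mathbb{R})$ with non-trivial image via Linnell's theorem and \cite[Theorem 1.1(1)]{BRW}, and then read off a detected slope on each JSJ torus from the dynamics of the image of $\pi_1(T_j)$, exactly as in the definition of order-detection.

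For the backward directions, the hypothesis supplies, for each piece $M_i$, a co-oriented taut foliation (respectively a left-order on $\pi_1(M_i)$) detecting $[\alpha_*^{(i)}]$. The crucial point is that these local detection data are automatically compatible across each JSJ torus $T$, because both pieces incident to $T$ detect the same slope---namely the component of the single $m$-tuple $[\alpha_*]$ corresponding to $T$. This matching is precisely the hypothesis of the general gluing theorem \ref{theorem: gluing}, whose conclusion is the existence of a global co-oriented taut foliation on $W$, respectively of a left-order on $\pi_1(W)$.

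The main obstacle, which is the content of the general gluing theorem \ref{theorem: gluing} rather than of Theorem \ref{theorem: special gluing} itself, is to make the gluing actually work. For foliations, one must modify each local foliation in a collar neighbourhood of its boundary tori so that the two holonomies along a JSJ torus match, producing a continuous (and still taut, co-oriented) foliation across the torus; the strength of the detection hypothesis---in particular its control on the fibre class, which is why the Seifert fibre appears in the definitions of detection---provides the flexibility needed to arrange this. For orders, the difficulty is that $\pi_1(W)$ is built from the $\pi_1(M_i)$ by amalgamations and HNN extensions along the $\pi_1(T_j)$, and a random choice of orders on the pieces will not descend to the amalgam; the detection data supplies the convex-subgroup structure needed to combine the orders coherently, in the spirit of Theorem \ref{equiv 3}(2). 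All of the hard work happens inside Theorems \ref{theorem: detection} and \ref{theorem: gluing}; Theorem \ref{theorem: special gluing} then follows by checking that the hypotheses of those general statements specialise to the statement here.
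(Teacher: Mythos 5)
Your proposal is correct and matches the paper's approach: the paper explicitly notes that Theorem~\ref{theorem: special gluing} is the $K=\emptyset$ case of Theorem~\ref{theorem: gluing}, with Theorem~\ref{theorem: detection} supplying the equivalence of foliation-detection and order-detection so that the single word ``detected'' in Theorem~\ref{theorem: gluing} can be read either way. One small simplification you might note: for the forward direction of (2) there is no need to pass through a dynamical realisation---the definition of order-detection only uses the restriction of the order to each $\pi_1(T_j)\cong\mathbb{Z}^2$ together with Proposition~\ref{prop:slope}, which is how the paper reads off the detected $m$-tuple directly from a left-order on $\pi_1(W)$.
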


The fundamental group of an irreducible rational homology $3$-sphere graph manifold which admits a co-oriented taut foliation acts by orientation-preserving homeomorphisms on $S^1$ via Thurston's universal circle construction \cite{CD}, and hence is circularly-orderable.  One can promote the circular-ordering to a left-ordering whenever the action lifts to an action on $\mathbb{R}$, but the existence of such a lift depends upon the vanishing of an obstruction in the finite group $H^2(W)$. It would be interesting to see how the hypotheses of the gluing theorem can be used to show that this obstruction can be made to vanish. 

As mentioned above, statements (2) and (3) of Theorem \ref{equiv 1} are known to be equivalent (cf. \cite{Linnell}, \cite[Theorem 1.1(1)]{BRW}). The remaining equivalences claimed in Theorem \ref{equiv 1} are immediate consequences of Theorems \ref{theorem: special detection}  and \ref{theorem: special gluing}. Theorems \ref{equiv 2} and \ref{equiv 3} will follow in a similar fashion. 

Various problems and questions arise naturally from this study, most importantly with regards to the Heegaard-Floer aspects of the detection theorem and the potential for a Heegaard-Floer version of the gluing theorem. 

\begin{question}
For a given $t \geq 2$, is NLS detection determined exclusively in terms of $N_t$? In particular, is it determined in terms of the twisted $I$-bundle over the Klein bottle $N_2$ (cf. Remark \ref{NLS reduction} )? (We expect this to be the case.) More generally, can an arbitrary Heegaard-Floer solid torus with incompressible boundary be used to determine NLS detection? 
\end{question}

Although the definition of NLS detection is extrinsic to the ambient manifold, we expect that there to be an intrinsic definition.  

\begin{problem}
Determine an intrinsic definition of NLS detection in terms, for instance, of the bordered Heegaard-Floer theory of the ambient manifold. Do this in such a way so as to remove the restriction that NLS detection be defined only for families of rational slopes.   
\end{problem}

Conjecture 1 of \cite{BGW} contends that an irreducible rational homology $3$-sphere $W$ is not an L-space if and only if its fundamental group is left-orderable. Consideration of Theorem \ref{LO implies NLS} reduces the conjecture to showing that if $W$ has a non-left-orderable fundamental group then $W$ is not an L-space. From the point of view of the detection and gluing theorems, this leads to the following conjecture. 

\begin{conjecture} \label{hf gluing conjecture}
The gluing theorem holds in the context of NLS detection. That is, a rational homology $3$-sphere graph manifold $W$ is {\it not} an L-space if and only if there is an $m$-tuple of rational slopes $[\alpha_*] = ([\alpha_1], [\alpha_2], \ldots , [\alpha_m])$, one for each JSJ torus of $W$, such that for each $i$, $[\alpha_*^{(i)}]$ is NLS detected (cf. Theorem \ref{theorem: special gluing}).
\end{conjecture}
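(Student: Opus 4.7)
The plan is to split Conjecture \ref{hf gluing conjecture} into its two implications. The reverse implication follows almost immediately from the results proved in this paper; the forward implication is the genuine obstacle and is, in essence, equivalent to the remaining open direction of Conjecture 1 of \cite{BGW} for graph manifold rational homology $3$-spheres.

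For the reverse implication, suppose that for each JSJ piece $M_i$ the sub-tuple $[\alpha_*^{(i)}]$ is NLS-detected. By the slope detection theorem (Theorem \ref{theorem: detection}), NLS detection and LO detection coincide on rational slope tuples to which both apply, so each $[\alpha_*^{(i)}]$ is also detected by some left-order on $\pi_1(M_i)$. The gluing theorem, in the form Theorem \ref{theorem: special gluing}(2), then produces a left-order on $\pi_1(W)$, and Theorem \ref{LO implies NLS} concludes that $W$ is not an L-space.

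For the forward implication, suppose $W$ is not an L-space and seek an $m$-tuple of NLS-detected slopes. The most direct route is to show first that $\pi_1(W)$ is left-orderable, then invoke Theorem \ref{theorem: special gluing}(2) to obtain an LO-detected slope tuple which by Theorem \ref{theorem: detection} will also be NLS-detected. This, however, amounts to proving the converse of Theorem \ref{LO implies NLS}, and no general mechanism for producing left-orders from the non-L-space condition is currently available. A more intrinsic alternative is to work directly in bordered Heegaard-Floer theory: write $\widehat{HF}(W)$ as the iterated pairing of the bordered invariants $\widehat{CFA}(M_i)$ and $\widehat{CFD}(M_i)$ along the JSJ tori, and argue that non-triviality of the reduced Floer homology of $W$ forces the bordered invariant of each piece to contain explicit algebraic data encoding an NLS-detected tuple on its boundary, with a compatibility condition that matches slopes across every JSJ torus.

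The principal obstacle is precisely this translation from a non-vanishing statement about the closed manifold $W$ into concrete slope data on its pieces. Carrying out the bordered program requires two substantial new ingredients: first, an intrinsic bordered-theoretic characterization of NLS detection on a Seifert fibred piece (the open problem raised in the introduction after Conjecture \ref{hf gluing conjecture}); and second, a delicate compatibility argument showing that, under the pairing theorem, the slopes extracted from each side of a given JSJ torus can be chosen to agree. Either of these achievements would in isolation be a significant advance, and together they would complete the proof of the conjecture.
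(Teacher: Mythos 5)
The statement you have been asked to prove is labelled a \emph{Conjecture} in the paper and is explicitly left open there: the authors remark that ``the conjecture is unknown even when $W$ has only two pieces'' and isolate the simplest open instance as Problem \ref{special case gluing}. There is therefore no proof in the paper against which to compare your attempt, and your write-up, accurately, does not supply one either.

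Your analysis of the two implications is nonetheless correct and agrees with how the authors position the conjecture. The reverse implication does follow from the paper's results exactly as you argue: for a rational tuple, NLS detection on each $M_i$ is equivalent to order detection by Theorem \ref{theorem: detection}, Theorem \ref{theorem: special gluing}(2) then yields left-orderability of $\pi_1(W)$, and Theorem \ref{LO implies NLS} finishes. (This is consistent with the authors' remark that Theorem \ref{LO implies NLS} reduces the relevant direction of Conjecture~1 of \cite{BGW} to the converse.) The forward implication, however, is precisely the open content of the conjecture: passing from ``$W$ is not an L-space'' to a left-order on $\pi_1(W)$, or to explicit detected slope data on the pieces, is not achievable with the tools of the paper. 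Your bordered Heegaard--Floer sketch is a reasonable research programme, and its two prerequisites (an intrinsic bordered characterization of NLS detection, and a compatibility statement across JSJ tori under the pairing theorem) are essentially the problems the authors themselves raise in the introduction. But as written it is a plan, not an argument: no pairing-theorem computation is carried out, no bordered invariant is shown to encode a detected slope, and no mechanism is given to force agreement of slopes across a JSJ torus. So the genuine gap is the forward implication in its entirety, and your proposal does not close it.
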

The conjecture is unknown even when $W$ has only two pieces. Here is the simplest open case. 

\begin{problem} \label{special case gluing} 
Show that the union $W = M_1 \cup M_2$ of two trefoil exteriors along $T = \partial M_1 = \partial M_2$ is an L-space if and only if the set of slopes on $T$ detected from $M_1$ is disjoint from the set of slopes on $T$ detected from $M_2$. 
\end{problem}

Our notions of order detection, representation detection, foliation detection and NLS detection extend to general compact connected orientable $3$-manifolds whose boundaries consist of tori. 

\begin{question}
To what extent do the detection and gluing theorems hold in this more general setting? (Compare with \cite[Conjecture 4.3]{CLW}.) 
\end{question}

This question is of particular importance considering the approach to Conjecture 1 of \cite{BGW} and Conjecture 5 of \cite{Ju} developed in this paper.

Finally, we ask:

\begin{question} \label{smooth question}
If a graph manifold rational homology $3$-sphere admits a co-oriented taut foliation, does it admit a smooth co-oriented taut foliation? 
\end{question}

Brittenham, Naimi and Roberts have constructed examples of graph manifolds with positive first Betti number which admit $C^0$ co-oriented taut foliations but no $C^2$ co-oriented taut foliations, and their construction is generic in terms of the gluing maps of the graph manifold's JSJ pieces (\cite[Theorem D]{BNR}). On the other hand, we noted above that the constructions used in the proof of Theorem \ref{equiv 1} show that the generic graph manifold rational homology $3$-sphere which admits a co-oriented taut foliation admits a smooth co-oriented taut foliation (cf. \S \ref{subset: smoothness}). It is possible that they all admit smooth co-oriented taut foliations. 

Here is how the paper is organised. Background material is introduced in \S \ref{section: assumptions} while the notions of representation detection, order detection and foliation detection are developed in \S \ref{section: detecting via reps}, \S \ref{section: detecting via orders} and \S \ref{section: detecting via foliations} respectively. The goal of \S \ref{section: Left-orders, dynamic realisations, foliations and slopes} is to show how to relate represention detection to both order detection and foliation detection in a slope preserving fashion. The equivalence of statements (2) and (3) of Theorems \ref{equiv 2} and \ref{equiv 3} are dealt with there. See Remark \ref{orders versus reps}. We introduce NLS-detection (i.e. not an L-space detection) in \S \ref{section: detecting via L-spaces} and develop the background to show that it is equivalent to foliation detection when restricted to families of rational slopes. One of the main technical results of this paper is the slope detection theorem, Theorem \ref{theorem: detection}, which is stated and proved in \S \ref{section detection}. The second main technical result is the gluing theorem, Theorem \ref{theorem: gluing}, which is stated in \S \ref{section gluing} and then proved over the next two sections. Proposition \ref{rational leaves} is proved in  \S \ref{proof foliation case gluing}. We make a few comments on smoothness issues and provide examples which illustrate Theorems \ref{equiv 1}, \ref{equiv 2} and \ref{equiv 3} and their differences in \S \ref{examples and smoothness}. Finally we collect the results of Eisenbud, Hirsch and Neumann, of Jankins and Neumann, and of Naimi on representations of fundamental groups of Seifert manifolds with values in $\widetilde{\hbox{Homeo}}_+(S^1)$ in Appendix \ref{sec: ehnjn} and translate them into the form needed for the purposes of this paper. 

{\bf Acknowledgements}.  The authors would like to thank Michel Boileau and Liam Watson for helpful conversations concerning the material of this paper. They would also like to thank Danny Calegari for insightful comments on an earlier version of the manuscript and Jonathan Bowden who pointed out a gap in our original proof of Theorem \ref{LO implies NLS} and who provided the technique needed to fill it. Finally, they would like to thank an anonymous referee for a detailed report which led to a greatly improved exposition.

\section{Assumptions and notation} \label{section: assumptions} 
We introduce assumptions and notation here which will be used throughout the paper. 

\subsection{Slopes}  \label{slopes} A {\it slope} on a torus $T$ is the class $[\alpha]$ of a non-zero element $\alpha \in H_1(T; \mathbb R)$ in the projective space 
$$\mathcal{S}(T) = \mathbb P^1(H_1(T; \mathbb R)) \cong S^1$$ 
We call a slope on $T$ {\it rational} if it is represented by a class $\alpha \in H_1(T)$. Otherwise we call it {\it irrational}. 

A {\it rational longitude} of a compact, connected, orientable $3$-manifold $N$ with boundary a torus is a primitive class $\lambda_N \in H_1(\partial N)$ which represents a torsion element when considered as an element of $H_1(N)$. Rational longitudes exist and are well-defined up to sign. Thus they determine a well-defined slope $[\lambda_N] \in \mathcal{S}(\partial N)$. 

\subsection{Seifert manifolds} \label{assumptions seifert} 
Throughout this paper $P$ will denote a punctured $2$-sphere, $Q$ a punctured projective plane and $Q_0$ a M\"{o}bius band. 
We use $M$ to denote a compact, connected, orientable Seifert fibred $3$-manifold, distinct from $S^1 \times D^2$ and $S^1 \times S^1 \times I$, whose boundary is a non-empty union of tori $T_1, \ldots , T_r$. We also assume that $M$ embeds in a rational homology $3$-sphere. Equivalently, $M$ has  base orbifold of the form $P(a_1, a_2, \ldots, a_n)$ or $Q(a_1, a_2, \ldots, a_n)$ where $n \geq 0$ and $a_1, \ldots , a_n \geq 2$. The Seifert fibring on $M$ is unique up to isotopy unless $M$ is a twisted $I$-bundle over the Klein bottle, denoted $N_2$, which admits exactly two isotopy classes of Seifert structures. One has base orbifold $Q_0$ and the other has base orbifold $D^2(2,2)$. Let $h_0, h_1 \in H_1(\partial N_2)$ denote, respectively, primitive classes carried by a Seifert fibre of the structure with base orbifold $Q_0$, respectively $D^2(2,2)$. Then $\{h_0, h_1\}$ is a basis of $H_1(\partial N_2)$ well-defined up to sign change of $h_0$ or $h_1$. The rational longitude of $N_2$ is represented by $h_0$.  

When $M \not \cong N_2$ the class of a regular Seifert fibre of $M$ is well-defined up to taking inverses and we use $h \in \pi_1(M)$ to denote it. For each boundary component $T_j$ of $M$ we will also use $h$ to denote a primitive class of $H_1(T_j)$ represented by a Seifert fibre. When $M \cong N_2$, $h$ will correspond to either $h_0$ or $h_1$, depending on the Seifert structure chosen for $M$. 

Define 
$$\mathcal{S}(M) = \{([\alpha_1], [\alpha_2], \ldots , [\alpha_r]) : [\alpha_j] \in \mathcal{S}(T_j) \hbox{ for each } j\} \cong (S^1)^r$$ 

We call $[\alpha_*] \in \mathcal{S}(M)$ {\it rational} if each $[\alpha_j]$ is rational, and {\it horizontal} if no $[\alpha_j]$ coincides with the slope of the fibre class $[h]$.  We use $v([\alpha_*])$ will denote the number of vertical $[\alpha_j]$: 
$$v([\alpha_*]) = |\{j : [\alpha_j] = [h]\}|$$ 
Thus $[\alpha_*]$ is horizontal if and only if $v([\alpha_*]) = 0$. 

Without loss of generality we suppose that the Seifert invariants $(a_1, b_1), \ldots, (a_n, b_n)$ 
of the exceptional fibres of $M$ satisfy $0 < b_i < a_i$ for each $i$. Set
$$\gamma_i = \frac{b_i}{a_i} \in (0,1)$$
The fundamental group of $M$ admits a presentation of the following form.

\subsubsection{Seifert manifolds over $Q(a_1, a_2, \ldots, a_n)$} \label{presentation seifert mobius} 

\begin{eqnarray}\pi_1(M) = \langle y_1,  \ldots , y_n, x_1, \ldots , x_r, z, h_0 : [x_j, h_0] = 1, [y_i, h_0] = 1, y_i^{a_i} = h_0^{b_i}, zh_0z^{-1} = h_0^{-1}, \nonumber \\ y_1y_2\ldots y_n x_1 \ldots x_r z^2 = 1 \rangle  \nonumber 
\end{eqnarray}
Here $x_j$ carries a {\it dual class} $h_j^*$ to $h_0$ on $T_j$, $1 \leq j \leq r$. This means that $\{h_0, h_j^*\}$ is a basis of $H_1(T_j) = \pi_1(T_j)$.

\subsubsection{Seifert manifolds over $P(a_1, a_2, \ldots, a_n)$} \label{presentation seifert planar} 

$$\pi_1(M) = \langle y_1,  \ldots , y_n, x_1, \ldots , x_r, h : h \hbox{ central, } y_i^{a_i} = h^{b_i}, y_1y_2\ldots y_n x_1 \ldots x_r = 1 \rangle$$ 
Again, $x_j$ carries a {\it dual class} to $h$ on $T_j$, $1 \leq j \leq r$.

\subsubsection{A special family of Seifert fibred manifolds}  \label{N_t}
For each integer $t \geq 2$ let $N_t$ be the Seifert fibred space with base orbifold a 2-disk $D^2(t,t)$ and $\gamma_1 = \frac{1}{t}, \gamma_2 = \frac{t-1}{t}$. (Thus $N_2$ is the twisted $I$-bundle over the Klein bottle, as above.) There is a unique Seifert structure on $N_t$ with an orientable base orbifold. We use $h_1 \in H_1(\partial N_t)$ to be a primitive class carried by a fibre of this structure (cf. \S \ref{presentation seifert planar}). In analogy with the case $t = 2$ we will use $h_0$ to denote a primitive class in $H_1(\partial N_t)$ representing the rational longitude of $N_t$. The reader will verify using the presentation for $\pi_1(N_t)$ in \S \ref{presentation seifert planar} that $h_0 = h_1^* + h_1$ has order $t$ in $H_1(N_t)$. Thus 
$$\Delta(h_0, h_1) = 1$$
and there is a connected, oriented, horizontal surface $F$ in $N_t$ with $t$ boundary components, each like-oriented on $\partial N_t$ and of slope $[h_0]$. 
It follows that the restriction of the Seifert map $N_t \to D^2(t,t)$ to each boundary component of $F$ is a homeomorphism onto its image $\partial D^2(t,t)$. In particular, $F$ is non-separating and so is the fibre of a locally trivial fibring $N_t \to S^1$.  

\begin{lemma} \label{homeo N_t} 
The image of $\hbox{Homeo}(N_t) \to GL_2(\mathbb Z)$ which sends $F \in \hbox{Homeo}(N_t)$ to the matrix of $(F|\partial N_t)_*: H_1(\partial N_t) \to H_1(\partial N_t)$ is given by $\{\left(\begin{smallmatrix} 1  &   0 \\ 0   &  1 \end{smallmatrix}\right), \left(\begin{smallmatrix} -1  &   0 \\ 0   &  -1 \end{smallmatrix}\right), \left(\begin{smallmatrix} -1  &   0 \\ 0   &  1 \end{smallmatrix}\right), \left(\begin{smallmatrix} 1  &   0 \\ 0   &  -1 \end{smallmatrix}\right)\}$ when expressed in terms of the basis $\{h_0, h_1\}$.  
\end{lemma}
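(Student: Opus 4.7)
The plan is to prove both containments separately.

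For the inclusion of the image in the stated four-element set, I would first observe that any $F \in \hbox{Homeo}(N_t)$ preserves the rational longitude up to sign, so $(F|\partial N_t)_\ast(h_0) = \pm h_0$. For the fibre class: when $t \geq 3$ the Seifert fibration on $N_t$ is unique up to isotopy (cf. \S\ref{assumptions seifert}), so $F$ may be isotoped to preserve it and therefore $(F|\partial)_\ast(h_1) = \pm h_1$; when $t = 2$, $N_2$ admits two Seifert fibrations with fibre classes $h_0$ and $h_1$, but $F$ cannot swap them since swapping would force $(F|\partial)_\ast(h_0) = \pm h_1$, contradicting the already-established $(F|\partial)_\ast(h_0) = \pm h_0$ (recall $\{h_0, h_1\}$ is a basis). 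Consequently the matrix of $(F|\partial)_\ast$ is diagonal with $\pm 1$ entries in the basis $\{h_0, h_1\}$.

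For the reverse inclusion, I would construct three non-identity involutions realising the three non-identity diagonal $\pm 1$ matrices. Represent the base orbifold $D^2(t,t)$ as the closed unit disk with cone points symmetrically placed on a diameter, and consider the orbifold involutions: the $\pi$-rotation $s$ swapping the two cone points, the reflection $r$ through the diameter fixing both cone points, and their composite $rs$. Each lifts to a self-homeomorphism of $N_t$ once coupled with the unique fibre-direction choice that preserves the ordered Seifert invariants $(t,1),(t,t-1)$ at the two cone points: namely $F_s := s$ combined with fibre reversal, $F_r := r$ combined with fibre reversal, and $F_{rs} := rs$ with no fibre reversal. These, together with the identity, form a Klein four subgroup of $\hbox{Homeo}(N_t)$ modulo isotopy.

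To compute the induced matrices I would use the identity that for a fibration-preserving self-homeomorphism $F$ of $N_t$, the matrix of $(F|\partial)_\ast$ in the basis $\{h_0, h_1\}$ is $\operatorname{diag}(\epsilon_b, \delta)$, where $\epsilon_b = \pm 1$ is the sign of the orientation change $F$ induces on the base $D^2(t,t)$ and $\delta = \pm 1$ records fibre reversal. The entry $\delta$ is immediate, and for $\epsilon_b$ the key point is that an oriented horizontal surface in $N_t$, with its $t$ like-oriented boundary components each of class $h_0$, is sent by $F$ to an oriented horizontal surface, with orientation reversed precisely when the base orientation is. Applied to $F_s$, $F_r$, $F_{rs}$ this yields $\operatorname{diag}(1, -1)$, $-I$, and $\operatorname{diag}(-1, 1)$ respectively, exhausting the three non-identity matrices of the list.

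The principal obstacle is the sign bookkeeping for $h_0$: its presentation-based definition $h_0 = h_1^\ast + h_1$ does not transparently record the orientation of a horizontal surface. The cleanest route to $\operatorname{diag}(\epsilon_b,\delta)$ is to decompose the oriented tangent space of $N_t$ at a regular point into base-tangent plane plus fibre direction and track the action of $F_\ast$ on each factor, using that the boundary of an oriented horizontal surface is oriented by the base. As an alternative, the mapping-torus description of $N_t$ provided by Lemma~\ref{monodromy has order t} makes the action of each involution directly readable.
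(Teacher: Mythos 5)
Your proof is correct and follows the same overall outline as the paper's: first bound the image inside the diagonal $\pm 1$ matrices, then realise enough elements by explicit homeomorphisms. The paper gets away with constructing only two: an $F_1$ that inverts the orientations of base and fibre simultaneously (giving $-I$), and an orientation-reversing $F_2$ that swaps the two exceptional fibres (giving determinant $-1$); since the image is a subgroup of the Klein four group of diagonal $\pm 1$ matrices and contains $-I$ together with an element of determinant $-1$, it must be the full group. You instead build all three nontrivial involutions from the orbifold symmetries $s$, $r$, $rs$ of $D^2(t,t)$, matching each with the forced fibre-direction choice, and compute each matrix directly via the $\operatorname{diag}(\epsilon_b,\delta)$ formula. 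Your route is more laborious but buys explicitness and, in particular, an explicit representative for each matrix (the paper only implicitly realises $\operatorname{diag}(1,-1)$ and $\operatorname{diag}(-1,1)$ as products). You are also more scrupulous than the paper on the containment step: the paper's ``by their definition, $h_0$ and $h_1$ are preserved up to sign'' silently uses uniqueness of the Seifert fibring, which fails for $N_2$; your observation that a swap of the two fibrings on $N_2$ would send $h_0$ to $\pm h_1$ and thus contradict preservation of the rational longitude (equivalently: one base orbifold is orientable and the other not) closes that gap cleanly. Both your $\operatorname{diag}(\epsilon_b,\delta)$ formula and the sign of $h_0$ under base-orientation change are correctly justified by the horizontal-surface argument you sketch.
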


\begin{proof}
By their definition, $h_0$ and $h_1$ are preserved by any homeomeorphism of $N_t$, at least up to sign. It is easy to see that there is an $F_1 \in \hbox{Homeo}(N_t)$ which simultaneously inverts the orientations of the base and fibre of $N_t$. Thus $(F_1|\partial N_t)_* = \left(\begin{smallmatrix} -1  &   0 \\ 0   &  -1 \end{smallmatrix}\right)$. Similarly using the fact that the exceptional fibres of $N_t$ have Seifert invariants $(t, 1)$ and $(t, t-1)$, we can construct an orientation-reversing homeomorphism $F_2$ of $N_t$ which switches the two exceptional fibres. More precisely, think of $N_t$ as obtained by Dehn filling the two ``inner" boundary components of a product $P \times S^1$ where $P$ is a twice-punctured disk. Let $r$ be a reflection of $P$ along a properly embedded arc which separates its two inner boundary components. We can post-compose $r \times 1_{S^1}$ with Dehn twists along a pair of disjoint vertical annuli in $P \times S^1$ which connect its inner boundary components to its outer one to obtain an orientation-reversing homeomorphism of $P \times S^1$ which exchanges its two inner boundary components and extends to $F_2: N_t \to N_t$.  This implies the result since $\det((F_2|\partial N_t)_*) = -1$. 
\end{proof}

\subsection{Graph manifolds} \label{assumptions graph}  
Throughout this paper $W$ will denote a graph manifold rational homology $3$-sphere. Thus $W$ contains a disjoint family of incompressible tori  $\mathcal{T} = \{T_1, T_2,  \ldots, T_m\}$ which split it into a family $M_1, M_2, \ldots, M_n$ of connected Seifert manifolds of the type described in \S \ref{assumptions seifert}. Define
$$\mathcal{S}(W; \mathcal{T}) = \{([\alpha_1], [\alpha_2], \ldots, [\alpha_m]) : [\alpha_j] \in \mathcal{S}(T_j) \hbox{ for all } j\} \cong (S^1)^m$$ 
An element $[\alpha_*]$ of $\mathcal{S}(W; \mathcal{T})$ will be called {\it rational} if each of its components is a rational slope. 

An element $[\alpha_*]$ of $\mathcal{S}(W; \mathcal{T}) $  will be called {\it horizontal} if for each $T_j$ in $\mathcal{T}$ the associated component of $[\alpha_*]$ is horizontal in the two pieces of $W$ incident to $T_j$.  

For each $i$ we have a projection map 
$$\Pi_i: \mathcal{S}(W; \mathcal{T}) \to \mathcal{S}(M_i)$$ 
which associates to each $[\alpha_*] \in \mathcal{S}(W; \mathcal{T})$ the $|\partial M_i|$-tuple of associated slopes corresponding the components of $\partial M_i$. We shall write
$$\Pi_i([\alpha_*]) = [\alpha_*^{(i)}]$$

\section{Detecting horizontal slopes via representations} \label{section: detecting via reps}

 Let $M$ be a compact orientable Seifert fibred manifold $M$ as in \S \ref{assumptions seifert}. In this section we review the results of Eisenbud-Hirsch-Neumann, of Jankins-Neumann, and of Naimi concerning the relationship between slopes on $\partial M$ and representations of $\pi_1(M)$ with values in $\widetilde{\hbox{Homeo}}_+(S^1)$.

 \subsection{Representation detection of horizontal slopes}

For $\gamma \in \mathbb R$ we use $\hbox{sh}(\gamma) \in \hbox{Homeo}_+(\mathbb R)$ to denote the translation homeomorphism $\hbox{sh}(\gamma)(x) = x + \gamma$. The universal cover $\widetilde{\hbox{Homeo}}_+(S^1)$ of $\hbox{Homeo}_+(S^1)$ can be identified in a natural way with the centraliser of $\hbox{sh}(1)$ in $\hbox{Homeo}_+(\mathbb R)$:
$$\widetilde{\hbox{Homeo}}_+(S^1) = \{f \in \hbox{Homeo}_+(\mathbb R): f(x + 1) = f(x) + 1\}$$
There is a continuous, conjugation-invariant {\it translation number} quasimorphism 
$$\tau: \widetilde{\hbox{Homeo}}_+(S^1) \to \mathbb R$$
for which $\tau(\hbox{sh}(\gamma)) = \gamma$ and which is a homomorphism when restricted to an abelian subgroup of $\widetilde{\hbox{Homeo}}_+(S^1)$ (cf. \cite[\S 5]{Ghys}). It is known that $\tau(f) = 0$ if and only if $f$ has a fixed point. 

We leave the proof of the following elementary lemma to the reader.

\begin{lemma} \label{trans} 
Let $f \in \hbox{Homeo}_+(\mathbb R)$. 

\noindent $(1)$ If $f$ has no fixed point, then it is conjugate in $\hbox{Homeo}_+(\mathbb R)$ to $\hbox{sh}(1)$ if $f(x) > x$ for all $x$ and to $\hbox{sh}(-1)$ if $f(x) < x$ for all $x$.  

\noindent $(2)$ If there is a $k \geq 1$ such that $f^k = \hbox{sh}(1)$, then there is a $g \in \widetilde{\hbox{Homeo}_+}(S^1)$ such that $g \circ f \circ g^{-1} = \hbox{sh}(\frac1k)$. 
\qed 
\end{lemma}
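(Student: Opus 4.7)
Both statements are standard dynamical facts about orientation-preserving homeomorphisms of the line, so I would just sketch the conjugacies explicitly after isolating the necessary monotonicity properties.

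For part (1), the first step is to observe that if $f \in \hbox{Homeo}_+(\mathbb R)$ has no fixed point then, by continuity and the intermediate value theorem applied to $x \mapsto f(x)-x$, either $f(x) > x$ for all $x$ or $f(x) < x$ for all $x$. I would treat the case $f(x) > x$; the other case follows by replacing $f$ with $f^{-1}$ (which has translation number $-\tau(f)$). Pick any $x_0 \in \mathbb R$ and consider the orbit $\{f^n(x_0)\}_{n \in \mathbb Z}$. It is strictly increasing, and an elementary argument shows $f^n(x_0) \to +\infty$ as $n \to +\infty$ and $f^n(x_0) \to -\infty$ as $n \to -\infty$: otherwise the monotone limit would be a fixed point. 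Next I would relate this to the sign of $\tau(f)$: since $\tau(f) = \lim_n (f^n(x_0) - x_0)/n$, we have $\tau(f) > 0$ in the case at hand. Now choose any increasing homeomorphism $g_0 : [x_0, f(x_0)] \to [0,1]$ with $g_0(x_0) = 0$ and $g_0(f(x_0)) = 1$, and extend it to a homeomorphism $g : \mathbb R \to \mathbb R$ by the equivariance rule $g(f^n(x)) = g_0(x) + n$ for $x \in [x_0,f(x_0)]$. This extension is well defined because the intervals $[f^n(x_0),f^{n+1}(x_0)]$ tile $\mathbb R$, and by construction $g \circ f \circ g^{-1} = \hbox{sh}(1)$.

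For part (2), the hypothesis $f^k = \hbox{sh}(1)$ immediately gives that $f$ commutes with $\hbox{sh}(1)$, hence $f \in \widetilde{\hbox{Homeo}}_+(S^1)$, and also that $f$ is fixed-point free (any fixed point of $f$ would be fixed by $f^k = \hbox{sh}(1)$). Taking $\tau$ of the relation $f^k = \hbox{sh}(1)$ and using that $\tau$ is a homomorphism on the cyclic subgroup generated by $f$ yields $\tau(f) = 1/k > 0$, so in particular $f(x) > x$ for all $x$. I would now mimic the construction in part (1) but starting from the orbit $0 = f^0(0) < f(0) < f^2(0) < \cdots < f^k(0) = 1$: choose any increasing homeomorphism $g_0 : [0, f(0)] \to [0, 1/k]$ with $g_0(0)=0$ and $g_0(f(0)) = 1/k$, and extend to $g:\mathbb R \to \mathbb R$ by the rules $g(f(x)) = g(x) + 1/k$ and $g(x+1) = g(x) + 1$. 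These rules are compatible precisely because $f^k = \hbox{sh}(1)$ and $g_0$ was chosen so that $g(1) = 1$. By construction $g \in \widetilde{\hbox{Homeo}}_+(S^1)$ and $g \circ f \circ g^{-1} = \hbox{sh}(1/k)$.

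The only mildly delicate point is verifying that the formulas defining $g$ in part (2) are consistent on the overlaps forced by simultaneously imposing $f$-equivariance and $\hbox{sh}(1)$-equivariance; this is where the hypothesis $f^k = \hbox{sh}(1)$ is essential, as it ensures the two equivariance rules agree on the orbit $\{f^j(0)\}_{0 \le j \le k}$. Everything else is a routine bookkeeping argument using monotonicity and the standard fundamental-domain construction, consistent with the authors' assessment that the lemma is elementary.
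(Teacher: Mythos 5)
Your proof is correct. Note that the paper explicitly states ``We leave the proof of the following elementary lemma to the reader,'' so there is no authorial proof to compare against; what you have written is the standard fundamental-domain argument, and it works. A couple of small remarks on precision. In part (1), as stated the lemma quietly presumes $\tau(f)$ is defined, i.e.\ $f \in \widetilde{\hbox{Homeo}}_+(S^1)$; your proof never uses this beyond invoking $\tau$, and your dichotomy ($f>\mathrm{id}$ or $f<\mathrm{id}$) plus the paper's earlier observation that $\tau(f)=0$ iff $f$ has a fixed point is exactly what ties the sign of $\tau(f)$ to which of the two cases holds, so the logic closes up. When you write ``since $\tau(f)=\lim_n(f^n(x_0)-x_0)/n$, we have $\tau(f)>0$,'' this is slightly compressed --- $f(x)>x$ alone only gives $\tau(f)\ge 0$ --- but combined with fixed-point-freeness and the fact just mentioned, strictness follows immediately, so there is no real gap. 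In part (2), your consistency check for the two equivariance rules is the right thing to verify, and as you note it reduces to $f^k=\hbox{sh}(1)$ together with the normalization $g_0(f(0))=1/k$; one could also observe that the rule $g(x+1)=g(x)+1$ is actually a formal consequence of $g(f(x))=g(x)+1/k$ once $f^k=\hbox{sh}(1)$ (and $f$ commutes with $\hbox{sh}(1)$), so only the single $f$-equivariance rule need be imposed. Either way, the construction gives $g\in\widetilde{\hbox{Homeo}}_+(S^1)$ with $gfg^{-1}=\hbox{sh}(1/k)$ as required.
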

Let $M$ be a compact orientable Seifert fibred manifold as in \S \ref{assumptions seifert} and define
$$\mathcal{R}_0(M) = \{\rho \in \hbox{Hom}(\pi_1(M), \hbox{Homeo}_+(\mathbb R)): \rho(h) = \hbox{sh}(1)\}$$

\begin{lemma} \label{non-orientable implies empty}  
Let $M$ be a compact orientable Seifert fibred manifold $M$ as in \S \ref{assumptions seifert}.

$(1)$ If $M$ has base orbifold $Q(a_1, \ldots , a_n)$ then $\mathcal{R}_0(M) = \emptyset$.
         
$(2)$ Suppose that $M$ has base orbifold $P(a_1, \ldots , a_n)$.  

\indent \hspace{.3cm} $(a)$ $\mathcal{R}_0(M) \subset  \hbox{Hom}(\pi_1(M), \widetilde{\hbox{Homeo}}_+(S^1))$. 

\indent \hspace{.3cm} $(b)$ Consider the presentation of $\pi_1(M)$ given in \S \ref{presentation seifert planar}. Then for each $\rho \in \mathcal{R}_0(M)$ and \\
\indent \hspace{1cm} $i \in \{1,2, \ldots , n\}$, $\rho(y_i)$ is conjugate to $\hbox{sh}(\gamma_i)$. 
\end{lemma}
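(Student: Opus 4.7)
The plan is to treat the three statements separately, using the two presentations of $\pi_1(M)$ given in \S\ref{presentation seifert mobius} and \S\ref{presentation seifert planar}.

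For (1), I will argue by contradiction. Suppose $\rho \in \mathcal{R}_0(M)$, so that $\rho(h_0) = \hbox{sh}(1)$. The presentation over $Q(a_1,\ldots,a_n)$ contains the relation $zh_0z^{-1}=h_0^{-1}$, and applying $\rho$ yields $\rho(z)\,\hbox{sh}(1)\,\rho(z)^{-1} = \hbox{sh}(-1)$. This is impossible since the translation number $\tau$ is conjugation-invariant on $\hbox{Homeo}_+(\mathbb R)$, while $\tau(\hbox{sh}(1))=1 \neq -1 = \tau(\hbox{sh}(-1))$; equivalently, every conjugate of $\hbox{sh}(1)$ shifts each point forward in the standard order, while $\hbox{sh}(-1)$ shifts every point backward.

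For (2)(a), the presentation in \S\ref{presentation seifert planar} makes $h$ central in $\pi_1(M)$, so every element of $\rho(\pi_1(M))$ commutes with $\rho(h)=\hbox{sh}(1)$. Since $\widetilde{\hbox{Homeo}}_+(S^1)$ is by definition the centralizer of $\hbox{sh}(1)$ in $\hbox{Homeo}_+(\mathbb R)$, this is exactly the assertion.

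For (2)(b), set $f = \rho(y_i)$, which lies in $\widetilde{\hbox{Homeo}}_+(S^1)$ by (a). The relation $y_i^{a_i} = h^{b_i}$ gives $f^{a_i} = \hbox{sh}(b_i)$, so $f$ descends to a finite-order $\bar f \in \hbox{Homeo}_+(S^1)$ with $\bar f^{a_i} = \hbox{id}$. I will invoke the classical fact that every torsion element of $\hbox{Homeo}_+(S^1)$ is topologically conjugate to a rigid rotation; this is obtained by averaging any probability measure over the finite cyclic action of $\bar f$ and using the resulting invariant measure to parametrize the conjugating homeomorphism. Since $a_i\tau(f) = \tau(f^{a_i}) = b_i$, we have $\tau(f) = \gamma_i$, so $\bar f$ is conjugate to $R_{\gamma_i}$. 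Lifting this conjugacy to $\widetilde{\hbox{Homeo}}_+(S^1)$ produces $g$ with $gfg^{-1} = \hbox{sh}(\gamma_i + n)$ for some $n \in \mathbb Z$. Raising to the $a_i$-th power and using that $g$ centralizes $\hbox{sh}(b_i)=\hbox{sh}(1)^{b_i}$ gives $\hbox{sh}(b_i + na_i) = \hbox{sh}(b_i)$, forcing $n=0$.

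The main technical point is the appeal to topological conjugation of torsion in $\hbox{Homeo}_+(S^1)$ to a rotation. A self-contained alternative, avoiding that appeal entirely, is the explicit averaging map $\phi(x) = \frac{1}{a_i}\sum_{k=0}^{a_i-1}\bigl(f^k(x) - k\gamma_i\bigr)$; using only $f(x+1)=f(x)+1$ and $f^{a_i}(x) = x + b_i$ one checks directly that $\phi \in \hbox{Homeo}_+(\mathbb R)$ and that $\phi \circ f = \hbox{sh}(\gamma_i) \circ \phi$.
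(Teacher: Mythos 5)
Your proof is correct and follows essentially the same route as the paper for each of the three assertions. Two small remarks. In (1), your first formulation invokes conjugation-invariance of $\tau$ on all of $\hbox{Homeo}_+(\mathbb R)$, but the paper only defines $\tau$ on $\widetilde{\hbox{Homeo}}_+(S^1)$ and $\rho(z)$ need not lie there; your parenthetical reformulation (conjugates of $\hbox{sh}(1)$ move every point forward, while $\hbox{sh}(-1)$ moves every point backward) is the precise version, and it is the same observation the paper makes via the inequality $\rho(z)(x) < \rho(z)(x)-1$. In (2)(b), the paper simply cites Lemma \ref{trans}(2), which as stated handles $f^k = \hbox{sh}(1)$; your situation has $\rho(y_i)^{a_i} = \hbox{sh}(b_i)$ with $\gcd(a_i,b_i)=1$, so a small reduction is needed, and you instead give a self-contained averaging argument (which also underlies Lemma \ref{trans}(2)); both are fine, and your explicit formula $\phi(x) = \frac{1}{a_i}\sum_{k=0}^{a_i-1}\bigl(f^k(x) - k\gamma_i\bigr)$ does check out using only $f(x+1)=f(x)+1$ and $f^{a_i}=\hbox{sh}(b_i)$.
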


\begin{proof}
Suppose that $M$ has base orbifold $Q(a_1, \ldots , a_n)$ and $\rho \in \mathcal{R}_0(M)$. There is an element $z \in \pi_1(M)$ such that $z h z^{-1} = h^{-1}$ (cf. the presentation of $\pi_1(M)$ given in \S \ref{presentation seifert mobius}). Since $\rho(z) \in \hbox{Homeo}_+(\mathbb R)$, $\rho(z)(x) < \rho(z)(y)$ for each pair of real numbers $x < y$. But then for $x \in \mathbb R$, $\rho(z)(x) < \rho(z)(x + 1) = \rho(z)(\rho(h)(x)) = \rho(h^{-1})(\rho(z)(x)) =  \rho(z)(x) - 1 < \rho(z)(x)$, a contradiction. Thus assertion (1) of the lemma holds. 

If $M$ has base orbifold $P(a_1, \ldots , a_n)$ then $h$ is central in $\pi_1(M)$ (\S \ref{presentation seifert planar}) so the image of any $\rho \in \mathcal{R}_0(M)$ is contained in $\widetilde {\hbox{Homeo}}_+(S^1)$. Therefore part (a) of assertion (2) holds. Part (b) follows immediately from Lemma \ref{trans}(2).
\end{proof}

If $\rho \in \mathcal{R}_0(M)$, then for each $1 \leq j \leq r$, 
$\hbox{kernel}((\tau \circ \rho|) \otimes {\bf{1}_{\mathbb{R}}}: \pi_1(T_j) \otimes \mathbb R = H_1(T_j; \mathbb R) \to \mathbb R) \cong \mathbb R$ and hence determines a slope $[\alpha_j(\rho)] \in \mathcal{S}(T_j)$. Note that if $h_j^* \in H_1(T_j)$ is the dual class to $h$ corresponding to $x_j$, then as $\rho(h) = \hbox{sh}(1)$ we have 
$$[\alpha_j(\rho)] = [\tau(\rho(h_j^*)) h - h_j^*]$$
Thus $[\alpha_j(\rho)]$ is horizontal. We call $[\alpha_*(\rho)] = ([\alpha_1(\rho)], [\alpha_2(\rho)], \ldots , [\alpha_r(\rho)])$ the {\it slope} of $\rho$. 

\begin{definition}
{\rm Let $\rho \in \mathcal{R}_0(M)$. A slope $[\alpha_j] \in \mathcal{S}(T_j)$ is {\it detected} by $\rho$, or $\rho$-{\it detected}, if $[\alpha_j] = [\alpha_j(\rho)]$. It is {\it strongly $\rho$-detected} if it is $\rho$-detected and $\rho|\pi_1(T_j)$ conjugates into the translation subgroup of $\mathbb R$. For $J \subset \{1, 2, \ldots, r\}$ and $[\alpha_*] = ([\alpha_1], [\alpha_2], \ldots , [\alpha_r]) \in \mathcal{S}(M)$, we say that $(J; [\alpha_*])$ is $\rho$-{\it detected} if $\rho|\pi_1(T_j)$ detects $[\alpha_j]$ for all $j$ and strongly detects $[\alpha_j]$ for $j \in J$. Finally, we say that $(J; [\alpha_*])$ is {\it representation}-{\it detected} if it is $\rho$-detected for some $\rho \in \mathcal{R}_0(M)$. 
}
\end{definition}  

We shall often simplify the phrase ``$(\emptyset; [\alpha_*])$ is $\rho$-detected, resp. representation detected", to ``$[\alpha_*]$ is $\rho$-detected, resp. representation detected". Similarly, we simplify ``$(\{1, 2, \ldots, r\}; [\alpha_*])$ is $\rho$-detected, resp. representation detected", to ``$[\alpha_*]$ is {\it strongly} $\rho$-detected, resp. {\it strongly} representation detected".

Set 
$$\mathcal{D}_{rep}(M; J) = \{[\alpha_*] \in \mathcal{S}(M) : (J; [\alpha_*]) \mbox{ is representation detected}\}$$ 
When $J = \emptyset$ we will often simplify $\mathcal{D}_{rep}(M; J)$ to $\mathcal{D}_{rep}(M)$. 

Determining $\mathcal{D}_{rep}(M; J)$ is a subtle problem which was completely resolved in a series of papers by Eisenbud, Hirsch, Neumann, Jankins and Naimi (\cite{EHN}, \cite{JN2}, \cite{Na}). See Appendix \ref{sec: ehnjn}. The interested reader should also see \cite{CW}, and in particular Theorem 3.9 of that paper, for a simpler, more direct approach to these results. One of the main results of this area implies that if $(J; [\alpha_*])$ is representation detected, then it is $\rho$-detected where $\rho$ takes values in a certain family of $3$-dimensional Lie groups. We describe this result next. 

\subsection{JN-realisability} 

For a subset $J$ of $\{1, 2, \ldots , r\}$ and an $r$-tuple  $(\tau_1, \ldots ,\tau_r) \in \mathbb R^r$, we say that $(J; 0; \gamma_1, \ldots , $ $\gamma_n; \tau_1, \ldots ,\tau_r)$ is {\it JN-realisable} (after Jankins-Neumann) if there is some $\rho \in \mathcal{R}_0(M)$ such that $\tau_j = \tau(\rho(x_j))$ ($1 \leq j \leq r$) and $\rho(x_j)$ is conjugate to $\hbox{sh}(\tau_j)$ for $j \in J$. (Our notation differs slightly from that of \cite{JN2}.) Clearly $(J; 0; \gamma_1, \ldots , $ $\gamma_n; \tau_1, \ldots ,\tau_r)$ is JN-realisable if and only if $(J; [\alpha_*])$ is representation detected where $[\alpha_j] = [\tau_j h - h_j^*]$ ($1 \leq j \leq r$). 

More generally, given $J \subseteq \{1, 2, \ldots , r\}$, $b \in \mathbb Z$ and $(\tau_1, \ldots ,\tau_r) \in \mathbb R^r$, we say that $(J; b; \gamma_1, \ldots , $ $\gamma_n; \tau_1 \ldots ,\tau_r)$ is {\it JN-realisable} if there exist $f_1, \ldots, f_n, g_1, \ldots, g_r \in \widetilde{\hbox{Homeo}_+}(S^1)$ such that
\begin{itemize}

\vspace{.1cm} \item $f_i$ is conjugate to $\hbox{sh}(\gamma_i)$ for $1 \leq i \leq n$;

\vspace{.1cm} \item $\tau(g_j) = \tau_j$ for $1 \leq j \leq r$; 

\vspace{.1cm} \item $g_j$ is conjugate to $\hbox{sh}(\tau_j)$ for each $j  \in J$; 

\vspace{.1cm} \item $f_1 \circ \ldots \circ f_n \circ g_1 \circ \ldots \circ g_r = \hbox{sh}(b)$. 

\end{itemize} 
 
If $f_1, \ldots , f_n, g_1, \ldots , g_r$ satisfying these conditions can be chosen to lie in a subgroup $\widetilde G$ of $\widetilde{\hbox{Homeo}_+}(S^1)$, we say that $(J; b; \gamma_1, \ldots , \gamma_n; \tau_1 \ldots ,\tau_r)$ is {\it JN-realisable in $\widetilde G$}.  

A particularly important family of subgroups $\widetilde G$ of $\widetilde{\hbox{Homeo}}_+(S^1)$ correspond to the universal covers $\widetilde{PSL}(2,\mathbb R)_k$ of the $k$-fold cyclic covers $PSL(2, \mathbb R)_k$ of $PSL(2, \mathbb R)$ ($k \geq 1$). These groups are conjugate in $\hbox{Homeo}_+(\mathbb R)$, though not in $\widetilde{\hbox{Homeo}}_+(S^1)$. More precisely, let $F_k: \mathbb R \to \mathbb R$ be the homeomorphism $F_k(x) = kx$. Then 
$$\widetilde{PSL}(2,\mathbb R)_k = F_k^{-1} \widetilde{PSL}(2,\mathbb R)F_k$$
Note that $\widetilde{PSL}(2,\mathbb R)_1 = \widetilde{PSL}(2,\mathbb R)$. 

The elements of $\widetilde{PSL}(2,\mathbb R)_k$ are either {\it elliptic, parabolic} or {\it hyperbolic} depending on whether the image in $PSL(2, \mathbb R)$ of its conjugate by $F_k$ has that property. Thus an element is elliptic if and only if it is conjugate to a translation. The parabolic and hyperbolic elements of $\widetilde{PSL}(2,\mathbb R)$ have integral translation numbers, so the translation number of a parabolic or hyperbolic element of $\widetilde{PSL}(2,\mathbb R)_k$ is of the form $\frac{d}{k}$ where $d \in \mathbb Z$.  

\begin{theorem} \label{realized standardly} {\rm (\cite{EHN}, \cite{JN2}, \cite{Na})} 
$(J; b; \gamma_1, \ldots , \gamma_n; \tau_1 \ldots , \tau_r)$ is JN-realisable if and only if it is JN-realisable in $\widetilde{PSL}(2,\mathbb R)_k$ for some $k \geq 1$. 
\end{theorem}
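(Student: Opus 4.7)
The implication ``JN-realisable in some $\widetilde{PSL}(2,\mathbb R)_k$ implies JN-realisable'' is immediate from the inclusion $\widetilde{PSL}(2,\mathbb R)_k \subset \widetilde{\hbox{Homeo}}_+(S^1)$, so only the converse needs argument. The plan is to show that JN-realisability is controlled by an explicit system of inequalities on $(\gamma_1, \ldots, \gamma_n, \tau_1, \ldots, \tau_r, b)$ which depends only on the numerical data and on the subset $J$, and then to verify that whenever those inequalities are satisfied, a realisation can be constructed inside $\widetilde{PSL}(2,\mathbb R)_k$ for a suitably chosen $k$.

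\textbf{Step 1: necessary inequalities.} Given a JN-realisation $f_1 \circ \cdots \circ f_n \circ g_1 \circ \cdots \circ g_r = \hbox{sh}(b)$ in $\widetilde{\hbox{Homeo}}_+(S^1)$, I would apply the translation number quasimorphism $\tau$ together with the bounded cocycle identity $|\tau(fg) - \tau(f) - \tau(g)| < 1$. Because $\tau$ restricts to a homomorphism on abelian subgroups and vanishes precisely on elements with fixed points, the product relation translates into a family of linear inequalities on the $\gamma_i$ and $\tau_j$ (the EHN/JN inequalities). The $J$-conjugacy condition $g_j \sim \hbox{sh}(\tau_j)$ for $j \in J$ records whether $g_j$ is conjugate to a translation, and forces the corresponding inequality to be strict (or non-strict) at exactly the right places; this is the refinement due to Naimi. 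Crucially, the resulting inequalities involve only translation numbers and $J$, and in particular do not depend on the ambient group.

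\textbf{Step 2: realisation inside $\widetilde{PSL}(2,\mathbb R)_k$.} Conversely, I would choose $k$ to be a common denominator of the rational data among $\{\gamma_i, \tau_j\}$. Elliptic elements of $\widetilde{PSL}(2,\mathbb R)_k$ are precisely the conjugates in that group of translations $\hbox{sh}(d/k)$, and parabolic/hyperbolic elements supply arbitrary real translation numbers of the form $d/k$ (respectively, arbitrary real numbers after passing to large enough $k$ or by working with hyperbolic conjugation). Choose each $f_i$ to be an elliptic element of $\widetilde{PSL}(2,\mathbb R)_k$ conjugate to $\hbox{sh}(\gamma_i)$, and each $g_j$ (for $j \in J$) to be elliptic with translation number $\tau_j$; then solve inductively for the remaining $g_j$ ($j \notin J$) as elliptic, parabolic, or hyperbolic elements of $\widetilde{PSL}(2,\mathbb R)_k$ so that the product equals $\hbox{sh}(b)$. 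The inequalities from Step 1 are precisely what guarantee that such a compatible choice exists at each stage of the induction.

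\textbf{Main obstacle.} The technically difficult part is Step 2: verifying that the numerical inequalities characterise exactly which products of prescribed conjugacy classes can yield $\hbox{sh}(b)$ inside a single group $\widetilde{PSL}(2,\mathbb R)_k$, and carrying out the inductive construction without violating the $J$-conjugacy constraints. This is the substance of \cite{EHN}, \cite{JN2}, and \cite{Na}, and in practice I would invoke these papers as a black box; the more conceptual treatment via semi-continuity of translation numbers and the Euler class (Calegari--Walker \cite[Theorem 3.9]{CW}) could be used as an alternative reference.
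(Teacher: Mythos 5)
The paper's own proof is a direct citation: it notes that the statement is precisely \cite[Conjecture 1]{JN2}, whose proof is contained in \cite{EHN}, \cite{JN2}, \cite{Na}, and refers the reader to \cite[Theorem 1]{Na} and the discussion at the end of \cite[\S 1]{JN2}. Your proposal ultimately does the same thing — it invokes those papers as a black box in Step 2 — so at the level of what is actually proven, the two approaches coincide. However, your accompanying sketch contains two factual slips worth correcting. First, you have the roles of elliptics and parabolics/hyperbolics in $\widetilde{PSL}(2,\mathbb R)_k$ reversed: as the paper explains just before the theorem, an element of $\widetilde{PSL}(2,\mathbb R)_k$ is \emph{elliptic} if and only if it is conjugate to a translation $\hbox{sh}(\gamma)$ with $\gamma$ an \emph{arbitrary} real number, while it is the \emph{parabolic and hyperbolic} elements whose translation numbers are constrained to lie in $\frac{1}{k}\mathbb{Z}$. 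So a ``common denominator'' choice of $k$ would be neither necessary (elliptics already have unrestricted translation numbers) nor sufficient (the $\tau_j$ need not be rational at all); the value of $k$ that works in the Naimi argument is dictated by the combinatorics of the defining inequalities (essentially the denominator $N$ appearing in Theorem \ref{JN_theorem_2}(4)), not by the numerical data directly. Second, your Step 1 — applying the quasimorphism cocycle bound — would give only coarse estimates; the sharp system of inequalities characterising JN-realisability (the conditions of Theorems \ref{JN_theorem_1} and \ref{JN_theorem_2}) is itself a substantial theorem of \cite{EHN}, \cite{JN2}, and \cite{Na}, not an elementary consequence of boundedness of $\tau$. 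Since you defer to those references anyway, the argument is sound, but the intermediate description should not suggest that Step 1 is routine.
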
 

\begin{proof}
The conclusion of this theorem is the substance of \cite[Conjecture 1]{JN2} whose proof is a consequence of results contained in \cite{EHN}, \cite{JN2}, \cite{Na}. See the discussion at the end of \cite[\S 1]{JN2} and \cite[Theorem 1]{Na}. 
\end{proof}

\subsection{JN-realisability and representation detection} 

\begin{proposition} \label{detected in psl2k}
Let $J \subset \{1, 2, \ldots, r\}$ and suppose that $[\alpha_*] = ([\alpha_1], [\alpha_2], \ldots , [\alpha_r]) \in \mathcal{S}(M)$ is horizontal. Then $(J; [\alpha_*])$ is 
representation detected if and only if it is $\rho$-detected for some $\rho \in \mathcal{R}_0(M)$ with values in some $\widetilde{PSL}(2,\mathbb R)_k$.
\end{proposition}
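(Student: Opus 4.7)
The reverse implication is immediate since $\widetilde{PSL}(2,\mathbb R)_k$ sits in $\widetilde{\hbox{Homeo}}_+(S^1) \subset \hbox{Homeo}_+(\mathbb R)$. For the forward implication, suppose $(J; [\alpha_*])$ is $\rho$-detected by some $\rho \in \mathcal{R}_0(M)$. By Lemma \ref{non-orientable implies empty}, $M$ has base orbifold $P(a_1, \ldots, a_n)$ and $\rho$ takes values in $\widetilde{\hbox{Homeo}}_+(S^1)$, so I work with the presentation of \S \ref{presentation seifert planar}. The plan has three steps: (i) extract JN-realisability data from $\rho$, (ii) apply Theorem \ref{realized standardly} to relocate the realisation inside some $\widetilde{PSL}(2,\mathbb R)_k$, and (iii) reassemble from the output a new representation that still $\rho$-detects $(J; [\alpha_*])$.

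For step (i), set $\tau_j = \tau(\rho(x_j))$; horizontality of $[\alpha_j]$ forces $[\alpha_j] = [\tau_j h - h_j^*]$. Taking $f_i := \rho(y_i)$ and $g_j := \rho(x_j)$, applying $\tau$ to the relation $y_i^{a_i} = h^{b_i}$ gives $\tau(f_i) = \gamma_i \in (0,1)$, so Lemma \ref{trans}(1) yields that $f_i$ is conjugate in $\hbox{Homeo}_+(\mathbb R)$ to $\hbox{sh}(1)$, and hence to $\hbox{sh}(\gamma_i)$. The translation number conditions on $g_j$ hold by definition, strong $\rho$-detection provides the required conjugacy for $j \in J$, and the surface relation gives $f_1 \circ \cdots \circ f_n \circ g_1 \circ \cdots \circ g_r = \hbox{id} = \hbox{sh}(0)$. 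Hence $(J; 0; \gamma_1, \ldots, \gamma_n; \tau_1, \ldots, \tau_r)$ is JN-realisable, and step (ii) invokes Theorem \ref{realized standardly} to produce new witnesses $f_i', g_j' \in \widetilde{PSL}(2,\mathbb R)_k$ for some $k \geq 1$.

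For step (iii), define $\rho'$ on generators by $\rho'(y_i) = f_i'$, $\rho'(x_j) = g_j'$, $\rho'(h) = \hbox{sh}(1)$. Centrality of $\rho'(h)$ follows from $\hbox{sh}(1) = \hbox{sh}(1/k)^k$ lying in the center $\langle \hbox{sh}(1/k) \rangle$ of $\widetilde{PSL}(2,\mathbb R)_k$, and the surface relation is realised by construction. The detection data transfers automatically: $\tau(\rho'(x_j)) = \tau_j$ gives $[\alpha_j(\rho')] = [\alpha_j]$, and for $j \in J$ the simultaneous conjugability within $\widetilde{PSL}(2,\mathbb R)_k$ of the central element $\hbox{sh}(1)$ and of $g_j'$ (conjugate to a translation) into the translation subgroup yields strong $\rho'$-detection.

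The main obstacle is verifying the exceptional fibre relation $(f_i')^{a_i} = \hbox{sh}(b_i)$, without which $\rho'$ fails to be a well-defined homomorphism. My strategy for this is to use that the $f_i'$ supplied by Theorem \ref{realized standardly} are elliptic in $\widetilde{PSL}(2,\mathbb R)_k$ and conjugate within that group to $\hbox{sh}(\gamma_i)$: if $f_i' = \widetilde{g}\,\hbox{sh}(\gamma_i)\,\widetilde{g}^{-1}$ with $\widetilde{g} \in \widetilde{PSL}(2,\mathbb R)_k$, then $(f_i')^{a_i} = \widetilde{g}\,\hbox{sh}(b_i)\,\widetilde{g}^{-1} = \hbox{sh}(b_i)$ since $\hbox{sh}(b_i)$ is central in $\widetilde{PSL}(2,\mathbb R)_k$. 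The delicate point is ensuring Theorem \ref{realized standardly} outputs a realisation in which all conjugacies can be taken inside $\widetilde{PSL}(2,\mathbb R)_k$; this is the standard interpretation in the EHN--JN--Naimi framework and is precisely what makes Theorem \ref{realized standardly} the right black box.
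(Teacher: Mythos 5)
Your overall strategy mirrors the paper's: unwind $\rho$-detection into JN-realisability, invoke Theorem \ref{realized standardly}, and then reassemble a representation with values in $\widetilde{PSL}(2,\mathbb R)_k$. The paper's own proof is a three-line appeal to the definitional equivalence and Theorem \ref{realized standardly}, implicitly treating the reassembly as immediate, whereas you correctly flag the exceptional-fibre relation $(f_i')^{a_i} = \hbox{sh}(b_i)$ as the point that must be checked. That is indeed the one non-trivial thing the terse proof is sweeping under the rug, so your expansion is faithful to the intended argument.

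There are two spots where your justifications are slightly off, though both are repairable. First, in step (i) you derive $f_i$ conjugate to $\hbox{sh}(\gamma_i)$ from Lemma \ref{trans}(1) plus $\tau(f_i)=\gamma_i$. But Lemma \ref{trans}(1) produces a conjugacy in $\hbox{Homeo}_+(\mathbb R)$, which is strictly weaker than what JN-realisability actually encodes: for the two clauses of the paper's definition to agree (i.e.\ for a system of witnesses $f_i, g_j$ to assemble back into a well-defined $\rho \in \mathcal{R}_0(M)$), one needs the conjugacy to hold by an element of $\widetilde{\hbox{Homeo}}_+(S^1)$, since that is what forces $f_i^{a_i}=\hbox{sh}(b_i)$ rather than merely $\tau(f_i^{a_i})=b_i$. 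Translation number alone does not suffice: there are $f \in \widetilde{\hbox{Homeo}}_+(S^1)$ with $\tau(f)=\gamma_i$ but $f^{a_i}\ne\hbox{sh}(b_i)$. The correct input here is Lemma \ref{non-orientable implies empty}(2)(b), which uses the full relation $\rho(y_i)^{a_i}=\hbox{sh}(b_i)$ and yields the stronger conjugacy.

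Second, in step (iii) you write $f_i'=\widetilde g\,\hbox{sh}(\gamma_i)\,\widetilde g^{-1}$ with $\widetilde g \in \widetilde{PSL}(2,\mathbb R)_k$. That is not guaranteed: $\hbox{sh}(\gamma_i)$ need not lie in $\widetilde{PSL}(2,\mathbb R)_k$, and the conjugacy witnessing ellipticity is a priori only in $\widetilde{\hbox{Homeo}}_+(S^1)$. The argument still closes, however, since $\hbox{sh}(b_i)$ is central in $\widetilde{\hbox{Homeo}}_+(S^1)$, which is where $\widetilde g$ does live. Alternatively, and more cleanly: since $f_i'$ is elliptic in $\widetilde{PSL}(2,\mathbb R)_k$, its image in $PSL(2,\mathbb R)$ under conjugation by $F_k$ is a rotation of order dividing $a_i$ (rotation number $k\gamma_i$ with $a_ik\gamma_i \in \mathbb Z$), so $(f_i')^{a_i}$ lies in the center $\langle \hbox{sh}(1/k)\rangle$ of $\widetilde{PSL}(2,\mathbb R)_k$; since it has translation number $a_i\gamma_i=b_i$, it must be $\hbox{sh}(b_i)$. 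Either patch resolves the "delicate point" you correctly identify without needing any hypothesis about where the conjugating element of Theorem \ref{realized standardly} lives.
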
 

\begin{proof}
Since $[\alpha_*]$ is horizontal we can find real numbers $\tau_1, \tau_2, \ldots, \tau_r$ such that $[\alpha_j] = [\tau_j h - h_j^*]$. 
Then $(J; [\alpha_*])$ is representation detected if and only if $(J; 0; \gamma_1, \ldots , \gamma_n; \tau_1 \ldots ,\tau_r)$ is JN-realisable and by Theorem \ref{realized standardly} this is equivalent to it being JN-realisable in $\widetilde{PSL}(2,\mathbb R)_k$ for some $k \geq 1$, which is what we had to prove.
\end{proof}

\begin{corollary} \label{irrational implies strong} 
Suppose that $J \subset \{1, 2, \ldots, r\}$ and $(J; [\alpha_*])$ is representation detected where $[\alpha_*] \in \mathcal{S}(M)$ is horizontal. Then $(J^\dagger; [\alpha_*])$ is representation detected where $J^\dagger = J \cup \{j : [\alpha_j] \hbox{ is irrational}\}$. 
\end{corollary}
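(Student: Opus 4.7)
The plan is to take any $\rho \in \mathcal{R}_0(M)$ that $\rho$-detects $(J;[\alpha_*])$ and argue that the \emph{same} $\rho$ automatically strongly detects each irrational component $[\alpha_j]$. The strong-detection index set can then be enlarged from $J$ to $J^\dagger$ without changing the representation, giving representation detection of $(J^\dagger;[\alpha_*])$.

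By Lemma \ref{non-orientable implies empty}(2), the nonemptiness of $\mathcal{R}_0(M)$ forces $M$ to have base orbifold $P(a_1,\ldots,a_n)$ and forces $\rho$ to take values in $\widetilde{\hbox{Homeo}}_+(S^1)$. Fix an index $j$ with $[\alpha_j]$ irrational and set $\tau_j = \tau(\rho(h_j^*))$. From the formula $[\alpha_j] = [\tau_j h - h_j^*]$, irrationality of $[\alpha_j]$ is equivalent to $\tau_j \notin \mathbb{Q}$. Since $\rho(h) = \hbox{sh}(1)$ commutes with $\rho(h_j^*)$, the image $\rho(\pi_1(T_j))$ is abelian; every element has the form $\hbox{sh}(n)\circ \rho(h_j^*)^m$ with $(n,m)\in\mathbb{Z}^2$, and by the homomorphism property of $\tau$ on abelian subgroups its translation number equals $n + m\tau_j$.

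The heart of the argument is that $n + m\tau_j = 0$ forces $(n,m) = (0,0)$ when $\tau_j$ is irrational. Consequently every nontrivial element of $\rho(\pi_1(T_j))$ has nonzero translation number, and so acts on $\mathbb{R}$ without fixed points; in other words the action is free. H\"older's theorem then produces a single $g_j \in \hbox{Homeo}_+(\mathbb{R})$ conjugating $\rho(\pi_1(T_j))$ into the translation subgroup of $\mathbb{R}$, which is exactly the requirement for $[\alpha_j]$ to be strongly $\rho$-detected. Applying this simultaneously at every irrational index (the reasoning at each boundary torus is independent, so the conjugations do not need to be compatible) yields the desired $\rho$-detection of $(J^\dagger;[\alpha_*])$.

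The step demanding the most care is the invocation of H\"older's theorem in the correct form: the definition of strong detection demands a single conjugation that simultaneously sends both generators of $\rho(\pi_1(T_j))$ to translations of $\mathbb{R}$, not independent conjugations of each generator, and it is precisely the freeness of the action---which comes for free from the irrationality of $\tau_j$---that supplies such a conjugation. No appeal to Proposition \ref{detected in psl2k} or to the Lie-group-valued refinement of $\rho$ is required.
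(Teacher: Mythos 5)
Your reduction of the statement to showing that $\rho(\pi_1(T_j))$ is conjugate into the translation subgroup is correct, and you are right that irrationality of $\tau_j$ together with the homomorphism property of $\tau$ on the abelian group $\rho(\pi_1(T_j))$ forces the action to be free. But the appeal to H\"older's theorem overstates what that theorem delivers: for a free action of a group on $\mathbb{R}$ by orientation-preserving homeomorphisms, H\"older's theorem (e.g. \cite[Theorem 6.10]{Ghys}) yields that the group is abelian and that the action is only \emph{semi-conjugate} to an action by translations. A semi-conjugacy is a monotone surjection, not a homeomorphism, and the definition of strong $\rho$-detection in the paper requires an honest conjugacy. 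The obstruction to upgrading semi-conjugacy to conjugacy is failure of minimality, and nothing in the hypotheses rules this out.

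Concretely, take a Denjoy homeomorphism $\bar g$ of $S^1$ with irrational rotation number and lift it to $g \in \widetilde{\hbox{Homeo}}_+(S^1)$, so $g$ commutes with $\hbox{sh}(1)$ and $\tau(g)$ is irrational. The group $\langle \hbox{sh}(1), g\rangle$ acts freely on $\mathbb R$ yet is \emph{not} conjugate in $\hbox{Homeo}_+(\mathbb R)$ to a group of translations: any conjugating $\phi$ would send $\hbox{sh}(1)$ to some $\hbox{sh}(c)$ with $c>0$ and hence descend to a homeomorphism of circles carrying $\bar g$ to a rotation, contradicting the Denjoy property. Nothing in the definition of $\mathcal{R}_0(M)$ or of $\rho$-detection prevents $\rho(h_j^*)$ from being of exactly this type, so the argument as written does not close.

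This is precisely why the paper's proof first applies Proposition~\ref{detected in psl2k} to replace $\rho$ by a representation valued in some $\widetilde{PSL}(2,\mathbb R)_k$. In that group the translation numbers of parabolic and hyperbolic elements lie in $\frac{1}{k}\mathbb Z$, so an element with irrational translation number is forced to be elliptic and hence genuinely conjugate to a translation by an element that commutes with $\hbox{sh}(1)$; from that, the whole peripheral group conjugates into the translations. The detour through the Eisenbud--Hirsch--Neumann/Jankins--Neumann/Naimi realization theorem is therefore not optional: it is what supplies the minimality (equivalently, the absence of Denjoy phenomena) at the boundary that H\"older alone cannot guarantee. Your final sentence, asserting that no appeal to Proposition~\ref{detected in psl2k} is required, is the one that needs to be retracted; either justify minimality directly or keep the paper's replacement step.
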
 

\begin{proof}
It follows from Proposition \ref{detected in psl2k} that $(J; [\alpha_*])$ is $\rho$-detected for some $\rho \in \mathcal{R}_0(M)$ with values in some $\widetilde{PSL}(2,\mathbb R)_k$. In particular $[\alpha_j] = [\alpha_j(\rho)] = [\tau_j(\rho(x_j)) h - h_j^*]$. If $[\alpha_j]$ is irrational then so is $\tau_j(\rho(x_j))$. But as $\rho(x_j) \in \widetilde{PSL}(2,\mathbb R)_k$, it is therefore elliptic and so is conjugate to a translation. Since $\rho(h) = \hbox{sh}(1)$, this implies that $\rho(\pi_1(T_j))$ conjugates into the group of translations of $\mathbb R$. Thus $[\alpha_j]$ is strongly $\rho$-detected, which completes the proof. 
\end{proof}

\begin{corollary} \label{irrational to rational - reps} 
Suppose that $J \subset \{1, 2, \ldots, r\}$ and $(J; [\alpha_*])$ is representation detected where $[\alpha_*] \in \mathcal{S}(M)$ is horizontal and some $[\alpha_j]$ is irrational. Reindex the boundary components of $M$ so that $[\alpha_j]$ is irrational if and only $1 \leq j \leq s$ and set $J^\dagger = J \cup \{1, 2, \ldots , s\}$. Then for $1 \leq j \leq s$ there is an open sector $U_j \subset \mathcal{S}(T_j)$ containing $[\alpha_j]$ such that one of the following two statements holds.

$(1)$ $(J^\dagger; [\alpha_*'])$ is representation detected for all $[\alpha_*']$ such that $[\alpha_j'] \in U_j$ for $1 \leq j \leq s$ and $[\alpha_j'] = [\alpha_j]$ otherwise. 

$(2)$ $M$ has no singular fibres, $s = 2$, $J^\dagger = \{1, 2, \ldots , r\}$ and $[\alpha_j] = [\tau_j h - h_j^*]$ where $\tau_3, \ldots, \tau_r \in \mathbb Z$. Further, there is a homeomorphism $\varphi: U_1 \to U_2$ which preserves both rational and irrational slopes and for which $(J^\dagger; [\alpha_*'])$ is representation detected for all $[\alpha_*'] = ([\alpha_1'], \varphi([\alpha_1']), [\alpha_3], \ldots, [\alpha_r])$ whenever $[\alpha_1'] \in U_1$. 

\end{corollary}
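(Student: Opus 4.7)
The plan is to recast representation detection as JN-realisability in $\widetilde{PSL}(2,\mathbb R)_k$ via Proposition \ref{detected in psl2k}, and then analyse local perturbations of the irrational coordinates using the explicit semi-algebraic description of the realisability region from Appendix \ref{sec: ehnjn}.

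By Corollary \ref{irrational implies strong}, $(J^\dagger; [\alpha_*])$ is representation detected, and by Proposition \ref{detected in psl2k} it is $\rho$-detected for some $\rho \in \mathcal{R}_0(M)$ with values in $\widetilde{PSL}(2,\mathbb R)_k$, for some $k \geq 1$. Setting $f_i = \rho(y_i)$ and $g_j = \rho(x_j)$, the relation $y_1 \cdots y_n x_1 \cdots x_r = 1$ gives $f_1 \cdots f_n g_1 \cdots g_r = \mathrm{id}$, with each $f_i$ elliptic of translation number $\gamma_i$ and each $g_j$ (for $j \in J^\dagger$) elliptic of translation number $\tau_j$, where $[\alpha_j] = [\tau_j h - h_j^*]$. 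The EHN/JN/Naimi results in Appendix \ref{sec: ehnjn} present the corresponding realisability region in $(\tau_1, \ldots, \tau_r)$ as a locally polyhedral subset of $\mathbb R^r$, cut out by finitely many hyperplanes with $\mathbb Q$-coefficients depending on $(\gamma_i)$, $k$, and $J^\dagger$.

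The dichotomy is driven by which defining hyperplanes pass through $(\tau_1, \ldots, \tau_r)$. Since $\tau_1, \ldots, \tau_s$ are irrational, no such hyperplane whose coefficients on $(\tau_1, \ldots, \tau_s)$ are nontrivial can contain the point \emph{unless} a specific rational identity among $\gamma_i, \tau_{s+1}, \ldots, \tau_r, 1/k$ is activated. When no such activation occurs, the region contains a product of open intervals about each $\tau_j$ for $j \leq s$; pulling these back to slope sectors $U_j \subset \mathcal{S}(T_j)$ gives Case (1) directly. The main work is in the activated case. Tracing the activation back to the group-theoretic data shows that the product $A := f_1 \cdots f_n g_{s+1} \cdots g_r$ must be central in $\widetilde{PSL}(2,\mathbb R)_k$, so that $g_1 \cdots g_s = A^{-1}$ is central as well. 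A detailed centrality analysis---using the noncentrality of each $f_i$ (coming from $0 < \gamma_i < 1$), the normalisation $\rho(h) = \hbox{sh}(1)$, and the specific defining inequalities of Appendix \ref{sec: ehnjn}---then forces the structural conclusions of Case (2): $n = 0$, $J^\dagger = \{1, \ldots, r\}$, and $\tau_3, \ldots, \tau_r \in \mathbb Z$. The value $s = 2$ is then forced as well: the case $s = 1$ is ruled out because a central element has translation number in $\tfrac{1}{k}\mathbb Z$ (hence rational), while for $s \geq 3$ the single rational relation $\sum_{j \leq s} \tau_j \in \mathbb Z$ leaves enough residual freedom, via independent perturbations of the remaining elliptic factors followed by a rebalancing, to produce open perturbations of each $\tau_j$ and hence return us to Case (1). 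The surviving single constraint $\tau_1' + \tau_2' \in \mathbb Z$ locally defines the affine involution $\tau_1' \mapsto (\tau_1 + \tau_2) - \tau_1'$, which preserves rationality and descends to the homeomorphism $\varphi : U_1 \to U_2$ of Case (2).

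The hardest step is the centrality analysis---in particular, deducing $n = 0$ and the integrality of $\tau_{s+1}, \ldots, \tau_r$ from the centrality of $A$, and ruling out $s \geq 3$ under failure of Case (1). This requires a careful tracking of how noncentral elliptic elements in $\widetilde{PSL}(2,\mathbb R)_k$ can combine to yield a central product, tied to the specific constraints arising from the Seifert presentation in \S\ref{presentation seifert planar} and to the explicit inequalities of Appendix \ref{sec: ehnjn}.
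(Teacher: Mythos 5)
Your overall strategy is aligned with the paper's --- recast the problem as JN-realisability in $\widetilde{PSL}(2,\mathbb R)_k$ via Proposition \ref{detected in psl2k} and then analyse the shape of the realisability region near the given point --- but you stop short of the crucial step, and the part you sketch in its place contains an algebraic flaw. The paper's own proof is a very short case split on $n + r_1 + s_0$ (the quantities introduced in Appendix \ref{sec: ehnjn}), appealing to Propositions \ref{tau fibre 1} and \ref{tau fibre 2}, which have already done the hard combinatorial case analysis of Theorems \ref{JN_theorem_1} and \ref{JN_theorem_2}. If $n + r_1 + s_0 \geq 2$, Proposition \ref{tau fibre 2} shows the realisability interval has non-degenerate interior varying continuously in the remaining $\tau_j$, giving Case (1). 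Otherwise $n = s_0 = 0$, $r_1 = 1$, and Proposition \ref{tau fibre 1} exhibits the realisability locus as the single affine equation $\tau' = -(\tau_1 + \cdots + \tau_{r-1})$, which immediately yields the structure of Case (2), including $s = 2$ and the affine involution $\varphi$.

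Your replacement for this step --- the ``centrality analysis'' --- is declared rather than performed, and the algebra that you do write down is wrong. From $f_1\cdots f_n g_1\cdots g_r = \mathrm{sh}(b)$ one gets $g_1\cdots g_s = (f_1\cdots f_n)^{-1}\mathrm{sh}(b)(g_{s+1}\cdots g_r)^{-1}$, which is \emph{not} $A^{-1}$ with $A := f_1\cdots f_n g_{s+1}\cdots g_r$: inverting $A$ reverses the order of the two outer blocks, and equality would already require those blocks to commute, which is precisely the kind of conclusion you are trying to reach. The correct mechanism in the degenerate case is not that some large product is central; it is that, once $n = s_0 = 0$, every $g_j$ for $j\in J$ with $\tau_j\in\mathbb Z$ is forced to be the genuine translation $\mathrm{sh}(\tau_j)$ (elliptic with integral translation number), and the $s_0 = 0$ hypothesis eliminates the remaining freedom --- this is exactly what the case analysis in Propositions \ref{tau fibre 1} and \ref{tau fibre 2} encodes. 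You also rule out $s\geq 3$ heuristically (``residual freedom \ldots rebalancing''), where the paper instead observes that $s\geq 3$ forces $r_1\geq 2$ and hence lands in the non-degenerate case automatically. In short: the skeleton is sound, but the load-bearing step is not proved, and as stated the centrality claim does not hold up. To make this airtight you should simply invoke Propositions \ref{tau fibre 1} and \ref{tau fibre 2}, whose proofs already contain the detailed analysis you are trying to reproduce from scratch.
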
 

\begin{proof}
By Corollary \ref{irrational implies strong} it suffices to deal with the case that $J^\dagger = J$. Let $n$ be the number of exceptional fibres in $M$ and let $r_1$ and $s_0$ be the non-negative integers defined in Appendix \ref{sec: ehnjn}. Note that $r_1 \geq 1$ by hypothesis. If $n+r_1+s_0 \geq 2$ then Proposition \ref{tau fibre 2} implies that statement (1) holds. Otherwise $n = s_0 = 0$ and $r_1 = 1$, in which case the proof of Proposition \ref{tau fibre 1} implies that statement (2) holds. 
\end{proof}

\section{Detecting slopes via left-orders} \label{section: detecting via orders}

\subsection{Left-orders} 
Let $\mathfrak{o}$ be a strict total ordering of a group $G$ and use $<$ to denote the associated relation. We say that $\mathfrak{o}$ is a {\it left-ordering} of $G$ if $<$ is invariant under left multiplication:
$$g < h  \Rightarrow fg < fh \; \hbox{ for all } \; f,g, h \in G$$
We call $G$ {\it left-orderable} if it admits a left-ordering. 
While the trivial group satisfies the criterion for being left-orderable, we will adopt the convention that it is {\it not} left-orderable in this paper. 
We use $LO(G)$ to denote the set of left-orderings on $G$. 

For example, the group $\hbox{Homeo}_+(\mathbb R)$ is left-orderable \cite{Co} (cf. the proof of Proposition \ref{reps to orders}), as is any of its non-trivial subgroups. Moreover, a countable group is left-orderable if and only if it is isomorphic to a non-trivial subgroup of $\hbox{Homeo}_+(\mathbb R)$ (\cite{Linnell}). See Proposition \ref{prop:dynamical}. A much stronger result holds for many $3$-manifold groups: {\it The fundamental group of a compact $\mathbb P^2$-irreducible $3$-manifold is left-orderable if and only if it admits an epimorphism to a left-orderable group} (\cite[Theorem 1.1(1)]{BRW}). Thus for $W$ a graph manifold rational homology $3$-sphere, $\pi_1(W)$ is left-orderable if and only if there is a homomorphism $\pi_1(W) \to \hbox{Homeo}_+(\mathbb R)$ whose image is non-trivial. 

Given $\mathfrak{o} \in LO(G)$, we call an element $g \in G$ {\it $\mathfrak{o}$-positive}, or simply {\it positive}, if $g>1$. Similarly we call $g$ {\it negative} if $g<1$.  The set $P(\mathfrak{o})$ of $\mathfrak{o}$-positive elements of $G$ is called the {\it positive cone} of $\mathfrak{o}$, which we simplify by writing $P$ when there is no risk of ambiguity. A left-ordering is uniquely determined by its positive cone, for a subset $P$ of $G$ which is closed under multiplication and for which $G = \{1\} \sqcup P \sqcup P^{-1}$ uniquely determines a left-ordering $\mathfrak{o}$ by defining $g_1 < g_2$ if and only if $g_1^{-1} g_2 \in P$. Evidently $P = P(\mathfrak{o})$. 

The {\it opposite order} of $\mathfrak{o}$ is the order $\mathfrak{o}_{op}$ defined by the subset $P(\mathfrak{o})^{-1}$ of $G$. 

\subsection{Order detection of slopes} It is elementary to verify the following proposition. 

\begin{proposition} \label{prop:slope}  {\rm (\cite[Lemma 3.3]{CR})}
Every left-ordering $\mathfrak{o}$ of $\mathbb{Z}^2$ determines a unique line $L_{\mathfrak{o}}$ in $\mathbb{R}^2$ characterised by the property that all elements of $\mathbb{Z}^2$ lying to one side of $L_{\mathfrak{o}}$ are positive and all elements lying  to the other are negative. 
\qed 
\end{proposition}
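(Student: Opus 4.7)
The plan is to define $L_\mathfrak{o}$ as the boundary of the closed convex cone in $\mathbb{R}^2$ spanned by the positive cone of $\mathfrak{o}$. Write $P = P(\mathfrak{o}) \subseteq \mathbb{Z}^2 \setminus \{0\}$; since $\mathbb{Z}^2$ is abelian, $P$ is closed under addition and $\mathbb{Z}^2 \setminus \{0\} = P \sqcup (-P)$. Set $C := \overline{\mathbb{R}_{\geq 0} \cdot P} \subseteq \mathbb{R}^2$.

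First I would show that $C$ lies in some closed half-plane through the origin. If not, then $0$ would lie in the interior of $\mathrm{conv}(P)$, and Carath\'eodory's theorem would yield $p_1, p_2, p_3 \in P$ (affinely spanning $\mathbb{R}^2$) together with positive reals $\lambda_1, \lambda_2, \lambda_3$ satisfying $\lambda_1 p_1 + \lambda_2 p_2 + \lambda_3 p_3 = 0$. The space of linear relations among these three $\mathbb{Q}^2$-vectors is one-dimensional and defined over $\mathbb{Q}$; scaling the rational generator appropriately then yields positive integers $n_1, n_2, n_3$ with $n_1 p_1 + n_2 p_2 + n_3 p_3 = 0$. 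But each $n_i p_i \in P$, forcing $0 \in P$, a contradiction.

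Next I would show that $C$ is exactly a closed half-plane. Otherwise $C$ would be a proper convex cone of angular width less than $\pi$, and $\mathbb{R}^2 \setminus (C \cup (-C))$ would contain a non-empty open double wedge, which in turn contains a lattice point $v \neq 0$. But $v \in P \sqcup (-P) \subseteq C \cup (-C)$, a contradiction. Setting $L_\mathfrak{o} := \partial C$, any lattice point $v \in \mathrm{int}(C)$ must lie in $P$ (for $v \in -P$ would give $v \in -C \cap \mathrm{int}(C) = \emptyset$), and symmetrically every lattice point in $\mathrm{int}(-C)$ lies in $-P$.

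Finally, for uniqueness, suppose $L \neq L'$ were two lines through $0$ with the stated property. Then the intersection of the positive open half-plane of $L$ with the negative open half-plane of $L'$ is a non-empty open sector, hence contains a lattice point $v$; this $v$ would be positive (by the property of $L$) and simultaneously negative (by the property of $L'$), contradicting $P \cap (-P) = \emptyset$. The main obstacle is Step 1, where one must combine Carath\'eodory with the rationality of $P \subseteq \mathbb{Z}^2$ to lift a hypothetical real-coefficient relation to an integer one; once this is accomplished, the remaining steps are routine planar convex and lattice geometry.
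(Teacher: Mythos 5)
The paper itself gives no proof of this proposition: it simply cites \cite[Lemma 3.3]{CR} and ends with \qed, so there is no internal argument to compare against. Your convex--geometric proof is essentially correct and is the standard way to establish this fact, but a few small points deserve tightening.

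First, the justification ``since $\mathbb{Z}^2$ is abelian, $P$ is closed under addition'' is misplaced: closure of the positive cone under the group operation holds for \emph{every} left-ordering (if $a,b>1$ then $ab>a>1$); abelianness is not what is being used there. Second, Carath\'eodory in $\mathbb{R}^2$ gives $0=\lambda_1p_1+\lambda_2p_2+\lambda_3p_3$ with $\lambda_i\geq 0$, $p_i\in P$, but you cannot simply assert that all $\lambda_i>0$ and that the $p_i$ affinely span $\mathbb{R}^2$; the degenerate cases (two nonzero coefficients, or three collinear $p_i$) need a sentence. They are easy --- e.g.\ if $\lambda_1p_1+\lambda_2p_2=0$ with $\lambda_i>0$ then $p_2$ is a negative rational multiple of $p_1$, clear denominators to get $mp_1+np_2=0$ with $m,n$ positive integers, contradicting $0\notin P$ --- but the reduction should be stated rather than folded into the ``affinely spanning'' phrase. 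Third, you call $C=\overline{\mathbb{R}_{\geq 0}\cdot P}$ a \emph{convex} cone; convexity is true but itself needs a density argument (approximate real nonnegative scalars by rationals with a common denominator and use closure of $P$ under addition), and you never give it. Fortunately, convexity is not actually needed for your Step~2: $C$ and $-C$ are closed cones, so if $C\cup(-C)\neq\mathbb{R}^2$ the complement is a nonempty open union of sectors symmetric under $v\mapsto -v$, hence contains a nonzero lattice point, which is the contradiction you want; and once $C\cup(-C)=\mathbb{R}^2$, the containment of $C$ in a closed half-plane forces $C$ to equal that half-plane by a direct closure argument. Your uniqueness step is correct: two distinct lines through the origin give four nonempty open sectors, so at least one sector lies in $H_+^L\cap H_-^{L'}$ and contains a lattice point of the wrong sign. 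The rationality-lifting idea in Step~1 is the genuinely substantive ingredient and you have it right.
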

It follows that every left-ordering $\mathfrak{o}$ of the fundamental group of a torus $T$ determines a unique slope 
$[\alpha(\mathfrak{o})] \in \mathcal{S}(T)$.  

Every non-trivial subgroup $K$ of $G$ is left-ordered by the restriction of the ordering $\mathfrak{o}$, which  will be denoted by $\mathfrak{o}|K$, or simply $\mathfrak{o}$ when there is no risk of ambiguity.  A subgroup $C$ of $G$ is called {\it $\mathfrak{o}$-convex} if whenever $g_1, g_2 \in C$ and $g_0 \in G$ satisfy $g_1 < g_0 < g_2$, then $g_0 \in C$.  The condition of $\mathfrak{o}$-convexity is equivalent to requiring that the left cosets $\{ gC \}_{g \in G}$ inherit a well-defined ordering from $\mathfrak{o}$ that is invariant under the left action of $G$ on $\{ gC \}_{g \in G}$.  In particular, when $C$ is normal, the quotient $G/C$ is left-orderable.

Conversely, if $1 \rightarrow K \rightarrow G \rightarrow H \rightarrow 1 $ is a short exact sequence of groups and $K$ and $H$ are left-ordered, then $G$ can be left-ordered lexicographically so that $K \subset G$ becomes a convex subgroup.  When $G$ is the fundamental group of a compact $\mathbb P^2$-irreducible $3$-manifold, we need only know that the quotient $H$ is left-orderable in order to lexicographically order $G$ in this way, because $G$ (and thus $K$) is left-orderable by \cite[Theorem 1.1(1)]{BRW}.

Let $M$ be a Seifert fibred manifold as in \S \ref{assumptions seifert}.  In what follows we consider $\pi_1(T_j) \cong H_1(T_j)$ as a subgroup of $H_1(T_j, \mathbb{R})$, and denote by $\langle \alpha_j \rangle$ the vector subspace of $H_1(T_j, \mathbb{R})$ generated by $\alpha_j$.

\begin{definition} \label{def: o-detected}
{\rm Let  $\mathfrak{o}$ be a left-ordering of $\pi_1(M)$.   A slope $[\alpha_j] \in \mathcal{S}(T_j)$ is {\it detected} by $\mathfrak{o}$, or $\mathfrak{o}$-{\it detected}, if  $[\alpha_j] = [\alpha(\mathfrak{o} | \pi_1(T_j))]$.  In this case we will simply write $[\alpha_j] = [\alpha_j(\mathfrak{o})]$.  It is \textit{strongly $\mathfrak{o}$-detected} if it is $\mathfrak{o}$-detected and there is an $\mathfrak{o}$-convex, normal subgroup $C$ of $\pi_1(M)$ such that $C \cap \pi_1(T_j) = \langle \alpha_j \rangle \cap \pi_1(T_j)$.  For $J \subset \{1, 2, \ldots, r\}$ and $[\alpha_*] = ([\alpha_1], [\alpha_2], \ldots , [\alpha_r]) \in \mathcal{S}(M)$, we say that $(J; [\alpha_*])$ is $\mathfrak{o}$-{\it detected} if $[\alpha_j]$ is $\mathfrak{o}$-detected for all $j$ and there exists a $\mathfrak{o}$-convex, normal subgroup $C$ of $\pi_1(M)$ such that $C \cap \pi_1(T_j) = \langle \alpha_j \rangle \cap \pi_1(T_j)$ for $j \in J$ and $C \cap \pi_1(T_j) \leq \langle \alpha_j \rangle \cap \pi_1(T_j)$ otherwise.  }
\end{definition} 

We'll often simplify ``$(\emptyset; [\alpha_*])$ is $\mathfrak{o}$-detected, resp. order detected", to ``$[\alpha_*]$ is $\mathfrak{o}$-detected, resp. order detected", and ``$(\{1, 2, \ldots, r\} ; [\alpha_*])$ is $\mathfrak{o}$-detected, resp. order detected", to ``$[\alpha_*]$ is \textit{strongly} $\mathfrak{o}$-detected, resp. order detected".   

Set
\[ \mathcal{D}_{ord}(M ; J) = \{ [\alpha_*] \in \mathcal{S}(M) : (J; [\alpha_*]) \mbox{ is order detected} \}
\]
When $J = \emptyset$ we write $\mathcal{D}_{ord}(M) $ in place of $\mathcal{D}_{ord}(M ; J)$.

\begin{remarks} \label{irrational implies strong order detection} 
{\rm (1) Order detected irrational slopes are always strongly order detected; if $[\alpha_j]$ is irrational then $\langle \alpha_j \rangle \cap \pi_1(T_j) = \{ 1\}$ is contained in every convex subgroup $C$ of $G$.  Therefore whenever $(J; [\alpha_*])$ is $\mathfrak{o}$-detected we can enlarge the set $J$ to create a new set $J^\dagger$ that contains all $j$ for which $[\alpha_j]$ is irrational, and $(J^\dagger; [\alpha_*])$ is $\mathfrak{o}$-detected.

(2) The reader will verify that $(J; [\alpha_*])$ is $\mathfrak{o}$-detected if and only if it is $\mathfrak{o}_{op}$-detected. Thus if $(J; [\alpha_*])$ is order detected it is $\mathfrak{o}$-detected where $h > 1$. }
\end{remarks}

\begin{lemma}
\label{order_fill_lemma}
Fix $[\alpha_*] \in \mathcal{S}(M)$ and $J \subset \{1, 2, \ldots, r\}$ such that $\{j \in J : [\alpha_j] = [h]\} = \emptyset$. Define $J_0 = \{j \in J : [\alpha_j] \hbox{ is rational}\}$, $J' = J \setminus J_0$, and let $M'$ be the Seifert manifold obtained by $[\alpha_j]$-Dehn filling of $M$ where $j \in J_0$.  Suppose that $M' \neq S^1 \times D^2$. Then $(J; [\alpha_*])$ is order detected in $M$ if and only if either $\partial M' \neq \emptyset$ and $(J'; [\alpha_*'])$ is order detected in $M'$, or $\partial M' = \emptyset$ and $\pi_1(M')$ is left-orderable.
\end{lemma}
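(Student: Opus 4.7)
Plan. Let $G = \pi_1(M)$ and $G' = \pi_1(M')$, and let $q: G \twoheadrightarrow G'$ be the natural surjection with kernel $N$, the normal closure in $G$ of $\{\alpha_j : j \in J_0\}$. Set $J' = J \setminus J_0$ and let $[\alpha_*']$ be the subtuple of $[\alpha_*]$ indexed by the boundary components $\{T_j : j \notin J_0\}$ of $M'$. My strategy for both directions of the equivalence is to transfer left-orderings and their convex normal subgroups through the short exact sequence
$$1 \longrightarrow N \longrightarrow G \xrightarrow{\ q\ } G' \longrightarrow 1$$
via the lexicographic construction on extensions.

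For the backward implication, I would start with a left-order $\mathfrak{o}'$ on $G'$ and convex normal $C' \triangleleft G'$ detecting $(J'; [\alpha_*'])$. The subgroup $N \subseteq G$ is left-orderable, since $G$ itself is left-orderable as the fundamental group of a Seifert fibred manifold with non-empty boundary. Lex-combining any left-order on $N$ with $\mathfrak{o}'$ produces a left-order $\mathfrak{o}$ on $G$ in which $N$ is convex normal, and the pullback $\tilde{C} := q^{-1}(C')$ is then convex normal in $G$ and contains $N$. To verify that $(J; [\alpha_*])$ is $\mathfrak{o}$-detected by $\tilde{C}$, I would check the required intersection identities at each boundary $\pi_1(T_j)$: for $j \notin J_0$, incompressibility of $T_j$ in $M'$ identifies $\pi_1(T_j)$ with its image in $G'$ and the equality $\tilde{C} \cap \pi_1(T_j) = C' \cap q(\pi_1(T_j))$ transfers the detection from $M'$; for $j \in J_0$, one uses the computation $\pi_1(T_j) \cap N = \langle \alpha_j\rangle$ (which follows from the hypothesis $[\alpha_j] \neq [h]$, as the Seifert structure persists through the filling) and reduces to checking $C' \cap q(\pi_1(T_j)) = \{1\}$. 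This last point rests on the observation that $q(\pi_1(T_j))$ is an infinite cyclic subgroup of $G'$ containing a power of the Seifert fibre class $h$, together with the claim $h \notin C'$, which follows from the hypothesis $[\alpha_k] \neq [h]$ for $k \in J$ combined with torsion-freeness of the left-ordered quotient $G'/C'$ (or is handled by taking $C' = \{1\}$ in the degenerate case $J' = \emptyset$).

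For the forward implication, let $\mathfrak{o}$ be a left-order on $G$ and $C \triangleleft G$ a convex normal subgroup detecting $(J; [\alpha_*])$. For each $j \in J_0$, rationality of $[\alpha_j]$ gives $\alpha_j \in \pi_1(T_j)$, and the equality $C \cap \pi_1(T_j) = \langle\alpha_j\rangle$ together with normality of $C$ forces $N \subseteq C$. I would then replace $\mathfrak{o}|C$ by the lexicographic combination of a left-order on $N$ with a left-order on $C/N$, preserving the quotient ordering on $G/C$ induced by $\mathfrak{o}$. The resulting new left-order of $G$ has $N$ convex normal, and therefore descends via $q$ to a left-order $\mathfrak{o}'$ on $G'$ in which $C/N$ is convex normal; the detection of $(J'; [\alpha_*'])$ by $\mathfrak{o}'$ then follows from the corresponding properties of $\mathfrak{o}$ and $C$ by the same incompressibility identifications used in the backward direction. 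The principal obstacle here is producing a left-order on $C/N$: since $C/N \hookrightarrow G' = G/N$, this reduces to left-orderability of $\pi_1(M')$, which is standard when $M'$ has non-empty boundary, but when $M'$ is closed (the case $J_0 = \{1, \ldots, r\}$) requires exploiting the specific structure of $C$ and the hypothesis $[\alpha_j] \neq [h]$ for $j \in J$ to either construct the ordering on $C/N$ directly or rule out bad configurations. This is the most delicate step of the proof.
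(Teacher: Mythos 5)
Your overall strategy --- passing through the short exact sequence $1 \to N \to \pi_1(M) \to \pi_1(M') \to 1$, observing that for the forward direction the hypothesis forces $N \subseteq C$, and constructing orders lexicographically in both directions --- is exactly the route the paper takes, and for the case $\partial M' \neq \emptyset$ your argument lines up with theirs essentially step for step.

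The point you flagged as the ``most delicate step,'' namely producing a left-order on $C/N$ when $M'$ is closed, is resolved in the paper without any direct analysis of $C/N$ at all: since the quotient map $\pi_1(M) \to \pi_1(M)/C$ factors through $\pi_1(M')$, one gets an epimorphism of $\pi_1(M')$ onto the nontrivial left-orderable group $\pi_1(M)/C$, and \cite[Theorem~1.1(1)]{BRW} (already quoted in \S4.1: the fundamental group of a compact $\mathbb P^2$-irreducible $3$-manifold is left-orderable iff it surjects onto a nontrivial left-orderable group) then yields left-orderability of $\pi_1(M')$ outright. The only thing to check is $\mathbb P^2$-irreducibility, which is secured by noting that the existence of such an epimorphism rules out $M' \cong P^3 \# P^3$ (whose fundamental group $\mathbb Z/2 * \mathbb Z/2$ has no nontrivial left-orderable quotient), so $M'$ is prime and the hypothesis of the BRW theorem is met. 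When $M'$ is closed the detection statement is vacuous beyond left-orderability, so that finishes the forward direction; for the backward direction in the closed case the paper again uses a lexicographic lift, and the verification that the resulting convex subgroup meets $\pi_1(T_j)$ in exactly $\langle\alpha_j\rangle$ comes from the fact that the cores of the filling solid tori are not null-homotopic (referring to \cite[Prop.\,4.1]{BRW}), rather than from the heuristic ``the Seifert structure persists through the filling'' you gave, which as stated does not quite justify $N \cap \pi_1(T_j) = \langle\alpha_j\rangle$. So: same approach, but the gap you correctly identified is closed by the BRW criterion rather than by any bare-hands construction of an order on $C/N$.
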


\begin{proof}  
Suppose that $M'$ is closed. Then $J_0 = J = \{1, \ldots , r\}$, so our assumptions imply that $[\alpha_*]$ is rational and horizontal. If $(J; [\alpha_*])$ is $\mathfrak{o}$-detected, there is a $\mathfrak{o}$-convex, normal subgroup $C$ of $\pi_1(M)$ such that $C \cap \pi_1(T_j) = \langle \alpha_j \rangle \cap \pi_1(T_j)$ for all $j$. The quotient homomorphism $\pi_1(M) \to \pi_1(M)/C$ induces a left-order $\mathfrak{o}'$ on $G = \pi_1(M)/C \ne \{1\}$ and factors through an epimorphism $\pi_1(M') \to G$. It follows that $M' \not \cong P^3 \# P^3$ and so is prime. Therefore by \cite[Theorem 1.1]{BRW}, $\pi_1(M')$ is left-orderable. Conversely suppose that $\pi_1(M')$ is left-orderable. Since the kernel of the epimorphism $\pi_1(M) \to \pi_1(M')$ is also left-orderable, we obtain an induced left-order $\mathfrak{o}$ on $\pi_1(M)$. Now the cores of the filling tori in $M'$ cannot be null-homotopic as otherwise $M'$ would be the $3$-sphere (see \cite[Proposition 4.1]{BRW} for instance). It follows that $(J; [\alpha_*])$ is $\mathfrak{o}$-detected. 

Next suppose that $M'$ is not closed. Let $J' = J \setminus J_0$ and let $[\alpha_*']$ be the projection of $[\alpha_*]$ in $\mathcal{S}(M')$. If $(J; [\alpha_*])$ is $\mathfrak{o}$-detected, there is a $\mathfrak{o}$-convex, normal subgroup $C$ of $\pi_1(M)$ such that $C \cap \pi_1(T_j) = \langle \alpha_j \rangle \cap \pi_1(T_j)$ for $j \in J$ and $C \cap \pi_1(T_j) \leq \langle \alpha_j \rangle \cap \pi_1(T_j)$ otherwise. The quotient homomorphism $\pi_1(M) \to \pi_1(M)/C$ induces a left-order $\bar{\mathfrak{o}}$ on $G = \pi_1(M)/C \ne \{1\}$ and factors through an epimorphism $\pi_1(M') \to G$. Since the kernel of this  epimorphism is left-orderable, we obtain an induced left-order $\mathfrak{o}'$ on $\pi_1(M')$ for which $(J'; [\alpha_*'])$ is $\mathfrak{o}'$-detected. Conversely if $(J'; [\alpha_*'])$ is $\mathfrak{o}'$-detected, we can use the epimorphism $\pi_1(M) \to \pi_1(M')$ to construct a left-order $\mathfrak{o}$ on $\pi_1(M)$ for which $(J; [\alpha_*])$ is $\mathfrak{o}$-detected. 
\end{proof}

\subsection{Representation detection implies order detection} 

Given $\mathfrak{o} \in LO(G)$ and a subgroup $H \subset G$, we call $f \in G$ {\it $\mathfrak{o}$-cofinal} in $H$ if for all $g \in H$ there exists $n \in \mathbb{Z}$ such that $f^{-n} < g < f^n$.    

\begin{proposition} \label{reps to orders}
Let $M$ be a compact orientable Seifert fibred manifold  as in \S \ref{assumptions seifert}. Suppose that $J \subset \{1, 2, \ldots, r\}$ and $(J; [\alpha_*])$ is representation detected. Then $(J; [\alpha_*])$ is $\mathfrak{o}$-detected for some ordering $\mathfrak{o} \in LO(\pi_1(M))$ for which $h$ is $\mathfrak{o}$-cofinal in $\pi_1(M)$. 
\end{proposition}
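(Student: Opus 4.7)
The plan is to construct $\mathfrak{o}$ by pulling back the dynamical ordering on $\hbox{Homeo}_+(\mathbb R)$ along $\rho$, then read off both the slope detection and the convex subgroup needed for strong detection from the translation-number homomorphism $\tau$.

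First, Lemma \ref{non-orientable implies empty} lets me restrict to the case where $M$ has base orbifold $P(a_1,\ldots,a_n)$, since otherwise $\mathcal{R}_0(M)=\emptyset$ and the hypothesis is vacuous. In that case $h$ is central in $\pi_1(M)$, so any $\rho \in \mathcal{R}_0(M)$ has image contained in the centraliser $\widetilde{\hbox{Homeo}}_+(S^1)$ of $\hbox{sh}(1)$. Fix such a $\rho$ which $\rho$-detects $(J;[\alpha_*])$. Since $\rho(h)=\hbox{sh}(1)$ is non-trivial, so is $\rho$, and then \cite[Theorem 1.1(1)]{BRW} makes $\pi_1(M)$ left-orderable; in particular $K := \ker(\rho)$ inherits some left-order $\mathfrak{o}_K$.

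Next I would fix a countable dense sequence $X=\{x_1,x_2,\ldots\}\subset\mathbb R$ and define the positive cone $P \subset \pi_1(M)$ by the standard recipe: $g\in P$ iff either (i) $\rho(g)\ne\mathrm{id}$ and, for the smallest $i$ with $\rho(g)(x_i)\ne x_i$, one has $\rho(g)(x_i)>x_i$; or (ii) $\rho(g)=\mathrm{id}$ and $g>_{\mathfrak{o}_K}1$. The verification that $P$ is the positive cone of a left-order $\mathfrak{o}$ hinges on one observation for multiplicative closure: for $g_1,g_2\in P$ with $\rho(g_1),\rho(g_2)\ne\mathrm{id}$ and minimal indices $i_1,i_2$, order-preservation of $\rho(g_1)$ forces $\rho(g_1g_2)(x_{\min(i_1,i_2)})>x_{\min(i_1,i_2)}$, and in particular $\rho(g_1g_2)\ne\mathrm{id}$. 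Cofinality of $h$ is then immediate: since $\rho(\pi_1(M))\subset\widetilde{\hbox{Homeo}}_+(S^1)$, we get $\rho(h^n g^{-1})(x_1)=\rho(g^{-1})(x_1)+n$ and $\rho(g^{-1}h^n)(x_1)=\rho(g^{-1})(x_1)+n$, so $h^{-n}<g<h^n$ for $n\gg 0$.

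For slope detection at $T_j$, note that any $g\in\pi_1(T_j)$ with $\tau(\rho(g))\ne 0$ acts on $\mathbb R$ without fixed points, hence $\rho(g)(x)-x$ has constant sign equal to the sign of $\tau(\rho(g))$ for every $x$; thus every such $g$ lies in $P$ or $P^{-1}$ according to the sign of $\tau(\rho(g))$. Consequently the line $L_{\mathfrak{o}|\pi_1(T_j)}$ inside $H_1(T_j;\mathbb R)$ coincides with $\ker(\tau\circ\rho|_{\pi_1(T_j)}\otimes\mathbf{1}_\mathbb R)$, which by definition represents $[\alpha_j(\rho)]=[\alpha_j]$.

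Finally, set $C := K$. It is normal by construction, and its $\mathfrak{o}$-convexity follows from the definition of $P$: if $f<g<h$ with $f,h\in K$ but $g\notin K$, then $f^{-1}g$ and $g^{-1}h$ would lie in $P$ via clause (i), which at the minimal index witnessing $\rho(g)\ne\mathrm{id}$ would force $\rho(g)(x_i)$ to be simultaneously greater and less than $x_i$. The inclusion $C\cap\pi_1(T_j)=\ker(\rho|_{\pi_1(T_j)})\le\ker(\tau\circ\rho|_{\pi_1(T_j)})=\langle\alpha_j\rangle\cap\pi_1(T_j)$ always holds, handling $j\notin J$; for $j\in J$ the strong $\rho$-detection hypothesis conjugates $\rho|_{\pi_1(T_j)}$ into the translation subgroup of $\mathbb R$, on which $\tau$ is injective, promoting the inclusion to equality. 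The delicate point I anticipate is that one and the same subgroup $C=\ker(\rho)$ must realise strong detection at every $j\in J$ simultaneously; the hypothesis is engineered exactly to make $\ker(\rho)$ and $\ker(\tau\circ\rho)$ coincide on each such $\pi_1(T_j)$, so a uniform choice works.
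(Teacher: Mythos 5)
Your proposal takes essentially the same route as the paper: pull back a left-order on $\hbox{Homeo}_+(\mathbb R)$ (defined via an enumeration of a dense countable subset of $\mathbb R$) through $\rho$, splice it with an arbitrary left-order on $\ker(\rho)$, take $C = \ker(\rho)$ as the convex normal subgroup, and read off slope detection from the sign of the translation number on each $\pi_1(T_j)$, with strong detection at $j\in J$ coming from injectivity of $\tau$ on the translation subgroup. One small slip: for $h^{-n}<g$ you should compute $\rho(h^{n}g)(x_1)=\rho(g)(x_1)+n$, not $\rho(h^{n}g^{-1})(x_1)$, but the conclusion is unaffected.
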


\begin{proof}
An enumeration $\{0 = r_1, r_2, r_3, \ldots \}$ of the rationals yields a left-order on $\hbox{Homeo}_+(\mathbb R)$ by taking $f > f'$ if and only if $f(r_{k_0}) > f'(r_{k_0})$ where $k_0 = \min\{k : f(r_k) \ne f'(r_k)\}$. Hence if we fix $\rho \in \mathcal{R}_0(M)$ such that $(J; [\alpha_*])$ is $\rho$-detected, there is an induced left-order $\mathfrak{o}_1$ on $\hbox{image}(\rho)$. Note that as $\rho(h) = \hbox{sh}(1)$, $\rho(h)$ is $\mathfrak{o}_1$-cofinal in $\hbox{image}(\rho)$. Let $C = \hbox{ker}(\rho)$ and observe that as $C$ is a subgroup of $\pi_1(M)$, it admits a left-ordering $\mathfrak{o}_0$ (cf. \cite[Theorem 1.1(1)]{BRW}). Let $\mathfrak{o}$ denote the lexicographic ordering of $\pi_1(M)$ arising from the left-orderings $\mathfrak{o}_0$ and $\mathfrak{o}_1$, by construction $C$ is $\mathfrak{o}$-convex and $h$ is $\mathfrak{o}$-cofinal. We prove that $(J; [\alpha_*])$ is $\mathfrak{o}$-detected next.  

Since $(J; [\alpha_*])$ is $\rho$-detected, $[\alpha_j] = [\tau(\rho(x_j)) h - h_j^*]$ for each $j$. Further, $\rho(\pi_1(T_j))$ conjugates into the group of translations when $j \in J$. Since translation number is an injective homomorphism when restricted to the translation subgroup of $\widetilde{\hbox{Homeo}}_+(S^1)$ we find $C \cap \pi_1(T_j) = \hbox{ker}(\rho) \cap \pi_1(T_j) \leq  \langle \alpha_j \rangle \cap \pi_1(T_j)$ for all $j$ with equality when $j \in J$. To complete the proof it suffices to show that $[\alpha_j]$ is $\mathfrak{o}$-detected for all $j$. To that end, note that the complement of the line containing $\langle \alpha_j \rangle$ in $H_1(T_j; \mathbb R)$ is the union of two components $H_+ \cup H_-$ where $\tau(\rho(\gamma)) > 0$ for $\gamma \in \pi_1(T_j) \cap H_+$ and $\tau(\rho(\gamma)) < 0$ for $\gamma \in \pi_1(T_j) \cap H_-$. (Here we identify $\pi_1(T_j)$ with $H_1(T_j)$.) It follows that for each $k$, $\rho(\gamma)(r_k) > r_k$ when $\gamma \in \pi_1(T_j) \cap H_+$ and $\rho(\gamma)(r_k) < r_k$ when $\gamma \in \pi_1(T_j) \cap H_-$. In particular, for $\gamma \in (H_+ \cup H_-) \cap \pi_1(T_j)$ we have $\rho(\gamma) \in P(\mathfrak{o})$ if and only if $\gamma \in H_+ \cap \pi_1(T_j)$. Thus $L_{\mathfrak{o}| \pi_1(T_j)}$ contains $\langle \alpha_j \rangle$, so $[\alpha_j] = [\alpha_j(\mathfrak{o})]$. 
\end{proof}

\subsection{Order detection of horizontal $[\alpha_*]$} 

\begin{lemma}
\label{cofinal conjugation}
Let $G$ be a left-ordered group with ordering $\mathfrak{o}$.  

\noindent $(1)$ If $f \in G$ is $\mathfrak{o}$-cofinal and positive, then $gfg^{-1} > 1$ for all $g \in G$.

\noindent $(2)$ Suppose there exists $h \in G$ which is central and $\mathfrak{o}$-cofinal. If $f \in G$ is $\mathfrak{o}$-cofinal and positive, then $gfg^{-1}$ is $\mathfrak{o}$-cofinal and positive for all $g \in G$.

\end{lemma}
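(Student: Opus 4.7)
The plan is to pass to the dynamical realisation of $(G, \mathfrak{o})$, which embeds $G$ in $\hbox{Homeo}_+(\mathbb R)$ via a homomorphism $\phi$ together with a distinguished basepoint $t_0 \in \mathbb R$ satisfying $a < b$ iff $\phi(a)(t_0) < \phi(b)(t_0)$ (cf. Proposition \ref{prop:dynamical}). The feature I would rely on is the standard property of the dynamical realisation that an element $a \in G$ is $\mathfrak{o}$-cofinal if and only if $\phi(a)$ acts on $\mathbb R$ without fixed points.

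For part (1), the cofinality of $f$ translates into $\phi(f)$ being fixed-point-free on $\mathbb R$. Since $\phi(f)$ is orientation-preserving and $\phi(f)(t_0) > t_0$ (because $f > 1$), it follows that $\phi(f)(x) > x$ for every $x \in \mathbb R$. Then for any $g \in G$, setting $s = \phi(g)^{-1}(t_0)$ gives $\phi(f)(s) > s$, and applying the order-preserving homeomorphism $\phi(g)$ yields $\phi(gfg^{-1})(t_0) > t_0$, i.e., $gfg^{-1} > 1$.

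For part (2), positivity is immediate from (1); the work lies in establishing cofinality. First, I would apply (1) to the central cofinal element $h$ to conclude that $\phi(h)$ is fixed-point-free, and hence by Lemma \ref{trans}(1) conjugate in $\hbox{Homeo}_+(\mathbb R)$ to $\hbox{sh}(1)$. After replacing $\phi$ by this conjugate I may assume $\phi(h) = \hbox{sh}(1)$. Centrality of $h$ forces every $\phi(g)$ to commute with $\hbox{sh}(1)$, placing the image of $\phi$ inside $\widetilde{\hbox{Homeo}}_+(S^1)$. Applying (1) to $f$ again, $\phi(f)$ is fixed-point-free, whence $\tau(\phi(f)) \ne 0$ by the paper's characterisation of $\tau$ on $\widetilde{\hbox{Homeo}}_+(S^1)$. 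Conjugation invariance of $\tau$ gives $\tau(\phi(gfg^{-1})) = \tau(\phi(f)) \ne 0$, so $\phi(gfg^{-1})$ is also fixed-point-free, and Lemma \ref{trans}(1) then shows that $\phi(gfg^{-1})$ is conjugate in $\hbox{Homeo}_+(\mathbb R)$ to $\hbox{sh}(\pm 1)$. Consequently its iterates $\phi(gfg^{-1})^n(t_0)$ are unbounded in both directions of $\mathbb R$, so $(gfg^{-1})^n$ eventually surpasses any $k \in G$, which establishes $\mathfrak{o}$-cofinality.

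The main obstacle will be pinning down the correct form of the dynamical realisation (Proposition \ref{prop:dynamical}) in which cofinality corresponds exactly to a fixed-point-free action on $\mathbb R$; granted this dictionary, both assertions reduce to direct dynamical observations, with part (2) additionally relying on the normalisation $\phi(h) = \hbox{sh}(1)$ afforded by centrality.
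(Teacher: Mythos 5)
Your argument is correct (for countable $G$, which covers all of the paper's uses) but takes a genuinely different route. The paper's proof is purely algebraic: part (1) follows from closure of the positive cone under products (from $1 < gf^k$ and $1 < g^{-1}$ one gets $1 < gf^kg^{-1} = (gfg^{-1})^k$), and part (2) uses centrality of $h$ to push the bound $hg^{-1} < f^k$ through to $h < (gfg^{-1})^k$, hence $h^n < (gfg^{-1})^{kn}$, which gives cofinality. That argument is self-contained and works for arbitrary (not necessarily countable) left-ordered groups. Your approach instead imports the dictionary ``$a$ is $\mathfrak{o}$-cofinal $\iff$ $\rho_{\mathfrak{o}}(a)$ is fixed-point-free,'' which is exactly Lemma \ref{lemma: non-trivial}(2) of the next section; this is a forward reference (not a circularity, since that lemma is independent of the present one) and it requires $G$ countable for the dynamical realisation to exist. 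Two remarks on your part (2). First, ``applying (1) to $h$ (resp.\ $f$)'' to deduce fixed-point-free action is a slip: statement (1) only gives positivity of conjugates; what you want is the cofinal$\Leftrightarrow$fixed-point-free dictionary applied to $h$ and $f$. Second, and more interestingly, once that dictionary is in hand, the entire detour through $\hbox{sh}(1)$, $\widetilde{\hbox{Homeo}}_+(S^1)$ and $\tau$ is unnecessary: $\phi(gfg^{-1}) = \phi(g)\phi(f)\phi(g)^{-1}$ is conjugate in $\hbox{Homeo}_+(\mathbb R)$ to the fixed-point-free $\phi(f)$ and is therefore itself fixed-point-free, directly. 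In particular your method reveals that for countable $G$ the centrality hypothesis in (2) is redundant, whereas the paper's elementary proof, which has no dynamical dictionary available, does appear to need it.
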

\begin{proof} We first prove (1).  Given $g \in G$, suppose $g<1$.    Choose $k>0$ so that $g^{-1} < f^k$, then $1 < gf^k$, and since $g^{-1}$ is positive $1<gf^kg^{-1} = (gfg^{-1})^k$.   Since the $k$-th power of $gfg^{-1}$ is positive, $gfg^{-1} >1$. The case of $g>1$ is similar.

Now assume that $h \in G$ is central and $\mathfrak{o}$-cofinal, we may assume also that $h>1$.  Suppose $g<1$ and choose $k$ so that $hg^{-1} <f^k$. Then $1<gh^{-1}f^k$, so $1< gh^{-1}f^kg^{-1}$ since $g^{-1}$ is positive.  Since $h$ is central, we find $h<gf^kg^{-1}$ which implies $h^n <(gf^kg^{-1})^n$ for all $n>0$.  Thus $gf^kg^{-1}$ is both positive and cofinal.  As above the case $g>1$ is similar.
\end{proof}

\begin{proposition} \label{non-orientable implies not detected}  
Let $M$ be a compact orientable Seifert fibred manifold $M$ as in \S \ref{assumptions seifert}.

$(1)$ Suppose that $M$ has base orbifold $Q(a_1, \ldots , a_n)$ and that $[\alpha_*] \in \mathcal{S}(M)$ is horizontal. Then $[\alpha_*]$ is not order detected. 

$(2)$ Suppose that $M$ has base orbifold $P(a_1, \ldots , a_n)$ and that $[\alpha_*] \in \mathcal{S}(M)$ is $\mathfrak{o}$-detected. Then $[\alpha_*]$ is horizontal if and only if  $h$ is $\mathfrak{o}$-cofinal in $\pi_1(M)$.  
\end{proposition}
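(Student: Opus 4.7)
The plan is to connect $\mathfrak{o}$-cofinality of $h$ in $\pi_1(M)$ with horizontality of $[\alpha_*]$ by analysing the convex hull $C_h = \{g \in \pi_1(M) : h^{-n} < g < h^n \text{ for some } n \geq 0\}$. I will prove part (2) directly, and then reduce part (1) to it via the orientation double cover of $M$.

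For part (2), $(\Leftarrow)$: assume $h$ is $\mathfrak{o}$-cofinal but, for contradiction, that $[\alpha_j] = [h]$ for some $j$. Then the line $L_{\mathfrak{o}|\pi_1(T_j)}$ contains $h$, while any dual class $h_j^*$ lies strictly on one side of $L$; since $h_j^* - nh$ remains in that same open half-plane for every $n$, one gets $h^n < h_j^*$ for all $n > 0$, contradicting cofinality of $h$ in the subgroup $\pi_1(T_j) \leq \pi_1(M)$. For $(\Rightarrow)$, centrality of $h$ makes $C_h$ a subgroup: from $g_i < h^{n_i}$ one computes $g_1 g_2 < g_1 h^{n_2} = h^{n_2} g_1 < h^{n_1+n_2}$ by left-multiplication, and the analogous lower bound. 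Horizontality means $h$ lies strictly off $L_{\mathfrak{o}|\pi_1(T_j)}$, so for any $v \in \pi_1(T_j)$ the element $nh - v$ enters the positive half-plane for sufficiently large $n$, giving $\pi_1(T_j) \subseteq C_h$. The elementary fact that $g^k \in C_h$ implies $g \in C_h$ — if $g > 1$ then $g \leq g^k < h^n$, with the $g < 1$ case symmetric — combined with $y_i^{a_i} = h^{b_i} \in C_h$ puts each $y_i$ in $C_h$. Since $\pi_1(M)$ is generated by $h$, the $y_i$ and the $x_j$ (all in $C_h$) and $C_h$ is a subgroup, we conclude $C_h = \pi_1(M)$.

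For part (1), suppose for contradiction that $[\alpha_*]$ is horizontal and $\mathfrak{o}$-detected. Pass to the orientation double cover $\tilde M \to M$, namely the cover corresponding to the kernel of the character $\pi_1(M) \to \mathbb{Z}/2$ sending $z \mapsto 1$ and $y_i, x_j, h_0 \mapsto 0$. Its base orbifold is $P(a_1, a_1, \ldots, a_n, a_n)$, the fibre class is still $h$ (now central in $\pi_1(\tilde M)$), each $T_j$ lifts to two tori $\tilde T_j^\pm$ with $\pi_1(\tilde T_j^\pm) = \pi_1(T_j)$, and the lifted slope tuple is still horizontal and detected by $\tilde{\mathfrak{o}} := \mathfrak{o}|\pi_1(\tilde M)$. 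Applying part (2) to $\tilde M$ (the proof goes through even for the edge case $\tilde M \cong T^2 \times I$, arising only when $M = N_2$ has base $Q_0$, since it uses only centrality of $h$, which is automatic in $\mathbb{Z}^2$) yields that $h$ is $\tilde{\mathfrak{o}}$-cofinal in $\pi_1(\tilde M)$. To promote this to $\pi_1(M)$, note that any $g \in \pi_1(M)$ satisfies $g^2 \in \pi_1(\tilde M)$ by index two, hence $g^2 \in C_h$; the elementary observation that $g^2 \in C_h$ implies $g \in C_h$ in any left-ordered group then gives $g \in C_h$. Thus $h$ is $\mathfrak{o}$-cofinal in $\pi_1(M)$. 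Taking $\mathfrak{o}$ so that $h > 1$ (swapping to $\mathfrak{o}_{op}$ if necessary, by Remarks \ref{irrational implies strong order detection}), Lemma \ref{cofinal conjugation}(1) forces $zhz^{-1} > 1$, contradicting the relation $zhz^{-1} = h^{-1} < 1$ from \S\ref{presentation seifert mobius}.

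The main obstacle is that $C_h$ fails to be a subgroup in general when $h$ is not central — precisely the base-$Q$ case — so the argument for part (2) does not transfer verbatim. The orientation double cover detour sidesteps this by moving to $\pi_1(\tilde M)$, where $h$ is central, after which the trivial observation that $g^2 \in C_h$ implies $g \in C_h$ carries cofinality across the index-two extension and makes part (1) a one-line consequence of Lemma \ref{cofinal conjugation}.
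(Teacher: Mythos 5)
Your proof of part (2) is correct and is essentially the paper's argument: since $h$ is central, the convex hull $C_h$ is a subgroup, the relations $y_i^{a_i}=h^{b_i}$ bound each $y_i$, and horizontality of each $[\alpha_j]$ bounds each $x_j$, whence $C_h = \pi_1(M)$.

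The reduction of part (1) to an index-two subgroup is the right idea -- in fact the subgroup corresponding to your orientation double cover is exactly the $H = \langle y_1,\ldots,y_n,x_1,\ldots,x_r,h\rangle$ that appears in the paper's own proof -- but there is a genuine gap in the assertion that the lifted slope tuple on $\tilde M$ is horizontal and detected by $\tilde{\mathfrak{o}}$. The preimage of $T_j$ in $\tilde M$ has two components, and as subgroups of $\pi_1(\tilde M)$ one contributes $\pi_1(T_j)$ while the other contributes a conjugate $z\pi_1(T_j)z^{-1}$ with $z\notin\pi_1(\tilde M)$; so the claim $\pi_1(\tilde T_j^\pm)=\pi_1(T_j)$ is false for one of the two lifts. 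Because left-orders are not right-invariant, cofinality of $h$ in $\pi_1(T_j)$ gives no control over $\mathfrak{o}$ restricted to $z\pi_1(T_j)z^{-1}$, and nothing in the hypotheses rules out $\tilde{\mathfrak{o}}$ detecting the fibre slope on $\tilde T_j^-$. (That this cannot in fact happen is the content of the proposition you are proving, so assuming it is circular.) Consequently part (2) cannot be applied to $\tilde M$ as a black box.

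The fix, which is what the paper does, is to bypass the black-box application: run the part-(2) computation directly on $H$ with the \emph{specific} generating set $\{y_1,\ldots,y_n,x_1,\ldots,x_r,h\}$. Since $h$ is central in $H$, $C_h\cap H$ is a subgroup of $H$; it contains each $y_i$ by the relations, each $x_j$ by horizontality of the original $[\alpha_j]$, and $h$, so $H\subseteq C_h$. This avoids ever having to say anything about the boundary slopes of $\tilde M$. Your index-two step then works exactly as written: $z^2=(y_1\cdots y_n x_1\cdots x_r)^{-1}\in H\subseteq C_h$ forces $z\in C_h$, so $h$ is $\mathfrak{o}$-cofinal in $\pi_1(M)$, and Lemma \ref{cofinal conjugation}(1) applied to $zhz^{-1}=h^{-1}$ gives the contradiction.
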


\begin{proof}
First we establish some inequalities that must hold in $\pi_1(M)$.

Suppose that $[\alpha_*]$ is horizontal and refer to the presentations of $\pi_1(M)$ given in \S \ref{presentation seifert mobius} and \S \ref{presentation seifert planar}. Set $H = \langle y_1, \ldots , y_n, x_1, \ldots , x_r, h \rangle \subset \pi_1(M)$. We claim that $h$ is cofinal in $H$. As $h$ is central in $H$, it suffices to show that there are integers $c_1, \ldots, c_n, d_1, \ldots, d_r$ such that 
$h^{-c_i} < y_i < h^{c_i}$ ($1 \leq i \leq n$) and $h^{-d_j} < x_j < h^{d_j}$ ($1 \leq j \leq r$). The existence of the $c_i$ is obvious from the relations $y_i^{a_i} = h^{b_i}$. On the other hand, $x_j \in \pi_1(T_j)$ and therefore as $[\alpha_j]$ is horizontal, $h$ is $\mathfrak{o}$-cofinal in $\pi_1(T_j)$. Thus we can find integers $d_1, d_2, \ldots, d_r$ as claimed.  

When $M$ has base orbifold $P(a_1, \ldots , a_n)$, $H = \pi_1(M)$ and it follows that $h$ is $\mathfrak{o}$-cofinal.  Conversely, if $h$ is $\mathfrak{o}$-cofinal in $\pi_1(M)$, it is $\mathfrak{o}$-cofinal in $\pi_1(T_j)$ for each $j$. In particular $[\alpha_j]$ cannot be the fibre slope $[h] \in \mathcal{S}(T_j)$. Thus $[\alpha_*]$ is horizontal. 

On the other hand when $M$ has base orbifold $Q(a_1, \ldots , a_n)$, $h$ is cofinal since the generator $z$ in \S \ref{presentation seifert mobius} satisfies $z^2 = (y_1 \ldots  y_n x_1 \ldots  x_r)^{-1} \in H$.  By Remark \ref{irrational implies strong order detection} (2) we may assume $h>1$, but then the generators $z, h$ satisfy $z h z^{-1} = h^{-1}<1$.  This is not possible by Lemma \ref{cofinal conjugation}(1).
\end{proof}

\subsection{Order detection of non-horizontal $[\alpha_*]$}
Next we examine the order detectability of a pair $(J; [\alpha_*])$ in the case $[\alpha_*]$ is not horizontal. The main result, Proposition \ref{order vertical}, is a characterization of the non-horizontal $[\alpha_*]$ which are order detected.  

\begin{lemma}
\label{strongly detected vertical slopes}
Suppose that $(J; [\alpha_*])$ is order detected and that $\{j \in J : [\alpha_j] = [h]\}$ is nonempty.  Then $[\alpha_j] = [h]$ for all $j$.
\end{lemma}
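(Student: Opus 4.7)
Fix a left-ordering $\mathfrak{o} \in LO(\pi_1(M))$ that $(J; [\alpha_*])$-detects, together with the witnessing $\mathfrak{o}$-convex normal subgroup $C \trianglelefteq \pi_1(M)$. The plan is to show, using only the hypothesis that some $j_0 \in J$ has $[\alpha_{j_0}] = [h]$, that the fibre class $h$ lies in $C$, and then to feed this back through the definition of $\mathfrak{o}$-detection at every index $j$.

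First I would unpack what the hypothesis gives at $j_0$. Since $j_0 \in J$, Definition \ref{def: o-detected} yields $C \cap \pi_1(T_{j_0}) = \langle \alpha_{j_0}\rangle \cap \pi_1(T_{j_0})$. Because $[\alpha_{j_0}] = [h]$, the line $\langle \alpha_{j_0}\rangle \subset H_1(T_{j_0};\mathbb{R})$ is the same as the line spanned by $h$, and since $h$ is primitive in $\pi_1(T_{j_0})$ we get $\langle \alpha_{j_0}\rangle \cap \pi_1(T_{j_0}) = \langle h\rangle$. Thus $h \in C$.

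Next I would transport this membership to every other boundary component. The key observation is that for the chosen Seifert structure on $M$, the regular fibre class $h$ lies in $\pi_1(T_j)$ (up to sign) for every boundary torus $T_j$; this is built into the conventions of \S\ref{assumptions seifert} and holds uniformly whether $M$ has base orbifold $P(a_1,\ldots,a_n)$ or $Q(a_1,\ldots,a_n)$, and in either Seifert structure on $N_2$ after a choice has been fixed. Combining this with $h \in C$ gives $h \in C \cap \pi_1(T_j)$ for every $j \in \{1,\ldots,r\}$.

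Finally I would invoke the defining inclusion from Definition \ref{def: o-detected}: whether or not $j \in J$, one has $C \cap \pi_1(T_j) \leq \langle \alpha_j\rangle \cap \pi_1(T_j) \subseteq \langle \alpha_j\rangle$. Hence $h$ lies on the line $\langle \alpha_j\rangle$ inside $H_1(T_j;\mathbb{R})$, which is exactly the statement $[\alpha_j] = [h]$ in $\mathcal{S}(T_j) = \mathbb{P}^1(H_1(T_j;\mathbb{R}))$. Since this holds for all $j$, the lemma follows. There is no real obstacle: the whole content is that once the fibre is forced into $C$ by a single strongly detected vertical index, convexity-normality propagates this across all boundary tori because $h$ is shared by all of them.
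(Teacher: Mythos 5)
Your proof is correct and follows essentially the same route as the paper's: derive $h \in C$ from the equality $C \cap \pi_1(T_{j_0}) = \langle \alpha_{j_0}\rangle \cap \pi_1(T_{j_0}) = \langle h\rangle$ at a vertical $j_0 \in J$, note that $h$ lies in every $\pi_1(T_j)$, and then feed $h \in C \cap \pi_1(T_j) \leq \langle\alpha_j\rangle$ back into the definition to conclude $[\alpha_j] = [h]$ for all $j$. The paper phrases the last step as ``$[h]$ is $\mathfrak{o}|\pi_1(T_j)$-detected'' whereas you read off the conclusion directly from the inclusion into the line $\langle\alpha_j\rangle$, but this is a cosmetic difference, not a change of argument.
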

\begin{proof}
Say that $(J; [\alpha_*])$ is $\mathfrak{o}$-detected. Then there is a convex, normal subgroup $C$ of $\pi_1(M)$ such that $C \cap \pi_1(T_j) = \langle \alpha_j \rangle \cap \pi_1(T_j)$ for $j \in J$ and $C \cap \pi_1(T_j) \leq \langle \alpha_j \rangle \cap \pi_1(T_j)$ otherwise. Since $\{j \in J : [\alpha_j] = [h]\}$ is nonempty, $C \cap \pi_1(T_j)$ contains $h$ for some, and therefore all, $j$. Thus $[h]$ is $\mathfrak{o}|\pi_1(T_j)$-detected for all $j$, which completes the proof.  \end{proof}

Next we prepare some preliminary gluing results that will be used to deal with detection of non-horizontal $[\alpha_*]$.   For a left-ordering $\mathfrak{o}$ of $G$, and for every $g \in G$, we will use $\mathfrak{o}^g$ to denote the conjugate ordering of $G$, defined by $h_1 <^g h_2$ if and only if $g h_1 g^{-1}  <g h_2 g^{-1}$.

\begin{definition}
{\rm A family of left-orderings $\mathcal{L} \subset LO(G)$ is called \textit{normal in $G$} if it is non-empty, and for all $g \in G$ if $\mathfrak{o} \in \mathcal{L}$ then $\mathfrak{o}^g \in \mathcal{L}$.}
\end{definition}

\begin{definition}
\label{compatible families}
{\rm Let $H_i \subset G_i$ be groups with left-orderings $\mathfrak{o}_i$ for $i = 1, 2$.  Suppose that $\phi_i: H_i \rightarrow H$ are isomorphisms and let $\mathcal{L}_i \subset LO(G_i)$ be families of left-orderings on $G_i$ for $i=1,2$.

We say that $\phi_2\phi_1^{-1}$ is compatible for the pair $( \mathcal{L}_1, \mathcal{L}_2)$ if and only if for every ordering $\mathfrak{o}_1 \in \mathcal{L}_1$ there exists an ordering $\mathfrak{o}_2 \in \mathcal{L}_2$ such that for all $h \in H_1$, $1 <_1 h$ implies $1  <_2 \phi_2 \phi_1^{-1}(h)$.  When this holds, we also say that $\phi_2\phi_1^{-1}$ is compatible for the pair $(\mathfrak{o}_1, \mathfrak{o}_2 )$.}
\end{definition}

\begin{theorem} \cite[Theorem A]{BG} 
\label{BG basic gluing}
Let $G_1$ and $G_2$ be left-ordered groups with orderings $\mathfrak{o}_1$ and $\mathfrak{o}_2$ respectively.  Suppose that $H_i \subset G_i$ are subgroups and that $\phi_i : H_i \rightarrow H$ are isomorphisms for $i=1,2$.  Set $\phi = \phi_2\phi_1^{-1}$.  Then $G_1 *_{\phi} G_2$ admits a left-ordering which extends each of the $\mathfrak{o}_i$ if and only if $\phi_2\phi_1^{-1}$ is compatible for the pair $(\mathfrak{o}_1, \mathfrak{o}_2 )$ and there exist normal families $\mathcal{L}_i$ such that $\phi_2\phi_1^{-1}$ is compatible for $( \mathcal{L}_1, \mathcal{L}_2)$ and $\phi_1\phi_2^{-1}$ is compatible for $( \mathcal{L}_2, \mathcal{L}_1)$.
\end{theorem}

\begin{lemma}
\label{basic normal families}
$(1)$ Suppose that $M$ is Seifert fibred over $P(a_1, \ldots, a_n)$ and $(J; [\alpha_*])$ is order detected where $\{ j \in J : [\alpha_j] = [h] \} = \emptyset$. Assume $v([\alpha_*]) \geq 2$ and $[\alpha_1] = [h]$.  Then there exists a normal family $\mathcal{L} \subset LO(\pi_1(M))$ such that the set $$\{ \mathfrak{o} \in LO(\pi_1(T_1)) :  \mbox{ $\mathfrak{o}= \mathfrak{o}'|\pi_1(T_1)$ for some $\mathfrak{o}' \in \mathcal{L}$} \} $$ contains exactly four left-orderings, all detecting $[h]$, each of which arises as the restriction of an ordering $\mathfrak{o}' \in \mathcal{L}$ which detects $(J; [\alpha_*])$.

$(2)$ Suppose $M$ is Seifert fibred over $Q_0(a_1, \ldots, a_n)$.  Then there exists a normal family $\mathcal{L} \subset LO(\pi_1(M))$ containing an ordering which detects $[h]$  such that the set $$\{ \mathfrak{o} \in LO(\pi_1(\partial M)) :  \mbox{ $\mathfrak{o}= \mathfrak{o}'|\pi_1(\partial M)$ for some $\mathfrak{o}' \in \mathcal{L}$} \} $$ contains exactly four left-orderings, all detecting $[h]$.
\end{lemma}
\begin{proof}

To prove $(1)$, let $\mathfrak{o}'$ be an ordering of $\pi_1(M)$ detecting  $(J; [\alpha_*])$.  Consider the short exact sequence
\[ 1 \rightarrow K \rightarrow \pi_1(M) \rightarrow \mathbb{Z}^{v([\alpha_*])-1} \rightarrow 1
\]
where the map $\pi_1(M) \rightarrow \mathbb{Z}^{v([\alpha_*])-1}$ is the result of killing the fibre class  $h \in \pi_1(M)$, the resulting torsion, and the dual classes $x_{v([\alpha_*])+1}, \ldots, x_r$.  Create a set $S$ of four distinct orderings of $\pi_1(M)$ by choosing an arbitrary ordering of $\mathbb{Z}^{v([\alpha_*])-1}$,  using the restrictions of $\mathfrak{o}'$ and $(\mathfrak{o}')^{op}$ on $K$, and ordering $\pi_1(M)$ lexicographically.   Set $\mathcal{L} = \{ \mathfrak{o}^g : \mathfrak{o} \in S \mbox{ and } g \in \pi_1(M)\}$. By construction, every ordering in $S$ detects $(J; [\alpha_*])$ and $\mathcal{L}$ has the required property.

For $(2)$, we use the short exact sequence 
\[ 1 \rightarrow K \rightarrow \pi_1(M) \rightarrow \mathbb{Z} \rightarrow 1
\]
arising from Dehn filling along the fibre slope and killing the resulting torsion, then argue as above.
\end{proof}

\begin{lemma}
\label{composingspace2}
Suppose that $M$ has base orbifold $P(a_1, \ldots, a_n)$ where $r \geq 2$.  If $r=2$ then there exists a left-ordering of $\pi_1(M)$ detecting $([h], [h])$; if $r \geq 3$ then for each slope $[\alpha] \in \mathcal{S}(T_r)$ there exists a left-ordering of $\pi_1(M)$ detecting $( \{r \} ; ( [h], \ldots, [h],[\alpha] ))$.

\end{lemma}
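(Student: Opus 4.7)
The plan is to construct the required left-orderings by lifting through carefully chosen surjective homomorphisms, reducing the rational non-vertical case to the all-vertical case by Dehn filling, and then using a compactness argument to treat irrational slopes.

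The backbone is the following universal construction: for any Seifert manifold $N$ of the type in \S \ref{assumptions seifert} with $s \geq 2$ toroidal boundary components and planar base orbifold, the all-vertical tuple $([h], \ldots, [h])$ is strongly order detected. This subsumes the $r = 2$ case of the lemma as well as the sub-case $[\alpha] = [h]$ of $r \geq 3$. To prove it, I would define a surjection $\varphi_0 \colon \pi_1(N) \twoheadrightarrow F_{s-1}$ killing $h$ and all $y_i$. Using the presentation from \S \ref{presentation seifert planar}, the relations $y_i^{a_i} = h^{b_i}$ become trivial and $y_1 \cdots y_n x_1 \cdots x_s = 1$ becomes $x_1 \cdots x_s = 1$, so the quotient is free of rank $s-1$ on the images of $x_1, \ldots, x_{s-1}$. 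Pick a left-ordering on $F_{s-1}$, pick a left-ordering on $\ker(\varphi_0)$ (which inherits one from $\pi_1(N)$ via \cite[Theorem 1.1(1)]{BRW}), and combine them by the standard lift construction to obtain an ordering $\mathfrak{o}$ on $\pi_1(N)$. The kernel $C := \ker(\varphi_0)$ is automatically $\mathfrak{o}$-convex and normal, and the restriction $\varphi_0 | \pi_1(T_j)$ has image $\mathbb{Z}$ and kernel $\langle h \rangle$ for every $j$; hence $C \cap \pi_1(T_j) = \langle h \rangle$, which delivers strong detection of $[h]$ on each $T_j$.

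For the general $r \geq 3$ case with $[\alpha]$ rational but not equal to $[h]$, I would let $\alpha_0 \in \pi_1(T_r)$ be a primitive class representing $[\alpha]$ and let $M' := M(\alpha_0)$ be the Dehn filling. Since $\alpha_0$ is not the fibre slope, the Seifert fibration on $M$ extends over the filling torus, producing a Seifert manifold $M'$ whose base orbifold is still planar (with possibly one additional cone point) and which has $r - 1 \geq 2$ toroidal boundary components. Lemma \ref{order_fill_lemma} applies, since the unique element of $J = \{r\}$ corresponds to the non-vertical slope $[\alpha]$; it gives the equivalence between order detection of $(\{r\}; ([h], \ldots, [h], [\alpha]))$ on $M$ and order detection of $(\emptyset; ([h], \ldots, [h]))$ on $M'$. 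The latter follows from the universal construction applied to $M'$ with $s = r-1$.

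For irrational $[\alpha]$ I would approximate. Choose a sequence of rational slopes $[\alpha_n] \in \mathcal{S}(T_r) \setminus \{[h]\}$ converging to $[\alpha]$ and invoke the rational case to obtain orderings $\mathfrak{o}_n \in LO(\pi_1(M))$ detecting $(\{r\}; ([h], \ldots, [h], [\alpha_n]))$. The space $LO(\pi_1(M))$ is compact, so a subsequence converges to some $\mathfrak{o}$. Continuity of the slope-detection map $LO(\pi_1(T)) \to \mathcal{S}(T)$, together with the closedness of $\{[h]\}$ in $\mathcal{S}(T_j)$ and the convergence $[\alpha_n] \to [\alpha]$, shows that $\mathfrak{o}$ detects $[h]$ on $T_j$ for $j < r$ and $[\alpha]$ on $T_r$ (possibly after a further subsequence to stabilize the two possible LOs on each $\pi_1(T_j)$ with slope $[h]$). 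The strong detection required on $T_r$ is then automatic from Remark \ref{irrational implies strong order detection}(1) with the trivial choice $C = \{1\}$.

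The main obstacle is the rational non-vertical sub-case, where one has to verify that the Dehn filling $M'$ genuinely fits into the framework of \S \ref{assumptions seifert} (planar base, nonempty boundary, embeds in a rational homology sphere) so that the universal construction applies, and then rely on Lemma \ref{order_fill_lemma} to translate between the two settings. The irrational sub-case, while conceptually straightforward, also requires careful handling of the finite indeterminacy of LOs on each $\pi_1(T_j)$ detecting the rational slope $[h]$, which is dealt with by iterated subsequence extraction.
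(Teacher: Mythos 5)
Your proposal is correct and follows the same overall strategy as the paper's proof, so I will confine myself to noting the handful of genuine differences in packaging. The universal all-vertical construction is the same in spirit: you kill $h$ and the $y_i$ to get the free group $F_{s-1}$, while the paper further abelianizes to $\mathbb Z^{s-1}$; either target works, since both are left-orderable and the only structural requirement is that $\ker \cap \pi_1(T_j) = \langle h\rangle$ for all $j$, which you verify. For rational horizontal $[\alpha]$, your explicit appeal to Lemma \ref{order_fill_lemma} followed by the universal construction on $M(\alpha)$ is exactly the content that the paper carries out by hand via the lexicographic order on the sequence $1 \to \langle\langle \alpha\rangle\rangle \to \pi_1(M) \to \pi_1(M(\alpha)) \to 1$; the paper leaves implicit that the chosen order on $\pi_1(M(\alpha))$ should be taken from the all-vertical construction so that $[h]$ is also detected on $T_1,\dots,T_{r-1}$, whereas you spell this out, which is a slight improvement in clarity. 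The only place where the two proofs genuinely diverge is the irrational case. The paper uses compactness of $LO(\pi_1(M))$ once, to conclude that the continuous slope map $LO(\pi_1(M)) \to \mathcal{S}(T_r)$ has compact (hence closed) image containing all rationals and is therefore surjective; it then picks any $\mathfrak{o}_0$ hitting $[\alpha]$ and \emph{rebuilds} a new order using $\mathfrak{o}_0|K$ on the kernel of a quotient $\pi_1(M) \twoheadrightarrow \mathbb Z^{r-2}$ that kills $h$ and $x_r$ but not $x_1,\dots,x_{r-1}$, which forces the vertical slopes on the remaining tori. You instead take a sequence $[\alpha_n] \to [\alpha]$ of rational horizontal slopes, apply the already-proved rational case to get $\mathfrak{o}_n$, and pass to a convergent subsequence; continuity of all the boundary slope maps then gives the right slope on every $T_j$ at once, and the strong detection on $T_r$ comes for free from irrationality with $C = \{1\}$. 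Your route avoids the rebuilding step at the cost of needing the full $(\{r\};\cdot)$-detection in the rational case as input, whereas the paper's route needs only realizability of rational slopes on $T_r$; both are correct, and yours is arguably a little more streamlined. (The ``further subsequence to stabilize the LOs on each $\pi_1(T_j)$'' is actually unnecessary, since continuity of the slope map already forces the limit slope to equal $[h]$ regardless of which of the finitely many orders on $\pi_1(T_j)$ occurs in the tail; but including it does no harm.)
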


\begin{proof}
If $r=2$ or if $r \geq 3$ and $[\alpha] = [h]$ then we use the short exact sequence
\[ 1 \rightarrow K \rightarrow \pi_1(M) \rightarrow \mathbb{Z}^{r-1} \rightarrow 1
\]
where the epimorphism $\pi_1(M) \rightarrow \mathbb{Z}^{r-1}$ is the result of killing the fibre class $h \in \pi_1(M)$ as well as the resulting torsion.  Since $h \in K$, though no dual class $x_j \in \pi_1(T_j)$ is, the sequence can be used to create a lexicographic left-ordering of $\pi_1(M)$ that strongly detects $([h], \ldots, [h])$.

 Assume next that $r \geq 3$ and $\alpha$ is a primitive rational element of $H_1(T_r)$ such that $[\alpha]$ is horizontal. 
In this case, $\pi_1(M(\alpha))$ is left-orderable since it is the fundamental group of an irreducible $3$-manifold with positive first Betti number, and therefore $\pi_1(M)$ admits a left-ordering $\mathfrak{o}$ with $[\alpha_r(\mathfrak{o})] = [\alpha]$ arising from the short exact sequence $ 1 \rightarrow \langle \langle \alpha \rangle \rangle \rightarrow
\pi_1(M) \rightarrow \pi_1(M(\alpha)) \rightarrow 1 $. 

On the other hand if $r \geq 3$ and $[\alpha]$ is irrational then we use the topology on $LO(\pi_1(M))$ defined by Sikora \cite{Si}. Recall that a left-order $\mathfrak{o}$ is determined by its positive cone $P(\mathfrak{o}) \subset \pi_1(M)$. Given $x \in \pi_1(M)$, let $U_x =  \{P \in LO(\pi_1(M)) : x \in P\}$. Now endow $LO(\pi_1(M))$ with the topology with subbasic open sets $\{U_x : x \in \pi_1(M)\}$. Sikora shows that $LO(\pi_1(M))$ is compact and metrizable in this topology. He also identifies $LO(\pi_1(T_j))$ with a space $X$ which has a circle as a natural quotient and which double covers $\mathcal{S}(T_j) \cong S^1$. (This circle quotient of $X$ is simply the space of oriented slopes in $H_1(T_j; \mathbb R)$. See \cite[\S 3]{Si}.) If $H$ is a non-trivial subgroup of $\pi_1(M)$, the restriction map $LO(\pi_1(M)) \to LO(H)$ is easily seen to be continuous, so taking $H$ to be $\pi_1(T_r)$ we obtain a continuous map $LO(\pi_1(M)) \to \mathcal{S}(T_r)$ whose image contains all 
rational points.  Since $LO(\pi_1(M))$ is compact so is its image in $\mathcal{S}(T_r)$; thus every irrational slope is in the image as well. In particular  we may fix $\mathfrak{o} \in LO(\pi_1(M))$ with $[ \alpha_r(\mathfrak{o})] = [\alpha]$.

Now since $r\geq3$ we can construct, as above, a short exact sequence
\[ 1 \rightarrow K \rightarrow \pi_1(M) \rightarrow \mathbb{Z}^{r-2} \rightarrow 1 \]
with $h \in K, x_r \in K$ and $x_j \not \in K$ for $j = 1, \ldots, r-1$.  Use the restriction $\mathfrak{o} | K$ to left-order $K$, and give $\mathbb{Z}^{r-2}$ an arbitrary ordering.  The corresponding lexicographic ordering of $\pi_1(M)$ detects  $( \{r \} ; ( [h], \ldots, [h],[\alpha] ))$.
\end{proof}

\begin{lemma}
\label{gluing strong orders}
Let $M_1$ and $M_2$ be Seifert fibred manifolds with 
$\partial M_1 = T_1 \cup \cdots \cup T_s$ and $\partial M_2 = T_s \cup  \cdots \cup T_r$.  Suppose that $(J; [\alpha_*])$ is
order detected on $M_1$ and $(K;[\beta_*])$ is
order detected on $M_2$.  Set $M = M_1 \cup_{\phi} M_2$, where
$\phi:T_s \rightarrow T_s$ is a homeomorphism satisfying
$\phi_*([\alpha_s]) = [\beta_s]$. If $s \in J \cap K$, then $((J \cup
K)\setminus \{s \} ; ([\alpha_1], \ldots,[\alpha_{s-1}],
[\beta_{s+1}], \ldots, [\beta_r]))$ is order detected on $M$.
\end{lemma}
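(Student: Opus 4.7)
Fix left-orderings $\mathfrak{o}_1$ on $\pi_1(M_1)$ and $\mathfrak{o}_2$ on $\pi_1(M_2)$ witnessing the detection hypotheses, together with the corresponding convex normal subgroups $C_1 \subset \pi_1(M_1)$ and $C_2 \subset \pi_1(M_2)$. Since $s \in J \cap K$ and $\phi_*[\alpha_s] = [\beta_s]$, the intersections $C_1 \cap \pi_1(T_s)$ and $C_2 \cap \pi_1(T_s)$ coincide inside the amalgamating subgroup of the Van Kampen decomposition $\pi_1(M) = \pi_1(M_1) *_{\pi_1(T_s)} \pi_1(M_2)$, both being equal to $\langle \alpha_s \rangle \cap \pi_1(T_s)$. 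Because $\mathfrak{o}_1|\pi_1(T_s)$ and $\phi_*^{-1}(\mathfrak{o}_2|\pi_1(T_s))$ both detect the same slope $[\alpha_s]$, their associated lines in $H_1(T_s;\mathbb{R})$ coincide by Proposition~\ref{prop:slope}, and we may invoke Remark~\ref{irrational implies strong order detection}(2) to replace $\mathfrak{o}_2$ by its opposite if necessary, so that the two restricted orderings on $\pi_1(T_s)$ agree outright.

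Pass to the quotients $\bar G_i = \pi_1(M_i)/C_i$ with induced left-orders $\bar{\mathfrak{o}}_i$, and denote by $\bar H$ the common image of $\pi_1(T_s)$ in the $\bar G_i$; this is infinite cyclic when $[\alpha_s]$ is rational and free abelian of rank two when $[\alpha_s]$ is irrational. Quotienting the Van Kampen decomposition by the normal closure $N$ of $C_1 \cup C_2$ produces the amalgam $\bar G_1 *_{\bar H} \bar G_2$ with kernel $N$; since $\pi_1(M_i)$ maps into this amalgam via $\bar G_i$, we have $N \cap \pi_1(M_i) = C_i$. The plan is to invoke a standard amalgamation theorem for left-orderable groups (of Bludov--Glass type) to produce a left-ordering on $\bar G_1 *_{\bar H} \bar G_2$ extending $\bar{\mathfrak{o}}_1$ and $\bar{\mathfrak{o}}_2$, then to left-order $N$ by exploiting its Bass--Serre decomposition as a tree product of conjugates of $C_1$ and $C_2$ amalgamated over conjugates of $\pi_1(T_s) \cap C_i$ (iterating the same amalgamation argument), and finally to build the desired left-ordering $\mathfrak{o}$ on $\pi_1(M)$ lexicographically from these two pieces with $N$ convex.

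Granted such an $\mathfrak{o}$, the detection conclusion follows readily: for each boundary torus $T_j \subset \partial M$, with $T_j \subset M_i$, the restriction $\mathfrak{o}|\pi_1(T_j)$ coincides with $\mathfrak{o}_i|\pi_1(T_j)$ (up to reversal) and hence detects the required slope, while $N \cap \pi_1(T_j) = C_i \cap \pi_1(T_j)$ provides strong detection at the indices in $(J \cup K) \setminus \{s\}$ and the required containment at the remaining indices. The principal technical obstacle is the amalgamation step: the standard criterion for left-orderability of an amalgamated product requires $\bar H$ to be relatively convex in each $\bar G_i$ under compatible orderings, and this is not automatic from the detection hypotheses alone. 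I expect to handle it either by enlarging the initial $C_i$ to convex normal supergroups whose intersections with $\pi_1(T_s)$ remain equal to $\langle \alpha_s \rangle \cap \pi_1(T_s)$ (so that the images of $\pi_1(T_s)$ in the new quotients land in a convex position), or by exploiting the abelian structure of $\bar H$ to construct the left-ordering on $\bar G_1 *_{\bar H} \bar G_2$ by hand, bypassing the convexity hypothesis altogether.
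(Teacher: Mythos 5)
Your rational-slope argument runs along the same lines as the paper's: quotient each $\pi_1(M_i)$ by $C_i$, amalgamate the quotients over the common cyclic image of $\pi_1(T_s)$, order that amalgam by Bludov--Glass, then order $\pi_1(M)$ lexicographically so the kernel is convex. In that case your worry about relative convexity of $\bar H$ is actually a non-issue: since $s\in J\cap K$ and $[\alpha_s]$ is rational, $C_i\cap\pi_1(T_s)=\langle\alpha_s\rangle\cap\pi_1(T_s)$ has rank one, so $\bar H\cong\mathbb Z$ and \cite[Corollary~5.3]{BG} applies directly with no extra convexity hypothesis; you do not need to enlarge the $C_i$ or build the order on $\bar G_1*_{\bar H}\bar G_2$ by hand. (Your Bass--Serre ordering of the kernel $N$ is also more elaborate than necessary: $N$ is a subgroup of $\pi_1(M)$, which is left-orderable by \cite[Theorem~1.1(1)]{BRW} once one knows it surjects onto the nontrivial left-orderable group $\bar G_1*_{\bar H}\bar G_2$, so $N$ inherits a left-order.)

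The genuine gap is the irrational case. If $[\alpha_s]$ is irrational, then $\langle\alpha_s\rangle\cap\pi_1(T_s)=\{1\}$, so $C_i\cap\pi_1(T_s)=\{1\}$ and $\bar H$ is all of $\pi_1(T_s)\cong\mathbb Z^2$. Now the cyclic-amalgamation theorem is unavailable, and an amalgamated product of left-orderable groups over a rank-two abelian subgroup need not be left-orderable without a relative-convexity hypothesis; neither of your proposed repairs produces it. Enlarging $C_i$ to a convex normal supergroup with $C_i'\cap\pi_1(T_s)=\langle\alpha_s\rangle\cap\pi_1(T_s)=\{1\}$ leaves $\bar H$ unchanged (still $\mathbb Z^2$), and ``exploiting the abelian structure of $\bar H$'' is not enough to bypass the hypothesis, because the obstruction lives in how $\bar H$ sits inside each $\bar G_i$, not in $\bar H$ itself. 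The paper instead abandons the amalgamation strategy entirely in the irrational case: after using Remark~\ref{irrational implies strong order detection} and Lemma~\ref{order_fill_lemma} to reduce to $J\cap\{1,\ldots,s-1\}=K\cap\{s+1,\ldots,r\}=\emptyset$, it runs a compactness argument in the Sikora topology on $LO(\pi_1(M))$, pulling back the closed set of tuples with the prescribed boundary slopes and observing that the image of detected slopes on $T_s$ is closed; if rational approximations to $[\alpha_s]$ are detectable, so is $[\alpha_s]$. The remaining degenerate case (where rational approximations are \emph{not} detectable, forcing one of the $M_i$ to be essentially $T^2\times I$ after filling) is handled separately. Your proposal contains none of this machinery, and the two fixes you float do not substitute for it.
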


\begin{proof}  
First consider the case that $[\alpha_s]$ is rational (cf. \cite[Theorem 8]{CLW}). Suppose that $(J; [\alpha_*])$ is $\mathfrak{o}_1$-detected, and
$(K;[\beta_*])$ is $\mathfrak{o}_2$-detected. Let $C_i \subset
\pi_1(M_i)$ be a normal, $\mathfrak{o}_i$-convex subgroup such that $C_i \cap \pi_1(T_j) \subseteq \langle \alpha_j \rangle  \cap \pi_1(T_j)$
for all $j$ with equality if $j \in J$ (similarly for all $k \in K$). Each ordering $\mathfrak{o}_i$
descends to an ordering $\mathfrak{o}_i'$ of $\pi_1(M_i)/C_i$.  

There is a map $\bar{\phi}: \pi_1(T_s)/ \langle \alpha_s \rangle \rightarrow \pi_1(T_s)/\langle
\beta_s \rangle$ induced by $\phi$ which we use to amalgamate the left-orderable groups
$\pi_1(M_1)/C_1$ and $\pi_1(M_2)/C_2$ along cyclic subgroups.  We arrive a short exact
sequence
\[ 1 \rightarrow C \rightarrow \pi_1(M_1) *_{\phi} \pi_1(M_2)
\rightarrow \pi_1(M_1)/C_1 *_{\bar{\phi}} \pi_1(M_2)/C_2 \rightarrow 1
\]
where the middle term is isomorphic to the fundamental group
$\pi_1(M)$.  By \cite[Corollary 5.3]{BG},  the group  $\pi_1(M_1)/C_1
*_{\bar{\phi}} \pi_1(M_2)/C_2$ is left-orderable and admits a
left-ordering $\mathfrak{o}'$ that extends $\mathfrak{o}_1'$ and
$\mathfrak{o}_2'$.  Using the ordering $\mathfrak{o}'$ and the short
exact sequence above to left-order $\pi_1(M)$, we arrive at an
ordering $\mathfrak{o}$ of $\pi_1(M)$ detecting the tuple
$([\alpha_1], \ldots,[\alpha_{s-1}], [\beta_{s+1}], \ldots,
[\beta_r])$ of slopes.  Moreover, the kernel $C$ is convex in
$\mathfrak{o}$ and satisfies $C \cap \pi_1(T_j) = C_1 \cap \pi_1(T_j)$ for $j<s$ and $C \cap \pi_1(T_j) = C_2 \cap \pi_1(T_j) $ for $j>s$. Therefore $\mathfrak{o}$ detects
$((J \cup K)\setminus \{s \} ; ([\alpha_1], \ldots,[\alpha_{s-1}],
[\beta_{s+1}], \ldots, [\beta_r]))$.

Now consider the case that $[\alpha_s]$ is irrational. Without loss of generality we can assume that $J \cap \{1,2, \ldots, s-1\}$ and $K \cap \{s+1, s+2, \ldots, r\}$ are empty (cf. Remark \ref{irrational implies strong order detection} and Lemma \ref{order_fill_lemma}).  

Consider the torus $Z = \mathcal{S}(T_1)  \times \ldots \times \mathcal{S}(T_s) \times \ldots \times \mathcal{S}(T_r)$  
 and the closed compact subset $X = \{([\alpha_1], [\alpha_2], \ldots, [\alpha_{s-1}])\} \times \mathcal{S}(T_s) \times \{([\beta_{s+1}], [\beta_{s+2}], \ldots, [\beta_{r}])\}$ 
of $Z$. As above, there is a continuous map $s: LO(\pi_1(M)) \to Z$ which associates to an element of $LO(\pi_1(M))$ the $r$-tuple of slopes that it detects on $T_1, T_2, \ldots , T_s, \ldots ,T_r$. Then $Y = s^{-1}(X)$ is closed and compact in $LO(\pi_1(M))$, so the image of $Y$ in $\mathcal{S}(T_s)$ is closed. If there are rational slopes $[\alpha_s']$ arbitrarily close to $[\alpha_s]$ for which the lemma holds, then $[\alpha_s]$ is contained in the image of $Y$ in $\mathcal{S}(T_s)$, which implies the lemma holds. Otherwise, at least one of $M_1$ and $M_2$, say $M_1$, is of a very special sort (cf. Propositions \ref{reps to orders} and Corollary \ref{hen and detected slopes}). Set $J_0 = \{ j \in J \mid [\alpha_j] \mbox{ is rational} \}$, and let $M_1'$ denote the Seifert fibred manifold obtained from  $[\alpha_j]$-Dehn filling $M_1$ where $j \in J_0$. Indeed, in this case $M_1'$ (as in Lemma \ref{order_fill_lemma}) is just $S^1 \times S^1 \times I$ with boundary $T_1 \cup T_s$ (after reindexing $T_1, \ldots, T_{s-1}$). It follows that $[\alpha_1]$ equals $[\alpha_s]$ under the identification $\mathcal{S}(T_1) = \mathcal{S}(T_s)$ induced by $M_1'$. Thus the order detectability of $(K; [\beta_*])$ combines with the obvious homeomorphism $M \cong M_2$ to complete the proof of the lemma. 
\end{proof}

\begin{proposition} \label{order vertical}
Let $M$ be a compact orientable Seifert fibred manifold as in \S \ref{assumptions seifert} and $J \subseteq \{1, \ldots , r\}$. Fix $[\alpha_*] \in \mathcal{S}(M)$ such that $\{j \in J : [\alpha_j] = [h]\} = \emptyset$. 

$(1)$ If $M$ has base orbifold $Q(a_1, \ldots , a_n)$ then $(J; [\alpha_*])$ is order detected if and only if $v([\alpha_*]) \geq 1$. 

$(2)$ If $M$ has base orbifold $P(a_1, \ldots , a_n)$ and $v([\alpha_*]) \geq 1$ then $(J; [\alpha_*])$ is order detected if and only if $v([\alpha_*]) \geq 2$. 

\end{proposition}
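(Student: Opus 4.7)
My plan is to prove each direction of each statement separately.

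\emph{Necessity.} In (1) the hypothesis $v([\alpha_*])=0$ makes $[\alpha_*]$ horizontal, so Proposition~\ref{non-orientable implies not detected}(1) immediately denies detection. In (2), suppose $v([\alpha_*])=1$; I argue by contradiction. Reindex so $[\alpha_1]=[h]$ and each $[\alpha_j]$ with $j\ge2$ is horizontal. If $(J;[\alpha_*])$ were $\mathfrak{o}$-detected, Proposition~\ref{non-orientable implies not detected}(2) would force $h$ not to be $\mathfrak{o}$-cofinal. On the other hand, every generator of $\pi_1(M)$ is cofinal with $h$: the relations $y_i^{a_i}=h^{b_i}$ handle the $y_i$; horizontality of $[\alpha_j]$ for $j\ge2$ makes $h$ cofinal in each $\pi_1(T_j)$, and hence cofinal with $x_j$; and the defining relation $y_1\cdots y_n x_1\cdots x_r=1$ writes $x_1$ as a product of elements cofinal with $h$. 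Since $h$ is central, the collection of elements cofinal with $h$ is closed under products and inverses, so $h$ is cofinal in all of $\pi_1(M)$, contradicting non-cofinality.

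\emph{Sufficiency for (2).} After reindexing suppose $[\alpha_1]=[\alpha_2]=[h]$. Split $M=M_a\cup_T M_b$ along an essential torus $T$, where $M_a$ is a pair-of-pants $\times S^1$ containing $T_1, T_2, T$ (no exceptional fibres) and $M_b$ is the rest. Lemma~\ref{composingspace2} applied to $M_a$ provides, for any chosen slope $[\beta]$ on $T$, a left-ordering strongly detecting $(\{T\};([h],[h],[\beta]))$. If $v([\alpha_*])\ge3$, set $[\beta]=[h]$ and apply the proposition inductively to $M_b$ (whose slope tuple has $v\ge2$); if $v([\alpha_*])=2$, choose $[\beta]$ so that the slope tuple on $M_b$ is representation detected, then convert representation detection to order detection via Proposition~\ref{reps to orders}, with existence guaranteed by Theorem~\ref{realized standardly}. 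Lemma~\ref{gluing strong orders} then assembles the two orders into one on $\pi_1(M)$. Finally, Lemma~\ref{order_fill_lemma} and Remark~\ref{irrational implies strong order detection}(1) reduce the strong detection requirement at indices in $J$ to the automatic case of irrational slopes.

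\emph{Sufficiency for (1).} Assume $v([\alpha_*])\ge1$, say $[\alpha_1]=[h]$. Split $M=M_0\cup_T N_2$ along an essential torus $T$, with $N_2$ carrying its $Q_0$-base Seifert structure so that its fibre matches $M$'s at $T$, and $M_0$ Seifert fibred over a punctured sphere. The Klein-bottle group $\pi_1(N_2)=\langle z,h \mid zhz^{-1}=h^{-1}\rangle$ admits a left-order in which $\langle h\rangle$ is a convex normal subgroup (a lex order from the quotient $\pi_1(N_2)/\langle h\rangle\cong\mathbb Z$), strongly detecting $[h]$ on $\partial N_2$. On $M_0$, the slope tuple has two vertical entries ($[\alpha_1]$ and the fibre at $T$), hence $v\ge2$, and the just-established sufficiency of (2) produces a compatible left-order with convex subgroup meeting $\pi_1(T)$ in $\langle h\rangle$. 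Lemma~\ref{gluing strong orders} then combines the two orderings into one on $\pi_1(M)$.

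\emph{Principal obstacle.} The hardest case is sufficiency for (2) when $v([\alpha_*])=2$: the split reduces $v$ to $1$ on $M_b$, a situation the proposition itself rules out, so the inductive step breaks down. The resolution is to choose the gluing slope $[\beta]$ on $T$ so that the resulting horizontal slope tuple on $M_b$ is representation-detected by the realisability machinery of Eisenbud--Hirsch--Neumann, Jankins--Neumann, and Naimi (Theorem~\ref{realized standardly}), then invoke Proposition~\ref{reps to orders}. Balancing this choice against the strong-detection requirement of Lemma~\ref{gluing strong orders} --- and similarly negotiating the delicate interaction between the $N_2$-side and $M_0$-side convex subgroups in sufficiency for (1) --- is the technical heart of the argument.
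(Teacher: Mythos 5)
The necessity arguments are in line with the paper's. The real divergence and the real trouble are in your sufficiency arguments.

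\textbf{Sufficiency for (2), $v([\alpha_*])\geq 3$.} You peel off only two vertical boundary tori into $M_a$ and set $[\beta]=[h]$ on the new torus $T$, hoping to induct on $M_b$. But Lemma~\ref{gluing strong orders} requires the shared index to lie in \emph{both} $J$-sets, i.e.\ strong detection of the slope on $T$ from each piece. On $M_a$ this is fine (Lemma~\ref{composingspace2} with $[\alpha]=[h]$ gives strong detection on a product piece). On $M_b$, however, the inductive invocation of the proposition cannot put $T$ in its $J$-set: the proposition's hypothesis is $\{j\in J : [\alpha_j]=[h]\}=\emptyset$, and $[\beta]=[h]$. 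Worse, Lemma~\ref{strongly detected vertical slopes} says that \emph{any} strong detection of a vertical slope on a Seifert piece forces \emph{every} boundary slope on that piece to be $[h]$, which is false on $M_b$ whenever some $[\alpha_j]$ with $j>v([\alpha_*])$ is horizontal. So the gluing step is unjustified. The paper sidesteps this entirely by putting \emph{all} $v([\alpha_*])$ vertical boundary tori into the single product piece $M_1\cong P_1\times S^1$; then $M_2$ carries an all-horizontal tuple, the shared slope $[\alpha]$ on $T$ is chosen \emph{horizontal} inside the EHN/JN/Naimi realisability interval (Proposition~\ref{hen and detected slopes}), and strong detection of $[\alpha]$ at $T$ is available from both $M_1$ (Lemma~\ref{composingspace2}, $r\geq 3$) and $M_2$ (Proposition~\ref{reps to orders}). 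No induction is needed, and no vertical slope ever appears at the gluing torus.

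\textbf{Sufficiency for (1).} You assert that assertion (2) applied to $M_0$ produces a convex subgroup meeting $\pi_1(T)$ in $\langle h\rangle$. This is exactly strong detection of the vertical slope $[h]$ at $T$, which --- again by Lemma~\ref{strongly detected vertical slopes} --- cannot be produced unless every boundary slope of $M_0$ is $[h]$, and in general some $[\alpha_j]$ is horizontal. Assertion (2) never gives strong detection at a vertical slope (by hypothesis on $J$), so this step needs a different justification than the one you give. The paper's own reduction of (1) to (2) is terse here; the substance of the argument has to supply strong detection at $T$ by a route other than a blind appeal to assertion (2) with $T$ in the $J$-set.

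\textbf{Minor.} The existence of a suitable gluing slope $[\beta]$ in your $v([\alpha_*])=2$ case is supplied by Proposition~\ref{hen and detected slopes} (the realisability interval), not by Theorem~\ref{realized standardly}, which only converts general JN-realisability to realisability in some $\widetilde{PSL}(2,\mathbb R)_k$.
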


\begin{proof} 
Suppose that $M$ has base orbifold $Q(a_1, \ldots , a_n)$. By Lemma \ref{non-orientable implies not detected}(1), if $(J; [\alpha_*])$ is order detected then $v([\alpha_*]) \geq 1$. Assume conversely that $v([\alpha_*]) \geq 1$. If $r = 1$, $[\alpha_*] = [\alpha_1]$ is the fibre class $[h]$. Hence $J = \emptyset$. Since $Q$ is non-orientable, $h$ is the rational longitude of $M$ and therefore $(\{1\}; [h])$ is order detected.  This implies that $( \emptyset ; [h])$ is order detected, as required.

Assume that $r > 1$ and note that $M$ splits along an essential vertical torus $T$ as the union of a Seifert manifold $N$ with base orbifold $Q_0(a_1, \ldots , a_n)$ and a Seifert manifold $M_0$ where $\partial M_0 = \partial M \cup T$. It is clear that $M_0$ has base orbifold a planar surface with $r+1 \geq 3$ boundary components.  If assertion (2) holds, then by using assertion (1) in the case $r=1$ each of $\pi_1(M_0)$ and $\pi_1(N)$ can be equipped with normal families of left-orderings $\mathcal{L}$ and $\mathcal{L}'$ respectively satisfying the hypotheses of Lemma \ref{basic normal families}.  By construction, $\mathcal{L}$ and $\mathcal{L}'$ are compatible with a gluing map which identifies the fibre of $M_0$ with the fibre of $N$ (\textit{cf.} Proposition \ref{gluing ready pieces}).  Then Theorem \ref{BG basic gluing} allows us to construct the required ordering of $\pi_1(M)$. Thus we are reduced to proving assertion (2).

Suppose that $M$ has base orbifold $P(a_1, \ldots , a_n)$, $v([\alpha_*]) = 1$, and $(J; [\alpha_*])$ is $\mathfrak{o}$-detected. Without loss of generality we can suppose that $[\alpha_1] = [h]$. Recall the presentation from \S \ref{presentation seifert planar} and observe that $\pi_1(M)$ is generated by $y_1,  \ldots , y_n, x_2, \ldots , x_r, h$.  Since $v([\alpha_*])=1$ the slopes $[\alpha_2], \ldots , [\alpha_r]$ are horizontal, and then Proposition \ref{non-orientable implies not detected} implies that $[\alpha_1 ]$ is horizontal as well.  Thus
 $(J; [\alpha_*])$ is not $\mathfrak{o}$-detected when $v([\alpha_*]) = 1$. 

Suppose that $v([\alpha_*]) \geq 2$. Without loss of generality we can suppose that $[\alpha_j ] = [h]$ for $1 \leq j \leq v([\alpha_*])$ and 
$[\alpha_j ]$ is horizontal for $v([\alpha_*]) + 1 \leq j \leq r$, and $J \subset \{v([\alpha_*])+1, \ldots, r \}$. Choose an essential vertical torus $T$ cutting $M$ into two pieces $M_1$ and $M_2$ where 
$M_1 \cong P_1 \times S^1$ ($P_1$ a planar surface) has boundary tori $T_1, \ldots, T_{v([\alpha_*])}, T$ and $M_2$
is a Seifert fibred space with boundary tori $T_{v([\alpha_*])+1}, \ldots,
T_r, T$.  Set $J' = \{ j - v([\alpha_*]) \mid j \in J \}$.  Then $J'$ is the set of indices whose slopes on $M_2$ are strongly detected.

  By applying Corollary \ref{hen and detected slopes} and Proposition \ref{reps to orders} there exists a slope $[\alpha] \in \mathcal{S}(T)$ and a left-ordering of $\pi_1(M_2)$ that detects $( J' \cup \{ r+1-v([\alpha_*]) \} ; ([\alpha_{v([\alpha_*])+1}], \ldots, [\alpha_r], [\alpha]))$, and by Lemma \ref{composingspace2}, there exists a
left-ordering of $\pi_1(M_1)$ detecting $(\{ v([\alpha_*]) \}; ([h],
\ldots, [h], [\alpha]))$.   It follows from Lemma \ref{gluing strong orders}
that $\pi_1(M) \cong \pi_1(M_1) *_{\pi_1(T)} \pi_1(M_2)$ admits a
left-ordering $\mathfrak{o}$ detecting $(J; ([h], \ldots, [h],
[\alpha_{v([\alpha_*])+1}], \ldots, [\alpha_r])$. This completes the proof of assertion (2). 
\end{proof}

\section{Left-orders, dynamic realisations and slopes}\label{section: Left-orders, dynamic realisations, foliations and slopes}

We have seen how representations yield orders in a slope preserving fashion. The goal of this section is to reverse this process. 

\subsection{The dynamical realisation of a left-ordering}
We begin with a classic result concerning countable left-orderable groups (cf. \cite[Theorem 6.8]{Ghys}, \cite[Proposition 2.1]{Navas}). We include a brief sketch of the proof of the forward implication for later use.  

\begin{proposition} 
\label{prop:dynamical}
A non-trivial countable group $G$ is left-orderable if and only if there exists a faithful 
representation $\rho: G \rightarrow \mathrm{Homeo_+}(\mathbb{R})$.
\end{proposition}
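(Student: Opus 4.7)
The plan is to prove the two implications separately, with the reverse direction being a quick consequence of earlier material and the forward direction being the substantive construction (the \emph{dynamical realisation} of $\mathfrak{o}$).

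For the ($\Leftarrow$) direction, I would argue as in the proof of Proposition \ref{reps to orders}. Fix an enumeration $\{r_1, r_2, r_3, \ldots\}$ of $\mathbb{Q}$ and define a relation on $\mathrm{Homeo}_+(\mathbb{R})$ by $f > f'$ if and only if $f(r_{k_0}) > f'(r_{k_0})$, where $k_0 = \min\{k : f(r_k) \ne f'(r_k)\}$. One checks easily that this is a left-invariant total ordering on $\mathrm{Homeo}_+(\mathbb{R})$. Given a faithful representation $\rho: G \to \mathrm{Homeo}_+(\mathbb{R})$, pull this order back along $\rho$ to obtain a left-ordering of $G$; faithfulness of $\rho$ together with totality of the order on the image guarantees we obtain a total order on $G$, and non-triviality of $G$ gives the non-trivial order required by our convention.

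For the ($\Rightarrow$) direction, the first step is to construct an order-preserving injection $t: G \hookrightarrow \mathbb{R}$. Enumerate $G = \{g_0, g_1, g_2, \ldots\}$ with $g_0 = 1$, and define $t$ inductively by setting $t(g_0) = 0$ and, having defined $t(g_0), \ldots, t(g_n)$, placing $t(g_{n+1})$ according to the position of $g_{n+1}$ in the $\mathfrak{o}$-order on $\{g_0, \ldots, g_n\}$: if $g_{n+1}$ is $\mathfrak{o}$-greater than all of these, set $t(g_{n+1}) = \max_i t(g_i) + 1$; if $\mathfrak{o}$-less than all of them, set $t(g_{n+1}) = \min_i t(g_i) - 1$; otherwise, let $g_i, g_j$ be the immediate $\mathfrak{o}$-predecessor and $\mathfrak{o}$-successor of $g_{n+1}$ in $\{g_0, \ldots, g_n\}$, and set $t(g_{n+1}) = \tfrac{1}{2}(t(g_i) + t(g_j))$. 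By construction $t$ is strictly order-preserving.

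Next I would define the action. For each $g \in G$ put $\rho(g)(t(h)) := t(gh)$ for every $h \in G$; since $\mathfrak{o}$ is left-invariant this is an order-preserving bijection of $t(G) \subset \mathbb{R}$. Extend $\rho(g)$ uniquely by continuity to the closure $\overline{t(G)}$ (this works because an order-preserving bijection of a subset of $\mathbb{R}$ to itself extends uniquely and continuously to its closure), and then extend affinely across each bounded complementary interval of $\overline{t(G)}$ in $\mathbb{R}$; on any unbounded complementary interval extend by translation. The resulting map $\rho(g): \mathbb{R} \to \mathbb{R}$ lies in $\mathrm{Homeo}_+(\mathbb{R})$, and the identity $\rho(g_1 g_2)(t(h)) = t(g_1 g_2 h) = \rho(g_1)(\rho(g_2)(t(h)))$ on the dense subset $t(G)$ shows $\rho$ is a homomorphism. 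Faithfulness is immediate: if $\rho(g) = \mathrm{id}_{\mathbb{R}}$ then $t(g) = \rho(g)(t(1)) = t(1)$, forcing $g = 1$ by injectivity of $t$.

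The main technical point to be careful about is the extension across the complement of $\overline{t(G)}$: one must verify that $\rho(g)$ maps complementary intervals to complementary intervals so that the affine/translational extensions on each piece glue together into a single orientation-preserving homeomorphism of $\mathbb{R}$. This follows because $\rho(g)$ restricted to $t(G)$ is order-preserving, hence extends to an order-preserving bijection of $\overline{t(G)}$, and therefore sends the bounded (resp. unbounded) components of $\mathbb{R} \setminus \overline{t(G)}$ to components of the same type in an order-preserving manner. Everything else is routine.
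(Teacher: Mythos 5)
Your proof is correct and follows the same approach as the paper's: for the forward direction you reproduce the dynamical realisation construction (order-preserving embedding $t\colon G\hookrightarrow\mathbb{R}$, action $g(t(h))=t(gh)$ on $t(G)$, continuous extension to $\overline{t(G)}$, affine extension across complementary gaps), and for the reverse direction you make explicit the pull-back of a left-order on $\mathrm{Homeo}_+(\mathbb{R})$ along a faithful $\rho$, which the paper delegates to \cite{Co} but in fact uses in identical form in the proof of Proposition \ref{reps to orders}. The only cosmetic remark is that your clause about unbounded complementary intervals is vacuous, since $t(G)$ is in fact unbounded above and below (the paper uses exactly this in the proof of Lemma \ref{lemma: non-trivial}(1)): $G$ is nontrivial, torsion-free, hence infinite, and a left-ordered infinite group has neither a greatest nor a least element, so the "new max"/"new min" cases recur infinitely often and push $t(G)$ out to $\pm\infty$.
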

\begin{proof}
It suffices to prove the forward implication (\cite{Co}). Suppose that $G$ is left-orderable with ordering $<$ and fix an enumeration $\{ g_0, g_1, g_2, \ldots \}$ of $G$ with $g_0 = id$. Inductively define an order-preserving embedding $t:G\rightarrow \mathbb{R}$ as follows:  Set $t(g_0) = 0$.  If $t(g_0), \ldots, t(g_i)$ have already been defined and $g_{i+1}$ is either larger or smaller than all previously embedded elements, set:
\begin{displaymath} 
   t(g_{i+1}) = \left\{
     \begin{array}{ll}
       \mathrm{max}\{t(g_0), \ldots , t(g_i) \} +1 & \mbox{ if } g_{i+1} > \mathrm{max}\{g_0, \ldots , g_i \} \\
       \mathrm{min}\{t(g_0), \ldots , t(g_i) \} -1 & \mbox{ if } g_{i+1} < \mathrm{min}\{g_0, \ldots , g_i \} \\

     \end{array}
   \right.
\end{displaymath} 
On the other hand, if there exist $j, k \in \{ 0, \ldots, i \}$ such that $g_j < g_{i+1} < g_k$ and there is no $n \in \{ 0, \ldots ,i \}$ such that $g_j < g_{n} < g_k$, set $ t(g_{i+1}) = \frac{t(g_j)+t(g_k)}{2}$.
The group $G$ acts in an order-preserving way on $t(G)$ according to the rule $g(t(h)) = t(gh)$. This action extends uniquely to an order-preserving action of $G$ on the closure $\overline{t(G)}$. Since the complement of $\overline{t(G)}$ is a disjoint union of open intervals whose end-points lie in $\overline{t(G)}$, we can extend the $G$-action on $\overline{t(G)}$ affinely over $\mathbb{R} \setminus \overline{t(G)}$.  This defines a faithful representation $\rho_{\mathfrak{o}} : G \rightarrow \mathrm{Homeo}_+(\mathbb{R})$.   
\end{proof}

\begin{definition}
{\rm Given a left-ordering $\mathfrak{o}$ of a countable group $G$, a representation $\rho_{\mathfrak{o}}$ constructed as in Proposition \ref{prop:dynamical} is called a \textit{dynamical realisation} of $\mathfrak{o}$.
}
\end{definition}

It follows from the method of proof of \cite[Lemma 2.8]{Navas} that any two dynamical realisations of a left-orderable group $G$ are conjugate in $\hbox{Hom}(G, \hbox{Homeo}_+(\mathbb R))$. 

\begin{lemma} \label{lemma: non-trivial} 
Let $\mathfrak{o}$ be a left-order on a group $G$ and $\rho_{\mathfrak{o}}: G \rightarrow \mathrm{Homeo}_+(\mathbb{R})$ a dynamical realisation of $\mathfrak{o}$.

$(1)$ The action on $\mathbb R$ induced by $\rho_{\mathfrak{o}}$ is nontrivial, i.e. there are no global fixed points.

$(2)$ An element $g \in G$ is $\mathfrak{o}$-cofinal if and only if $\rho_{\mathfrak{o}}(g)$ is fixed point free $($and if and only if it is conjugate in $\mathrm{Homeo}_+(\mathbb{R})$ to $\hbox{sh}(\pm 1)$$)$. 

\end{lemma}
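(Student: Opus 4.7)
The plan is to exploit two features of the construction of $\rho_\mathfrak{o}$: the $G$-action on $t(G)$ is transitive (since $h \cdot t(k) = t(hk)$), and the embedding $t$ is order-preserving. The first step is to establish the unboundedness of $t(G)$ in $\mathbb R$. A non-trivial left-ordered group has no maximum (if $g > 1$ then $hg > h$ for every $h$), and no minimum. So during the recursive construction of $t$ there are infinitely many indices $i$ at which $g_{i+1}$ is larger than all of $g_0, \ldots , g_i$, and each such index pushes the running maximum of $t(g_0), \ldots , t(g_{i+1})$ up by $1$; hence $\sup t(G) = +\infty$, and symmetrically $\inf t(G) = -\infty$. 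Part (1) then follows quickly: were $x \in \mathbb R$ a global fixed point of $\rho_\mathfrak{o}$, then order-preservation would render each of $(-\infty, x)$ and $(x, \infty)$ $\rho_\mathfrak{o}(G)$-invariant, and by transitivity on $t(G)$ the orbit $G \cdot 0 = t(G)$ of $0$ would lie entirely on one side of $x$, contradicting unboundedness.

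For the forward direction of (2), I would assume $g$ is $\mathfrak{o}$-cofinal, say $g > 1$ (the other case is symmetric). For arbitrary $h \in G$, cofinality gives an $n$ with $g^{-n} < h < g^n$, and hence $\rho_\mathfrak{o}(g)^{-n}(0) = t(g^{-n}) < t(h) < t(g^n) = \rho_\mathfrak{o}(g)^n(0)$. As $h$ varies, $t(h)$ ranges over an unbounded subset of $\mathbb R$ by (1), so the orbit $\{\rho_\mathfrak{o}(g)^n(0)\}_{n \in \mathbb Z}$ is unbounded above and below. This rules out fixed points of $\rho_\mathfrak{o}(g)$, since any fixed point $x_0$ would confine the orbit of $0$ to one of the half-lines $(-\infty, x_0]$ or $[x_0, \infty)$.

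For the reverse direction, I would assume $\rho_\mathfrak{o}(g)$ has no fixed point; by order-preservation either $\rho_\mathfrak{o}(g)(x) > x$ for all $x$ or $\rho_\mathfrak{o}(g)(x) < x$ for all $x$. In the first case the orbit $\{\rho_\mathfrak{o}(g)^n(0)\}$ is strictly increasing, and if it were bounded above its monotone limit would be a fixed point; hence $\rho_\mathfrak{o}(g)^n(0) \to +\infty$, and similarly $\rho_\mathfrak{o}(g)^{-n}(0) \to -\infty$. Given any $h \in G$ there is then an $n$ with $t(g^{-n}) < t(h) < t(g^n)$, i.e.\ $g^{-n} < h < g^n$, proving cofinality of $g$. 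Finally, the equivalence between fixed-point freeness of $\rho_\mathfrak{o}(g) \in \mathrm{Homeo}_+(\mathbb R)$ and conjugacy to $\mathrm{sh}(\pm 1)$ is exactly Lemma~\ref{trans}(1), applied once fixed-point freeness is known.

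The only delicate point I anticipate concerns passing from conclusions about the action on $t(G)$ to the affinely-extended action on the complementary open intervals. However, any fixed point inside such a gap interval would force the endpoints of that interval, which lie in $\overline{t(G)}$, to be fixed as well, so controlling the dynamics on $\overline{t(G)}$ (and hence on $t(G)$ by density) will be enough to settle both parts of the lemma.
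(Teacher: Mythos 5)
Your proof is correct and takes essentially the same approach as the paper: both rest on the unboundedness of $t(G)$, the identification of the $\langle g\rangle$-orbit of $0$ with $\{t(g^n)\}$, and Lemma~\ref{trans}(1). You spell out the unboundedness of $t(G)$ and the monotone-limit-is-a-fixed-point argument that the paper leaves implicit (attributing the discreteness of $\{t(g^n)\}$ somewhat vaguely to "the construction"), but the underlying ideas coincide.
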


\begin{proof}
Assertion (1) follows from the observation that the set $t(G)$ is unbounded above and below. 

Next suppose that $g \in G$ is $\mathfrak{o}$-cofinal and $x \in \mathbb R$. There is an integer $n$ such that $x \in [t(g^n), t(g^{n+1}))$, so $\rho_{\mathfrak{o}}(g)(x) \in  [t(g^{n+1}), t(g^{n+2}))$ and therefore $\rho_{\mathfrak{o}}(g)(x) \ne x$. Conversely suppose that $\rho_{\mathfrak{o}}(g)$ is fixed point free. It follows from the construction of $\rho_{\mathfrak{o}}$ that the intersection of $\{t(g^n) : n \in \mathbb Z\}$ with any bounded subset of $\mathbb R$ is finite. Hence $g$ must be $\mathfrak{o}$-cofinal. Finally observe that Lemma \ref{trans}(1) implies that an element of $\hbox{Homeo}_+(\mathbb R)$ is conjugate to $\hbox{sh}(\pm1)$ if and only if it is fixed point free. This proves (2). 
\end{proof}

\subsection{Left-orders, dynamic realisations and slopes}

The next proposition is a converse to Proposition \ref{reps to orders}. 

\begin{proposition} \label{dynamical detects}
Let $M$ be a compact orientable Seifert fibred manifold  as in \S \ref{assumptions seifert}. Suppose that $J \subset \{1, 2, \ldots, r\}$ and $(J; [\alpha_*])$ is $\mathfrak{o}$-detected where $[\alpha_*]$ is horizontal. 

$(1)$ If $\rho_{\mathfrak{o}}: \pi_1(M) \rightarrow {Homeo}_+(\mathbb R)$ is a dynamic realisation of $\mathfrak{o}$, then $\rho_{\mathfrak{o}}$ is conjugate in $\hbox{Homeo}_+(\mathbb R)$ to a representation $\rho$ with values in $\widetilde{\hbox{Homeo}}_+(S^1)$. Further, each $[\alpha_j]$ is $\rho$-detected.

$(2)$  $(J; [\alpha_*])$ is $\rho'$-detected for some $\rho' \in \mathcal{R}_0(M)$ which takes values in some $\widetilde{PSL}(2,\mathbb R)_k$.
\end{proposition}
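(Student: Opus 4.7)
The plan is to prove (1) by conjugating a dynamical realisation into $\widetilde{\hbox{Homeo}}_+(S^1)$ and reading off the detected slopes from translation numbers, then to bootstrap (1) into (2) by first quotienting out the convex normal subgroup supplied by $\mathfrak{o}$-detection.

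For (1), since $[\alpha_*]$ is horizontal and $\mathfrak{o}$-detected, Proposition \ref{non-orientable implies not detected} forces $M$ to have base orbifold $P(a_1, \ldots , a_n)$ (so that $h$ is central in $\pi_1(M)$) and $h$ to be $\mathfrak{o}$-cofinal; replacing $\mathfrak{o}$ by $\mathfrak{o}_{op}$ if needed (Remark \ref{irrational implies strong order detection}(2)) I assume $h > 1$. By Lemma \ref{lemma: non-trivial}(2) the element $\rho_{\mathfrak{o}}(h)$ is fixed point free, and the definition of the dynamical realisation gives $\rho_{\mathfrak{o}}(h)(0) = t(h) > 0$, so Lemma \ref{trans}(1) lets me conjugate $\rho_{\mathfrak{o}}$ in $\hbox{Homeo}_+(\mathbb R)$ to a representation $\rho$ with $\rho(h) = \hbox{sh}(1)$; centrality of $h$ then places the image of $\rho$ in the centraliser $\widetilde{\hbox{Homeo}}_+(S^1)$ of $\hbox{sh}(1)$, so $\rho \in \mathcal{R}_0(M)$. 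To identify the slope detected by $\rho$ at $T_j$, the key observation is that for $\gamma \in \pi_1(T_j)$, centrality and cofinality of $h$ in $\pi_1(M)$ promote cofinality in $\pi_1(T_j)$ to cofinality in $\pi_1(M)$, so $\gamma$ is $\mathfrak{o}$-cofinal in $\pi_1(M)$ if and only if $\gamma$ lies off $L_{\mathfrak{o}|\pi_1(T_j)}$. Combined with Lemma \ref{lemma: non-trivial}(2) and the equivalence $\tau(f) = 0 \Leftrightarrow f$ has a fixed point, this identifies the kernel of the linear extension of $\tau \circ \rho$ to $\pi_1(T_j) \otimes \mathbb R$ as the line $L_{\mathfrak{o}|\pi_1(T_j)}$; a sign check (positive $\gamma$ gives $\rho(\gamma)(0) > 0$ which with fix-point-freeness forces $\tau(\rho(\gamma)) > 0$) confirms the direction of this kernel is $[\alpha_j]$.

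For (2), let $C$ be the convex normal subgroup of $\pi_1(M)$ provided by $\mathfrak{o}$-detection of $(J; [\alpha_*])$, so that $C \cap \pi_1(T_j) = \langle \alpha_j \rangle \cap \pi_1(T_j)$ for $j \in J$. The plan is to replay the argument of (1) after first passing to the quotient: set $G = \pi_1(M)/C$ with the induced left-order $\bar{\mathfrak{o}}$, let $\rho_{\bar{\mathfrak{o}}}: G \to \hbox{Homeo}_+(\mathbb R)$ be its dynamical realisation, and define $\rho'' = \rho_{\bar{\mathfrak{o}}} \circ q$ where $q: \pi_1(M) \to G$ is the quotient. Cofinality of $h$ together with convexity of $C$ force $h \notin C$, so $\bar h$ is central and cofinal in $G$, and the argument of (1) produces, after conjugation in $\hbox{Homeo}_+(\mathbb R)$, a representation $\rho'' \in \mathcal{R}_0(M)$ with $[\alpha_j(\rho'')] = [\alpha_j]$ for every $j$. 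For $j \in J$ with $[\alpha_j]$ rational, $\pi_1(T_j)/(\pi_1(T_j) \cap C) \cong \mathbb Z$, so $\rho''(\pi_1(T_j))$ is a cyclic subgroup of $\widetilde{\hbox{Homeo}}_+(S^1)$ containing $\hbox{sh}(1) = \rho''(h)$; Lemma \ref{trans}(2) then conjugates it into the translation subgroup of $\mathbb R$, yielding strong $\rho''$-detection. Setting $J_1 = \{j \in J : [\alpha_j] \hbox{ is rational}\}$, the pair $(J_1; [\alpha_*])$ is now representation detected, Corollary \ref{irrational implies strong} upgrades the strongly detected set to include every $j$ with $[\alpha_j]$ irrational and hence all of $J$, and finally Proposition \ref{detected in psl2k} produces the desired $\rho' \in \mathcal{R}_0(M)$ with values in some $\widetilde{PSL}(2, \mathbb R)_k$.

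The most delicate step will be verifying detection by $\rho''$ at indices $j \notin J$, where $\mathfrak{o}$-detection supplies only the inclusion $C \cap \pi_1(T_j) \leq \langle \alpha_j \rangle \cap \pi_1(T_j)$ rather than equality. The crucial input is that $\pi_1(T_j) \cap C$ is automatically convex in the restriction $\mathfrak{o}|\pi_1(T_j)$, hence coincides with one of the three convex subgroups $\{0\}$, $\langle \alpha_j \rangle \cap \pi_1(T_j)$, or $\pi_1(T_j)$ of $\mathbb Z^2$; the last case is ruled out by $h \notin C$, and in the remaining two cases the cofinality-versus-translation-number correspondence for $\rho_{\bar{\mathfrak{o}}}$, combined with centrality and cofinality of $\bar h$ in $G$, lets the slope identification from (1) go through unchanged.
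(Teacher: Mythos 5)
Your proposal is correct and follows essentially the same route as the paper: conjugate the dynamical realisation into $\widetilde{\hbox{Homeo}}_+(S^1)$ using cofinality and centrality of $h$ for part (1), then for part (2) pass to $\pi_1(M)/C$, take a dynamical realisation of the induced order, compose, verify detection by a case split on $C \cap \pi_1(T_j)$, and finish via Proposition \ref{detected in psl2k} and Corollary \ref{irrational implies strong}. The one place you go beyond the paper is your explicit convexity argument showing $C \cap \pi_1(T_j)$ is one of $\{1\}$, $\langle\alpha_j\rangle \cap \pi_1(T_j)$, or $\pi_1(T_j)$ — a detail the paper leaves implicit — which is a welcome clarification rather than a departure.
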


\begin{proof}
First note that if $[\alpha_*]$ is horizontal, Proposition \ref{non-orientable implies not detected} implies that $M$ has base orbifold $P(a_1, \ldots, a_n)$ and $h$ is $\mathfrak{o}$-cofinal. Therefore  Lemma \ref{lemma: non-trivial} (2) implies that $\rho(h)$ is conjugate to $\hbox{sh}(\pm 1)$. Hence $\rho_{\mathfrak{o}}$ is conjugate in $\hbox{Homeo}_+(\mathbb R)$ to a representation $\rho$ with values in $\widetilde{\hbox{Homeo}}_+(S^1)$.  

Fix $j$ and let $L_\mathfrak{o} \supset \langle \alpha_j \rangle$ be the line in $H_1(T_j; \mathbb R)$ determined by $\mathfrak{o}$. By construction, each element of $\pi_1(T_j) \setminus L_\mathfrak{o}$ is $\mathfrak{o}$-cofinal while
$$t(\{g \in \pi_1(T_j) \setminus L_\mathfrak{o}: g > 1\}) = \{t(g) : g \in \pi_1(T_j) \setminus L_\mathfrak{o}\} \cap \mathbb R_+$$ 
and 
$$t(\{g \in \pi_1(T_j) \setminus L_\mathfrak{o}: g < 1\}) = \{t(g) : g \in \pi_1(T_j) \setminus L_\mathfrak{o}\} \cap \mathbb R_-$$ 
It follows that $\tau(\rho(g)) > 0$ for all positive $g \in \pi_1(T_j) \setminus L_\mathfrak{o}$ and $\tau(\rho(g)) < 0$ for all negative $g \in \pi_1(T_j) \setminus L_\mathfrak{o}$. Thus $[\alpha_j]$ is $\rho$-detected, so assertion (1) holds. 

Up to replacing $\mathfrak{o}$ by $\mathfrak{o}_{op}$, we can suppose that $h$ is $\mathfrak{o}$-positive and therefore that $\rho(h) = \hbox{sh}(1)$. (See Remark \ref{irrational implies strong order detection}(2).) We shall assume this below. 

Fix an $\mathfrak{o}$-convex, normal subgroup $C$ of $\pi_1(M)$ such that $C \cap \pi_1(T_j) = \langle \alpha_j \rangle \cap \pi_1(T_j)$ for $j \in J$ and $C \cap \pi_1(T_j) \leq \langle \alpha_j \rangle \cap \pi_1(T_j)$ otherwise. As $C$ is $\mathfrak{o}$-convex and normal, $\mathfrak{o}$ induces a left-order $\bar {\mathfrak{o}}$ on $\pi_1(M)/C$ by taking the positive cone of $\bar {\mathfrak{o}}$ to be the image in $\pi_1(M)/C$ of the positive cone of $\mathfrak{o}$. The convexity of $C$ implies that $\bar{\mathfrak{o}}$ is a well-defined left-order. 

Let $\rho_{\bar{\mathfrak{o}}}$ be a dynamic realisation of $\bar {\mathfrak{o}}$. Since the image of $h$ in $\pi_1(M)/C$ is $\bar {\mathfrak{o}}$-cofinal, we can assume that $\rho_{\bar{\mathfrak{o}}}(hC) = \hbox{sh}(1)$.  Therefore $\bar \rho \in \mathcal{R}_0(M)$ where $\bar \rho$ is the composition of the dynamic realisation $\rho_{\bar{\mathfrak{o}}}$ and the quotient $\pi_1(M) \to \pi_1(M)/C$. We claim that $(J; [\alpha_*])$ is $\bar \rho$-detected. Fix $j$ and suppose first of all that $C \cap \pi_1(T_j) =  \{1\}$. In this case, $\pi_1(T_j) \to \pi_1(M)/C$ is injective, so the left-orders on $\pi_1(T_j)$ induced from $\mathfrak{o}$ and $\bar{\mathfrak{o}}$ coincide. The method of proof of part (1) of this proposition then shows that $[\alpha_j(\bar \rho)] = [\alpha_j(\bar{\mathfrak{o}})] = [\alpha_j(\mathfrak{o})] = [\alpha_j]$. On the other hand, if $C \cap \pi_1(T_j) =  \langle \alpha_j \rangle \cap \pi_1(T_j)$, then $\hbox{ker}(\bar \rho) \cap \pi_1(T_j) = \langle \alpha_j \rangle \cap \pi_1(T_j)$ so $[\alpha_j]$ is $\bar \rho$-detected for such $j$. Further, when $[\alpha_j]$ is rational, it is strongly is $\bar \rho$-detected. Thus if $J_0 = \{j \in J : [\alpha_j] \hbox{ is rational}\}$, $(J_0; [\alpha_*])$ is $\bar \rho$-detected. Proposition \ref{detected in psl2k} and Corollary \ref{irrational implies strong} now show that $(J; [\alpha_*])$ is $\rho'$-detected for some $\rho' \in \mathcal{R}_0(M)$ which takes values in some $\widetilde{PSL}_2(\mathbb R)_k$.
\end{proof}

\begin{remark} \label{orders versus reps}
{\rm The methods of this section can be used to show the equivalence of statements (2) and (3) of Theorem \ref{equiv 2} and of Theorem \ref{equiv 3}. For instance, consider a dynamical realisation $\rho_{\mathfrak{o}}$ of a left-order $\mathfrak{o}$ on the fundamental group of a graph manifold rational homology $3$-sphere $W$. If $\mathfrak{o}$ satisfies the condition of statement (2) of Theorem \ref{equiv 2} then Lemma \ref{lemma: non-trivial}(2) implies that the image by $\rho_{\mathfrak{o}}$ of the class each Seifert fibre of a piece of $W$ is conjugate to $\hbox{sh}(\pm 1)$. Thus statement (3) of Theorem \ref{equiv 2} holds. If $\mathfrak{o}$ satisfies the condition of statement (2) of Theorem \ref{equiv 3}, then $\pi(W)/C$ admits an induced left-order $\overline{\mathfrak{o}}$ in which the image of the class of each Seifert fibre of a piece of $W$ is cofinal. The composition of the quotient homomorphism $\pi_1(W) \to \pi_1(W)/C$ with a dynamical realisation of $\overline{\mathfrak{o}}$ is a representation which satisfies the condition of statement (3) of Theorem \ref{equiv 3}. 

Conversely consider a representation $\rho: \pi_1(W) \to \hbox{Homeo}_+(\mathbb R)$ with non-trivial image. It follows from \cite[Theorem 1.1(1)]{BRW}) that $\pi_1(W)$ admits a left-ordering and therefore so does $\hbox{ker}(\rho)$ as long as it is non-trivial. Since $\hbox{Homeo}_+(\mathbb R)$ is left-orderable, there is a left-ordering $\mathfrak{o}$ on $\pi_1(W)$ induced by the exact sequence $1 \to \hbox{ker}(\rho) \to \pi_1(W) \to \hbox{image}(\rho) \to 1$ (cf. the proof of Proposition \ref{reps to orders}). Since $\hbox{sh}(1)$ is cofinal in the natural left-orderings on $\hbox{Homeo}_+(\mathbb R)$ (cf. the first paragraph of the proof of \cite[Proposition 2.1]{Navas}), the reader will verify that if $\rho$ satisfies statement (3) of Theorem \ref{equiv 2} then $\mathfrak{o}$ satisfies statement (2) of that theorem. Further if $\rho$ satisfies statement (3) of Theorem \ref{equiv 3} then $\mathfrak{o}$ satisfies statement (2) of that theorem when we take $C = \hbox{ker}(\rho)$. 
}
\end{remark}

\section{Detecting slopes via foliations} \label{section: detecting via foliations}

Let $M$ be a compact orientable Seifert fibred manifold  as in \S \ref{assumptions seifert}. We use $\mathcal{F}(M)$ to denote the set of isotopy classes of co-oriented, taut foliations on $M$ which are transverse to $\partial M$. The set of isotopy classes of co-oriented, horizontal foliations on $M$ is contained in $\mathcal{F}(M)$. 

\subsection{Co-oriented taut foliations in Seifert manifolds} 
Brittenham has made a detailed study of essential laminations in Seifert fibred manifolds (\cite{Br1, Br2}). The next proposition is a consequence of his results. \\ \\

\begin{proposition} \label{britt}  $\;$ 

$(1)$ Suppose that $W$ is a Seifert fibred space which admits a co-oriented taut foliation $\mathcal{F}$. If $W$ is a rational homology $3$-sphere, then $\mathcal{F}$ can be isotoped to be horizontal. Consequently, the base orbifold of $W$ has underlying space a $2$-sphere. 

$(2)$ Suppose that $M$ is a compact orientable Seifert fibred manifold as in \S \ref{assumptions seifert}. 

$(a)$ If $\mathcal{F} \in \mathcal{F}(M)$ has no compact leaves, then $\mathcal{F}$ can be isotoped to be horizontal in $M$. Consequently, the base orbifold of $M$ is orientable. 

$(b)$ If $\mathcal{F} \in \mathcal{F}(M)$ has a compact leaf $L$, then $L$ is non-separating and has non-empty boundary. Consequently, it is either a vertical annulus or a horizontal surface. In the former case, $\mathcal{F}$ can be isotoped so that every leaf is either vertical or horizontal in $M$. In the latter case, $\mathcal{F}$ can be isotoped to be horizontal in $M$, so the base orbifold of $M$ is orientable. 

\end{proposition}

\begin{proof} 
A co-oriented taut foliation $\mathcal{F}$ on a rational homology $3$-sphere Seifert fibred space $W$ has no compact leaves, since each closed, connected, orientable surface in $W$ separates. Hence, as $W$ has first Betti number $0$, each leaf of $\mathcal{F}$ has exponential growth (\cite[Corollary 6.4]{Pl}). This implies that no leaf of $\mathcal{F}$ can be isotoped to be vertical. To see this, observe that there is a Riemannian metric on $W$ for which the lengths of the Seifert fibres are uniformly bounded, which implies that the growth of any leaf which can be isotoped to be vertical is polynomial. Thus $\mathcal{F}$  can be isotoped to be horizontal (\cite[Theorem 1 and Proposition 6]{Br1}). The co-orientability of $\mathcal{F}$ then shows that the Seifert fibres of $W$ can be coherently oriented and so its base orbifold is orientable. This proves (1).  

Next we consider Assertion (2)(a). Suppose that $\mathcal{F} \in \mathcal{F}(M)$ has no compact leaves but cannot be isotoped to be horizontal in $M$. Then by \cite[Theorem 1 and Proposition 4]{Br2}, $\mathcal{F}$ can be split open along a finite number of leaves to produce an essential lamination $\mathcal{L}$ which contains a vertical sublamination $\mathcal{L}_0$. We claim that $\mathcal{L}_0$ contains a compact vertical leaf, contrary to our initial assumption, thus completing the proof. In the case that the base orbifold of $M$ is not orientable, it suffices to show that the pull back of $\mathcal{L}_0$ to a $2$-fold cover of $M$ whose base orbifold has a planar underlying surface has a compact vertical leaf. Thus, without loss of generality we can assume that the base orbifold of $M$ is of the form $P(a_1, \ldots , a_n)$ where $P$ is a planar surface. 

After splitting $\mathcal{L}_0$ along its leaves which contain exceptional fibres of $M$, we may assume that it is a vertical lamination in the exterior $M_0$ of the exceptional fibres of $M$. As $P$ is orientable,  $M_0 \cong P_0 \times S^1$ where $P_0$ is the exterior of the cone points of $P(a_1, a_2, \ldots, a_n)$. Since $\mathcal{L}_0$ is vertical, it projects to a $1$-dimensional lamination $L_0$ in $P_0$. Further, since a compressing or end-compressing disk for a leaf of $L_0$ in $P_0$ is also one for $\mathcal{L}_0$ in $M$, $L_0$ is incompressible in $P_0$. 

Let $\tau \subset \hbox{int}(P_0)$ be an incompressible train track which fully carries $L_0$. Since $\mathcal{F}$ is co-oriented, so is $L_0$ and therefore the branches of $\tau$ can be coherently oriented. Thus, any embedded loop in $\tau$ consists of a sequence of coherently oriented branches. Each such loop separates $P_0$ and so we can choose one, say $C$, such that $P_0$ splits as the union of two subsurfaces $S_0, S_1$ such that $S_0 \cap S_1 = C$ and $\tau \cap \hbox{int}(S_0) = \emptyset$. Since $\tau$ is essential, there is a point $x \in \hbox{int}(S_0) \setminus L_0$. Given an arc in $S_0$ which is  transverse to $L_0$ and connects a point of $C$ to $x$, it contains a unique point $y$ ``closest" to $x$, and it is not hard to see that the leaf of $L$ which passes through $y$ must be a simple closed curve. The inverse image of this leaf in $M_0$ is a separating torus leaf of $\mathcal{L}_0$, which contradicts the fact that $\mathcal{F}$ has no compact leaves. Hence combinatorially, $\tau$ is a tree whose extrema are contained in $\partial P$. But then there is an arc $A$ contained in $\tau$ which is properly embedded in $(P_0, \partial P)$ and which has a tubular neighbourhood $N(A)$ such that $\tau \cap (N(A) \setminus A)$, if non-empty,  is contained in one of the two components of $N(A) \setminus A$. Arguing as in the proof that $\tau$ contains no loops shows that $\mathcal{L}_0 \subset \mathcal{F}$ has an annular leaf, contrary to our assumption that $\mathcal{F}$ has no compact leaves. Thus Assertion (2)(a) holds. 

Finally, for Assertion (2)(b), suppose that $\mathcal{F}$ has a compact leaf $F$. Since $\mathcal{F}$ is taut and co-oriented, $F$ must be non-separating. In particular, as $M$ is contained in a rational homology $3$-sphere, $\partial F$ cannot be empty. Further, as a leaf of a taut foliation, it is incompressible in $M$. Consequently, it is isotopic to either a vertical annulus or a horizontal surface. In the former case, \cite[Theorem 1]{Br2} implies that $\mathcal{F}$ can be isotoped so that every leaf is either vertical or horizontal. In the latter case, we can apply \cite[Proposition 5]{Br2} to see that if $\mathcal{F}$ cannot be isotoped to be horizontal, it contains the product of a Reeb annulus and an interval. But this possibility is ruled out by the tautness of $\mathcal{F}$. Thus $\mathcal{F}$ can be isotoped to be horizontal and the base orbifold of $M$ is orientable, which completes the proof. 
\end{proof}

\begin{corollary} \label{horizontal or vertical annulus}
Let $M$ be a compact orientable Seifert fibred manifold as in \S \ref{assumptions seifert} and fix $\mathcal{F} \in \mathcal{F}(M)$. Then either $\mathcal{F}$ contains a vertical annulus leaf or it is horizontal and $M$ has an orientable base orbifold. In particular, for each boundary component $T_j$ of $M$, either $\mathcal{F} \cap T_j$ contains a vertical leaf or it is horizontal.
\end{corollary}

\begin{proof} 
The first statement follows immediately from Proposition \ref{britt}(2) while the second follows from Proposition \ref{britt}(2) and \cite[Theorem 1]{Br2}. 
\end{proof}

\begin{corollary} \label{horizontal implies horizontal}
Let $M$ be a compact orientable Seifert fibred manifold as in \S \ref{assumptions seifert} and fix $\mathcal{F} \in \mathcal{F}(M)$. Then $\mathcal{F}$ is horizontal if and only if it restricts to a horizontal foliation on each boundary component of $M$. In the case that such a foliation exists, the base orbifold of $M$ is orientable. 
\end{corollary}

\begin{proof} 
The forward direction of the first statement is obvious while the reverse implication follows from Proposition \ref{britt}(2). The second statement follows as in the proof of Proposition \ref{britt}. 
\end{proof}

\subsection{Foliation detection of slopes} \label{conventions} 
An orientation-preserving homeomorphism $f: S^1 \to S^1$ gives rise to a {\it suspension foliation} $\mathcal{F}_f$ on the quotient torus $T_f = (S^1 \times \mathbb R)/\big( (x, t) \equiv (f(x), t+1) \big)$ whose leaves are the image of the lines $\{x\} \times \mathbb R$. More generally, a foliation $\mathcal{F}$ on a torus $T$ is a {\it suspension foliation} if there is a homeomorphism $(T, \mathcal{F}) \to (T_f, \mathcal{F}_f)$ for some orientation-preserving homeomorphism $f$ of the circle. It is simple to see that $\mathcal{F}$ is a suspension if and only if it is transverse to a foliation $\mathcal{F}'$ of $T$ by simple closed curves. The homeomorphism $f$ is the first return map on a fixed leaf of $\mathcal{F}'$ determined by $\mathcal{F}$.  

A {\it linear} foliation on a torus is any foliation homeomorphic to the suspension of a rotation $S^1 \to S^1$. A linear foliation has closed leaves if and only if the rotation is of finite order, in which case each of its leaves is closed. Linear foliations can be isotoped to either coincide or be transverse. 

For each $\mathcal{F} \in \mathfrak{F}(M)$ and $1 \leq j \leq r$, $\mathcal{F}$ determines a co-dimension $1$ foliation $\mathcal{F} \cap T_j$ of $T_j$ which, after an isotopy, either contains a closed leaf or is horizontal on $T_j$ (Corollary \ref{horizontal or vertical annulus}). Suppose that the latter occurs and let $C_j$ denote a horizontal simple closed curve on $T_j$ which carries $x_j$. Then $T_j \cong C_j \times S^1$ where the second factor is vertical. The first return map $\mathbb R \to \mathbb R$ of the pull back of $\mathcal{F} \cap T_j$ to $C_j \times \mathbb R$ under the cover $C_j \times \mathbb R \to C_j \times S^1$ determines an element $f_j \in \widetilde{\hbox{Homeo}}_+(S^1)$. (See \cite[page 654]{EHN} for the details.) We can define a uniquely determined slope $[\alpha_j(\mathcal{F})] \in \mathcal{S}(T_j)$ as follows:  
\begin{itemize}

\item if $\mathcal{F} \cap T_j$ contains a closed leaf, we define $[\alpha_j(\mathcal{F})]$ to be the slope of that leaf. 

\vspace{.2cm} \item if $\mathcal{F} \cap T_j$ is horizontal we define $[\alpha_j(\mathcal{F})] = [\tau(f_j) h - h_j^*]$. 

\end{itemize}
 
These definitions coincide when $\mathcal{F} \cap T_j$ is horizontal and contains a closed leaf. To see this, note that if $p, q$ are coprime integers, then $\mathcal{F} \cap T_j$ has a closed leaf representing $[ph - qh^*]$ if and only if there is a $t \in \mathbb R$ such that $f_j^q(t) = t + p$. On the other hand, it follows from the basic properties of translation numbers (\cite[\S 5]{Ghys}) that the latter condition is equivalent to $\tau(f_j) = \frac{p}{q}$, that is $[\tau(f_j) h - h_j^*] = [ph - qh^*]$.

\begin{lemma}
$[\alpha_j(\mathcal{F})]$ depends only on $\mathcal{F} \cap T_j$. 
\end{lemma}

\begin{proof}
We have just seen that the lemma holds if $[\alpha_j(\mathcal{F})]$ is rational, so suppose that it isn't. In other words, suppose that $\tau(f_j)$ is irrational. In this case $\mathcal{F}_j = \mathcal{F} \cap T_j$ has no closed leaves and therefore can be assumed to be transverse to each slice $\{x\} \times S^1$ of the factorisation $T_j = C_j \times S^1$, where the second factor is vertical (Corollary \ref{horizontal or vertical annulus}). It follows that $\mathcal{F}_j$ is a suspension and so each leaf of the lift $\widetilde{\mathcal{F}}_j$ of $\mathcal{F}_j$ to $\mathbb R^2 = \mathbb R \times \mathbb R$, the universal cover of $T_j = C_j \times \mathbb R$, is the graph of a continuous function $\mathbb R \to \mathbb R$. In particular, it is closed and separating in $\mathbb R^2$. This allows us to identify the leaf space of $\widetilde{\mathcal{F}}_j$ with $\mathbb R \equiv \{0\} \times \mathbb R \subset \mathbb R^2$. The induced action of $\pi_1(T_j)$ on $\mathbb R$ is free since $\mathcal{F}_j$ has no closed leaves. Consider the associated homomorphism $\rho_j: \pi_1(T_j) \to \widetilde{{\rm Homeo}}_+(S^1)$ where by construction, $\rho_j(h) = \hbox{sh}(1)$. 

Set $\varphi = \tau \circ \rho_j: \pi_1(T_j) \to \mathbb R$ and note that for all integers $p, q$ we have $\varphi(h^px_j^{-q}) = p\varphi(h) - q\varphi(x_j) = p - q\tau(f_j)$. Since $\tau(f_j)$ is irrational, $\varphi(h^px_j^{-q}) = 0$ if and only if $p = q = 0$, and therefore by the properties of $\tau$ (\cite[\S 5]{Ghys}), $\rho_j(h^px_j^{-q})(t) = t$ for some $t$ if and only if $p = q = 0$. Then for each $1 \ne \gamma \in \pi_1(T_j)$, either $\rho_j(\gamma)(t) > t$ for all $t$ or $\rho_j(\gamma)(t) < t$ for all $t$. In the former case $\varphi(\gamma) > 0$ and in the latter $\varphi(\gamma) < 0$. Put another way, for any leaf $L$ of $\widetilde{\mathcal{F}}_j$, $\varphi(\gamma) > 0$ if and only if $\gamma(L)$ lies above $L$ and $\varphi(\gamma) < 0$ if and only if $\gamma(L)$ lies below $L$. Since $\varphi(h^px_j^{-q}) > 0$ if and only if $q = 0$ and $p > 0$ or $\tau(f_j) < \frac{p}{q}$, the slope $[\tau(f_j)h - h_j^*] \subset \mathbb R^2$ is the dividing line between the elements of $\pi_1(T_j) = H_1(T_j) \leq H_1(T_j; \mathbb R) = \mathbb R^2$ which move a leaf $L$ to one of its sides from the ones that move it to its other. Thus $[\alpha_j(\mathcal{F})]$ depends only on $\mathcal{F} \cap T_j$. 
\end{proof}

We call $[\alpha_*(\mathcal{F})] = ([\alpha_1(\mathcal{F})], [\alpha_2(\mathcal{F})], \ldots , [\alpha_r(\mathcal{F})])$ 
the {\it slope} of $\mathcal{F}$. 

\begin{definition}
{\rm Let $\mathcal{F}$ be a taut co-oriented foliation in $M$ which is transverse to $\partial M$.  
A slope $[\alpha_j] \in \mathcal{S}(T_j)$ is {\it detected} by $\mathcal{F}$, or $\mathcal{F}$-{\it detected}, if $[\alpha_j] = [\alpha_j(\mathcal{F})]$. It is {\it strongly $\mathcal{F}$-detected} if it is $\mathcal{F}$-detected and $\mathcal{F}|T_j$ is linear. For $J \subset \{1, 2, \ldots, r\}$ and $[\alpha_*] = ([\alpha_1], [\alpha_2], \ldots , [\alpha_r]) \in \mathcal{S}(M)$, we say that $(J; [\alpha_*])$ is $\mathcal{F}$-{\it detected} if $\mathcal{F}$ detects $[\alpha_j]$ for all $j$ and $\mathcal{F}$ strongly detects $[\alpha_j]$ for $j \in J$. Finally, we say that $(J; [\alpha_*])$ is {\it foliation}-{\it detected} if it is $\mathcal{F}$-detected for some $\mathcal{F} \in \mathfrak{F}(M)$.
}
\end{definition} 

We shall often simplify the phrase ``$(\emptyset; [\alpha_*])$ is $\mathcal{F}$-detected, resp. foliation detected", to ``$[\alpha_*]$ is $\mathcal{F}$-detected, resp. foliation detected". Similarly, we simplify ``$(\{1, 2, \ldots, r\}; [\alpha_*])$ is $\mathcal{F}$-detected, resp. foliation detected", to ``$[\alpha_*]$ is {\it strongly} $\mathcal{F}$-detected, resp. {\it strongly} foliation detected".

Set 
$$\mathcal{D}_{fol}(M; J) = \{[\alpha_*] \in \mathcal{S}(M) : (J; [\alpha_*]) \mbox{ is foliation detected}\}$$
When $J = \emptyset$ we simplify $\mathcal{D}_{fol}(M; J)$ to $\mathcal{D}_{fol}(M)$. 

Here is a rephrasing of Corollary \ref{horizontal implies horizontal}. 

\begin{proposition} \label{prop: horizontal foliation} 
Suppose that $[\alpha_*] \in \mathcal{S}(M)$ is $\mathcal{F}$-detected. Then $[\alpha_*]$ is horizontal if and only if $\mathcal{F}$ is horizontal. 
\qed
\end{proposition}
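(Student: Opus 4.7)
\emph{Plan.} The forward direction is immediate from the definitions: if $\mathcal{F}$ is horizontal then it is transverse to the Seifert fibration of $M$, and in particular along each boundary torus $T_j$; consequently $\mathcal{F} \cap T_j$ is transverse to the circle foliation of $T_j$ by fibres, so no leaf of $\mathcal{F} \cap T_j$ has slope $[h]$. Thus $[\alpha_j(\mathcal{F})] \neq [h]$ for every $j$ and $[\alpha_*(\mathcal{F})] = [\alpha_*]$ is horizontal.

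For the reverse direction the plan is to invoke Brittenham's theorem on essential laminations in Seifert fibered manifolds: up to isotopy, any essential lamination in $M$ is either transverse to the Seifert fibration or contains a non-empty sublamination saturated by Seifert fibres. Applied to our taut foliation $\mathcal{F}$, this yields two alternatives. In the first (horizontal) alternative there is nothing left to prove, so the work is to derive a contradiction from the second.

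In the second alternative $\mathcal{F}$ contains a vertical sublamination, which, being a union of leaves of $\mathcal{F}$, must consist either of a properly embedded vertical annulus leaf with boundary on some $T_j$ of fibre slope $[h]$, or of a closed incompressible vertical torus leaf $T' \subset \mathrm{int}(M)$. The first possibility is immediately ruled out, since $\mathcal{F} \cap T_j$ would then contain a closed leaf of fibre slope, contradicting horizontality of $[\alpha_j]$. For the second, incompressibility of $T'$ together with the fact that $M$ contains no essential non-peripheral vertical tori (being a single Seifert piece of the type described in \S \ref{assumptions seifert}) forces $T'$ to be parallel to some boundary torus $T_j$, cutting off a collar $T^2 \times I$ with $T'$ on one side and $T_j$ on the other. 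Within this collar $\mathcal{F}$ is tangent to $T'$ and transverse to $T_j$; Reeb stability around the compact torus leaf $T'$ then forces the foliation to be product-like near $T'$, and matching this product structure against the fibre-transverse behaviour of $\mathcal{F} \cap T_j$ yields a Reeb component in the collar, contradicting tautness of $\mathcal{F}$ via Novikov's theorem.

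I expect the main obstacle to be this final Reeb-component argument: since $\mathcal{F} \cap T_j$ is only required to detect a horizontal slope and may contain a closed leaf rather than be purely linear, the collar analysis must simultaneously accommodate the Reeb stability picture at $T'$ and the possibly non-linear behaviour of $\mathcal{F}$ near $T_j$, so some care is needed to produce a bona fide Reeb component to which Novikov's theorem applies.
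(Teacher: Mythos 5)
Your forward direction is fine, and you are right that Brittenham's theorem is the key input for the reverse direction, but the reverse direction as written has several genuine gaps.

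First, you assert that a vertical sublamination of $\mathcal{F}$ must consist either of a properly embedded vertical annulus meeting $\partial M$ in fibre slope or of a closed incompressible vertical torus. That is not true: a vertical lamination in a Seifert piece projects to a $1$-dimensional lamination in the (punctured, cone-point-removed) base surface, and such a lamination need not have any compact leaves at all. The work in the paper's proof is precisely to deduce the existence of a compact torus leaf from a general vertical lamination, and it does so by a co-orientation argument on an incompressible train track fully carrying the projected lamination: co-orientability of the branches forces any loop in the train track to bound a sublamination-free region, which in turn produces a closed leaf whose preimage is a separating vertical torus leaf, contradicting tautness. You have skipped this entirely by assuming compact leaves exist.

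Second, even granting a compact vertical torus leaf $T'$, your claim that ``$M$ contains no essential non-peripheral vertical tori'' is false. Any Seifert manifold over $P(a_1,\ldots,a_n)$ with $n+r\geq 3$ has plenty of essential vertical tori lying over essential arcs and circles in the base that are not boundary-parallel (for example, in a composing space $P\times S^1$ with $P$ a pair of pants). So the reduction to a $T^2\times I$ collar does not follow, and the subsequent Reeb-stability/Novikov argument has no ground to stand on. The paper instead derives a contradiction directly from the existence of a \emph{separating} torus leaf, which is incompatible with a taut co-oriented foliation.

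Third, you write that in the horizontal alternative of Brittenham's dichotomy ``there is nothing left to prove.'' This is a further gap: Brittenham only guarantees a horizontal \emph{sublamination} of a lamination obtained by splitting $\mathcal{F}$; it does not say that $\mathcal{F}$ itself is horizontal. The paper still has to argue: if the horizontal sublamination has no compact leaves one invokes \cite[Proposition 4]{Br1} to conclude all of $\mathcal{L}$ (hence $\mathcal{F}$) is horizontal; if it has a compact leaf one shows that a non-horizontal $\mathcal{F}$ would contain a Reeb sublamination with annular horizontal boundary (\cite[Proposition 5]{Br1}), contradicting co-orientation. Your proof as written omits this case analysis.
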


\subsection{Representation detection and foliation detection} 

The relations between horizontal foliations in Seifert manifolds and representations with values in $\widetilde{\hbox{Homeo}}_+(S^1)$ was worked out in detail in \cite{EHN}. We summarize this work in the context of detected slopes below. 

\begin{proposition} {\rm (\cite{EHN}) }\label{reps to foliations}
Let $M$ be a compact orientable Seifert fibred manifold as in \S \ref{assumptions seifert}. Suppose that $J \subseteq \{1, 2, \ldots, r\}$ and $[\alpha_*] \in \mathcal{S}(M)$ is horizontal. Then $(J; [\alpha_*])$ is representation detected if and only if it is foliation detected. 
\end{proposition}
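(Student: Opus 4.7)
The plan is to exploit the classical dictionary, developed by Eisenbud--Hirsch--Neumann, between horizontal foliations on Seifert fibred manifolds and representations of $\pi_1(M)$ into $\widetilde{\hbox{Homeo}}_+(S^1)$. First I would reduce to the orientable-base case: by Proposition \ref{prop: horizontal foliation}, any foliation $\mathcal{F}$ detecting the horizontal tuple $[\alpha_*]$ is itself horizontal; Proposition \ref{non-orientable implies empty}(1) rules out representation detection when the base orbifold has the form $Q(a_1,\ldots,a_n)$, and a parallel argument (a co-oriented horizontal foliation would orient the circle-fibre direction, incompatible with the fibre-reversing element $z$ in the presentation of \S \ref{presentation seifert mobius}) rules out foliation detection in that case. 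So one may assume $M$ has base orbifold $P(a_1,\ldots,a_n)$, and $h$ is central in $\pi_1(M)$.

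For the implication representation $\Rightarrow$ foliation, I would perform the suspension construction. Given $\rho \in \mathcal{R}_0(M)$ that $\rho$-detects $(J;[\alpha_*])$, by Proposition \ref{detected in psl2k} I may arrange $\rho$ to take values in some $\widetilde{PSL}(2,\mathbb R)_k \subseteq \widetilde{\hbox{Homeo}}_+(S^1)$. Descending modulo the central element $\hbox{sh}(1)=\rho(h)$ gives an orbifold representation $\bar\rho\colon \pi_1^{\mathrm{orb}}(P(a_1,\ldots,a_n)) \to \hbox{Homeo}_+(S^1)$. The associated flat $S^1$-bundle over $P(a_1,\ldots,a_n)$ carries a canonical horizontal foliation by transverse slices, and Lemma \ref{non-orientable implies empty}(2b) (which forces $\rho(y_i)$ to be conjugate to $\hbox{sh}(\gamma_i)$) ensures that the suspension matches the correct local model at each cone point, assembling into $M$ with a co-oriented horizontal foliation $\mathcal{F}$. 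Tautness is automatic since no horizontal foliation on an irreducible manifold with non-empty boundary can carry a Reeb component. By construction the holonomy of $\mathcal{F}$ along $x_j$ is conjugate to $\rho(x_j)$, so $[\alpha_j(\mathcal{F})] = [\tau(\rho(x_j))h - h_j^*] = [\alpha_j]$; and strong $\rho$-detection for $j \in J$ means $\rho|\pi_1(T_j)$ conjugates into the translation subgroup, making $\mathcal{F} \cap T_j$ linear.

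For the converse (foliation $\Rightarrow$ representation), I would extract the total holonomy representation of $\mathcal{F}$ along the Seifert fibration. Horizontality makes $\mathcal{F}$ transverse to every regular fibre, so parallel transport of the fibre over a loop in the base produces a homomorphism into $\hbox{Homeo}_+(S^1)$; the sections $x_j$ and the convention used to define $[\alpha_j(\mathcal{F})]$ provide a coherent lift to $\widetilde{\hbox{Homeo}}_+(S^1)$ sending $h$ to $\hbox{sh}(1)$, giving $\rho \in \mathcal{R}_0(M)$. The slope of $\mathcal{F}$ on $T_j$ is then, by the same formula $[\tau(\rho(x_j))h - h_j^*]$ used in its definition, equal to $[\alpha_j]$; and $\mathcal{F}|T_j$ is linear precisely when $\rho|\pi_1(T_j)$ is conjugate to a translation representation, so strong foliation detection and strong representation detection match on $J$.

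The main obstacle I anticipate is the suspension step: realising the correct local picture at each exceptional fibre requires matching the conjugacy class of $\rho(y_i)$ to the Seifert invariant $(a_i,b_i)$, which is the content of Lemma \ref{non-orientable implies empty}(2b), and then gluing together local flat-bundle models across the cone points to recover $M$ on the nose rather than merely up to homotopy. A secondary subtlety is that the construction naturally yields a topological (not a priori smooth) foliation; the statement is at the topological level, so smoothing is not needed here, but one should be careful to verify co-orientability globally rather than only on each piece of the suspension.
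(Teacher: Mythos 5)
Your proposal follows essentially the same approach as the paper: the EHN dictionary between horizontal foliations and representations into $\widetilde{\hbox{Homeo}}_+(S^1)$, with the forward direction given by the suspension/flat-bundle construction and the converse by extracting holonomy.

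Two points where the details differ. First, your explicit reduction to the $P(a_1,\ldots,a_n)$-base case at the outset is correct and arguably makes the logic cleaner than the paper, which tacitly writes ``Let $X \to P(a_1,\ldots,a_n)$ be the universal cover\ldots'' without first disposing of the $Q$-case (where both detections fail vacuously). Second, and more substantively, in the converse direction the paper does not build the lift to $\widetilde{\hbox{Homeo}}_+(S^1)$ out of the holonomy representation into $\hbox{Homeo}_+(S^1)$ together with the boundary sections $x_j$, as you propose. Instead it lifts $\mathcal{F}$ to the universal cover $\widetilde M$, invokes Brittenham's result that each lifted fibre meets each leaf of $\widetilde{\mathcal{F}}$ exactly once (so the leaf space is a line), and lets $\pi_1(M)$ act directly on that line with $h$ acting fixed-point-freely and hence conjugate to $\hbox{sh}(\pm 1)$. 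Your phrasing ``the sections $x_j$ \ldots provide a coherent lift to $\widetilde{\hbox{Homeo}}_+(S^1)$ sending $h$ to $\hbox{sh}(1)$'' elides the real content: one must know that the central $\mathbb Z$-extension $1\to\langle h\rangle\to\pi_1(M)\to\pi_1^{\mathrm{orb}}(P)\to 1$ is classified by the same class that obstructs lifting the holonomy, i.e.\ that the Euler number of the flat bundle matches the Seifert data. That equality is \emph{true} here precisely because $\mathcal{F}$ lives in $M$ itself rather than in an abstract circle bundle, but it is not a formal consequence of having the sections $x_j$; the leaf-space-of-the-universal-cover argument gets this for free and is the cleaner route. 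If you want to keep your formulation you should add a sentence justifying the lift, either by invoking the Euler-class match or by passing to $\widetilde M$ as the paper does.

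Everything else is sound: the use of Proposition~\ref{detected in psl2k} to put $\rho$ in $\widetilde{PSL}(2,\mathbb R)_k$, the role of Lemma~\ref{non-orientable implies empty}(2b) in matching cone-point local models, the tautness remark, and the characterisation of strong detection on both sides via conjugation into the translation subgroup versus linearity of $\mathcal{F}\cap T_j$ all track the paper accurately.
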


\begin{proof}
Suppose that $(J; [\alpha_*])$ is $\rho$-detected for some $\rho \in \mathcal{R}_0(M)$. By Proposition \ref{detected in psl2k} we can suppose that $\rho$ takes values in $\widetilde{PSL}(2,\mathbb R)_k$ for some $k \geq 1$. A standard construction associates a  horizontal foliation $\mathcal{F}(\rho)$ on $M$ to $\rho$, which  we describe next. 

Let $X \to P(a_1, a_2, \ldots , a_n)$ be the universal cover so that $\hbox{int}(X) \cong \mathbb R^2$. Then $\pi_1(M)$ acts properly discontinuously on $X$ via the quotient homomorphism $\varphi: \pi_1(M) \to \pi_1(P(a_1, a_2, \ldots , a_n))$ and freely and  properly discontinuously on $X \times \mathbb R$ via 
$$\gamma \cdot (x, t) = (\varphi(\gamma)(x), \rho(\gamma)(t))$$ 
since $\rho(h) = \hbox{sh}(1)$. 
Consider the quotient $M' = X \times_\rho \mathbb R = (X \times \mathbb R)/ \pi_1(M)$. Then $M'$ is a $3$-manifold whose fundamental group is isomorphic to the group of deck transformations of the cover $X \times \mathbb R \to M'$, which is $\pi_1(M)$ acting on $X \times \mathbb R$ as above. This determines an identification $\pi_1(M') = \pi_1(M)$. Observe that the image of each $\{x\} \times \mathbb R$ in $M'$ is a circle, so $M'$ is Seifert fibred in a natural way. Now $(x, t)$ and $(x', t')$ map to the same fibre of $M'$ if and only if $x' = \varphi(\gamma)(x)$ for some $\gamma \in \pi_1(M') = \pi_1(M)$. Thus the base orbifold $\mathcal{B}'$ of $M'$ is the quotient of $X$ by $\pi_1(P(a_1, a_2, \ldots , a_n))$. In other words, $\mathcal{B}' = P(a_1, a_2, \ldots , a_n)$. Further, the action of $\pi_1(M')$ on $X$ induced by the quotient $\varphi': \pi_1(M') \to \pi_1(\mathcal{B}')$ coincides with the action of $\pi_1(M)$ on $X$ under the $\varphi$-action. It follows that $M' \cong M$. (This is because the Seifert invariant of the $i^{th}$ Seifert fibre is determined by the action of $\varphi(x_i)$ on $X$: $a_i$ is the order of $\varphi(x_i)$ while $b_i$ is the unique integer in the interval $(0, a_i)$ such that $\varphi(x_i^{b_i})$ acts by rotation by $2 \pi/a_i$ about the fixed point of $\varphi(x_i)$ in $X$.) Finally observe that $M'$, and therefore $M$, inherits a horizontal foliation $\mathcal{F}(\rho)$ from the foliation $\{X \times \{t\} : t \in \mathbb R\}$ of $X \times \mathbb R$. 

Let $\widetilde L_j$ be the component of $\partial X$ such that $\widetilde T_j = \widetilde L_j \times \mathbb R$ is the $\pi_1(T_j)$-invariant component of $\partial (X \times \mathbb R)$. Fix an identification $\widetilde L_j = \mathbb R$ such that $\varphi(h_j^*)(x) = x+1$ for all $x \in \widetilde L_j$. Assume first that $\rho(\pi_1(T_j))$ conjugates into the group of translations of $\mathbb R$. Then after conjugation, for $(x, t) \in \widetilde T_j$ we have $h \cdot (x, t) = (x, t + 1)$ and $h_j^* \cdot (x, t) = (x + 1, t + \tau(\rho(h_j^*)))$. It is well known (cf. \cite[Example 1.2.4 and Exercise 1.2.6]{CC1}) that $\mathcal{F}(\rho)|_{T_j}$ is a linear foliation of slope $[\tau(\rho(h_j^*)) h - h_j^*] = [\alpha_j(\rho)]$. Thus $\mathcal{F}(\rho)$ strongly detects $[\alpha_j]$ for $j \in J$. Suppose now that $\rho(\pi_1(T_j))$ does not conjugate into the group of translations of $\mathbb R$. It follows from the conventions set in \S \ref{conventions} that $[\alpha_j(\mathcal{F})] = [\tau(\rho(h_j^*)) h - h_j^*] = [\alpha_j(\rho)] = [\alpha_j]$. Thus $(J; [\alpha_*])$ is $\mathcal{F}(\rho)$-detected.  

Conversely suppose that $(J; [\alpha_*])$ is $\mathcal{F}$-detected. Since $[\alpha_*]$ is horizontal, so is $\mathcal{F}$ (Proposition \ref{prop: horizontal foliation}). Let $\widetilde M \to M$ be a universal cover and $\widetilde{\mathcal{F}}$ the lift of $\mathcal{F}$ to $\widetilde M$. Then the preimage in $\widetilde M$ of any Seifert fibre of $M$ intersects each leaf of $\widetilde{\mathcal{F}}$ once and only once. (See \cite[\S 3]{Br2} for instance.) It follows that the leaf space of $\widetilde{\mathcal{F}}$ is homeomorphic to the real line, so $\pi_1(M)$ acts on $\mathbb R$. As $\mathcal{F}$ is co-oriented, this action is by orientation-preserving homeomorphisms. Further, $h$ acts by $\hbox{sh}(1)$, at least up to conjugation and an appropriate orientation on $\widetilde{\mathcal{F}}$. Let $\rho: \pi_1(M) \to \widetilde{\hbox{Homeo}}_+(S^1)$ be the associated homomorphism. It follows from our conventions that $[\alpha_*(\rho)] = [\alpha_*(\mathcal{F})] = [\alpha_*]$. If $j \in J$, then $\mathcal{F} \cap T_j$ is linear, which implies that $\rho(x_j)$ is conjugate to a translation. Hence $\rho|\pi_1(T_j)$ conjugates into the subgroup of translations of $\mathbb R$ so that $\rho$ strongly detects $[\alpha_j]$. This completes the proof.
\end{proof}

\subsection{Foliation detection and non-horizontal $[\alpha_*]$} 

Our first lemma shows that a vertical slope can rarely be strongly foliation detected. 

\begin{lemma} \label{strongly rational not vertical}
Suppose that $\mathcal{F} \in \mathfrak{F}(M)$ and $\mathcal{F} \cap T_j$ is a foliation by simple closed curves. Then either $[\alpha_j(\mathcal{F})]$ is horizontal or $M$ is a twisted $I$-bundle over the Klein bottle with base orbifold a M\"{o}bius band. In the latter case we can alter the Seifert structure on $M$ so that $[\alpha_j(\mathcal{F})]$ becomes horizontal.
\end{lemma}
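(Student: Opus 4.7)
The plan is to assume that $[\alpha_j(\mathcal{F})]$ is not horizontal, so $[\alpha_j(\mathcal{F})] = [h]$ and $\mathcal{F} \cap T_j$ is a linear foliation of $T_j$ by Seifert fibres, and to deduce that $M \cong N_2$ with base orbifold $Q_0$. The second assertion of the lemma then follows at once: the alternate Seifert structure on $N_2$ has base orbifold $D^2(2,2)$ and fibre class $h_1$, and since $\{h_0, h_1\}$ is a basis of $H_1(\partial N_2)$, the slope $[\alpha_j(\mathcal{F})] = [h_0]$ is horizontal with respect to the new structure.

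The first step is a holonomy computation. Each Seifert fibre $f \subset T_j$ is a leaf of $\mathcal{F} \cap T_j$ with trivial holonomy in this linear foliation by circles; since the holonomy of $\mathcal{F}$ along $f$ can be computed along a transversal lying in $T_j$, it is also trivial. Hence $\mathcal{F}$ admits a saturated collar neighbourhood $U$ of $T_j$ in $M$ whose leaves form a product family of vertical annuli, each meeting $T_j$ in a fibre.

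To globalise, I would follow the scheme of the proof of Proposition \ref{prop: horizontal foliation}: split $\mathcal{F}$ along finitely many leaves to obtain an essential lamination $\mathcal{L}$ in $M$ admitting a horizontal or vertical sublamination $\mathcal{L}_0$ \cite[Theorem 1]{Br1}. A horizontal $\mathcal{L}_0$ is ruled out since its leaves meet each Seifert fibre transversally, whereas $\mathcal{L}_0 \cap T_j \subseteq \mathcal{F} \cap T_j$ is a union of fibres, yielding the standard transverse-versus-parallel contradiction on $T_j$. Thus $\mathcal{L}_0$ is vertical, and combined with the product structure of $\mathcal{F}$ on $U$, this should force $\mathcal{F}$ to be globally a foliation of $M$ by vertical annuli, exhibiting $M$ as an annulus bundle over $S^1$. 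Among Seifert manifolds satisfying the conventions of \S \ref{assumptions seifert} (in particular $M \neq S^1 \times S^1 \times I$), the only such annulus bundle is $M \cong N_2$ with base $Q_0$, with the $\mathcal{F}$-annuli being the fibres of the standard annulus fibration $N_2 \to S^1$. The main obstacle is the global propagation of the local product structure on $U$ to all of $M$, which I expect to follow from the combination of the vertical sublamination $\mathcal{L}_0$, compactness, tautness, and co-orientation.
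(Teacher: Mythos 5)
Your route is entirely different from the paper's, and it has gaps. The paper's argument is a short Dehn-filling trick: if $\mathcal{F} \cap T_j$ is a fibration of $T_j$ by circles of the fibre slope $[h]$, it coincides with the boundary restriction of the product $D^2$-foliation of a solid torus glued to $M$ along $[h]$, so $\mathcal{F}$ extends to a co-oriented taut foliation of the Dehn filling $M' = M([h])$. But filling the fibre slope destroys the Seifert structure: over base $P$ one gets $M' \cong (\#_{i=1}^{n} L_{a_i}) \# (\#_{j=1}^{r-1} S^1 \times D^2)$, with an extra $S^1 \times S^2$ summand over base $Q$. Primeness of manifolds admitting co-oriented taut foliations then forces $n + (r-1) \le 1$ in the $P$-case, making $M$ a solid torus or $T^2 \times I$ and hence excluded by the standing conventions, and $n = r - 1 = 0$ in the $Q$-case, i.e.\ $M \cong N_2$ with base $Q_0$. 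No essential laminations or train tracks appear.

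Two steps of your plan do not hold as written. First, the contradiction you use to rule out a horizontal $\mathcal{L}_0$ presupposes $\mathcal{L}_0 \cap T_j \ne \emptyset$; but Brittenham's sublamination is just a closed sublamination of the split-open $\mathcal{L}$ and has no reason to meet $T_j$ --- it could be anchored to a different boundary component or lie entirely in the interior. Second, and more seriously, the step you yourself flag as ``the main obstacle'' really is one: knowing that $\mathcal{L}_0$ is vertical and that $\mathcal{F}$ is a product of vertical annuli in a collar of $T_j$ does not propagate to $\mathcal{F}$ being globally a foliation by vertical annuli, and you supply no mechanism for this. This is not how vertical sublaminations are exploited in the paper --- compare the $v=1$ case in the proof of Proposition \ref{foliation vertical}(2), where a vertical sublamination is used via an incompressible train track in the base orbifold to produce a separating annular leaf contradicting tautness, rather than to spread verticality over $M$. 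Moreover, since the lemma only asserts something about the manifold $M$ and not about $\mathcal{F}$, the target of your globalisation step is stronger than necessary; the Dehn-filling argument reaches the conclusion about $M$ directly and avoids all of these difficulties.
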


\begin{proof}
Suppose that $\mathcal{F} \cap T_j$ is a foliation by simple closed curves of slope $[h_j]$. Let $M'$ be the manifold obtained by Dehn filling $M$ along its fibre slope on $T_j$. If $M$ has base orbifold of the form $P(a_1, \ldots, a_n)$, then $M'$ is homeomorphic to $(\#_{i = 1}^{n} L_{a_i}) \# (\#_{j= 1}^{r-1} S^1 \times D^2)$. On the other hand, $M'$ admits a co-oriented taut foliation and so is either prime or $S^2 \times I$ (see e.g. \cite[Corollary 9.1.9]{CC2}). The latter is clearly impossible. Therefore $M$ is prime and so $n + (r-1) \leq 1$. But then $M$ is either a solid torus or $S^1 \times S^1 \times I$, which contradicts our assumptions. Similarly if $M$ has base orbifold $Q(a_1, \ldots, a_n)$, $M'$ is homeomorphic to $(\#_{i = 1}^{n} L_{a_i}) \# (S^1 \times S^2) \# (\#_{j= 1}^{r-1} S^1 \times D^2)$ so as it is prime, $n =  r -1 = 0$. Hence $M$ is a twisted $I$-bundle over the Klein bottle with the Seifert structure having base orbifold a M\"{o}bius band. After changing the structure on $M$ to that with base orbifold $D^2(2,2)$, $[\alpha_j(\mathcal{F})]$ becomes horizontal. Thus the lemma holds. 
\end{proof} 

Here is one of the main result of this section. 

\begin{proposition} \label{foliation vertical}
Let $M$ be a compact orientable Seifert fibred manifold $M$ as in \S \ref{assumptions seifert} and $J \subseteq \{1, \ldots , r\}$. Fix $[\alpha_*] \in \mathcal{S}(M)$ and suppose that $j \in J$ implies that $[\alpha_j] \ne [h]$. 

$(1)$ If $M$ has base orbifold $Q(a_1, \ldots , a_n)$, then $(J; [\alpha_*])$ is foliation detected if and only if $v([\alpha_*]) \geq 1$. 

$(2)$ If $M$ has base orbifold $P(a_1, \ldots , a_n)$ and $v([\alpha_*]) > 0$, then $(J; [\alpha_*])$ is foliation detected if and only if $v([\alpha_*]) \geq 2$.

\end{proposition}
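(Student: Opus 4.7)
The plan is to parallel the proof of Proposition \ref{order vertical}, replacing each order-theoretic input by its foliation analogue: Proposition \ref{prop: horizontal foliation}, Proposition \ref{reps to foliations}, Lemma \ref{non-orientable implies empty}, and the Brittenham splitting argument used in the proof of Proposition \ref{prop: horizontal foliation}.

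For the forward direction of (1), if $v([\alpha_*])=0$ then $[\alpha_*]$ is horizontal, so Proposition \ref{prop: horizontal foliation} forces the detecting foliation $\mathcal{F}$ to be horizontal, and Proposition \ref{reps to foliations} then yields an element of $\mathcal{R}_0(M)$, contradicting Lemma \ref{non-orientable implies empty}(1). For the converse with $v([\alpha_*])\geq 1$, I would split $M$ along an essential vertical torus $T$ into a $Q_0(a_1,\ldots,a_n)$-based piece $N$ and a planar-based piece $M_0$ with $r+1\geq 3$ boundary tori; the vertical $S^1$-bundle structure on $N$ produces a taut foliation detecting $[h]$ on $T$, and applying part (2) to $M_0$ (whose slope tuple now has $v\geq 2$) gives a compatible foliation, which I glue to the one on $N$ along $T$.

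For the converse of (2) with $v([\alpha_*])\geq 2$, I would split $M$ along an essential vertical torus $T$ into a product piece $M_1\cong P_1\times S^1$ carrying all of the vertical-slope boundary tori and a remaining Seifert piece $M_2$ carrying the horizontal-slope tori. The product $M_1$ supports a taut foliation with the prescribed vertical boundary slopes and any desired slope on $T$, obtained by lifting a suitable $1$-dimensional foliation of $P_1$ through the product structure. On $M_2$ all boundary slopes are horizontal, so the Eisenbud--Hirsch--Neumann--Jankins--Naimi realisability machinery of Appendix \ref{sec: ehnjn} together with Proposition \ref{reps to foliations} produces a horizontal taut foliation on $M_2$ realising these slopes and any chosen horizontal slope on $T$; choosing this slope to match the one on $M_1$'s side of $T$, the two foliations glue along $T$.

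The main obstacle is the forward direction of (2) in the case $v([\alpha_*])=1$. Writing $[\alpha_1]=[h]$ with $[\alpha_j]$ horizontal for $j\geq 2$, suppose $\mathcal{F}$ detects $(J;[\alpha_*])$. Proposition \ref{prop: horizontal foliation} forces $\mathcal{F}$ to be non-horizontal, so the Brittenham splitting yields an essential sublamination $\mathcal{L}_0\subset M$ that must be vertical, the horizontal case being ruled out as in the proof of Proposition \ref{prop: horizontal foliation}. The horizontality of $[\alpha_j]$ for $j\geq 2$ keeps $\mathcal{L}_0$ away from $T_2,\ldots,T_r$, so the projected lamination $L_0$ in the base $P(a_1,\ldots,a_n)$ is a co-oriented incompressible lamination whose only boundary interaction is along $\partial_1 P$. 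I would then adapt the interior train-track argument from the proof of Proposition \ref{prop: horizontal foliation} to this relative setting in order to extract a compact vertical torus leaf of $\mathcal{L}_0$, which, being separating in the graph manifold rational homology sphere, contradicts the tautness and co-orientedness of $\mathcal{F}$. The delicate point is that $L_0$ is now allowed to terminate on $\partial_1 P$, so the separating embedded loop $C$ used in the interior argument may enclose $\partial_1 P$; I would close this off by Dehn-filling $T_1$ along the fibre slope $[h]$ to reduce to the closed-base situation, using a product extension of $\mathcal{F}$ by meridian disks across the fibre-filling solid torus to preserve tautness.
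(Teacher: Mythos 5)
Your plan diverges from the paper's at exactly the points where the foliation case is genuinely harder than the order case, and two of those divergences are gaps rather than alternatives. The most serious is the converse of (2). You split along a vertical torus $T$ into $M_1 \cong P_1 \times S^1$ (carrying the vertical tori $T_1,\ldots,T_v$) and $M_2$ (carrying the horizontal tori), and claim $M_1$ admits a taut foliation ``with the prescribed vertical boundary slopes and any desired slope on $T$, obtained by lifting a suitable $1$-dimensional foliation of $P_1$.'' But any foliation of $P_1 \times S^1$ obtained by pulling back a $1$-dimensional foliation of $P_1$ is vertical (its leaves are saturated by Seifert fibres), and a vertical foliation meets every boundary torus, including $T$, in circles of the fibre slope $[h]$. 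So the slope you can realise on $T$ from the $M_1$ side is only $[h]$, never a horizontal slope; consequently $M_2$ is handed a slope tuple with $v\geq 1$, and the Eisenbud--Hirsch--Neumann route to a horizontal foliation on $M_2$ is unavailable. To give $M_1$ a foliation that is vertical near $T_1,\ldots,T_v$ and linear of horizontal slope on $T$ you would need the same ``spinning'' operation that the paper uses, at which point you might as well use the paper's decomposition directly. The paper avoids the torus-gluing compatibility issue entirely by cutting $M$ along $v-1$ vertical \emph{annuli} joining consecutive $T_i$'s (and one non-separating annulus in the $Q$-based case), producing a single planar-based piece $M_0$ carrying a horizontal foliation $\mathcal{F}_0$ transverse to the cut annuli, then reassembling and spinning $\mathcal{F}_0$ vertically around the annuli (\cite[Example 4.9]{Ca2}) to force the slope $[h]$ on $T_1,\ldots,T_v$. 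The same concern undercuts your converse of (1): besides the torus-gluing issue, you need a taut foliation on $N$ (Seifert over $Q_0(a_1,\ldots,a_n)$) detecting $[h]$ on its single boundary torus, which is precisely the $r=1$ instance of the statement you are trying to prove and is not established separately.

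The second gap is the Dehn-filling trick in the forward direction of (2). You propose filling $T_1$ along $[h]$ and extending $\mathcal{F}$ ``by meridian disks across the fibre-filling solid torus.'' This extension requires $\mathcal{F}\cap T_1$ to be a circle fibration of slope $[h]$, but detecting $[h]$ only means $\mathcal{F}\cap T_1$ has a closed leaf of that slope; in general it is not a product foliation and cannot be capped off by a disk fibration. Even when it can, the filled manifold $M([h])$ is $(\#_{i}L_{a_i})\#(\#_{j}S^1\times D^2)$ (cf.\ the proof of Lemma~\ref{strongly rational not vertical}), which is reducible once $n+r-1\geq 2$, and the resulting foliation would not be taut. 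The paper instead runs Brittenham's train-track analysis in the relative setting: after reducing to an incompressible, coherently oriented train track $\tau$ carrying the projected lamination $L_0$ with $\tau\cap\partial P_0 = \tau\cap C_1 \ne \emptyset$, one rules out embedded loops of $\tau$ exactly as in Proposition~\ref{prop: horizontal foliation}, concludes $\tau$ is a tree with extrema on $C_1$, and then finds an arc of $\tau$ splitting $P_0$ which yields a separating annular leaf of $\mathcal{L}_0$, contradicting tautness and co-orientation. Your idea of adapting the loop argument is on the right track, but the needed step is the tree/arc case, not a reduction to the closed-base situation. Your treatment of the forward direction of (1) via Lemma~\ref{non-orientable implies empty} is fine, though the paper uses the more direct observation that a co-oriented horizontal foliation coherently orients the Seifert fibres, forcing the base orbifold to be orientable.
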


\begin{proof} 
We use $C_j$ to denote the image of $T_j$ in the base orbifold of $M$. Without loss of generality we suppose that $[\alpha_j] = [h]$ if and only if $1\leq j \leq v([\alpha_*])$. Then $J \subset \{v([\alpha_*]) + 1, \ldots , r\}$. 

First assume that $M$ has base orbifold $Q(a_1, \ldots , a_n)$. Proposition \ref{prop: horizontal foliation} shows that $v([\alpha_*]) > 0$. Let $M_0$ be a connected manifold obtained by cutting $M$ open along disjoint vertical annuli $A_0, A_1, \ldots, A_{v([\alpha_*]) - 1}$ where $A_0$ is non-separating and connects $T_1$ to itself and $A_i$ connects $T_i$ and $T_{i+1}$ for $1 \leq i \leq v([\alpha_*]) - 1$. Then $M_0$ is Seifert with base orbifold $P_0(a_1, \ldots , a_n)$ where $P_0$ is planar with $|\partial P_0| = r - v([\alpha_*]) + 1$. We can write $\partial M_0$ = $T_0 \cup T_{v([\alpha_*])+1} \cup \ldots \cup T_r$ where $T_0$ is a torus containing $2v([\alpha_*])$ disjoint vertical annuli  $A_0^+, A_0^-, A_1^+, A_1^-, \ldots , A_{v([\alpha_*]) - 1}^+, A_{v([\alpha_*]) - 1}^-$ indexed so that $A_j^+$ and $A_j^-$ are identified by a homeomorphism $f_j$ in reconstructing $M$ from $M_0$. We can find a horizontal slope $[\alpha_0]$ on $T_0$ and a representation $\rho_0$ which detects $(\{0\} \cup J; ([\alpha_0], [\alpha_{v([\alpha_*])+1}], \ldots , [\alpha_r]))$ (Proposition \ref{tau fibre 2}). By Proposition \ref{reps to foliations}, 
there is a horizontal foliation $\mathcal{F}_0$ in $M_0$ which detects $(\{0\} \cup J; ([\alpha_0], [\alpha_{v([\alpha_*])+1}], \ldots , [\alpha_r]))$. Then $\mathcal{F}_0 \cap A_j^\pm$ is a foliation by horizontal arcs and we can assume that $f_j$ preserves this foliation. Then $\mathcal{F}_0$ determines a horizontal foliation $\mathcal{F}_1$ in $M$ which is transverse to the annuli $A_j$ and detects $[\alpha_j]$ on $T_j$ for $v([\alpha_*]) + 1 \leq j \leq r$. Note that $f_0$ reverses the transverse orientation of the leaves of $\mathcal{F}_1$ while $f_1, \ldots , f_{v([\alpha_*])}$ preserves them, so $\mathcal{F}_1$ is not co-oriented. But spinning $\mathcal{F}_1$ vertically around $A_1, A_2, \ldots, A_{v([\alpha_*]) - 1}$  in an appropriate fashion produces a co-oriented taut foliation $\mathcal{F}$ which detects $(J; [\alpha_*])$ (cf. \cite[Example 4.9]{Ca}). This completes the proof of (1). 

Next assume that $M$ has base orbifold $P(a_1, \ldots , a_n)$. Since $v([\alpha_*]) > 0$, Proposition \ref{britt}(2) implies that if $(J; [\alpha_*])$ is detected by $\mathcal{F} \in \mathcal{F}(M)$, then $\mathcal{F}$ has a leaf which is a non-separating vertical annulus, which necessarily intersects distinct boundary components of $M$. Hence $v([\alpha_*]) \geq 2$.

Conversely suppose that $v([\alpha_*]) \geq 2$.  Fix $J \subset \{1, 2, \ldots , r\}$ and $[\alpha_*] \in \mathcal{S}(M)$ such that $j \in J$ implies that $[\alpha_j] \ne [h]$. Without loss of generality we can suppose that $[\alpha_j] = [h]$ if and only if $1\leq j \leq v([\alpha_*])$. Then $J \subset \{v([\alpha_*]) + 1, \ldots , r\}$. Let $M_0$ be a connected manifold obtained by cutting $M$ open along disjoint vertical annuli $A_1, A_2, \ldots, A_{v([\alpha_*]) - 1}$ where $A_i$ connects $T_i$ and $T_{i+1}$. Then $M_0$ is Seifert with base orbifold $P_0(a_1, \ldots , a_n)$ where $P_0$ is planar with $|\partial P_0| = r - v([\alpha_*]) + 1$. We can write $\partial M_0$ = $T_0 \cup T_{v([\alpha_*])+1} \cup \ldots \cup T_r$ where $T_0$ is a torus containing $2v([\alpha_*]) - 2$ disjoint vertical annuli  $A_1^+, A_1^-, \ldots , A_{v([\alpha_*]) - 1}^+, A_{v([\alpha_*]) - 1}^-$ indexed so that $A_j^\pm$ are identified by a homeomorphism $f_j$ in reconstructing $M$ from $M_0$. We can find a horizontal foliation $\mathcal{F}_0$ in $M_0$ which detects some $(\{0\} \cup J; [\beta_*])$ where $[\beta_j] = [\alpha_j]$ for $v([\alpha_*]) + 1 \leq j \leq r$. Then $\mathcal{F}_0 \cap A_j^\pm$ is a foliation by horizontal arcs and we can assume that $f_j$ preserves this foliation. Then $\mathcal{F}_0$ determines a horizontal foliation $\mathcal{F}_1$ in $M$ which is transverse to the annuli $A_j$ and detects $[\alpha_j]$ on $T_j$ for $v([\alpha_*]) + 1 \leq j \leq r$. Spinning $\mathcal{F}_1$ vertically around these annuli produces a co-oriented taut foliation $\mathcal{F}$ which detects $(J; [\alpha_*])$, which completes the proof of (2).
\end{proof}

\begin{proposition} \label{irrational to rational - foliations} 
Suppose that $J \subset \{1, 2, \ldots, r\}$ and $(J; [\alpha_*])$ is foliation detected where some $[\alpha_j]$ is irrational. Reindex the boundary components of $M$ so that $[\alpha_j]$ is irrational if and only $1 \leq j \leq s$. Set $J^\dagger = J \cup \{1, 2, \ldots , s\}$. Then for $1 \leq j \leq s$ there is an open sector $U_j \subset \mathcal{S}(T_j)$ containing $[\alpha_j]$ such that one of the following two statements holds.

$(1)$ $(J^\dagger; [\alpha_*'])$ is foliation detected for all $[\alpha_*']$ such that $[\alpha_j'] \in U_j$ for $1 \leq j \leq s$ and $[\alpha_j'] = [\alpha_j]$ otherwise. 

$(2)$ $M$ has no singular fibres, $s = 2$, $J^\dagger = \{1, 2, \ldots , r\}$ and $[\alpha_*]$ is horizontal with $[\alpha_j] = [\tau_j h - h_j^*]$ where $\tau_3, \ldots, \tau_r \in \mathbb Z$. Further, there is a homeomorphism $\varphi: U_1 \to U_2$ which preserves both rational and irrational slopes and for which $(J^\dagger; [\alpha_*'])$ is foliation detected for all $[\alpha_*'] = ([\alpha_1'], \varphi([\alpha_1']), [\alpha_3], \ldots, [\alpha_r])$ whenever $[\alpha_1'] \in U_1$. 
\end{proposition}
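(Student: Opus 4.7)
The plan is to reduce Proposition \ref{irrational to rational - foliations} to its representation-theoretic analogue, Corollary \ref{irrational to rational - reps}, using the bridge provided by Proposition \ref{reps to foliations}. Since irrational slopes are automatically non-vertical, the argument splits naturally along whether $[\alpha_*]$ is horizontal or has some components equal to the fibre class.

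First I would dispose of the case in which $[\alpha_*]$ is horizontal. Because $(J;[\alpha_*])$ is foliation detected by some $\mathcal{F} \in \mathfrak{F}(M)$, Proposition \ref{prop: horizontal foliation} forces $\mathcal{F}$ itself to be horizontal, and Proposition \ref{reps to foliations} then promotes this to representation detection of $(J;[\alpha_*])$. Applying Corollary \ref{irrational to rational - reps} produces open sectors $U_1, \ldots, U_s$ around the irrational slopes together with one of the alternatives (1) or (2), phrased in terms of representation detection. I would shrink the sectors $U_j$ if necessary so that they remain bounded away from the fibre slope $[h]$; this guarantees that every perturbed tuple $[\alpha_*']$ in the conclusion is still horizontal. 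Feeding each such $[\alpha_*']$ back through Proposition \ref{reps to foliations} then converts representation detection into foliation detection, yielding the matching alternative (1) or (2).

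Second, I would handle the case in which $[\alpha_*]$ has $v([\alpha_*]) \geq 1$ vertical components. Only alternative (1) needs to be produced, since alternative (2) explicitly requires $[\alpha_*]$ to be horizontal. Reindex so that $[\alpha_{v+1}] = \cdots = [\alpha_r] = [h]$, noting that the irrational indices $\{1,\ldots,s\}$ lie in $\{1, \ldots, v\}$. Following the spinning construction from the proof of Proposition \ref{foliation vertical}, I would cut $M$ along disjoint vertical annuli joining the vertical boundary components to obtain a Seifert manifold $M_0$ with boundary $T_0 \cup T_1 \cup \cdots \cup T_v$, where $T_0$ is a new torus arising from the cut. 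Using the EHN-type analysis of Proposition \ref{tau fibre 2} exactly as in Proposition \ref{foliation vertical}, there is a representation and hence (by Proposition \ref{reps to foliations}) a horizontal foliation $\mathcal{F}_0$ on $M_0$ detecting the tuple $([\beta_0], [\alpha_1], \ldots, [\alpha_v])$ for some rational slope $[\beta_0]$ on $T_0$. Applying the horizontal case already treated to $M_0$ yields open sectors $U_1, \ldots, U_s$ such that every perturbed tuple in these sectors is foliation detected on $M_0$ by some horizontal foliation $\mathcal{F}_0'$. Spinning $\mathcal{F}_0'$ around the cut annuli --- whose intersection with a horizontal foliation is automatically a horizontal-arc foliation, and so is preserved by the regluing homeomorphisms --- produces a co-oriented taut foliation on $M$ detecting $(J^\dagger; [\alpha_*'])$, which is alternative (1).

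The principal obstacle I anticipate is the degenerate possibility that Corollary \ref{irrational to rational - reps} applied to $M_0$ falls into its alternative (2) rather than (1). In that case perturbations on $M_0$ are constrained along the graph of a homeomorphism $\varphi$, so naive spinning produces only a one-parameter family of perturbations on $M$ rather than the required $s$-parameter family. Overcoming this requires exploiting the freedom in the choice of the auxiliary slope $[\beta_0]$: varying $[\beta_0]$ within its admissible set deforms the realising representation on $M_0$ and with it the homeomorphism $\varphi$, and the union of the resulting one-parameter families of perturbations should sweep out an open neighbourhood of $([\alpha_1], \ldots, [\alpha_s])$ in $U_1 \times \cdots \times U_s$. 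Verifying this openness is the most delicate point in the argument.
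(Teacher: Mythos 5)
Your treatment of the horizontal case is correct and coincides with what the paper does: Proposition \ref{prop: horizontal foliation} to see the detecting foliation is horizontal, Proposition \ref{reps to foliations} to pass to representation detection, Corollary \ref{irrational to rational - reps} for the perturbation, and Proposition \ref{reps to foliations} again to return to foliations (shrinking the sectors $U_j$ away from the fibre slope so the perturbed tuple stays horizontal is exactly the point).

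For the vertical case you have missed the short route and, in the longer route you take instead, you leave an acknowledged gap. The paper's argument when $v([\alpha_*])>0$ is simply to invoke Proposition \ref{foliation vertical}: that proposition characterises foliation detectability of a non-horizontal $(J;[\alpha_*])$ entirely in terms of the count $v([\alpha_*])$ (at least $2$ for an orientable base orbifold, at least $1$ for a non-orientable one), under the standing condition that $[\alpha_j]\ne[h]$ for $j\in J$. Perturbing the irrational $[\alpha_j]$ inside sectors $U_j$ chosen to avoid $[h]$ leaves $v$ unchanged, and for $j\in J\setminus\{1,\ldots,s\}$ the condition $[\alpha_j]\ne[h]$ is already forced by Lemma \ref{strongly rational not vertical} (the twisted $I$-bundle exception is impossible here since $r\geq 2$ once some slope is vertical and some is irrational). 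Thus $(J^\dagger;[\alpha_*'])$ is detected by the same criterion, and alternative $(1)$ follows immediately with no need to cut $M$ open, choose an auxiliary slope $[\beta_0]$, or pass through Corollary \ref{irrational to rational - reps} on the cut manifold $M_0$. Your proposed route re-proves Proposition \ref{foliation vertical} from scratch and, as you note yourself, runs into the possibility that $M_0$ falls into alternative $(2)$ of Corollary \ref{irrational to rational - reps}; you sketch a workaround via varying $[\beta_0]$ but do not verify the required openness. That verification is not a minor detail --- it is precisely the content that Proposition \ref{foliation vertical} supplies for free --- so as written, the vertical case of your proof is incomplete.
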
 

\begin{proof}
If $[\alpha_*]$ is horizontal this is a consequence of Proposition \ref{reps to foliations} and Corollary \ref{irrational to rational - reps}. On the other hand, if $v([\alpha_*]) > 0$ conclusion (1) holds by Proposition \ref{foliation vertical}. 
\end{proof}

\subsection{Controlling boundary behaviour} \label{controlling boundary behaviour} 
Our goal in this section is to show that we can choose our foliations to display standardized behaviour on the boundary components of $M$. This will be a key component of the proof of Theorem \ref{theorem: gluing}. 

Here is a consequence of Proposition \ref{reps to foliations} and its proof combined with Proposition \ref{no parabolics}.

\begin{proposition} \label{standard realisation} 
Let $M$ be a compact orientable Seifert fibred manifold as in \S \ref{assumptions seifert} and fix a horizontal $[\alpha_*] \in \mathcal{S}(M)$ and $J \subseteq \{1, 2, \ldots, r\}$. If $(J; [\alpha_*])$ is foliation detected, then it is $\mathcal{F}(\rho)$-detected where $\rho$ takes values in $\widetilde{PSL}(2,\mathbb R)_k$ for some $k \geq 1$. Further, we can suppose that for each $j$, $\rho(\pi_1(T_j))$ contains no parabolics. 
\qed
\end{proposition}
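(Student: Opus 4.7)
The plan is to chain three already-available ingredients. Since $[\alpha_*]$ is horizontal and $(J;[\alpha_*])$ is foliation detected, Proposition \ref{reps to foliations} gives that $(J;[\alpha_*])$ is representation detected. Proposition \ref{detected in psl2k} then upgrades this to a representation $\rho \in \mathcal{R}_0(M)$ taking values in $\widetilde{PSL}(2,\mathbb{R})_k$ for some $k \geq 1$. The construction of $\mathcal{F}(\rho)$ reviewed in the proof of Proposition \ref{reps to foliations} attaches to $\rho$ a horizontal co-oriented foliation on $M$ that $\mathcal{F}(\rho)$-detects $(J;[\alpha_*])$. This yields the first assertion.

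For the parabolic-free statement, I first observe that each $\rho(\pi_1(T_j))$ is abelian, generated by $\rho(h)=\hbox{sh}(1)$ and $\rho(x_j)$. Since $\rho(h)$ is a translation (hence elliptic in $\widetilde{PSL}(2,\mathbb{R})_k$) and lies in the centre, the absence of parabolics in $\rho(\pi_1(T_j))$ reduces to the condition that $\rho(x_j)$ itself is not parabolic: if $\rho(x_j)$ is elliptic then $\rho(\pi_1(T_j))$ conjugates into the translation subgroup and contains no parabolics, while if $\rho(x_j)$ is hyperbolic then $\rho(\pi_1(T_j))$ lies inside a hyperbolic one-parameter subgroup together with the centre. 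For $j \in J$, strong $\rho$-detection forces $\rho(\pi_1(T_j))$ to conjugate into translations, so $\rho(x_j)$ is automatically elliptic and the issue does not arise.

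It remains to eliminate any parabolic $\rho(x_j)$ for $j\notin J$ without disturbing the detected data. Here I appeal to Proposition \ref{no parabolics} from the Appendix: since parabolic and hyperbolic elements in $\widetilde{PSL}(2,\mathbb{R})_k$ share the same translation spectrum $\frac{1}{k}\mathbb Z$, one can substitute each parabolic $\rho(x_j)$ by a hyperbolic element with identical translation number, at the cost of possibly passing to some $\widetilde{PSL}(2,\mathbb{R})_{k'}$, while preserving the product relation $y_1\cdots y_n x_1 \cdots x_r = 1$ from \S \ref{presentation seifert planar} and the elliptic/conjugate-to-translation condition for $j \in J$. Because each $[\alpha_j(\rho)] = [\tau(\rho(x_j))\,h - h_j^*]$ depends only on the translation number, this substitution preserves $(J;[\alpha_*])$-detection.

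The main obstacle is precisely this simultaneous adjustment: turning every boundary holonomy generator into a non-parabolic element while keeping the global Seifert relation satisfied and the elliptic behaviour along $J$ intact. This is exactly the content of Proposition \ref{no parabolics}, which packages the relevant realisability analysis of Eisenbud--Hirsch--Neumann, Jankins--Neumann, and Naimi in the form needed here. Once that proposition is invoked, the conclusion is immediate.
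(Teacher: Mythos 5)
Your proposal is correct and follows exactly the same route the paper sketches for this proposition (the paper's entire proof is the one-line remark ``Here is a consequence of Proposition \ref{reps to foliations} and its proof combined with Proposition \ref{no parabolics}''): pass from foliation detection to representation detection via Proposition \ref{reps to foliations}, upgrade the target to $\widetilde{PSL}(2,\mathbb R)_k$ via Proposition \ref{detected in psl2k}, run the $\mathcal{F}(\rho)$ construction from the proof of Proposition \ref{reps to foliations}, and invoke Proposition \ref{no parabolics} to dispose of parabolics on the boundary. One small inaccuracy worth noting: Proposition \ref{no parabolics} keeps the realisation in the \emph{same} $\widetilde{PSL}(2,\mathbb R)_k$, so your caveat ``at the cost of possibly passing to some $\widetilde{PSL}(2,\mathbb R)_{k'}$'' is unnecessary (though harmless, since the conclusion allows any $k$).
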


Thus each horizontal $[\alpha_*] \in \mathcal{D}_{fol}(M; J)$ is detected by some $\mathcal{F}$ where for each $i$,  $\mathcal{F} \cap T_i$ is a suspension foliation of either a rotation, and so is a circle fibration of slope $[\alpha_i(\mathcal{F})]$ (this always happens if $j \in J$), or a hyperbolic element $g_i$ of some $\widetilde{PSL}(2,\mathbb R)_k$. In the latter case, $\mathcal{F} \cap T_j$ has a non-zero even number of closed leaves, each of slope $[\alpha_i(\mathcal{F})]$, and $g_i$ is alternately increasing or decreasing in the complementary intervals of the fixed points of $g_i$. 

For an orientation-preserving homeomorphism $f: S^1 \to S^1$, let $T(f) \cong S^1 \times S^1$ denote its mapping torus $(S^1 \times I)/((x,1) \equiv (f(x), 0))$ and let $\mathcal{F}(f)$ denote its suspension foliation on $T(f)$. 

\begin{definition} \label{j interval-hyperbolic}
{\rm (1) For each positive integer $k$ let $IH^0(k)$ be the set of orientation-preserving homeomorphisms $f: S^1 \to S^1$ whose fixed point set consists of $2k$ disjoint closed non-degenerate intervals and for which $f$ is alternately increasing or decreasing on the $2k$ complementary open intervals.  

(2) For a positive integer $k$ we say that a codimension one foliation $\mathcal{F}$ on a torus $T$ is {\it $k$ interval-hyperbolic} if it is homeomorphic to the suspension foliation $\mathcal{F}(f)$ on $T(f)$ of a homeomorphism $f \in IH^0(k)$. The {\it slope} of a $k$ interval-hyperbolic foliation on $T$ is the slope of its closed leaves. }
\end{definition}

\begin{lemma} \label{standard are isotopic} 
$(1)$ Let $f \in IH^0(k)$. Then the foliation $\mathcal{F}(f)$ on $T(f)$ is invariant up to isotopy under any homeomorphism of $T(f)$ which leaves the slope of $\mathcal{F}(f)$ invariant. 

$(2)$ Two $k$ interval-hyperbolic foliations of the same slope on a torus $T$ are isotopic.
\end{lemma}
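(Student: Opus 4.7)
My plan is to prove (2) first and deduce (1) as an immediate corollary: any self-homeomorphism $\phi$ of $T(f)$ preserving the slope of the closed leaves of $\mathcal{F}(f)$ sends $\mathcal{F}(f)$ to another $k$ interval-hyperbolic foliation of the same slope, so (2) supplies an ambient isotopy carrying $\phi(\mathcal{F}(f))$ back to $\mathcal{F}(f)$.

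For (2), let $\mathcal{F}_1$ and $\mathcal{F}_2$ be $k$ interval-hyperbolic foliations on $T$ of common slope $[F]$. I would fix a circle fibration $\pi: T \to S^1$ whose fibers realise $[F]$ and, via an initial ambient isotopy, arrange that every closed leaf of each $\mathcal{F}_i$ is a fiber of $\pi$; this is possible because the closed leaves are disjoint essential simple closed curves of slope $[F]$ and any finite disjoint collection of such curves on a torus can be simultaneously isotoped onto fibers. The set $F_i \subset S^1$ of $\pi$-values over which $\mathcal{F}_i$ has a closed leaf is then a union of $2k$ closed non-degenerate intervals, and in each of the $2k$ complementary open intervals $\mathcal{F}_i$ has no closed leaves but spirals from one boundary annulus to the next in a definite direction.

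Next I would construct $g \in \mathrm{Homeo}_+(S^1)$ with $g(F_1) = F_2$ matching fixed intervals in cyclic order and matching, complementary interval by complementary interval, the spiraling direction. This is possible because the cyclic alternation of spiraling directions across the $2k$ complementary intervals is the same for any element of $IH^0(k)$ up to cyclic shift, so an appropriate choice of shift followed by an interval-by-interval order-preserving identification produces $g$. The bundle lift $G(y,s) = (g(y), s)$ of $g$ is isotopic to $\mathrm{id}_T$ since $g$ is isotopic to $\mathrm{id}_{S^1}$, so after replacing $\mathcal{F}_1$ by $G(\mathcal{F}_1)$ I may assume that $\mathcal{F}_1$ and $\mathcal{F}_2$ have identical closed leaf sets and identical spiraling directions on each complementary annulus.

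The final step is an isotopy inside each complementary open annulus $A$, relative to its boundary leaves, carrying $\mathcal{F}_1|_A$ to $\mathcal{F}_2|_A$. Each such restriction is the suspension of a fixed-point-free orientation-preserving homeomorphism of an open interval of a prescribed sign; any two such homeomorphisms are conjugate in $\mathrm{Homeo}_+$ of that interval, and one may interpolate the conjugating homeomorphism to the identity, producing the required isotopy on $A$ relative to $\partial A$. Piecing these local isotopies together and composing with the isotopy from $G$ to $\mathrm{id}_T$ gives the ambient isotopy on $T$. The principal subtle point I expect is verifying that the spiraling direction is a well-defined intrinsic invariant of the cooriented foliation and that its cyclic arrangement across the $2k$ complementary intervals really is unique up to rotation for all elements of $IH^0(k)$, which is what legitimises the construction of $g$ in the second step.
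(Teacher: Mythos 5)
You reverse the paper's logical order: you prove (2) directly and deduce (1), whereas the paper proves (1) first and obtains (2) from it in a few lines. The paper's proof of (1) factors the mapping class of any slope-preserving homeomorphism of $T(f)$ into a Dehn twist along $[\alpha]$ (realized inside an annulus of compact leaves of $\mathcal{F}(f)$), the involution $[x,t]\mapsto[h(x),1-t]$, and the reflection $[x,t]\mapsto[r_1(x),t]$, using at the crucial moment that $IH^0(k)$ is a single conjugacy class in $\hbox{Homeo}_+(S^1)$. That one conjugacy-class fact packages precisely the matching of fixed-interval patterns, spiraling directions, and first-return maps that your approach develops by hand, and it also directly supplies the comparison homeomorphism $\psi: T(f_1) \to T(f_2)$ needed for (2), so the paper never has to argue annulus-by-annulus.

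The one genuinely incomplete point in your write-up is the final step. After matching closed-leaf sets and spiraling signs, you want an isotopy of each complementary closed annulus $\bar A$ rel $\partial\bar A$ carrying $\mathcal{F}_1|_{\bar A}$ to $\mathcal{F}_2|_{\bar A}$. What the conjugacy $h$ between the first-return maps $r_1, r_2$ actually hands you is a homeomorphism of $\bar A$ fixing $\partial\bar A$ pointwise and carrying $\mathcal{F}_1|_{\bar A}$ to $\mathcal{F}_2|_{\bar A}$; a priori this may represent a nontrivial power of the Dehn twist along the core of $\bar A$, the generator of $\pi_0(\hbox{Homeo}(\bar A,\partial\bar A))\cong\mathbb Z$, and then it is not an isotopy. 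Interpolating $h$ to the identity does not repair this: the intermediate maps $h_s$ do not conjugate $r_1$ to anything in particular, so $(x,t)\mapsto(h_s(x),t)$ does not descend to a self-map of the quotient annulus. The fix is to note that every power of the Dehn twist along the core can be realized by a homeomorphism of $\bar A$ that fixes $\partial\bar A$ pointwise and preserves $\mathcal{F}_1|_{\bar A}$ setwise---push points sufficiently far along the spiraling leaves---so the mapping-class obstruction can always be absorbed. With that supplement your proof of (2) goes through, your deduction of (1) from (2) is immediate, and the two arguments reach the same statement by genuinely different routes.
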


\begin{proof}
(1) Let $\alpha$ be a primitive class in $H_1(T(f))$ representing the slope of $\mathcal{F}(f)$ and $\alpha^* \in H_1(T(f))$ a primitive class carried by the image of $S^{1} \times \{0\}$ in $T(f)$. Then $\alpha^*$ is dual to $\alpha$. Use the ordered  basis $\{\alpha, \alpha^*\}$ of $H_1(T(f))$ to identify the mapping class group of $T(f)$ with $GL(2, \mathbb Z)$ in the usual way. Then a homeomorphism of $T(f)$ which leaves $[\alpha]$ invariant corresponds to a matrix of the form $\left(\begin{matrix} \epsilon  &   b \\ 0   &  \delta \end{matrix}\right)$ where $\epsilon, \delta \in \{\pm 1\}$ and $b \in \mathbb Z$. As this matrix factors $\left(\begin{matrix} 1 &   \delta b \\ 0   &  1 \end{matrix}\right) \left(\begin{matrix} 1  &   0 \\ 0   &  \delta \end{matrix}\right) \left(\begin{matrix} \epsilon  &   0 \\ 0   &  1 \end{matrix}\right)$, it suffices to show that $\mathcal{F}(f)$ on $T(f)$ is invariant up to isotopy under the homeomorphisms corresponding to the matrices $\left(\begin{matrix} 1 &   1 \\ 0   &  1 \end{matrix}\right), \left(\begin{matrix} -1  &   0 \\ 0   &  1 \end{matrix}\right)$ and $\left(\begin{matrix} 1  &   0 \\ 0   &  -1 \end{matrix}\right)$. 

To that end, first observe that $\mathcal{F}(f)$ is invariant up to isotopy by a Dehn twist of slope $[\alpha]$ (i.e. simply perform the Dehn twist in one of the annuli composed of circle leaves of $\mathcal{F}$) and that the matrix of such a Dehn twist is $\left(\begin{matrix} 1  &   1 \\ 0   &  1\end{matrix}\right)$.

Next observe that $IH^0(k)$ is a conjugacy class of elements of $\hbox{Homeo}_+(S^1)$. (See the discussion on page 355 of \cite{Ghys} for instance.) In particular there is an $h \in \hbox{Homeo}_+(S^1)$ such that $f^{-1} = hfh^{-1}$. Then if $[x,t]$ denotes the class of $(x, t)$ in $T(f)$, the correspondence $[x,t] \mapsto [h(x),1-t]$ determines a homeomorphism of $T(f)$ which leaves $\mathcal{F}(f)$ invariant and which corresponds to the matrix $\left(\begin{matrix} -1  &   0 \\ 0   &  1\end{matrix}\right)$.

Finally note that if $r$ is an orientation-reversing homeomorphism of $S^1$ then $rfr^{-1} \in IH^0(k)$. Thus there is some $h \in \hbox{Homeo}_+(S^1)$ such that 
$rfr^{-1} = hfh^{-1}$. Hence if $r_1 = h^{-1} r$, then $r_1$ is orientation-reversing and $f = r_1 f r_1^{-1}$. The correspondence $[x,t] \mapsto [r_1(x),t]$ determines a homeomorphism of $T(f)$ which leaves $\mathcal{F}(f)$ invariant and which corresponds to the matrix $\left(\begin{matrix} 1  &   0 \\ 0   &  -1\end{matrix}\right)$. This completes the proof of (1). 

(2) Let $\mathcal{F}_1$ and $\mathcal{F}_2$ be $k$ interval-hyperbolic foliations of slope $[\alpha]$ on a torus $T$ and fix homeomorphisms $f_1, f_2 \in IH^0(k)$ and $\varphi_1: (T(f_1), \mathcal{F}(f_1)) \to (T, \mathcal{F}_1)$ and $\varphi_2: (T(f_2), \mathcal{F}(f_2)) \to (T, \mathcal{F}_2)$. Fix $h \in \hbox{Homeo}_+(S^1)$ such that $f_2 = hf_1h^{-1}$ and note that the correspondence $[x,t] \mapsto [h(x),t]$ determines a homeomorphism $\psi: (T(f_1), \mathcal{F}(f_1)) \to (T(f_2), \mathcal{F}(f_2))$. By construction, the composition $\theta = \varphi_2 \circ \psi \circ \varphi_1^{-1}: (T, \mathcal{F}_1) \stackrel{\cong}{\longrightarrow} (T, \mathcal{F}_2)$. Since $\theta([\alpha]) = [\alpha]$, part (1) of the lemma implies that $\mathcal{F}_1$ is isotopic to $\theta(\mathcal{F}_1) = \mathcal{F}_2$. 
\end{proof}

\begin{lemma} \label{ready for gluing horizontal non-compact} 
Let $M$ be a compact orientable Seifert fibred manifold as in \S \ref{assumptions seifert}. Consider a co-oriented taut foliation $\mathcal{F}(\rho)$ where $\rho \in \mathcal{R}_0(M)$ takes values in some $\widetilde{PSL}(2,\mathbb R)_k$ and $\rho(\pi_1(T_i))$ contains no parabolics for each $i$. Suppose that $\mathcal{F}(\rho)$ has no compact leaves and that $[\alpha_*(\mathcal{F})]$ is rational. Then there is a constant $k(\mathcal{F}) > 0$ such that for positive integers $k_1, \ldots, k_r \geq k(\mathcal{F})$, there is a co-oriented horizontal foliation $\mathcal{F}'$ in $M$ such that $[\alpha_*(\mathcal{F})]$ is $\mathcal{F}'$-detected and $\mathcal{F}'$ is $k_i$ interval-hyperbolic on the boundary component $T_i$ of $\partial M$. 
\end{lemma}

\begin{proof}
Our hypotheses imply that $\mathcal{F} \cap T_i$ is a suspension foliation of an elliptic or hyperbolic element $f$ of $\widetilde{PSL}(2,\mathbb R)_k$. In the first case, $\mathcal{F} \cap T_i$ is a circle fibration of slope $[\alpha_i(\mathcal{F})]$. In the latter, $f$ has an even number $2l_i > 0$ of fixed points and is alternately increasing or decreasing on the $2l_i$ complementary open intervals. Further, $\mathcal{F} \cap T_i$ has exactly $2l_i$ compact leaves $C_{i1}, C_{i2}, \ldots, C_{i2l_i}$, each of slope $[\alpha_i(\mathcal{F})]$. If $\mathcal{F}\cap T_i$ is a circle fibration, set $l_i = 0$ and choose a circle fibre $C_{i1}$. Define $k(\mathcal{F}) = \max\{l_1, l_2, \ldots, l_r\}$. 

Let $L_1, L_2, \ldots, L_s$ be the leaves of $\mathcal{F}$ which contain some $C_{ij}$. Since $\mathcal{F}(\rho)$ is taut, the fundamental group of each $L_i$ injects into $\pi_1(M)$ (\cite[Theorem 4.35]{Ca}). Since each $L_i$ is non-compact, its fundamental group is free and each of its circle boundary component provides a free generator. Since $\mathcal{F}(\rho)$ is co-oriented, each $L_j$ is orientable. Now replace $\mathcal{F}$ by a foliation $\mathcal{F}'$ obtained by thickening $L_1 \cup L_2 \cup \ldots \cup L_s$ (cf. \cite[Operation 2.1.1]{Ga2}). Each $L_j$ has a product neighbourhood $V_j \cong L_j \times I$, which we can assume are mutually disjoint, and each slice $L_j \times \{t\}$ is a leaf of $\mathcal{F}'$. Set $L_j' := L_j \times \{\frac12 \}$.

Choose a subset $C_1, \ldots, C_r$ of the $C_{ij}$ so that $C_i$ is contained in $T_i$. For each $j$, fix a homomorphism $\varphi_j: \pi_1(L_j) \to \hbox{Homeo}_+(I)$ which sends each generator determined by the $C_i \subseteq \partial L_j$ to a homeomorphism $f_i \in  \hbox{Homeo}_+(I)$ and contains all other free generators in its kernel. We can refoliate $L_j \times I$ as the quotient of its universal cover $(\tilde L \times I)$ by the diagonal action of $\pi_1(L_j)$ which acts by deck transformations on the first factor and by $\varphi_j$ on the second. Call the new foliation $\mathcal{F}(\varphi_j)$. By construction, $L_j \times \{0\}$ and $L_j \times \{1\}$ are leaves of $\mathcal{F}(\varphi_j)$ and if $C_i \subseteq \partial L_j$, the holonomy of $\mathcal{F}(\varphi_j)$ on $C_i \times I$ is the suspension of $f_i$. Further, $\mathcal{F}'$ is unchanged near the components of $\partial L_j' \times I$ which are not one of the $C_i \times I$. Now replace $\mathcal{F}'$ by the new foliation obtained by substituting $\mathcal{F}(\varphi_j)$ for the product foliation $L_j \times I$. By choosing appropriate $f_i \in  \hbox{Homeo}_+(I)$, we can arrange for the new foliation to be $k_i$ interval hyperbolic on each $T_i$. 
\end{proof}

\begin{lemma} \label{ready for gluing horizontal compact} 
Let $M$ be a compact orientable Seifert fibred manifold as in \S \ref{assumptions seifert}. Consider a co-oriented taut foliation $\mathcal{F}(\rho)$ where $\rho \in \mathcal{R}_0(M)$ takes values in some $\widetilde{PSL}(2,\mathbb R)_k$ and $\rho(\pi_1(T_i))$ contains no parabolics for each $i$. Suppose that $\mathcal{F}(\rho)$ has a compact leaf $F$, so that $[\alpha_*(\mathcal{F})]$ is rational. Given any positive integers $k_1, \ldots, k_r$, there is a co-oriented horizontal foliation $\mathcal{F}'$ in $M$ which detects $[\alpha_*(\mathcal{F})]$ such that $\mathcal{F}'$ is $k_i$ interval-hyperbolic on the boundary component $T_i$ of $\partial M$ subject to the following constraints.

$(a)$ If $r = |\partial M| \geq 2$ and $F$ is planar, then $k_i = k_j$ for some $i \ne j$. 

$(b)$ If $M \cong N_2$, then $k_1$ is odd. 

\end{lemma}

\begin{proof}
Since $F$ is compact and horizontal, it is the fibre of a horizontal locally-trivial fibre bundle $M \to S^1$. We can assume, without loss of generality, that this fibre bundle is $\mathcal{F}(\rho)$. 

The base orbifold of $M$ is orientable since it admits a horizontal co-oriented foliation, say it is $P(a_1, \ldots, a_n)$. If $F$ is planar, it has at least two boundary components, and if two, either $P(a_1, \ldots, a_n) = D^2(2,2)$ and $M \cong N_2$ or $P(a_1, \ldots, a_n)$ is an annulus and $M \cong S^1 \times S^1 \times I$. By assumption, the latter does not occur. Then to prove the lemma, we need to consider the four cases: $F$ has positive genus;  $F$ is planar and $r \geq 2$; $F$ is planar, $r = 1$, and $|\partial F| \geq 3$; $M \cong N_2$. 

Let $g$ be the genus of $F$ and consider the presentation 
\begin{eqnarray}\pi_1(F) = \langle a_1, b_1, \ldots , a_g, b_g, x_1, \ldots , x_m : \big(\Pi_{j} [a_j, b_j]  \big) x_1 \ldots x_m  = 1 \rangle  \nonumber 
\end{eqnarray}
where $m = |\partial F|$ and $x_i$ corresponds to the $i^{th}$ boundary component of $F$. 

First assume that the genus of $F$ is positive and fix a neighbourhood $F \times I$ of $F$ where each $F \times \{t\}$ is a fibre of $\mathcal{F}(\rho)$. Each orientation preserving homeomorphism of $I$ is a commutator (cf. the proof of \cite[Proposition 5.11]{Ghys}), so for each $f_1, \ldots, f_m \in \hbox{Homeo}_+(I)$ we can find  $g, h\in \hbox{Homeo}_+(I)$ such that the composition $f_1 \circ \cdots \circ f_m \circ [g, h]$ is the identity. Define a homomorphism $\varphi: \pi_1(F) \to \hbox{Homeo}_+(I)$ by setting $\varphi(a_1) = g, \varphi(b_1) = h, \varphi(x_i) = f_i$ and $\varphi(a_j) = \varphi(b_j) = 1$ for $2 \leq j \leq r$. For each boundary component $T_j$ of $M$, fix a boundary component $ C_j$ of $F$ contained in $T_j$. By setting $f_l$ to be the identity when $x_l$ does not correspond to some $C_i$ and choosing $f_l$ appropriately otherwise, we can refoliate $F \times I$ as in the proof of Lemma \ref{ready for gluing horizontal non-compact} to introduce $k_i$ interval hyperbolic behaviour on any boundary component of $M$.

Assume next that $F$ is planar, $r \geq 2$ and, after reindexing, that $k_1 = k_2$. Each $f_1, \ldots, f_m \in \hbox{Homeo}_+(I)$ whose composition is the identity determines a homomorphism $\varphi: \pi_1(F) \to \hbox{Homeo}_+(I)$. Fix boundary components $C_1, C_2$ of $F$ where $C_1 \subset T_1$ and $C_2 \subset T_2$. We can assume that the class $x_i \in \pi_1(F)$ corresponds to $C_i$ for $i = 1, 2$. By choosing $f_2 = f_1^{-1}$ and $f_3 = \ldots = f_m = 1$, we can use the operation of the previous case on a product neighbourhood $F \times I$ of $F = F \times \{\frac12\}$ to construct a new co-oriented taut foliation $\mathcal{F}'$  on $M$ which detects $[\alpha_*(F)]$ and which is $k_1$ interval hyperbolic on both $T_1$ and $T_2$, and remains elliptic on $T_3, \ldots, T_r$. By construction, $\mathcal{F}'$ has non-compact leaves in $F \times I$, and since $\mathcal{F}'$ is horizontal, the quotient map from $M$ to its base orbifold is surjective when restricted to any of its leaves. In particular, there are non-compact leaves incident to each of the tori $T_3, \ldots, T_r$. Thickening such leaves preserves the elliptic nature of $\mathcal{F}' \cap T_i$ for $3 \leq i \leq r$ and the $k_i$ interval hyperbolic behaviour on $\mathcal{F}' \cap T_i$ for $1 \leq i \leq 2$. Now apply the construction from the proof of Lemma \ref{ready for gluing horizontal non-compact} to introduce $k_i$ interval hyperbolic behaviour on $\mathcal{F}' \cap T_i$ for $3 \leq i \leq r$ while leaving $\mathcal{F}' \cap T_1$ and $\mathcal{F}' \cap T_2$ alone. 

If $F$ is planar, $r = 1$, and $m \geq 3$, fix a product neighbourhood of $F$ and two boundary components $C_1, C_2$ of $F$ which are successive on $\partial M$. Modify $\mathcal{F}$ by introducing holonomy, as above, which only alters the boundary behaviour of $\mathcal{F}$ near $C_1 \cup C_2$, so that the new foliation $\mathcal{F}'$ is $1$ interval hyperbolic on $\partial M$. Since $\mathcal{F}'$ is horizontal and has non-compact leaves which intersect $\partial M$ arbitrarily close to any component $C$ of $\partial F \setminus (C_1 \cup C_2)$, we can add $(k_1-1)$ interval hyperbolic behaviour near $C$ in such a way that the new foliation is $k_1$ interval hyperbolic on $\partial M$. 

Finally suppose that $r = 1$ and $m = 2$, so $M \cong N_2$. Then $F$ is a horizontal annulus and as $M$ has a connected boundary, it must be $N_2$. We can alter $\mathcal{F}$ in a saturated product neighbourhood of $F$ to to produce a foliation which is $1$ interval hyperbolic on $\partial M$. The reader will verify that if we alter the new foliation by performing a similar operation near another surface fibre, the resulting foliation is not $2$ interval hyperbolic, but that it can then be performed near a third surface fibre to produce an element of $\mathcal{F}(M)$ which is $3$ interval hyperbolic on $\partial M$.  Proceeding this way, we can produce an element of $\mathcal{F}(M)$ which is $k$ interval hyperbolic on $\partial M$ for all odd $k$. This completes the proof. 
\end{proof}

\begin{lemma} \label{ready for gluing vertical} 
Let $M$ be a compact orientable Seifert fibred manifold as in \S \ref{assumptions seifert}. Suppose that $[\alpha_*] \in \mathcal{S}(M)$ is rational, but not horizontal, and foliation detected. Index the boundary components of $M$ so that $[\alpha_i] = [h]$ if and only if $1 \leq i \leq v$. Then for any positive integer $k_0$, there is a co-oriented taut foliation $\mathcal{F}$ on $M$ which detects $[\alpha_*]$ and is $k_i$ interval hyperbolic on $T_i$ $($$1 \leq i \leq r$$)$ where $k_{v+1}, \ldots, k_r$ are arbitrary, $k_i \geq k_0$ for $2 \leq i \leq v$, and $k_1$ is arbitrary unless $v = 1$ and $M$ has a non-orientable base orbifold. In the latter case, $k_1$ be chosen to be an arbitrary odd positive integer. In all cases, $\mathcal{F}$ can be assumed to be transverse to any predetermined, finite family of Seifert fibres of $M$. 
\end{lemma}

\begin{proof}
Consider a foliation $\mathcal{F}_0$ on $M$ which detects $(\{v+1, \ldots, r\}; [\alpha_*])$, as constructed in the proof of Proposition \ref{foliation vertical}. The only compact leaves in the resulting foliation are a finite number of vertical annuli, and we can assume that they avoid any predetermined, finite family of Seifert fibres. 

Recall that in the case that the base orbifold of $M$ is orientable, $v \geq 2$ (Proposition \ref{foliation vertical}). By construction, $\mathcal{F}_0$ has exactly $v-1$ compact leaves $A_1, \ldots, A_{v-1}$ where $A_i$ is a vertical annulus which runs between $T_{i-1}$ and $T_{i}$. After thickening these compact leaves (avoiding the finite set of Seifert fibres), we can apply the constructions used in the proofs of the previous two lemmas to add $a_1$ interval hyperbolic behaviour on $T_1$, $(a_1 + a_2)$ interval hyperbolic behaviour on $T_2$, \ldots, $(a_{v-2} + a_{v-1})$ interval hyperbolic behaviour on $T_{v-1}$, and $a_{v-1}$ interval hyperbolic behaviour on $T_v$ where $a_1, \ldots, a_{v-1}$ are arbitrary positive integers. Take $a_1 = k_1$ and $a_2, \ldots, a_{v-1} > k_0$. Further, for each $v+1 \leq i \leq r$, the new foliation has non-compact leaves incident to $T_i$ on which it is linear of slope $[\alpha_i]$. Hence we can apply the construction of the proof of Lemma \ref{ready for gluing horizontal non-compact} to introduce $k_i$ interval hyperbolic behaviour on $T_i$ for arbitrary $k_i$ ($v+1 \leq i \leq r$). The resulting foliation is transverse to any predetermined, finite family of Seifert fibres of $M$. 

The case that the base orbifold of $M$ is non-orientable is handled similarly. Here $v \geq 1$ and $\mathcal{F}_0$ has exactly $v$ compact leaves $A_0, \ldots, A_{v-1}$, each a vertical annulus, where $A_0$ runs between $T_1$ and itself and $A_i$ runs between $T_{i}$ and $T_{i+1}$ for $1 \leq i \leq v$. We can use $A_0$ to introduce $a_0$ interval hyperbolic behaviour on $T_1$ for an arbitrary odd $k_1$, then $A_1, \ldots, A_{v-1}$ to introduce $a_0 + a_1$ interval hyperbolic behaviour on $T_1$, $(a_1 + a_2)$ interval hyperbolic behaviour on $T_2$, \ldots, $(a_{v-2} + a_{v-1})$ interval hyperbolic behaviour on $T_{v-1}$, and $a_{v-1}$ interval hyperbolic behaviour on $T_v$ where $a_1$ is an arbitrary odd integer and $a_2, \ldots, a_{v-1}$ are arbitrary positive integers. By an appropriate choice of the $a_i$ we produce a co-oriented taut foliation on $M$ which is $k_i$ interval hyperbolic of slope $[h]$ on $T_i$ for $1 \leq i \leq v$ as in the stament of the lemma, and is linear of slope $[\alpha_i]$ on $T_i$ for $v+1 \leq i \leq r$. As before, we can 
apply the construction of the proof of Lemma \ref{ready for gluing horizontal non-compact} to introduce $k_i$ interval hyperbolic behaviour on $T_i$ for arbitrary $k_i$ ($v+1 \leq i \leq r$). Again, the resulting foliation is transverse to any predetermined, finite family of Seifert fibres of $M$. 
\end{proof}

Here is a consequence of Lemmas \ref{ready for gluing horizontal non-compact} and \ref{ready for gluing horizontal compact} which will be applied to study slope detection via Heegaard-Floer homology. It can be considered a special case of Theorem \ref{theorem: gluing}. Recall the definition of the Seifert manifolds $N_t$ from \S \ref{N_t}.

\begin{proposition} \label{foliation iff detected}
Let $M$ be a compact orientable Seifert fibred manifold as in \S \ref{assumptions seifert} and fix $J \subseteq \{1, 2, \ldots , r\}$ and $t \geq 2$. Suppose that $[\alpha_*]$ is a rational  element of $\mathcal{S}(M)$ such that $[\alpha_j] \ne [h]$ for $j \in J$. Let $W_t$ be obtained by the $[\alpha_j]$ Dehn filling of $M$ for $j \in J$ and by gluing $N_{t}$ to $M$ along $T_j$ in such a way that for each $j \notin J$, $h_0$ is identified with $[\alpha_j]$. Then if $W_t$ is a rational homology $3$-sphere, it admits a co-oriented taut foliation if and only if $[\alpha_*]$ is horizontal and lies in $\mathcal{D}_{fol}(M; J)$. 
\end{proposition}

\begin{proof} 
Without loss of generality we can assume that $\alpha_j$ is a primitive element of $H_1(T_j)$ for each $j$. 

Since $M$ is contained in a rational homology $3$-sphere, its rational homology is isomorphic to $\mathbb Q^r$ generated by peripheral classes, one from each $T_j$. It follows  that as $W_t$ is a rational homology $3$-sphere, $\alpha_1, \ldots, \alpha_r$ are linearly independent when considered as classes in $H_1(M; \mathbb Q)$. Hence $v([\alpha_*]) \leq 1$ and if it equals $1$, $M$ has an orientable base orbifold. 

Let $M'$ be the Seifert manifold obtained by the $[\alpha_j]$ Dehn filling of $M$ for $j \in J$ and $[\alpha_*'] \in \mathcal{S}(M')$ the projection of $[\alpha_*]$. Since $[\alpha_j] \ne [h]$ for $j \in J$, the Seifert structure on $M$ extends to one on $M'$ of the sort described in in \S \ref{assumptions seifert}. Further, its base orbifold is obtained from that of $M$ by attaching a disk with a cone point of order $\Delta(\alpha_j, h)$ to the boundary component corresponding to $T_j$ for each $j \in J$.  

First suppose that $v([\alpha_*]) = 1$. Then $[\alpha_*]$ is neither horizontal nor lies in $\mathcal{D}_{fol}(M; J)$ (Proposition \ref{foliation vertical}(2)). On the other hand, if $W_t$ admitted a co-oriented taut foliation, it can be isotoped so that it is transverse to each $T_j$ for $j \not \in J$ and intersects $M'$ and each $N_t$ in co-oriented taut foliations \cite{BR}. Since $\mathcal{D}_{fol}(N_t) = \{[h_0]\}$ (Proposition \ref{tau fibre 2}), this implies that $[\alpha_*']$ is foliation detected in $M'$. But this is impossible since $M'$ has an orientable base orbifold and $v([\alpha_*']) = 1$ (Proposition \ref{foliation vertical}). Thus $W_t$ does not admit a co-oriented taut foliation. 

Next suppose that $v([\alpha_*]) = 0$. If $W_t$ admitted a co-oriented taut foliation, the argument of the previous paragraph shows that $[\alpha_*']$ is foliation detected in $M'$. Since $v([\alpha_*']) = v([\alpha_*]) = 0$, Corollary \ref{horizontal implies horizontal} implies that it is detected by a horizontal foliation $\mathcal{F}'$ on $M'$. Since the cores of the $\alpha_j$ filling tori are transverse to $\mathcal{F}'$ ($j \in J$), we obtain a co-oriented taut foliation on $M$ from $\mathcal{F}'$ which strongly detects $[\alpha_j]$ for $j \in J$. Thus $[\alpha_*] \in \mathcal{D}_{fol}(M; J)$. 

Conversely, if $[\alpha_*] \in \mathcal{D}_{fol}(M; J)$, it is easy to see that $M'$ admits a co-oriented taut foliation which detects $[\alpha_*']$ if $\partial M' \ne \emptyset$.  If $\partial M' = \emptyset$, then $M' = W_t$ so we are done. Assume otherwise and observe that since $[\alpha_*']$ is horizontal, Corollary \ref{horizontal implies horizontal} and Proposition \ref{standard realisation} imply that the base orbifold of $M'$ is orientable and that $[\alpha_*]$ is $\mathcal{F}(\rho)$-detected where $\rho \in \mathcal{R}_0(M)$ takes values in some $\widetilde{PSL}(2,\mathbb R)_k$ and $\rho(\pi_1(T_j))$ contains no parabolics for each $j$. If $\mathcal{F}(\rho)$ contains a compact leaf, $M'$ admits a fibration $\mathcal{F}_1$ which strongly detects $[\alpha_*]$. On the other hand, each attached $N_{t}$ admits a fibration of slope $[h_0]$ (cf. \S \ref{N_t}) which can be glued to $\mathcal{F}_1$ to produce a co-oriented taut foliation in $W_t$. Suppose then that each leaf of $\mathcal{F}(\rho)$ is non-compact. Let $k_0$ denote the smallest odd integer which is greater than $k(\mathcal{F})$. By Lemma \ref{ready for gluing horizontal non-compact} there is a co-oriented taut foliation $\mathcal{F}'$ on $M$ which detects $[\alpha_*']$ and which is $k_0$ interval-hyperbolic on each component of $\partial M'$. By Lemma \ref{ready for gluing horizontal compact}, there is a co-oriented taut foliation on $N_t$ which detects $[h_0]$ and which is $k_0$ interval-hyperbolic on $\partial N_t$. These foliations piece together to give the desired foliation on $W_t$. 
\end{proof}

\section{Detecting rational slopes via L-spaces} \label{section: detecting via L-spaces}

In this section we show to how to detect rational elements of $\mathcal{S}(M)$ using Heegaard-Floer homology. 

\subsection{Some background results on L-spaces}
Here is an elementary fact that we will use below. Its proof follows from the homology exact sequence of the pair $(W, M_1)$. (Compare with \cite[Lemma 3.2]{Watson2008}.)

\begin{lemma} \label{order} 
Let $M_1$ and $M_2$ be two rational homology solid tori and $W = M_1 \cup_f M_2$ where $f: \partial M_1 \to \partial M_2$ is a homeomorphism. Then 
$$|H_1(W)| = d_1d_2|T _1(M_1)||T_1(M_2)| \Delta(\lambda_1, \lambda_2)$$ 
where $\lambda_j$ is the rational longitude of $M_j$, $d_j \geq 1$ is its order in $H_1(M_j)$, and $T_1(M_j)$ is the torsion subgroup of $H_1(M_j)$. 
\qed 
\end{lemma}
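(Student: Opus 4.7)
The plan is to apply the long exact sequence of the pair $(W, M_1)$ and use excision to replace the relative groups with those of $(M_2, \partial M_2)$. First, since each $M_j$ is a rational homology solid torus with torus boundary, one has $H_2(M_j) = 0$ and $H_1(M_j) \cong \mathbb{Z} \oplus T_1(M_j)$, so Poincar\'e--Lefschetz duality together with universal coefficients gives $H_1(M_j, \partial M_j) \cong T_1(M_j)$ and $H_2(M_j, \partial M_j) \cong \mathbb{Z}$. Feeding these in (and using that $W$ is connected, so $H_0(M_1) \to H_0(W)$ is an isomorphism) reduces the long exact sequence of $(W, M_1)$ to the five-term sequence
\begin{equation*}
0 \to H_2(W) \to \mathbb{Z} \xrightarrow{\partial} \mathbb{Z} \oplus T_1(M_1) \to H_1(W) \to T_1(M_2) \to 0.
\end{equation*}

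The crucial step is to identify $\partial$ up to sign. Via the excision isomorphism, $\partial$ factors as the connecting map $H_2(M_2, \partial M_2) \to H_1(\partial M_2)$ of the pair $(M_2, \partial M_2)$ followed by the inclusion-induced map $\iota_{1*}: H_1(\partial M_1) \to H_1(M_1)$. A generator of $H_2(M_2, \partial M_2)$ is represented by a rational Seifert surface $S$ for $M_2$ with $\partial S = \pm d_2 \lambda_2$, so the first map sends a generator to $\pm d_2 \lambda_2 \in H_1(\partial M_2) = H_1(\partial M_1)$. Next, fix a basis $\{\lambda_1, \mu_1\}$ of $H_1(\partial M_1)$ with $\Delta(\lambda_1, \mu_1) = 1$ and write $\lambda_2 = p\lambda_1 + q\mu_1$, so that $|q| = \Delta(\lambda_1, \lambda_2)$.

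The one computational input needed is the analogous claim for $M_1$ itself: $\iota_{1*}(\lambda_1)$ lies in $T_1(M_1)$ and has order $d_1$, while $\iota_{1*}(\mu_1) = \pm d_1 \mu + \tau_1$ for some generator $\mu$ of the free part of $H_1(M_1)$ and some $\tau_1 \in T_1(M_1)$. This follows by running the same exact sequence for $(M_1, \partial M_1)$ and matching the order of the cokernel to $|H_1(M_1, \partial M_1)| = |T_1(M_1)|$. Granting this, $d_2 \iota_{1*}(\lambda_2) = \pm q d_1 d_2\, \mu + (\text{torsion})$, so $\partial$ is injective when $\Delta(\lambda_1, \lambda_2) \ne 0$ (forcing $H_2(W) = 0$), and a routine count in $\mathbb{Z} \oplus T_1(M_1)$ shows its cokernel has order $|q|\, d_1 d_2\, |T_1(M_1)|$. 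Multiplying by $|T_1(M_2)|$ from exactness on the right yields $|H_1(W)| = d_1 d_2\, |T_1(M_1)|\, |T_1(M_2)|\, \Delta(\lambda_1, \lambda_2)$. The main (minor) obstacle is just the order-counting in $\mathbb{Z} \oplus T_1(M_1)$; the underlying homological algebra is entirely routine.
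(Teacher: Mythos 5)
Your argument is correct and follows exactly the route the paper indicates: the long exact sequence of the pair $(W,M_1)$ together with excision to $(M_2,\partial M_2)$, the duality identifications $H_1(M_j,\partial M_j)\cong T_1(M_j)$ and $H_2(M_j,\partial M_j)\cong\mathbb{Z}$, and the observation that $\iota_{1*}(\mu_1)$ projects to $\pm d_1$ times a generator of the free part (forced by matching the cokernel of $\iota_{1*}$ with $T_1(M_1)$). The paper leaves all of this implicit, so your write-up is simply a complete elaboration of the same proof.
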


Recall the manifolds $N_t$ from \S \ref{N_t} and the basis $\{h_0, h_1\}$ of $H_1(\partial N_t)$ where $h_0$ is the rational longitude of $N_t$. (The classes $h_0, h_1$ are only well-defined up to sign.) In what follows we take $R$ to be a compact, connected orientable $3$-manifold with torus boundary and $f: \partial R \to \partial N_t$ to be a gluing map.  
Set 
$$W_t(f) = R \cup_f N_t$$
We call $W_t(f)$ an $N_t$-{\it filling} of $R$. 
A striking property of $N_2$-fillings  was proved in \cite{BGW}.

\begin{proposition} {\rm (\cite{BGW})} \label{HF solid torus} 
Let $R$ be a compact, connected, orientable $3$-manifold with torus boundary and suppose that $f_1$ and $f_2$ are homeomorphisms $\partial R \to \partial N_2$ such that $f_2$ is obtained by post-composing $f_1$ by a Dehn twist in $\partial  N_2$ along $h_0$. Then $\widehat{HF}(W_2(f_1)) \cong \widehat{HF}(W_2(f_2))$. 
\end{proposition}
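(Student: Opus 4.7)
The plan is to apply the pairing theorem of bordered Heegaard-Floer homology. Fix a parametrization of $\partial N_2$ using the basis $\{h_0, h_1\}$ of $H_1(\partial N_2)$; this yields a type-$D$ module $\widehat{CFD}(N_2)$ such that for any $R$ with torus boundary and any gluing $f: \partial R \to \partial N_2$,
$$\widehat{HF}(W_2(f)) \simeq H_*\bigl(\widehat{CFA}(R, f) \boxtimes \widehat{CFD}(N_2)\bigr).$$

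First, I would compute $\widehat{CFD}(N_2)$ explicitly. Starting from the standard handle decomposition of the twisted $I$-bundle over the Klein bottle and its induced genus-$2$ bordered Heegaard diagram (with $\alpha$- and $\beta$-arcs matching $h_0$ and $h_1$ respectively), one reads off generators, idempotents, and coefficient maps to obtain a small, explicit type-$D$ structure.

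Second, the passage from $f_1$ to $f_2 = \tau_{h_0} \circ f_1$ amounts, via the pairing theorem, to replacing $\widehat{CFD}(N_2)$ by $\widehat{CFDA}(\tau_{h_0}) \boxtimes \widehat{CFD}(N_2)$, where $\widehat{CFDA}(\tau_{h_0})$ is the Lipshitz-Ozsv\'ath-Thurston $DA$-bimodule of the Dehn twist along $h_0$. The proposition therefore reduces to a homotopy equivalence of type-$D$ structures
$$\widehat{CFDA}(\tau_{h_0}) \boxtimes \widehat{CFD}(N_2) \simeq \widehat{CFD}(N_2).$$
Since both factors are completely explicit, one can hope to exhibit the required equivalence by a direct calculation with the arrows.

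The main obstacle is this twist-invariance. By Lemma \ref{homeo N_t} the twist $\tau_{h_0}$ does \emph{not} extend to a homeomorphism of $N_2$, so $W_2(f_1)$ and $W_2(f_2)$ are typically non-homeomorphic and the asserted isomorphism is not a topological statement but a genuinely Heegaard-Floer-theoretic property of $N_2$ (reflecting that $N_2$ behaves as a ``Heegaard-Floer solid torus'' with respect to its rational longitude $h_0$). Carrying out the two bordered computations and verifying the equivalence is the core technical content of the proof. An alternative route would use a surgery exact triangle relating $W_2(f_1)$, $W_2(f_2)$, and a common third filling, together with the order formula of Lemma \ref{order}, to match ranks $\mathrm{Spin}^c$-structure by $\mathrm{Spin}^c$-structure; but the bordered approach appears cleaner.
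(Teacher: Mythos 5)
Your plan is exactly the argument of \cite[Proposition~7]{BGW}, which is what the paper's one-line proof cites: compute $\widehat{CFD}(N_2)$ from a bordered Heegaard diagram in the parametrization determined by $\{h_0,h_1\}$, observe that it is invariant up to homotopy equivalence under box-tensoring with the $DA$-bimodule of the Dehn twist along $h_0$, and let the pairing theorem deliver the isomorphism of $\widehat{HF}$. You defer the verification of $\widehat{CFDA}(\tau_{h_0}) \boxtimes \widehat{CFD}(N_2) \simeq \widehat{CFD}(N_2)$, calling it ``the core technical content''; that computation is precisely what the cited reference supplies, so what you have written is the correct roadmap rather than a self-contained proof. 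One caution about the alternative you sketch at the end: a surgery exact triangle together with the order formula of Lemma~\ref{order} can at best control Euler characteristics or, with extra work, total ranks (the L-space question), but not the isomorphism of $\widehat{HF}$-groups that the proposition actually asserts; this is precisely why \cite{BGW}, and Watson's subsequent generalisation to $N_t$, proceed through bordered Floer homology rather than the surgery triangle.
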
 

\begin{proof}
This result follows from \cite[Proposition 7]{BGW}. Compare with the proof of Theorem 7 of that paper and the comments which follow it.
\end{proof}

Watson has generalised this result to a wider class of manifolds he calls {\it Heegaard-Floer solid tori}. In particular, for each integer $t \geq 2$ he has shown that the manifold $N_t$ defined in \S \ref{N_t} is a Heegaard-Floer solid torus. 

\begin{proposition} {\rm (Watson \cite{Watson2013})} \label{general HF solid torus} 
Let $R$ be a compact, connected, orientable $3$-manifold with torus boundary and suppose that $f_1$ and $f_2$ are homeomorphisms $\partial N_t \to \partial R$ such that $f_2$ is obtained by precomposing $f_1$ by a Dehn twist in $\partial  N_t$ along $h_0$. Then $\widehat{HF}(W_t(f_1)) \cong \widehat{HF}(W_t(f_2))$. 
 \end{proposition}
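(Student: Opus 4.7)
The plan is to show that $N_t$ is a Heegaard-Floer solid torus in Watson's sense, meaning that its type D bordered Heegaard-Floer invariant is quasi-isomorphism invariant under the action of a Dehn twist along its rational longitude $h_0$. Parametrize $\partial N_t$ by the basis $\{h_0, h_1\}$ so that $h_0$ is identified with the $\alpha$-arc of the standard pointed matched circle on the torus, and write $T_{h_0}$ for the Dehn twist along $h_0$ and $\widehat{CFDA}(T_{h_0})$ for the associated type $DA$ bimodule over the torus algebra $\mathcal{A}(T^2)$.

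First I would compute $\widehat{CFD}(N_t)$ from a bordered Heegaard diagram adapted to the Seifert structure of $N_t$ over $D^2(t,t)$. Realizing $N_t$ as $P \times S^1$ (with $P$ a pair of pants) capped off by two $(t,1)$-filled solid tori, one can draw an explicit bordered diagram from which $\widehat{CFD}(N_t)$ is read off. The resulting module organizes into ``loops'' in the sense of bordered Floer theory, with labels in $\mathcal{A}(T^2)$ that depend in a controlled way on $t$. I would then verify that
\[
\widehat{CFDA}(T_{h_0}) \boxtimes \widehat{CFD}(N_t) \simeq \widehat{CFD}(N_t)
\]
as type D modules over $\mathcal{A}(T^2)$. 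The key observation to establish this is that tensoring with $\widehat{CFDA}(T_{h_0})$ acts on each loop by an ``edge reduction'' (in the sense of Lipshitz--Ozsv\'ath--Thurston) that returns an explicitly homotopy-equivalent loop; geometrically, this reflects the fact that $T_{h_0}$ fixes every simple closed curve parallel to $h_0$ on $\partial N_t$, and that the labels appearing in the loop structure of $\widehat{CFD}(N_t)$ are built from the sequence corresponding to $h_0$-parallel arcs.

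With this invariance in hand, the conclusion is immediate from the pairing theorem of Lipshitz--Ozsv\'ath--Thurston. For any gluing $f$, one has $\widehat{HF}(W_t(f)) \cong H_*\bigl(\widehat{CFA}(R,f) \boxtimes \widehat{CFD}(N_t)\bigr)$, and if $f_2 = f_1 \circ T_{h_0}$ then $\widehat{CFA}(R, f_2) \simeq \widehat{CFA}(R, f_1) \boxtimes \widehat{CFDA}(T_{h_0})$, so combining this with the displayed quasi-isomorphism produces the desired isomorphism $\widehat{HF}(W_t(f_1)) \cong \widehat{HF}(W_t(f_2))$.

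The main obstacle is the explicit identification of $\widehat{CFD}(N_t)$ and the verification of its $T_{h_0}$-invariance. For $t=2$, Boyer--Gordon--Watson gave a direct surgery-exact-triangle argument exploiting the $I$-bundle structure of $N_2$, and this does not extend uniformly in $t$ since the number of generators of $\widehat{CFD}(N_t)$ grows with $t$. The substantive content of Watson's generalization is therefore to treat the loop structure uniformly: either by an inductive argument that reduces the case of arbitrary $t$ to the case $t=2$ via a sequence of surgery triangles relating $N_t$ and $N_{t-1}$, or by giving a direct loop-by-loop verification that each component of the loop decomposition of $\widehat{CFD}(N_t)$ is of a form stabilized by $\widehat{CFDA}(T_{h_0})$.
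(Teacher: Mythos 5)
The paper does not prove this proposition; it simply cites Watson's (at the time unpublished) work on Heegaard--Floer solid tori, so there is no internal argument to compare yours against. What you have written is a plausible outline of the bordered-Floer argument Watson's framework supplies: express the gluing via the pairing theorem, compute $\widehat{CFD}(N_t)$ from a bordered diagram adapted to the Seifert structure over $D^2(t,t)$, and show that the type $D$ module is stabilized (up to homotopy equivalence) by the type $DA$ bimodule of the Dehn twist along $h_0$. This is the right shape of argument, and your identification of where the real work lies---a uniform-in-$t$ analysis of the loop structure of $\widehat{CFD}(N_t)$, since the number of generators grows with $t$ and the $t=2$ surgery-triangle trick of \cite{BGW} does not iterate directly---is an accurate diagnosis.

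That said, as it stands this is a plan rather than a proof. Everything of substance (the explicit presentation of $\widehat{CFD}(N_t)$, the verification that each loop component is fixed under $\widehat{CFDA}(T_{h_0}) \boxtimes -$ via edge reductions, or alternatively the inductive reduction $N_t \rightsquigarrow N_{t-1}$) is deferred to ``the main obstacle,'' so nothing is actually established. One small bookkeeping point: since $f_2 = f_1 \circ T_{h_0}$ acts on $\partial N_t$, the twist should be absorbed on the type $D$ side, i.e.\ $\widehat{CFD}(N_t, f_2) \simeq \widehat{CFDA}(T_{h_0}) \boxtimes \widehat{CFD}(N_t, f_1)$, rather than being moved to $\widehat{CFA}(R)$ as you wrote; this does not change the logic, because showing $\widehat{CFDA}(T_{h_0}) \boxtimes \widehat{CFD}(N_t) \simeq \widehat{CFD}(N_t)$ is exactly what makes the two pairings agree, but it is worth keeping the conventions straight before attempting the computation.
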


A closed, connected $3$-manifold $V$ is an {\it L-space} if it is a rational homology sphere with the property that $\hbox{rank}(\widehat{HF}(V)) = \left|H_1(V)\right|$. Examples of L-spaces include lens spaces and, more generally, connected sums of manifolds with elliptic geometry \cite[Proposition 2.3]{OSz2005-lens}. L-spaces do not admit smooth co-orientable taut foliations (\cite[Theorem 1.4]{OSz2004-genus}). 
Here is an immediate consequence of Lemma \ref{order} and Proposition \ref{HF solid torus}. 

\begin{corollary} \label{preserve l-spaces}
If $f_2$ is obtained by post-composing $f_1$ by a Dehn twist in $\partial  N_t$ along $h_0$, then $N_t \cup_{f_1} R$ is an L-space if and only if $N_t \cup_{f_2} R$ is an L-space. 
\qed
\end{corollary}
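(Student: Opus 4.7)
The plan is to show that both $\widehat{HF}$ and $|H_1|$ of the glued manifold are unchanged by post-composing $f_1$ with a Dehn twist along $h_0$; since an L-space is precisely a rational homology sphere $V$ with $\hbox{rank}(\widehat{HF}(V)) = |H_1(V)|$, this will be enough to conclude. Invariance of the hat-Floer group is exactly what Proposition \ref{general HF solid torus} asserts (after matching conventions: the maps $f_i \colon \partial R \to \partial N_t$ are inverse to the ones appearing in the statement there, but the resulting closed manifold is unchanged, and the Dehn-twist relation on one side corresponds to the same relation on the other).

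For the order of $H_1$, first dispose of the case that $R$ is not a rational homology solid torus: then $b_1(R) \geq 2$, and a Mayer--Vietoris calculation immediately gives $b_1(N_t \cup_{f_i} R) \geq 1$ for both $i$, so neither filling is an L-space and the biconditional is vacuous. Assume then that $R$ is a rational homology solid torus with rational longitude $\lambda_R$. Lemma \ref{order} gives
$$|H_1(N_t \cup_{f_i} R)| \; = \; d_{N_t}\, d_R\, |T_1(N_t)|\, |T_1(R)|\, \Delta(\lambda_{N_t},\, (f_i)_*(\lambda_R)),$$
with the intersection number computed on $\partial N_t$. Only the intersection term can depend on $i$. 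Write $f_2 = \tau \circ f_1$, where $\tau$ is a Dehn twist of $\partial N_t$ along $h_0 = \lambda_{N_t}$. Since $\tau_*$ fixes the homology class $h_0$ and intersection numbers are invariant under self-homeomorphisms of the torus,
$$\Delta(h_0,\, (f_2)_*(\lambda_R)) \; = \; \Delta(h_0,\, \tau_*(f_1)_*(\lambda_R)) \; = \; \Delta(\tau_*^{-1}(h_0),\, (f_1)_*(\lambda_R)) \; = \; \Delta(h_0,\, (f_1)_*(\lambda_R)).$$
Hence $|H_1(N_t \cup_{f_1} R)| = |H_1(N_t \cup_{f_2} R)|$, and combining with Proposition \ref{general HF solid torus} finishes the argument.

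There is no real obstacle here: the two cited results combine mechanically, and the only supplementary observation is the elementary fact that $\tau_*(h_0) = h_0$. This is why the paper bills the corollary as immediate.
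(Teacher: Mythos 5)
Your proposal is correct and matches the paper's implicit argument: the authors label this as ``an immediate consequence of Lemma \ref{order} and Proposition \ref{HF solid torus},'' i.e.\ exactly the mechanical combination you describe — the $\widehat{HF}$ invariance from Watson's result together with the $|H_1|$ invariance from the intersection-number formula (the latter boiling down to the fact that a twist along $h_0$ fixes the homology class $h_0$). You are also right that for arbitrary $t$ the relevant cite is Proposition \ref{general HF solid torus} rather than \ref{HF solid torus}, and your convention-matching remark and the short dismissal of the $b_1(R) \geq 2$ case are sensible elaborations of what the paper leaves unstated.
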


If $R$ is a compact, connected $3$-manifold with torus boundary and $\{ \alpha, \beta \}$ is a basis of $H_1(\partial R)$, then $(\alpha, \beta, \alpha + \beta)$ is called a \textit{triad} if 
\[ |H_1(R(\alpha))| + |H_1 (R(\beta))| = |H_1(R(\alpha+\beta))|
\]
A key property of L-space Dehn filling has been proven by Ozsv\'ath and Szab\'o.  

\begin{proposition} {\rm (\cite[Proposition 2.1]{OSz2005-lens}, \cite[Proposition 4]{BGW})} \label{triad}
Suppose that $R$ is a compact, connected, orientable $3$-manifold with torus boundary. If $(\alpha, \beta, \alpha + \beta)$ is a triad such that $R(\alpha)$ and $R(\beta)$ are L-spaces, then $R(u\alpha+v\beta)$ is an L-space for all coprime integer pairs $u, v \geq 0$. 
\qed 
\end{proposition}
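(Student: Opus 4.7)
The plan is to combine the Heegaard--Floer surgery exact triangle with a Farey-graph induction. For the first step, since $\Delta(\alpha,\beta)=1$, the Ozsv\'ath--Szab\'o surgery exact triangle furnishes an exact sequence
$$\cdots \to \widehat{HF}(R(\alpha)) \to \widehat{HF}(R(\beta)) \to \widehat{HF}(R(\alpha+\beta)) \to \widehat{HF}(R(\alpha)) \to \cdots$$
and hence the rank bound $\mathrm{rank}\,\widehat{HF}(R(\alpha+\beta)) \leq \mathrm{rank}\,\widehat{HF}(R(\alpha)) + \mathrm{rank}\,\widehat{HF}(R(\beta))$. Invoking the L-space hypothesis on $R(\alpha)$ and $R(\beta)$ together with the triad identity then gives $\mathrm{rank}\,\widehat{HF}(R(\alpha+\beta)) \leq |H_1(R(\alpha+\beta))|$; since the reverse inequality holds for every rational homology sphere, equality is forced and $R(\alpha+\beta)$ is an L-space. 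This disposes of the case $(u,v)=(1,1)$.

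For general coprime $u,v \geq 0$, I would induct on $u+v$, with base cases $u+v\leq 2$ covered by the hypothesis together with the preceding paragraph. Given coprime $u,v\geq 0$ with $u+v\geq 3$, one may assume $u>v\geq 1$. Pass to the basis $\{\alpha'=\alpha,\,\beta'=\alpha+\beta\}$ of $H_1(\partial R)$; then
$$u\alpha+v\beta = (u-v)\alpha' + v\beta',$$
with $(u-v,v)$ coprime, both non-negative, and $(u-v)+v=u<u+v$. If $(\alpha',\beta',\alpha'+\beta')$ is itself a triad and both $R(\alpha')$ and $R(\beta')$ are L-spaces, then the inductive hypothesis applied to the pair $(\alpha',\beta')$ yields that $R(u\alpha+v\beta)$ is an L-space.

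The main obstacle, which is essentially bookkeeping, is the propagation of the triad condition along the Farey walk. I would dispatch this by reformulating it in terms of the rational longitude $\lambda_R$ of $R$: an analogue of Lemma \ref{order} shows that for every primitive $\mu \in H_1(\partial R)$ not parallel to $\lambda_R$, the order $|H_1(R(\mu))|$ is a fixed positive constant (depending only on $R$) times $\Delta(\mu,\lambda_R)$. Consequently, the triad condition for $(\alpha,\beta,\alpha+\beta)$ is equivalent to the additivity $\Delta(\alpha,\lambda_R)+\Delta(\beta,\lambda_R)=\Delta(\alpha+\beta,\lambda_R)$, which holds exactly when $\alpha$ and $\beta$ lie in the same open half-space determined by $\lambda_R$. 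This half-space condition is manifestly preserved under the basis change $(\alpha,\beta)\mapsto(\alpha,\alpha+\beta)$, and every non-negative combination $u\alpha+v\beta$ lies in the same open half-space, so $\Delta(u\alpha+v\beta,\lambda_R)=u\Delta(\alpha,\lambda_R)+v\Delta(\beta,\lambda_R)$ throughout. This linearity guarantees that every sub-triad encountered along the Farey walk satisfies the numerical triad condition, closing the induction.
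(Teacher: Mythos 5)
Your proof is correct, and since the paper does not give a proof of Proposition~\ref{triad} but simply cites \cite[Proposition 2.1]{OSz2005-lens} and \cite[Proposition 4]{BGW}, the right comparison is with those references. Your argument — the surgery exact triangle to handle $(u,v)=(1,1)$, together with an induction on $u+v$ driven by the Euclidean-algorithm change of basis $(\alpha,\beta)\mapsto(\alpha,\alpha+\beta)$, with the triad condition propagated by observing that $|H_1(R(\mu))|$ is a fixed multiple of $\Delta(\mu,\lambda_R)$ and that the triad condition is equivalent to $\alpha,\beta$ lying in a common open half-space determined by $\lambda_R$ — is precisely the standard argument from those sources. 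The one step worth stating explicitly is that $R(u\alpha+v\beta)$ is a rational homology sphere (so that the lower bound $\mathrm{rank}\,\widehat{HF}\geq|H_1|$ applies); this does follow from your half-space observation, since $u\alpha+v\beta$ is never parallel to $\lambda_R$, but it should be flagged rather than left implicit.
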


Here is an immediate consequence of Lemma \ref{homeo N_t}.

\begin{lemma} \label{matrix normalisation}
Let $\{\alpha_1, \alpha_2\}$ be a basis of $H_1(\partial R)$ and let $f: \partial R \to \partial N_t$ be a gluing map. Then there exists $f'$ such that $W_t(f) \cong W_t(f')$ and $f'_* =   \left(\begin{smallmatrix} a  &   b \\ c   &  d \end{smallmatrix}\right)$, with respect to the bases $\{\alpha_1, \alpha_2\}$ of $H_1(\partial R)$ and $\{h_0, h_1\}$ of $H_1(\partial N_t)$, can be chosen so that $c \geq 0$ and $\det(f_*') = 1$. 
\qed 
\end{lemma}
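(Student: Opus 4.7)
My plan is to exploit the freedom we have in choosing the gluing map by pre-composing with (or post-composing, depending on orientation convention) self-homeomorphisms of $N_t$, and to read off the allowed modifications on $H_1(\partial N_t)$ from Lemma \ref{homeo N_t}.

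First, I would observe that if $F: N_t \to N_t$ is any self-homeomorphism and $f': \partial R \to \partial N_t$ is defined by $f' = (F|\partial N_t) \circ f$, then the obvious map $\mathrm{id}_R \cup F: W_t(f) \to W_t(f')$ is a homeomorphism, so $W_t(f) \cong W_t(f')$. With respect to the bases $\{\alpha_1,\alpha_2\}$ and $\{h_0,h_1\}$, the matrix of $f'_*$ is $F_* \cdot f_*$, where $F_*$ denotes the matrix of $(F|\partial N_t)_*$ in the basis $\{h_0,h_1\}$. By Lemma \ref{homeo N_t}, $F_*$ can be chosen to be any element of
$$
\mathcal{A} = \left\{ \begin{pmatrix} 1 & 0 \\ 0 & 1 \end{pmatrix},\ \begin{pmatrix} -1 & 0 \\ 0 & -1 \end{pmatrix},\ \begin{pmatrix} -1 & 0 \\ 0 & 1 \end{pmatrix},\ \begin{pmatrix} 1 & 0 \\ 0 & -1 \end{pmatrix} \right\}.
$$

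Next I would do the case analysis. Writing $f_* = \left(\begin{smallmatrix} a & b \\ c & d \end{smallmatrix}\right)$, we have $\det(f_*) \in \{\pm 1\}$ since $f$ is a homeomorphism of tori. Multiplication by $-I$ preserves the determinant and negates $c$; multiplication by $\mathrm{diag}(-1,1)$ negates the determinant and preserves $c$; multiplication by $\mathrm{diag}(1,-1)$ negates both. Hence the four products $A \cdot f_*$ with $A \in \mathcal{A}$ realize all four combinations of signs of $\det$ and of $c$ (when $c\ne 0$; when $c = 0$ the sign of $c$ is moot). Choosing the unique $A \in \mathcal{A}$ that yields $\det = 1$ and $c \geq 0$ produces the required $f' = (F|\partial N_t) \circ f$. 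No step should present any real obstacle, as the argument is a direct translation of Lemma \ref{homeo N_t} into linear algebra.
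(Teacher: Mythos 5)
Your proof is correct and is precisely the argument the paper leaves implicit when it states the lemma as an "immediate consequence" of Lemma~\ref{homeo N_t} (the paper gives no proof body, just \qed). The observation that post-composing $f$ by $F|\partial N_t$ yields a homeomorphic $W_t(f')$ with $f'_* = F_*\cdot f_*$, and that the four diagonal sign matrices realize all four $(\operatorname{sign}(c),\det)$ combinations, is exactly the intended reasoning.
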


The next lemma will be used to study L-space $N_t$-filling. 

\begin{lemma} \label{fibre surgery} 
Suppose that $R$ is a compact, connected, orientable $3$-manifold with torus boundary. Fix a basis $\{ \alpha_1, \alpha_2 \}$ of $H_1(\partial R)$ so that the rational longitude of $R$ is of the form $[\lambda_R ]= [p \alpha_1 - q \alpha_2]$ for some $p,q \geq 0$ and set 
$\mathcal{T}(R) = \overline{\{ \frac{u}{v} : u,v \hbox{ are coprime integers and }R( u \alpha_1 - v \alpha_2 ) \mbox{ is not an L-space}\}} \subseteq \mathbb{R} \cup \{ \frac{1}{0} \}$.   
Let $f : \partial R \rightarrow \partial N_t$ be a homeomorphism with $f_* =   \left(\begin{smallmatrix} a  &   b \\ c   &  d \end{smallmatrix}\right)$ with respect to the bases $\{\alpha_1, \alpha_2\}$ of $H_1(\partial R)$ and $\{h_0, h_1\}$ of $H_1(\partial N_t)$ where $c \geq 0$ and $\det(f_*) = 1$. If $\frac{b + td}{a + tc}, \frac{(1-t)b + td}{(1-t)a + tc} \not \in \mathcal{T}(R)$ and $-t < \frac{pa-qb}{pc - qd} < \frac{t}{t-1}$, then $W_t(f)$ is an L-space.
\end{lemma}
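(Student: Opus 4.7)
The strategy is to combine the Dehn-twist invariance of Proposition \ref{general HF solid torus} (the Heegaard--Floer solid torus structure on $N_t$) with iterated application of the surgery triad Proposition \ref{triad}, bootstrapping the L-space property of $W_t(f)$ from the two L-space Dehn fillings of $R$ supplied by the hypotheses.

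I begin by unpacking the hypotheses. Set $\lambda_0 = d\alpha_1 - c\alpha_2 = f_*^{-1}(h_0)$ and $\mu = -b\alpha_1 + a\alpha_2 = f_*^{-1}(h_1)$; both are primitive because $\det f_* = 1$. A direct matrix computation gives $f_*(\beta_1) = th_0 - h_1$ and $f_*(\beta_2) = th_0 + (t-1)h_1$, where $\beta_1 = (b+td)\alpha_1 - (a+tc)\alpha_2$ and $\beta_2 = ((1-t)b+td)\alpha_1 - ((1-t)a+tc)\alpha_2$, so the two slope conditions $\frac{b+td}{a+tc}, \frac{(1-t)b+td}{(1-t)a+tc} \notin \mathcal{T}(R)$ are exactly the assertion that $R(\beta_1)$ and $R(\beta_2)$ are L-spaces. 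Moreover $\beta_2 = \beta_1 + t\mu$ and $\beta_1 + \mu = t\lambda_0$, so the sequence $\{\beta_1 + i\mu : 0 \leq i \leq t\}$ interpolates between the two endpoints along $\partial R$, with exactly one non-primitive intermediate $\beta_1 + \mu = t\lambda_0$, and maps under $f_*$ to the arc $\{th_0 + (i-1)h_1\}$ on $\partial N_t$. The inequality $-t < \frac{pa-qb}{pc-qd} < \frac{t}{t-1}$ places $f_*([\lambda_R]) = (pa-qb)h_0 + (pc-qd)h_1$ strictly inside this arc, and via the order formula $|H_1(R(\gamma))| = d_R|T_1(R)|\Delta(\gamma,\lambda_R)$ applied to each filling in the chain, this inequality is equivalent to the sign conditions that make each triple $(\beta_1 + (i+1)\mu, -\mu, \beta_1 + i\mu)$ a triad in the sense of Proposition \ref{triad}.

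The main step is to iterate Proposition \ref{triad} backwards along this chain, starting from the known L-space $R(\beta_2)$ and producing L-space Dehn fillings $R(\beta_1 + i\mu)$ at each primitive intermediate value of $i$. The non-primitive interpolant $\beta_1 + \mu = t\lambda_0$ is circumvented by first normalising $f$ via Proposition \ref{general HF solid torus}: post-composing $f$ with a Dehn twist along $h_0$ sends $(a,b) \mapsto (a+nc, b+nd)$ while fixing $c$, $d$, and $\widehat{HF}(W_t(f))$, which reroutes the triad chain through the primitive slope $\lambda_0$ instead of its $t$-fold multiple. To conclude, I invoke the surgery exact triangle arising from the Heegaard--Floer solid torus structure on $N_t$: $\widehat{HF}(W_t(f))$ is identified with the mapping cone of a surgery map whose other two terms are $\widehat{HF}$ of two consecutive Dehn fillings $R(\beta_1 + i\mu)$ in the chain, now known to be L-spaces, and Lemma \ref{order} supplies the rank identity $|H_1(W_t(f))| = |H_1(R(\beta_1 + i\mu))| + |H_1(R(\beta_1 + (i+1)\mu))|$ that forces the total rank of $\widehat{HF}(W_t(f))$ to equal its minimum possible value $|H_1(W_t(f))|$; thus $W_t(f)$ is an L-space.

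The hard part will be the careful identification of $\widehat{HF}(W_t(f))$ with the mapping cone of the appropriate surgery triangle via the Heegaard--Floer solid torus structure of Watson, and the detailed orientation and rank bookkeeping required to verify that the inequality $-t < \frac{pa-qb}{pc-qd} < \frac{t}{t-1}$ simultaneously gives the triad sign conditions along the entire chain and the final mapping cone rank identity, consistently across the choice of Dehn-twist normalisation used to skip past the non-primitive interpolant $t\lambda_0$.
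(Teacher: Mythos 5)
Your proposal does not follow the paper's route, and it has a genuine gap. The paper's proof does not iterate triads in $\partial R$: it instead passes to the intermediate manifold $E = R \cup_f C_t$, where $C_t$ is the exterior in $N_t$ of the exceptional fibre of type $\frac{1}{t}$. On $\partial E = T_1$ the slopes $\beta_1$ and $\beta_2 := \beta_1 + \phi_1$ satisfy $\Delta(\beta_1,\beta_2) = 1$, the two hypothesis conditions say exactly that $E(\beta_1) = R((b+td)\alpha_1 - (a+tc)\alpha_2)$ and $E(\beta_2) = R(((1-t)b+td)\alpha_1 - ((1-t)a+tc)\alpha_2)$ are L-spaces, and the inequality $-t < \frac{pa-qb}{pc-qd} < \frac{t}{t-1}$ forces $(\beta_1,\beta_2,\beta_1+\beta_2)$ to be a triad. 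Since $W_t(f) = E(t\beta_1 + \phi_1) = E((t-1)\beta_1 + \beta_2)$ with $(t-1,1)$ coprime and nonnegative, the sector form of Proposition \ref{triad} finishes in a single step.

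Your chain $\{\beta_1 + i\mu : 0 \le i \le t\}$ lives in $\partial R$, not in $\partial E$, and consecutive terms there have distance $\Delta(\beta_1+i\mu, \beta_1+(i+1)\mu) = \Delta(\beta_1,\mu) = t$, not $1$. Proposition \ref{triad} (as cited from Ozsv\'ath--Szab\'o's Proposition 2.1) requires $\{\alpha,\beta\}$ to be a basis of $H_1(\partial R)$; a chain of distance-$t$ slopes does not satisfy the distance-one hypothesis, so the triad iteration you propose along that chain does not apply. The further assertion that ``$\widehat{HF}(W_t(f))$ is identified with the mapping cone of a surgery map whose other two terms are $\widehat{HF}$ of two consecutive Dehn fillings $R(\beta_1+i\mu)$'' is not established: Proposition \ref{general HF solid torus} only records the Dehn-twist invariance of $\widehat{HF}(W_t(f))$ and does not supply a surgery exact triangle relating $N_t$-fillings to ordinary Dehn fillings of $R$. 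You flag this as ``the hard part,'' but it is not a computational detail to be deferred — it is the substitute for the paper's key idea of cutting along the exceptional fibre and realising $W_t(f)$ as a Dehn filling of $E$, and without it the proof does not close.
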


\begin{proof} Denote by $C_t$  the exterior in $N_t$ of the exceptional fibre $K_1$ of type $\frac{1}{t}$. Then $\partial C_t = \partial N_t \sqcup T_1$ where $T_1$ is the boundary of a tubular neighbourhood of $K_1$ in $N_t$. Denote an oriented fibre slope on $T_1$ by $\phi_1$ and by $\beta_1$ the oriented slope of $K_1$ of distance $1$ from $\phi_1$ such that $t \beta_1 + \phi_1$ is the meridional slope of $K_1$. Then $t \beta_1 + \phi_1 = 0$
in $H_1(N_t)$. Without loss of generality we can suppose that $h_1$ is oriented so that $h_1 = \phi_1$ in $H_1(C_t)$. 

The exterior of the two exceptional fibres of $N_t$ can be identified with $P \times S^1$, where $P$ is a twice-punctured disk, in such a way that writing $\partial P = \partial_0 \sqcup \partial_1 \sqcup \partial_2$ then 
 
\begin{itemize}

\item $\partial N_t = \partial_0 \times S^1$ and $T_1 = \partial_1 \times S^1$;

\vspace{.2cm} \item $\partial_0, \partial_1$ and $\partial_2$ can be oriented so that $\beta_1 = [\partial_1]$ and $[\partial_1] + [\partial_2] = [\partial_0]$ in $H_1(P)$. 

\vspace{.2cm} \item $t[\partial_2] + (t-1) \phi_2 = 0$ in $H_1(C_t)$ where $\phi_2$ represents the fibre slope on the boundary of a tubular neighbourhood of the exceptional fibre $K_2$ of type $\frac{t-1}{t}$.

\end{itemize} 
 
Then in $H_1(C_t)$ we have 
$$t \beta_1 = t [\partial_0] - t[\partial_2] = t[\partial_0] + (t-1) h_1$$
The rational longitude $h_0$ of $N_t$ corresponds homologously in $C_t$ to the meridional class $t \beta_1 +  \phi_1$ of $K_1$, at least up to a non-zero rational multiple. Hence from above, $h_0$ corresponds to $(t[\partial_0] + (t-1)h_1) +  h_1 = t([\partial_0] + h_1)$. In other words we can take 
$[\partial_0] = h_0 - h_1$ and therefore $t \beta_1 = t[\partial_0] + (t-1) h_1 = t h_0 - h_1$. 
It follows that for any $x, y \in \mathbb Z$ we have 
$$xt \beta_1 + y \phi_1 = xt h_0 + (y - x) h_1$$ 
In particular in $H_1(C_t)$ we have $\phi_1 = h_1$ and 
$$t \beta_1 = t h_0 - h_1$$ 
Further if $\beta_2 = \beta_1 + \phi_1$ then 
$$t \beta_2 = t h_0 + (t - 1)h_1$$
Let $E = R \cup_f C_t$. Then $\partial E = T_1$ and $E(t\beta_1 + \phi_1) = W(f)$. Since $\Delta(\beta_j, \phi_1) = 1$, $C_t(\beta_j)$ is a solid torus for $j = 1, 2$. Thus $E(\beta_j)$ is a Dehn filling of $R$. Indeed, 
$$E(\beta_1) = R(f_*^{-1}(t h_0 - h_1)) = R((b + td)\alpha_1 - (a + tc)\alpha_2)$$ 
and so as $ \frac{b + td}{a + tc} \not \in \mathcal{T}(R)$, $E(\beta_1)$ is an L-space. In particular, $|H_1(E(\beta_1))| \ne 0$. 
Similarly 
$$E(\beta_2) = R(((1-t) b + td)\alpha_1 - ((1-t)a + tc)\alpha_2)$$  
and so as $\frac{(1-t) b + td}{(1-t)a + tc} \not \in \mathcal{T}(R)$, $E(\beta_2)$ is an L-space. Therefore $|H_1(E(\beta_2))| \ne 0$. 

For any slope $\gamma'$ on $\partial R$ we have $|H_1(R(\gamma'); \mathbb Z)| = k \Delta(\gamma', \lambda_R) = k \Delta(\gamma', p \alpha_1 - q \alpha_2)$, where $k$ is a constant determined by $R$ as in Lemma \ref{order}. Hence 
$$|H_1(E(\beta_1))| = |H_1(R((b + td) \alpha_1 - (a + tc) \alpha_2))| = k|t(pc - qd) - (qb-pa)| = k|tx + y|$$
where $x = pc - qd$ and $y = pa - qb$, while
$$|H_1(E(\beta_2))| = |H_1(R(((1-t) b + td) \alpha_1 - ((1-t)a + tc) \alpha_2))| = k |tx + (1-t)y|$$
Since $|H_1(E(\beta_1))|, |H_1(E(\beta_2))| \ne 0$ it follows that $|H_1(E(\beta_1 + \beta_2))|  = |H_1(E(\beta_1))| + |H_1(E(\beta_2))|$ if and only if 
$\hbox{sign}(\beta_1 \cdot \lambda_R) = \hbox{sign}(\beta_2 \cdot \lambda_R)$. Equivalently, $\hbox{sign}(tx + y) = \hbox{sign}(tx + (1-t)y)$. 
Since $\frac{pa-qb}{pc - qd} = \frac{a}{c} + \frac{q}{cx}$, the reader will verify that this occurs if and only if 
$-t < \frac{y}{x} = \frac{pa-qb}{pc - qd} < \frac{t}{t-1}$, 
which we have assumed. Proposition \ref{triad} now implies that $W_t(f) = E(t \beta_1 + \phi_1)$ is an L-space. 
\end{proof} 

\subsection{L-space $N_t$-fillings of $M$ when its base orbifold is non-orientable} 

\begin{proposition} \label{mobius piece}
Let $M$ be a Seifert manifold with base orbifold $Q(a_1, a_2, \ldots , a_n)$ as in \S \ref{assumptions seifert} and let $[\alpha_*] = ([\alpha_1], [\alpha_2], \ldots , [\alpha_r])$ be a rational element of $\mathcal{S}(M)$. Let $t \geq 2$ and $W_t$ be a graph manifold obtained by gluing $N_{t}$ to $M$ along each $T_j$ in such a way that $[h_0]$ is identified with $[\alpha_j]$. Then $W_t$ is an L-space if and only if $[\alpha_*]$ is horizontal. 
\end{proposition}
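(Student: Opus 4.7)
The plan is to prove the two directions separately.

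For the forward direction, I will argue the contrapositive: if $v([\alpha_*]) \geq 1$ then $b_1(W_t) \geq 1$, so $W_t$ is not a rational homology sphere and cannot be an L-space. The key observation is that the non-orientability of the base forces $2[h] = 0$ in $H_1(M)$ via the relation $zhz^{-1} = h^{-1}$ from \S\ref{presentation seifert mobius}, so $[h]$ is $2$-torsion and represents the rational longitude on every boundary torus, while abelianising the presentation gives $H_1(M;\mathbb{Q}) = \mathbb{Q}^r$ generated by the dual classes $x_1, \ldots, x_r$. A Mayer-Vietoris computation on $W_t = M \cup (\sqcup_j N_t^{(j)})$ with rational coefficients then yields $b_1(W_t) = v([\alpha_*])$: each horizontal $T_j$ contributes rank $2$ to the image of $\oplus_j H_1(T_j;\mathbb{Q}) \to H_1(M;\mathbb{Q}) \oplus \oplus_j H_1(N_t^{(j)};\mathbb{Q})$ (the class $[\alpha_j]$ maps non-trivially into $\mathbb{Q} x_j$, and its dual additionally hits $h_1^{(j)}$), whereas each vertical $T_j$ contributes only rank $1$, since both $[\alpha_j] = [h]$ and the $N_t$-side class $[h_0]$ are rationally trivial in the respective pieces.

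For the reverse direction I would induct on $r$, applying Lemma \ref{fibre surgery} at each step. In the base case $r = 1$, choose a basis $\{\mu, \nu\}$ of $H_1(\partial M)$ with $\mu$ representing $[\alpha_1]$ and $\nu$ arranged so that $\lambda_M = [h] = p\mu - q\nu$ with $p, q \geq 0$; horizontality of $[\alpha_1]$ forces $q \geq 1$. By Lemma \ref{homeo N_t} I would normalise the gluing to $f_* = \left(\begin{smallmatrix} 1 & b \\ 0 & 1 \end{smallmatrix}\right)$ for an integer $b$ to be chosen. The slope inequality in Lemma \ref{fibre surgery} reduces to $-t < b - p/q < t/(t-1)$, an open interval of length $t^2/(t-1) > 4$, so contains multiple integers; I avoid the at most two values of $b$ for which one of the requisite Dehn fillings $M((b+t)\mu - \nu)$ or $M(((1-t)b+t)\mu - \nu)$ would coincide with the longitudinal filling along $[h]$. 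The remaining fillings are Seifert fibered rational homology spheres with closed non-orientable base orbifold of the form $\mathbb{RP}^2(a_1, \ldots, a_n, a')$; by the argument of Lemma \ref{non-orientable implies empty}(1) they admit no representation into $\widetilde{\hbox{Homeo}}_+(S^1)$ sending $h$ to $\hbox{sh}(1)$, and hence no horizontal foliation. Combined with the fact that taut foliations on closed Seifert rational homology spheres may be isotoped to be horizontal, the Seifert case of \cite{BGW} (see also \cite{LS}) then gives that these Dehn fillings are L-spaces, so Lemma \ref{fibre surgery} concludes that $W_t$ is an L-space.

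For the inductive step $r > 1$, set $R' = M \cup (\sqcup_{j < r} N_t^{(j)})$ so that $W_t = R' \cup_{T_r} N_t^{(r)}$. A parallel Mayer-Vietoris calculation yields $b_1(R') = 1$ with rational longitude $[h]$ on $T_r$, placing me in the same setup as the base case. I apply Lemma \ref{fibre surgery} to $R'$ identically; the requisite Dehn fillings of $R'$ on $T_r$ take the form $R'(\beta) = M(\beta) \cup (\sqcup_{j < r} N_t^{(j)})$ for slopes $\beta \neq [h]$ on $T_r$, and the manifold $M(\beta)$ is Seifert fibered with non-orientable base $Q(a_1, \ldots, a_n, a')$ and $r - 1$ boundary tori, on which the slopes $[\alpha_j]$ for $j < r$ remain horizontal since the Seifert fibration on each $T_j$ is unaffected by the filling at $T_r$. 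The inductive hypothesis applied to $M(\beta)$ therefore gives that $R'(\beta)$ is an L-space, and Lemma \ref{fibre surgery} concludes. The main obstacle will be controlling the L-space status of the Dehn fillings that appear; this rests on the persistence of non-orientable base under Dehn filling along non-longitudinal slopes together with the closed Seifert L-space classification of \cite{BGW}, \cite{LS}.
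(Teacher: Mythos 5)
Your proposal is correct and follows essentially the same route as the paper: both directions hinge on Lemma \ref{fibre surgery} plus the observation that the only non-L-space filling of $M$ (and inductively of $R = M \cup (\sqcup_{j<r} N_t^{(j)})$) is the longitudinal one, with an induction on $r$ in the reverse direction. The one genuine difference is a reparametrization: you write the gluing matrix in a basis $\{\mu,\nu\}$ of $H_1(\partial M)$ adapted to the gluing slope $[\alpha_1]=\mu$, so that $f_*=\left(\begin{smallmatrix}1&b\\0&1\end{smallmatrix}\right)$ with $\lambda_M=p\mu-q\nu$, $q\geq 1$, and the free parameter is $b$; the paper instead uses $\{h,h^*\}$, so that $p=1,q=0$, $\mathcal{T}(M)=\{1/0\}$, $c>0$, and normalises $-c<a\leq 0$ via Corollary \ref{preserve l-spaces}. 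Both normalisations rest on the same invariance under Dehn twisting along $h_0$ and lead to the same verification. Two small inaccuracies worth flagging: the interval length $t^2/(t-1)$ is exactly $4$ when $t=2$, not $>4$ (still enough integers survive); and your second filling should be $M(((1-t)b+t)\mu-(1-t)\nu)$, which as a slope is $[(b-\tfrac{t}{t-1})\mu-\nu]$, not $M(((1-t)b+t)\mu-\nu)$ — a typo that does not affect the argument since you correctly use the ratio $b-\tfrac{t}{t-1}$ elsewhere. Your forward direction via a full Mayer--Vietoris computation of $b_1(W_t)=v([\alpha_*])$ is more than the paper needs (it only observes that $h$ is rationally null-homologous on both sides so $W_t$ is not a $\mathbb{Q}HS^3$), but it is correct and slightly more informative.
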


\begin{proof}  Fix $t \geq 2$.
If some $[\alpha_j]$ is vertical, then $W_t$ is not a rational homology $3$-sphere since the fibre class on $\partial M$ is rationally null-homologous in $M$. In particular, $W_t$ is not an L-space.  Suppose then that $[\alpha_*]$ is horizontal. We show that $W_t$ is an L-space by induction on $r$. Without loss of generality we suppose that each $\alpha_j$ is a primitive element of $H_1(T_j)$. 

{\bf Base case}. Suppose that $r = 1$, so $Q$ is a M\"{o}bius band, and let $T = \partial M$. As usual, $h \in H_1(T)$ represents the slope of the Seifert fibre on $\partial M$ in the given structure and $h^* \in H_1(T)$ is the dual class defined in \S \ref{assumptions seifert}. We apply Lemma \ref{fibre surgery} as follows.

As $Q$ is non-orientable, $h$ represents the rational longitude of $M$. In particular, relative to the basis $\{ h, h^* \}$ the coordinates $p,q$ of the rational longitude are $p=1, q=0$. If $\gamma$ represents a non-longitudinal slope on $\partial M$, then $\Delta(\gamma, h) \geq 1$ and $M(\gamma)$ is a Seifert fibred rational homology $3$-sphere with base orbifold $Q(a_1, \ldots, a_n, \Delta(\gamma, h))$, so it is an L-space \cite[Proposition 5]{BGW}.  Thus $\mathcal{T}(M) = \overline{\{ \frac{r}{s} : M( r h - s h^* ) \mbox{ is not an L-space}\}} = \{\frac{1}{0}\}$.

Let $f : \partial M \rightarrow \partial N_t$ be a gluing map such that $f^{-1}(h_0)$ is horizontal. In other words, $f^{-1}(h_0) \neq \pm h$. By Lemma \ref{matrix normalisation} we can suppose that its matrix $\left(\begin{smallmatrix} a  &   b \\ c   &  d \end{smallmatrix}\right)$ with respect to the bases $\{h, h^*\}$ and $\{h_0, h_1\}$ satisfies $c \geq 0$ and $\det(f_*) = 1$. Since $f^{-1}(h_0) \neq \pm h$, $c > 0$. Corollary \ref{preserve l-spaces} implies that the question of whether $W_t = W_t(f)$ is an L-space depends only on $(c,d)$. Thus after adding an appropriate multiple of the second row of $f$ to the first row we may assume that $-c < a \leq 0$. Then $-t < -1 <  \frac{a}{c} = \frac{pa - qb}{pc - qd} \leq 0 < \frac{t}{t-1}$.  Thus $a + tc \neq 0$ and $(1-t)a + tc \ne 0$, so $\frac{b + td}{a + tc}, \frac{(1-t)b + td}{(1-t)a + tc} \not \in \mathcal{T}(M)$.  By Lemma  \ref{fibre surgery},  $W_t(f)$ is an L-space.

{\bf Inductive case}. Suppose that the result holds when $1 \leq |\partial M| < r$. Let $R$ be a manifold obtained by gluing $r - 1$ copies of $N_t$ to $M$ along $\partial M$ in such a way that for each $1 \leq j \leq r-1$, $[h_0]$ is identified with $[\alpha_j]$. Then $\partial R = T_r$. Let $h \in H_1(\partial R)$ represent the slope of the Seifert fibre of $M$ and let $h^*$ be a dual class to $h$. As $Q$ is non-orientable, $h$ represents the rational longitude of $R$. Our inductive hypothesis implies that any Dehn filling of $R$ along a slope other than $[h]$ is an L-space, so again we will apply Lemma  \ref{fibre surgery} where $\mathcal{T}(R) = \overline{\{ \frac{r}{s} : R( r h - s h^* ) \mbox{ is not an L-space}\}} = \{\frac{1}{0}\}$, $p=1, q=0$. That is, if $f: \partial R \to \partial N_t$ is a gluing map such that  $f^{-1}(h_0) \neq \pm h$, write $f_* =   \left(\begin{smallmatrix} a  &   b \\ c   &  d \end{smallmatrix}\right)$ with respect to the bases $\{h, h^*\}$ and $\{ h_0, h_1\}$   and proceed as in the base case to complete the induction. 
\end{proof}

\subsection{L-space $N_t$-fillings of $M$ when its base orbifold is orientable}

In this section we suppose that $M$ is a Seifert manifold with base orbifold $P(a_1, a_2, \ldots , a_n)$ as in \S \ref{assumptions seifert}. Fix a rational element $[\alpha_*] = ([\alpha_1], [\alpha_2], \ldots , [\alpha_r])$ of $\mathcal{S}(M)$ and let $W_t$ be a graph manifold obtained by gluing $N_{t}$ to $M$ along each $T_j$ in such a way that $[h_0]$ is identified with $[\alpha_j]$.  Recall that $\mathcal{D}_{fol}(M)$ is the set of all $[\alpha_*] \in \mathcal{S}(M)$ which are foliation detected. 

\begin{proposition} \label{foliation detected implies l-space}
If $[\alpha_*] \in \mathcal{D}_{fol}(M)$, then no $W_t$ is an L-space.
\end{proposition}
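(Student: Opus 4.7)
The plan is to combine Proposition \ref{foliation implies detected} with the L-space obstruction of Ozsv\'ath--Szab\'o. Proposition \ref{foliation implies detected} applied with $J = \emptyset$ asserts that $W_t$ admits a co-oriented taut foliation if and only if $[\alpha_*] \in \mathcal{D}_{fol}(M)$. Our hypothesis therefore supplies a co-oriented taut foliation $\mathcal{F}_W$ on $W_t$. On the other hand, by \cite[Theorem 1.4]{OSz2004-genus}, a closed oriented rational homology $3$-sphere that carries a smooth co-oriented taut foliation is not an L-space. So if $\mathcal{F}_W$ were smooth enough, we would be done.

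The main obstacle is precisely this regularity issue. The foliation $\mathcal{F}_W$ produced by the proof of Proposition \ref{foliation implies detected} is assembled by pasting a foliation $\mathcal{F}'$ on $M$ to foliations $\mathcal{F}_{k_0}$ on the attached copies of $N_t$, matching their restrictions to each $T_j$ as $k_0$ interval-hyperbolic foliations via Lemma \ref{standard are isotopic}. That lemma matches the boundary foliations only up to isotopy, so the glued object is a priori only $C^0$ transversely across the JSJ tori, and Ozsv\'ath--Szab\'o's theorem as stated requires at least a $C^2$ foliation.

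To bridge this gap I would invoke the smoothing theorem of Kazez--Roberts and Bowden, which asserts that any $C^0$ co-oriented taut foliation on a closed orientable $3$-manifold can be $C^0$-approximated by a smooth co-oriented taut foliation. Applying this to $\mathcal{F}_W$ yields a smooth co-oriented taut foliation on $W_t$, and \cite[Theorem 1.4]{OSz2004-genus} then forces $\operatorname{rank} \widehat{HF}(W_t) > |H_1(W_t)|$, so $W_t$ is not an L-space. This is the same strategy alluded to in the acknowledgements, where Bowden's contribution to filling a gap in Theorem \ref{LO implies NLS} is credited; the content of the present proposition is in essence that the foliation-detection/gluing technology of the paper makes Theorem \ref{LO implies NLS} visible at the level of each Seifert piece, once the continuous-to-smooth passage is granted.
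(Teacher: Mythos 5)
Your overall strategy is close in spirit to what the paper acknowledges as an alternative route, but the key intermediate step is misstated in a way that matters. You write that Kazez--Roberts and Bowden prove that any $C^0$ co-oriented taut foliation on a closed orientable $3$-manifold can be $C^0$-approximated by a \emph{smooth co-oriented taut foliation}. That is not their theorem, and if it were, it would settle one of the open questions this very paper poses (``If a graph manifold admits a co-oriented taut foliation, does it admit a smooth co-oriented taut foliation?''). What Kazez--Roberts and Bowden actually prove is that a $C^0$ co-oriented taut foliation can be $C^0$-approximated by positive and negative \emph{contact structures}; one then concludes, via the Eliashberg--Thurston semi-filling construction and \cite[Theorem 1.4]{OSz2004-genus}, that the ambient rational homology sphere is not an L-space. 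So the citation you want is the contact-approximation theorem, not a foliation-smoothing theorem, and as written the step fails.

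With that correction your argument does go through, and it is essentially the route the paper flags in its opening parenthetical (``Rachel Roberts has informed the authors that she and Will Kazez have shown that no rational homology $3$-sphere admitting a topological co-oriented taut foliation can be an L-space''). But the proof the paper actually gives is implemented differently and avoids citing any general $C^0$ approximation theorem. It first handles $v([\alpha_*]) > 0$ separately: by Proposition \ref{foliation vertical} this forces $v([\alpha_*]) \geq 2$, and gluing vertical annuli in $M$ to fibre surfaces in the attached $N_t$'s produces a closed non-separating surface in $W_t$, so $W_t$ is not a rational homology sphere and hence not an L-space. In the horizontal case it then exploits the fact that, by Propositions \ref{standard realisation} and \ref{foliation implies detected}, the foliations on $M$ and on each attached $N_t$ are individually \emph{smooth} (indeed analytic, coming from $\widetilde{PSL}(2,\mathbb R)_k$-representations and the circle fibrations of the $N_t$). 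Eliashberg--Thurston \cite{ET} is applied piece by piece to produce boundary-transverse contact structures $\xi_j^{\pm}$ with hyperbolic characteristic foliations on the JSJ tori, together with closed $2$-forms $\omega_j$ positive on $\xi_j^{\pm}$; these are interpolated by hand across collars $T \times [0,1]$ of the JSJ tori, and a symplectic form $\omega_X = p^*\omega + \epsilon\, d(q^*\alpha)$ on $W_t \times [-1,1]$ is built directly, giving a weak semi-filling and the conclusion from \cite[Theorem 1.4]{OSz2004-genus}. In short: same endgame, but the paper constructs the contact geometry explicitly from the piecewise-smooth structure rather than invoking the general approximation theorem, and your statement of that theorem is incorrect.
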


\begin{proof}
Suppose that some $W_t$ is an L-space and $[\alpha_*] \in \mathcal{D}_{fol}(M)$. If $v([\alpha_*]) > 0$, then $v([\alpha_*]) \geq 2$ by Proposition \ref{foliation vertical}(2), which implies that $W_t$ is not a rational homology $3$-sphere, contrary to the assumption that it is an L-space. Thus $[\alpha_*]$ is horizontal. But then Proposition \ref{foliation iff detected} implies that $W_t$ admits a co-oriented taut foliation and so cannot be an L-space by \cite[Corollary 9.2]{Bn}, \cite[Corollary 1.6]{KR}, or \cite[Theorem 1.1]{BC}. This completes the proof. 
\end{proof}
 
Set 
$$\mathcal{L}_{fol}(M) = \mathcal{S}(M) \setminus \mathcal{D}_{fol}(M)$$

\begin{proposition} \label{planar piece}
Let $M$ be a Seifert manifold with base orbifold $P(a_1, a_2, \ldots , a_n)$ as in \S \ref{assumptions seifert} and fix a rational element $[\alpha_*] = ([\alpha_1], [\alpha_2], \ldots , [\alpha_r])$ of $\mathcal{S}(M)$. Let $W_t$ be a graph manifold obtained by gluing $N_{t}$ to $M$ along each $T_j$ in such a way that $[h_0]$ is identified with $[\alpha_j]$. If 
$v([\alpha_*]) \geq 2$ or $v([\alpha_*]) = 0$ and $[\alpha_*] \in \mathcal{D}_{fol}(M)$, then no $W_t$ is an L-space. On the other hand, if $v([\alpha_*]) = 1$ or $v([\alpha_*]) = 0$ and $[\alpha_*] \in \mathcal{L}_{fol}(M)$, then $W_t$ is an L-space for $t \gg 0$. 
\end{proposition}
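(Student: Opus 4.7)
The argument splits into three cases according to $v([\alpha_*])$, and the bulk of the work is in the horizontal case.

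When $v([\alpha_*]) \geq 2$, Proposition \ref{foliation vertical}(2) already produces a co-oriented taut foliation on $M$ detecting $(\emptyset; [\alpha_*])$, and Proposition \ref{foliation detected implies l-space} then rules out $W_t$ being an L-space for any $t$. When $v([\alpha_*]) = 1$, after reindexing we may assume $[\alpha_1] = [h]$. The class $h_0$ is simultaneously the rational longitude of $N_2$ and the fibre slope of its Seifert structure with base orbifold $Q_0$ (see \S\ref{assumptions seifert}), so the identification $h_0 \sim [h]$ extends the Seifert structure of $M$ across $N_2^{(1)}$ to give $M' := M \cup_{T_1} N_2^{(1)}$ the structure of a Seifert manifold with base orbifold $Q(a_1, \ldots, a_n)$. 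If $r = 1$, then $M' = W_2$ is closed with non-orientable base, hence an L-space by \cite[Proposition 5]{BGW}. If $r \geq 2$, then $\partial M' = T_2 \cup \ldots \cup T_r$ and the fibre class of $M'$ on each such $T_j$ still equals $[h]$, so the remaining slopes $([\alpha_2], \ldots, [\alpha_r])$ are horizontal in $M'$; Proposition \ref{mobius piece} applied to $M'$ at $t = 2$ then yields that $W_2 \cong M' \cup \bigl(\bigsqcup_{j \geq 2} N_2^{(j)}\bigr)$ is an L-space.

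The heart of the argument is the case $v([\alpha_*]) = 0$. The forward direction is immediate from Proposition \ref{foliation detected implies l-space}. For the converse I would assume $[\alpha_*]$ horizontal and $[\alpha_*] \notin \mathcal{D}_{fol}(M)$, and induct on $r = |\partial M|$ following the strategy of Proposition \ref{mobius piece}. Writing the partial filling $R = M \cup \bigl(\bigsqcup_{j \geq 2} N_t^{(j)}\bigr)$, so that $\partial R = T_1$ and $W_t = R \cup_{T_1} N_t^{(1)}$, the plan is to invoke Lemma \ref{fibre surgery} at the final gluing. After normalising the gluing matrix using Lemma \ref{matrix normalisation} and modifying it by Dehn twists along $h_0$ via Corollary \ref{preserve l-spaces}, the hypothesis $[\alpha_*] \notin \mathcal{D}_{fol}(M)$ is translated, via Proposition \ref{reps to foliations} and the EHN--Jankins--Neumann--Naimi characterisation of representation detection in Appendix \ref{sec: ehnjn}, into the statement that for some sufficiently large $t$ both of the auxiliary Dehn-filling slopes $\tfrac{b+td}{a+tc}$ and $\tfrac{(1-t)b+td}{(1-t)a+tc}$ lie outside $\mathcal{T}(R)$. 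The sign inequality $-t < \tfrac{pa-qb}{pc-qd} < \tfrac{t}{t-1}$ becomes automatic as $t$ grows, so Lemma \ref{fibre surgery} delivers that $W_t$ is an L-space.

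The main obstacle is exactly this inductive control of $\mathcal{T}(R)$ at each gluing stage. In contrast with the Möbius situation of Proposition \ref{mobius piece}, where $\mathcal{T}(M) = \{\tfrac{1}{0}\}$ is a single slope and every non-longitudinal Dehn filling is automatically an L-space, here $\mathcal{T}(R)$ is a non-trivial closed subset of $\mathcal{S}(\partial R)$ cut out by the Eisenbud--Hirsch--Neumann and Jankins--Neumann inequalities. The induction must be arranged so that the identification of $\mathcal{T}(R)$ with the set of (rational) representation-detected slopes on $\partial R$ persists under successive $N_t$-gluings, and so that the openness of $\mathcal{S}(\partial R) \setminus \mathcal{T}(R)$ near the limits $\tfrac{d}{c}$ and $\tfrac{d-b}{c-a}$ of the two auxiliary slopes (as $t \to \infty$) provides enough slack to place both slopes in the L-space region simultaneously for some $t \geq 2$.
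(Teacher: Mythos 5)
Your overall structure matches the paper's proof: the same three-way case split on $v([\alpha_*])$, the same use of Proposition \ref{foliation detected implies l-space} for the forward implication, the same reduction of the $v([\alpha_*])=1$ case to Proposition \ref{mobius piece} (and your observation that $M' = M \cup_{T_1} N_2$ inherits a Seifert structure over a punctured projective plane is exactly the right way to make that reduction precise), and the same induction-plus-Lemma-\ref{fibre surgery} strategy in the horizontal case.

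However, the paragraph you flag as ``the main obstacle'' is not a loose end the reader can be expected to fill in routinely — it is the crux of the proof, and you leave it open. Two things need to be supplied. First, the identification $\mathcal{T}(R) = \mathcal{T}(M;\emptyset;\tau_*)$ does not simply ``persist'': it must be established at each inductive stage by observing that Dehn filling $R$ along $[\gamma]$ on $T_r$ is the same as first filling $M$ at $T_r$ to obtain a Seifert manifold $M(\gamma)$ with $r-1$ boundary components (and one additional exceptional fibre), then applying the inductive hypothesis to $M(\gamma)$, and finally translating foliation/representation detection on $M(\gamma)$ back to detection of $([\alpha_1],\ldots,[\alpha_{r-1}],[\gamma])$ on $M$ via Proposition \ref{reps to foliations}; the interval structure $\mathcal{T}(M;\emptyset;\tau_*) = [\eta,\zeta]$ with rational endpoints comes from Proposition \ref{tau fibre 2}, and the fact that $\tfrac{p}{q} \in [\eta,\zeta]$ (used to verify the sign constraint of Lemma \ref{fibre surgery}) from the rational longitude being detected. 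Second, the claim that the inequality $-t < \tfrac{pa-qb}{pc-qd} < \tfrac{t}{t-1}$ ``becomes automatic as $t$ grows'' is too quick: one must first use Corollary \ref{preserve l-spaces} to normalise $a$ into the window $-tc < a < -(t-1)c$, after which the constraint is equivalent to $\tfrac{d}{c} + \tfrac{1}{c(tc+1)} < \tfrac{p}{q}$, and only then does $\tfrac{d}{c} < \eta \leq \tfrac{p}{q}$ close it for large $t$; this is the content of Claim \ref{dt}. Finally, you omit the $c=1$ sub-case entirely: when $a = 0$ the filled manifold $M \cup_f N_2$ is itself closed Seifert fibred, and the paper invokes \cite{LS} there before extending to all $c=1$ gluings via Proposition \ref{HF solid torus}; without this the induction has no anchor for slopes of distance one from the fibre.
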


\begin{proof}
If $v([\alpha_*]) \geq 2$, then $W_t$ is not a rational homology $3$-sphere since it contains a closed non-separating orientable surface obtained by piecing together vertical annuli in $M$ with fibre surfaces in $N_t$. In particular, $W_t$ is not an L-space. 

If $v([\alpha_*]) =0 $ and $[\alpha_*] \in \mathcal{D}_{fol}(M)$, then $W_t$ is not an L-space by Proposition \ref{foliation detected implies l-space}.   

Conversely suppose that $v([\alpha_*]) = 1$ or $v([\alpha_*]) = 0$ and $[\alpha_*] \in \mathcal{L}_{fol}(M)$. We prove that $W_t$ is an L-space for $t \gg 0$ by induction on $r$.

{\bf Base case}. Suppose that $r = 1$, so $P \cong D^2$. Let $h \in H_1(\partial M)$ represent the slope of the Seifert fibre on $\partial M$  and recall that we have fixed a presentation $\pi_1(M) = \langle y_1,  \ldots , y_n, h : h \hbox{ central, } y_1^{a_1} = h^{b_1}, \ldots , y_n^{a_n} = h^{b_n}  \rangle$. The class $y_1 y_2 \ldots y_n$ is peripheral and represents a dual class $h^* \in H_1(\partial M)$ to $h$. A simple calculation shows that if $q = a_1a_2\ldots a_n$ and $p = \sum_{i=1}^n \frac{qb_i}{a_i}$, then 
$$[\lambda_M] = [ph - qh^*]$$ 
Note that $M(h)$ is a connected sum of lens spaces and hence an L-space. 

Corollary \ref{hen and r = 1} implies that $\mathcal{D}_{fol}(M)$ is a closed interval in $\mathcal{S}(M) \cong S^1$ containing $[\lambda_M]$. Thus $\mathcal{L}_{fol}(M)$ is an open interval containing $[h]$ but not $[\lambda_M]$. Recall $\mathcal{T}(M) = \overline{\{ \frac{r}{s} : M( r h - s h^* ) \mbox{ is not an L-space}\}} \subset \mathbb R$. Then by Propositions \ref{reps to foliations} and \ref{tau fibre 2}, $\mathcal{D}_{fol}(M) = \{[\gamma h - h^*] : \gamma \in \mathcal{T}(M)\}$ so there are rational numbers $\eta \leq \zeta$ such that  
$$\frac{p}{q} \in \mathcal{T}(M) = [\eta, \zeta]$$

Let $f: \partial M \to \partial N_t$ be the gluing map and set $W_t(f) = M \cup_f N_t$ for $t \geq 2$. Write $f_* =   \left(\begin{smallmatrix} a  &   b \\ c   &  d \end{smallmatrix}\right)$ with respect to the bases $\{h, h^*\}$ and $\{ h_0, h_1\}$. We can always assume $f_* =   \left(\begin{smallmatrix} a  &   b \\ c   &  d \end{smallmatrix}\right)$ where $c \geq 0$ and $\det(f_*) = 1$ by Lemma \ref{homeo N_t}. Note that 
$$[\alpha_*] = [f^{-1}(h_0)] = [d h - ch^*]$$

First suppose that $c = 0$, so $[\alpha_*] = [h]$. Then $a = d = \pm 1$ and by a further application of Lemma \ref{homeo N_t} we can take $a = d = 1$. By Proposition \ref{general HF solid torus}, it suffices to deal with the case that $b = \min \{\lfloor \eta \rfloor , 1\}$. Lemma \ref{fibre surgery} implies that $W_t$ is an L-space as long as $b + t, b - \frac{t}{t-1} \not \in [\eta, \zeta]$ and $-t < b - \frac{p}{q} < \frac{t}{t-1}$. Since $\frac{p}{q} > 0$, these inequalities hold for $t \gg 0$. 

If $c = 1$, we can assume that $a = 0$, using Proposition \ref{general HF solid torus}. Then $W_t(f)$ is Seifert with base orbifold of the form $S^2(a_1, a_2, \ldots , a_n,t,t)$ and is therefore {\it not} an L-space if and only if it admits a horizontal foliation (\cite{LS}). But if $W_t(f)$ admits a horizontal foliation, then $[\alpha_*] \in \mathcal{D}_{fol}(M)$ by Proposition \ref{foliation iff detected}, contrary to our assumptions. Hence $W_t(f)$ is an L-space for each $t \geq 2$ when $c = 1$.  

Suppose now that $c \geq 2$. As $[\alpha_*] \in \mathcal{L}_{fol}(M)$, $\frac{d}{c} \not \in [\eta, \zeta]$. Write $\eta = \frac{j}{k}$ where $k > 0$ and $\zeta = \frac{l}{m}$ where $m > 0$. We will show that if $t > km$, then $W_t$ is an L-space. We consider the case that $\frac{d}{c} < \eta$ first. 

\begin{claim} \label{dt}
If $\frac{d}{c} + \frac{1}{c(tc + 1)} < \eta$, then $W_t(f)$ is an L-space. 
\end{claim} 

\begin{proof}
We show that the hypotheses of Lemma \ref{fibre surgery} are satisfied under the conditions of this claim. 

The reader will verify that $\frac{pa-qb}{pc-qd} = \frac{a}{c} + \frac{q}{c(pc-qd)}$ and therefore that the condition ``$-t < \frac{pa-qb}{pc-qd}  < \frac{t}{t-1}$" of Lemma \ref{fibre surgery} is equivalent to ``$-c(tc + a) < \frac{1}{\frac{p}{q} - \frac{d}{c}}  < c\big((\frac{t}{t-1})c - a\big)$". Choosing $a$ so that $-tc < a < -(t-1)c$ (cf. Corollary \ref{preserve l-spaces}), this is equivalent to $\frac{d}{c} + \frac{1}{c\big((\frac{t}{t-1})c - a\big)} < \frac{p}{q}$. With this value of $a$ we have $\frac{d}{c} + \frac{1}{c\big((\frac{t}{t-1})c - a\big)} < \frac{d}{c} + \frac{1}{c(tc +1)} < \eta \leq \frac{p}{q}$ and therefore $-t < \frac{pa-qb}{pc-qd}  < \frac{t}{t-1}$. 

Next observe that $\frac{b + td}{a + tc} = \frac{d}{c} - \frac{1}{c(ct+a)} < \frac{d}{c} < \eta$ while $\frac{(1-t)b + td}{(1-t)a + tc} = \frac{d}{c} + \frac{1}{c\big((\frac{t}{t-1})c - a\big)} < \frac{d}{c} + \frac{1}{c(tc +1)} < \eta$. Hence $\frac{b + td}{a + tc}, \frac{(1-t)b + td}{(1-t)a + tc} \not \in \mathcal{T}(R)$ so that $W_t(f)$ is an L-space by Lemma \ref{fibre surgery}.
\end{proof}

Suppose that $t > km$. Then $\eta - \frac{d}{c} = \frac{cj - dk}{kc} \geq \frac{1}{kc} > \frac{1}{c(tc +1)}$ so in particular, $\frac{d}{c} + \frac{1}{c(tc +1)} < \eta$. Thus $W_t$ is an L-space. 

A similar argument shows that if $\frac{d}{c} > \zeta$ and $t > km$, then $W_t$ is an L-space. This completes the base case of the induction. 

{\bf Inductive case}. Suppose that the result holds when $1 \leq |\partial M| < r$ and let $R_t$ be a manifold obtained by gluing $r - 1$ copies of $N_t$ to $M$ along $\partial M$ in such a way that for each $1 \leq j \leq r-1$, $[h_0]$ is identified with $[\alpha_j]$. Then $\partial R_t = T_r$. Since $v([\alpha_*]) \leq 1$, we can suppose that $[\alpha_j]$ is horizontal for $1 \leq j \leq r-1$. Let $h \in H_1(\partial R_t)$ represent the slope of the Seifert fibre of $M$ and let $h^* \in H_1(\partial R_t)$ be a dual class to $h$ oriented so that there are coprime positive integers $p, q$ such that the rational longitude of $R_t$ can be written $\lambda_{R_t} = ph - qh^*$. Let $\mathcal{D}_{fol}(R_t) = \{[\alpha] \in \mathcal{S}(T_r) : [\alpha] \hbox{ is detected  by a co-oriented, taut foliation on } R_t\}$. It follows from the proof of Proposition \ref{foliation iff detected} that $\mathcal{D}_{fol}(R_t) = \{[\beta] \in \mathcal{S}(T_r) : ([\alpha_1],  \ldots, [\alpha_{r-1}], [\beta]) \in \mathcal{D}_{fol}(M)\}$. Hence, $[\alpha_r] \in \mathcal{S}(R_t) \setminus \mathcal{D}_{fol}(R_t)$. Note as well that if $\tau_* = (\tau_1, \ldots, \tau_{r-1})$ where $[\alpha_j] = [\tau_j h - h_j^*]$ ($1 \leq j \leq r-1$), then by Proposition \ref{reps to foliations} and the discussion in the Appendix we have $\mathcal{D}_{fol}(R_t) = \{[\tau'h - h^*_r] : \tau' \in \mathcal{T}(M; \emptyset; \tau_*)\}$. Thus $\mathcal{D}_{fol}(R_t)$ is homeomorphic to $\mathcal{T}(M; \emptyset; \tau_*)$ which, by Corollary \ref{hen and detected slopes}, is an interval $[\eta, \zeta]$ with rational endpoints. 

We claim that our inductive hypothesis implies that for each rational $[\alpha_r'] \in \mathcal{S}(T_r) \setminus \mathcal{D}_{fol}(R_t)$, Dehn filling $R_t$ along $T_r$ with slope $[\alpha_r']$ is an L-space for large $t$. To see this, first note that for each such $[\alpha_r']$, $([\alpha_1], \ldots, [\alpha_{r-1}], [\alpha_r']) \not \in \mathcal{D}_{fol}(M)$. Hence $v(([\alpha_1], [\alpha_2], \ldots, [\alpha_{r-1}], [\alpha_r'])) \leq 1$ by Proposition \ref{foliation vertical}(2). 

Consider the effect of Dehn filling $M$ along $T_r$ with slope $[\alpha_r']$ to produce a new manifold $M'$. We suppose, first of all, that $[\alpha_r']$ is horizontal. Then $M'$ is Seifert fibred with base orbifold a planar surface with $r-1$ boundary components and possibly with cone points. In particular, it is either $S^1 \times D^2$ or $S^1 \times S^1 \times I$ or satisfies our standard hypotheses (\S \ref{assumptions seifert}). 

Let $[\alpha_*']$ be the projection of $[\alpha_*]$ to $\mathcal{S}(M')$. By construction, $v([\alpha_*']) = 0$. Note as well that $[\alpha_*' ] \in \mathcal{L}_{fol}(M')$ as otherwise, arguing as in the proof of Proposition \ref{foliation iff detected} would show that $([\alpha_1], \ldots, [\alpha_{r-1}], [\alpha_r']) = ([\alpha_*'], [\alpha_r']) \in \mathcal{D}_{fol}(M) = \mathcal{S}(M) \setminus \mathcal{L}_{fol}(M)$, contrary to our assumptions. Let $W_t'$ be a manifold obtained by gluing $r - 1$ copies of $N_t$ to $M'$ along $\partial M'$ in such a way that for each $1 \leq j \leq r-1$, $[h_0]$ is identified with $[\alpha_j'] = [\alpha_j]$.

We claim that $W_t'$ is an L-space for $t \gg 0$. If $M'$ satisfies our standard hypotheses, this is an immediate consequence of the induction hypothesis. If $M' = S^1 \times D^2$, then $[\alpha_1'] \in \mathcal{L}_{fol}(M') = \mathcal{S}(M') \setminus \{[\lambda_{M'}]\}$, so $W_t'$ is a Dehn filling of $N_t$ along a slope other than $[h_0]$. It follows from Proposition \ref{tau fibre 2}(4)(c) that $\mathcal{D}_{fol}(N_t) = \{[h_0]\}$ and therefore the Seifert manifold $W_t'$ does not admit a horizontal foliation. Hence it is an L-space (\cite{LS}). Finally, if $M' = S^1 \times S^1 \times I$, then $([\alpha_1'], [\alpha_2']) = [\alpha_*'] \in \mathcal{L}_{fol}(M')$ implies that the slopes $[\alpha_1']$ and $[\alpha_2']$ are distinct. Hence $W_t'$ is the union along their boundaries of two copies of $N_t$ whose rational longitudes do not coincide. Hence we can apply the base case of the induction to see that the claim holds. Thus Dehn filling $R_t$ along $T_r$ with slope $[\alpha_r']$ is an L-space for large $t$ when $[\alpha_r']$ is horizontal. 

Suppose next that $[\alpha_r'] = [h]$. In this case $M'$ is a connected sum of $r-1$ copies of $S^1 \times D^2$, each with meridional slope a Seifert fibre of $M$. Hence $W_t'$ is the connected sum of $(r-1)$ manifolds obtained by Dehn filling $N_t$ along a slope different from $[\lambda_{N_t}] = [h_0]$  since $[\alpha_i]$ is horizontal for $1 \leq i \leq r-1$. Thus it is an L-space for all $t \geq 2$, which completes the proof that for each rational $[\alpha_r'] \in \mathcal{S}(T_r) \setminus \mathcal{D}_{fol}(R_t)$, Dehn filling $R_t$ along $T_r$ with slope $[\alpha_r']$ is an L-space for large $t$. Put another way, for each pair of coprime integers $u, v$ such that $\frac{u}{v} \not \in [\eta, \zeta]$, $R_t(u h - v h_r^*)$ is an L-space for all large values of $t$. 

Now we complete the induction. Let $f: \partial R_t \to \partial N_t$ be a gluing map which identifies $[\alpha_r]$ to $[h_0]$ and write $f_* =   \left(\begin{smallmatrix} a  &   b \\ c   &  d \end{smallmatrix}\right)$ with respect to the bases $\{h, h^*\}$ and $\{ h_0, h_1\}$. As above we can suppose that $c \geq 0$ and $\det(f_*) = 1$. We have $[\alpha_r] = [f^{-1}(h_0)] = [d h - ch^*]$. 

First suppose that $c = 0$, so $[\alpha_r] = [h]$. Lemma \ref{homeo N_t} implies that we can take $a = d = 1$, and by Proposition \ref{general HF solid torus} it suffices to deal with the case that $b = \min \{\lfloor \eta \rfloor , 1\}$. In this case, $b + t, b - \frac{t}{t-1} \not \in [\eta, \zeta]$ for large $t$. It follows that $R_t((b+t) h - h_r^*)$ and $R_t(((t-1)b - t) h - (t-1)h_r^*)$ are L-spaces for all large values of $t$. Since $-t < b - \frac{p}{q} < \frac{t}{t-1}$ for such $t$, Lemma \ref{fibre surgery} implies that $W_t$ is an L-space for $t \gg 0$. 

If $c = 1$, we can assume that $a = 0$, using Proposition \ref{general HF solid torus}. In this case $M' = M \cup_f N_t$ is Seifert with base orbifold $P'(a_1, a_2, \ldots , a_n, t,t)$ where $P'$ is an $(r-1)$-punctured $2$-sphere. Note that $([\alpha_1], \ldots , [\alpha_{r-1}]) \not \in \mathcal{D}_{fol}(M')$ as otherwise $[\alpha_*] \in \mathcal{D}_{fol}(M)$ by the proof of Proposition \ref{foliation iff detected}. Thus  $([\alpha_1], \ldots , [\alpha_{r-1}]) \in \mathcal{L}_{fol}(M')$, so our inductive hypothesis implies that $W_t$ is an L-space for $t \gg 0$. 

Finally, when $c \geq 2$ we can proceed exactly as in the case $c = 0$ and as in the proof of the claim in the base case to see that $W_t(f)$ is an L-space for $t \gg 0$, which completes the induction.  
\end{proof}

\subsection{L-space $N_t$-fillings of $M$}

\begin{definition} 
{\rm Let $M$ be a compact orientable Seifert fibred manifold as in \S \ref{assumptions seifert}. For $J \subset \{1, 2, \ldots, r\}$ and rational $[\alpha_*] = ([\alpha_1], [\alpha_2], \ldots , [\alpha_r]) \in \mathcal{S}(M)$, let $\mathcal{M}_t(J; [\alpha_*])$ denote the set of manifolds obtained by doing $[\alpha_j]$-Dehn filling of $M$ for $j \in J$, and for each $j \notin J$ attaching $N_{t}$ to $M$ in such a way that the rational longitude $[h_0]$ of $N_{t}$ is identified with $[\alpha_j]$. }
\end{definition}

\begin{remark} \label{filled version}
{\rm Let $M'$ be obtained by Dehn filling $M$ along all horizontal rational slopes $[\alpha_j]$ with $j \in J$. Then $M'$ inherits a Seifert structure from $M$ and $\mathcal{M}_t(J; [\alpha_*]) = \mathcal{M}_t'(J'; [\alpha_*'])$ where $[\alpha_*']$ is the projection of $[\alpha_*]$ to $\mathcal{S}(M')$ and $J' = \{j \in J : [\alpha_j] = [h]\}$. Note that $M'$ is closed if and only if $[\alpha_*]$ is horizontal and $J = \{1, 2, \ldots , r\}$. 
}
\end{remark}
Here are two corollaries of the propositions above. 

\begin{corollary}  \label{not horizontal}
Let $M$ be a compact orientable Seifert fibred manifold as in \S \ref{assumptions seifert}. Fix $t \geq 2$, $J \subset \{1, 2, \ldots, r\}$ and rational $[\alpha_*] = ([\alpha_1], [\alpha_2], \ldots , [\alpha_r]) \in \mathcal{S}(M)$. If $v([\alpha_*]) > 0$, then

$(1)$ No element of $\mathcal{M}_t(J; [\alpha_*])$ is an L-space if either $v([\alpha_*]) \geq 2$ or $v([\alpha_*]) = 1$ and $M$ has base orbifold $Q(a_1, \ldots, a_n)$.

$(2)$ Each element of $\mathcal{M}_2(J; [\alpha_*])$ is an L-space if $v([\alpha_*]) = 1$ and $M$ has base orbifold $P(a_1, \ldots, a_n)$.

\end{corollary}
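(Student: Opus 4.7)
The plan is to deduce the corollary from Propositions \ref{mobius piece} and \ref{planar piece}, which handle the case $J = \emptyset$, together with Remark \ref{filled version}. First I would use Remark \ref{filled version} to replace $M$ by the Seifert manifold $M'$ obtained by Dehn filling $M$ along $[\alpha_j]$ for each $j \in J$ with $[\alpha_j]$ horizontal. This reduces the problem to the case where $J = \{j \in J : [\alpha_j] = [h]\}$ consists only of vertical indices, while preserving both $v([\alpha_*])$ and the orientability type of the base orbifold.

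For statement (1), I would treat the two sub-cases in turn. When $v([\alpha_*]) \geq 2$, pick two boundary tori $T_{j_1}, T_{j_2}$ with fibre-slope fillings together with a vertical annulus $A \subset M$ joining them; in each of the two fillings the class $[h]$ is rationally null-homologous, so pasting a suitable number of parallel copies of $A$ to cap-off surfaces in these fillings (meridian disks for Dehn-filled tori, horizontal fibre surfaces in $N_t$, matching multiplicities by taking $t$ copies) produces a closed oriented surface $S \subset W$ with $[S]\neq 0 \in H_2(W; \mathbb{Q})$. Hence $b_1(W) > 0$ and $W$ cannot be an L-space; this mirrors the argument used in the proofs of Propositions \ref{planar piece} and \ref{mobius piece}. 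When $v([\alpha_*]) = 1$ with base orbifold $Q$, the presentation from \S \ref{presentation seifert mobius} yields the relation $z h z^{-1} = h^{-1}$, forcing $h$ to be $2$-torsion in $H_1(M)$; killing $[h]$ at the single vertical boundary (by either mode of filling) does not remove any free homology, so the homological computation of Proposition \ref{mobius piece} extends uniformly to show $W$ is not a rational homology $3$-sphere.

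For statement (2), with $v([\alpha_*])=1$, base orbifold $P$ and $[\alpha_1]=[h]$, I would split on whether $1 \in J$. When $1 \notin J$, we attach $N_2$ at $T_1$ identifying $h_0$ with $[h]$; switching to $N_2$'s other Seifert structure (with base the M\"obius band $Q_0$ and fibre $h_0$) extends the fibration across $M\cup_{T_1} N_2$ to a Seifert structure with non-orientable base $P \natural Q_0$ in which every remaining slope is horizontal. One more application of Remark \ref{filled version} absorbs any remaining horizontal Dehn fillings in $J$, after which Proposition \ref{mobius piece} delivers the conclusion. The main obstacle is the sub-case $1 \in J$, where the Dehn filling of $M$ at $T_1$ along the fibre slope destroys the Seifert structure. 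To handle it I would use the centrality of $h$ in $\pi_1(M)$ to identify $M''$ topologically with the connected sum $L(a_1,b_1) \# \cdots \# L(a_n,b_n) \# (\#^{r-1}(S^1 \times D^2))$, so that $W$ decomposes as a connected sum of the lens space summands (themselves L-spaces) with the closed manifolds obtained by capping each $S^1 \times D^2$ summand by the corresponding $N_2$. Horizontality of $[\alpha_j]$ for $j \geq 2$ forces each such cap to be a non-longitudinal Dehn filling of the Heegaard-Floer solid torus $N_2$, whose $\widehat{HF}$, by Proposition \ref{general HF solid torus}, agrees with that of a small standard filling of $N_2$ known to be an L-space. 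Stability of the L-space property under connected sum then completes the proof.
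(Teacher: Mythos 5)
Your proof is correct and retraces the paper's argument: reduce to $M'$ via Remark \ref{filled version}, argue for (1) that the resulting manifold is not a rational homology sphere because the vertical slopes are rationally null-homologous in $M'$, and for (2) absorb the $N_2$-piece at the vertical torus into a Seifert piece with non-orientable base so that Proposition \ref{mobius piece} applies (when $1\notin J$), or decompose into a connected sum of L-spaces (when $1\in J$). The one step left implicit in the $1\in J$ subcase is why each cap $(S^1\times D^2)\cup N_2$ is a non-longitudinal filling of $N_2$: this needs the observation that the meridian of each $S^1\times D^2$ summand of $M'([\alpha_1])$ is the fibre slope $[h]$ on $T_j$, which holds because $h$ is killed in $\pi_1(M'(h))$ while each dual class $h_j^*$ survives as a free generator. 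Granting that, the conclusion follows, although your appeal to Proposition \ref{general HF solid torus} is heavier than needed: each cap is a Seifert fibred rational homology sphere over $S^2(2,2,\cdot)$ or a connected sum of lens spaces, hence an L-space by \cite[Proposition 2.3]{OSz2005-lens} (the paper's own phrase ``connected sum of lens spaces'' is slightly loose here for the same reason). Finally, the ``one more application of Remark \ref{filled version}'' in your argument for (2) is redundant since you already passed to $M'$ at the start.
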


\begin{proof}
If $v([\alpha_*]) \geq 2$ or if $v([\alpha_*]) = 1$ and $M$ has base orbifold $Q(a_1, \ldots, a_n)$, then each element of $\mathcal{M}_t(J; [\alpha_*])$ admits a co-orientable taut foliation no matter what $J$ is, and so is not an L-space. Suppose that $v([\alpha_*]) = 1$ and $M$ has base orbifold $P(a_1, \ldots, a_n)$. Let $M'$ and $J'$ be as in Remark \ref{filled version}. Note that $M'$ is not closed since $[\alpha_*]$ is not horizontal. If $J' = \emptyset$, each element of $\mathcal{M}_2(J; [\alpha_*]) = \mathcal{M}_2'(J'; [\alpha_*'])$ is an L-space by 
Proposition \ref{mobius piece} applied to $M'$. Otherwise, each element of $\mathcal{M}_2(J; [\alpha_*]) = \mathcal{M}_2'(J'; [\alpha_*'])$ is a connected sum of lens spaces, and so is an L-space. 
\end{proof}

\begin{corollary}  \label{independent}
Let $M$ be a compact orientable Seifert fibred manifold as in \S \ref{assumptions seifert} and fix $J \subset \{1, 2, \ldots, r\}$ and rational $[\alpha_*] = ([\alpha_1], [\alpha_2], \ldots , [\alpha_r]) \in \mathcal{S}(M)$. There exists $t \geq 2$ such that some manifold in $\mathcal{M}_t(J; [\alpha_*])$ is an L-space if and only if each manifold in $\mathcal{M}_t(J; [\alpha_*])$ is an L-space.  
\end{corollary}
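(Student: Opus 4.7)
The plan is to show that any two manifolds in $\mathcal{M}_t(J; [\alpha_*])$ are related by a finite sequence of modifications, each of which preserves the L-space property. The Dehn filling of $M$ along $[\alpha_j]$ for $j \in J$ is determined up to homeomorphism by the slope $[\alpha_j]$ alone, so any two elements $W, W' \in \mathcal{M}_t(J; [\alpha_*])$ arise from the same sequence of Dehn fillings followed by gluings of copies of $N_t$ to each $T_j$ $(j \notin J)$ via homeomorphisms $f_j, f_j' : T_j \to \partial N_t$ satisfying $f_j([\alpha_j]) = [h_0] = f_j'([\alpha_j])$. Thus the only freedom lies in the choice of the $f_j$, and it suffices to see that changing one $f_j$ to $f_j'$ at a time preserves L-space status.

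Fix such a $j$ and set $\phi_j = f_j' \circ f_j^{-1} \in \mathrm{Homeo}(\partial N_t)$. Since $\phi_j$ preserves the slope $[h_0]$, its induced action on $H_1(\partial N_t)$, written in the basis $\{h_0, h_1\}$, has matrix of the form $\left(\begin{smallmatrix} \varepsilon & b \\ 0 & \delta \end{smallmatrix}\right)$ with $\varepsilon, \delta \in \{\pm 1\}$ and $b \in \mathbb{Z}$. Factor this as
$$\left(\begin{smallmatrix} \varepsilon & b \\ 0 & \delta \end{smallmatrix}\right) = \left(\begin{smallmatrix} 1 & \delta b \\ 0 & 1 \end{smallmatrix}\right)\left(\begin{smallmatrix} \varepsilon & 0 \\ 0 & \delta \end{smallmatrix}\right).$$
By Lemma \ref{homeo N_t} the diagonal factor is the restriction of a self-homeomorphism of $N_t$; absorbing this into the glued copy shows that $W'$ is homeomorphic to the manifold obtained from $W$ by replacing $f_j$ with $\tau_{h_0}^{\delta b} \circ f_j$, where $\tau_{h_0}$ denotes the Dehn twist of $\partial N_t$ along $h_0$.

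Now iterating Corollary \ref{preserve l-spaces} $|b|$ times shows that this modification preserves the L-space property. Carrying out such a modification in turn at each $j \notin J$ connects $W$ to $W'$ through a chain of manifolds in which consecutive terms share the same L-space status. Consequently, some element of $\mathcal{M}_t(J; [\alpha_*])$ is an L-space if and only if every element is. The argument is essentially bookkeeping; the only minor point to verify is that a modification at one boundary component can be made while holding all other attachments fixed, which is immediate from the fact that Corollary \ref{preserve l-spaces} applies to any manifold $R$ with torus boundary. I do not anticipate any serious obstacle.
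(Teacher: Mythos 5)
Your argument is correct, and it takes a genuinely different and more direct route than the paper's. The paper deduces Corollary~\ref{independent} as a byproduct of the complete computations in Propositions~\ref{mobius piece} and~\ref{planar piece} (together with Corollary~\ref{not horizontal} for the non-horizontal case), which characterize exactly when an $N_t$-filling is an L-space in terms of $[\alpha_*]$ and thereby show that the answer is independent of the chosen attaching maps. You instead extract the independence statement directly from Lemma~\ref{homeo N_t} and Corollary~\ref{preserve l-spaces}: any change of attaching map preserving the slope $[h_0]$ factors (on homology) into a power of the Dehn twist along $h_0$ and a matrix realized by a self-homeomorphism of $N_t$, the first handled by the Heegaard--Floer solid torus property and the second absorbed by re-parametrizing the glued copy. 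This is cleaner, more elementary, and in fact proves the sharper ``for every $t$'' statement rather than merely the ``there exists $t$'' statement. Both arguments ultimately route through Proposition~\ref{HF solid torus}/\ref{general HF solid torus}, so the underlying mechanism is the same; what you buy is a self-contained proof that does not require the case analysis by base orbifold and $v([\alpha_*])$.

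One small imprecision worth noting: after absorbing the diagonal factor $F$, the gluing map becomes $\tau_{h_0}^{\delta b}\circ (F|\partial N_t)\circ f_j$, not $\tau_{h_0}^{\delta b}\circ f_j$, so ``the manifold obtained from $W$ by replacing $f_j$ with $\tau_{h_0}^{\delta b}\circ f_j$'' is not literally the intermediate manifold you produce. The clean statement is: setting $g = (F|\partial N_t)\circ f_j$, one has $R\cup_g N_t \cong W$ (absorb $F$) and $W' = R\cup_{\tau_{h_0}^{\delta b}\circ g} N_t$, and Corollary~\ref{preserve l-spaces} applies to the latter pair. This does not affect the validity of the argument, only the phrasing of the intermediate step.
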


\begin{proof}
The previous corollary shows that the result holds with $t = 2$ when $v([\alpha_*]) > 0$. Assume then that $v([\alpha_*]) = 0$ and let $M'$ and $J'$ be as in Remark \ref{filled version}. Clearly $J' = \emptyset$. If $M'$ is closed, $\mathcal{M}_t(J; [\alpha_*]) = \{M'\}$, so the result is obvious. Otherwise it is a direct consequence of Propositions \ref{mobius piece} and \ref{planar piece} applied to $M'$. 
\end{proof}

\subsection{Detecting slopes via L-spaces}

\begin{definition} \label{NLS detection}
{\rm Let $[\alpha_*] = ([\alpha_1], [\alpha_2], \ldots , [\alpha_r])$ be a {\it rational} element of $\mathcal{S}(M)$.  
For $J \subset \{1, 2, \ldots, r\}$, we say that $(J; [\alpha_*])$ is {\it NLS detected} if for each $t \geq 2$, no manifold in $\mathcal{M}_t(J; [\alpha_*])$ is an L-space.
}
\end{definition} 

We shall often simplify the phrase ``$(\emptyset; [\alpha_*])$ is NLS detected" to ``$[\alpha_*]$ is NLS detected". Similarly, we simplify ``$(\{1, 2, \ldots, r\}; [\alpha_*])$ is NLS detected," to ``$[\alpha_*]$ is {\it strongly} NLS detected".

\begin{remark} \label{NLS reduction} 
{\rm  We expect that $[\alpha_*]$ is NLS detected if and only if no manifold in $\mathcal{M}_2(J; [\alpha_*])$ is an L-space, so that $t=2$ suffices in the definition of NLS detection.
}
\end{remark}

Set 
$$\mathcal{D}_{NLS}(M; J) = \{[\alpha_*] \in \mathcal{S}(M) : (J; [\alpha_*]) \mbox{ is NLS detected}\}$$
When $J = \emptyset$ we simplify $\mathcal{D}_{NLS}(M; J)$ to $\mathcal{D}_{NLS}(M)$.

\section{The slope detection theorem} \label{section detection}

In this section we state and prove the slope detection theorem. 

\begin{theorem} \label{theorem: detection} 
Let $M$ be a compact orientable Seifert fibred manifold  as in \S \ref{assumptions seifert} and fix $J \subseteq \{1, 2, \ldots, r\}$. Suppose that $[\alpha_*] \in \mathcal{S}(M)$ is an $r$-tuple of slopes such that $[\alpha_j] \ne [h]$ for $j \in J$. Then the  following statements are equivalent.

$(1)$ $(J; [\alpha_*])$ is order detected. 

$(2)$ $(J; [\alpha_*])$ is foliation detected.

$(3)$ If $[\alpha_*]$ is horizontal, $(J; [\alpha_*])$ is representation detected.  

$(4)$ If $[\alpha_*]$ is rational, $(J; [\alpha_*])$ is NLS-detected.     

\end{theorem}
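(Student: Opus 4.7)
The plan is to assemble the equivalences from the pairwise results established in Sections~\ref{section: detecting via reps}--\ref{section: detecting via L-spaces}, splitting into two cases according to whether $[\alpha_*]$ is horizontal.

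First I would treat the horizontal case. Here statement (3) is genuinely in play, and the equivalence (2) $\Leftrightarrow$ (3) is precisely Proposition~\ref{reps to foliations}. The implication (3) $\Rightarrow$ (1) follows from Proposition~\ref{reps to orders}, whose proof constructs an order on $\pi_1(M)$ from the pullback of a natural order on $\mathrm{Homeo}_+(\mathbb R)$ along a detecting representation. The converse (1) $\Rightarrow$ (3) is the content of Proposition~\ref{dynamical detects}(2), which uses a dynamical realisation of a detecting order together with the Eisenbud--Hirsch--Neumann--Jankins--Naimi machinery to replace it by a representation with image in some $\widetilde{PSL}(2,\mathbb R)_k$. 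When $[\alpha_*]$ is rational, the equivalence (2) $\Leftrightarrow$ (4) then follows immediately from the identification $\mathcal{D}_{fol}(M;J) = \mathcal{D}_{NLS}^0(M;J)$ proved at the end of Section~\ref{section: detecting via L-spaces}, noting that the hypothesis ``$[\alpha_j] \neq [h]$ for $j \in J$'' is exactly the restriction defining $\mathcal{D}_{NLS}^0(M;J)$.

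Next I would treat the non-horizontal case, that is $v([\alpha_*]) \geq 1$. The hypothesis that no $[\alpha_j]$ with $j \in J$ is vertical is precisely the hypothesis needed to invoke both Proposition~\ref{foliation vertical} and Proposition~\ref{order vertical}. Comparing the two statements termwise yields identical characterisations of (1) and (2): both hold exactly when either the base orbifold is non-orientable and $v([\alpha_*]) \geq 1$, or the base orbifold is orientable and $v([\alpha_*]) \geq 2$; both fail when the base is orientable and $v([\alpha_*]) = 1$. This gives (1) $\Leftrightarrow$ (2). Statement (3) is vacuous here since $[\alpha_*]$ is not horizontal. For rational $[\alpha_*]$, the equivalence (2) $\Leftrightarrow$ (4) follows from $\mathcal{D}_{fol}(M;J) = \mathcal{D}_{NLS}^0(M;J)$, and the awkward case of orientable base with $v([\alpha_*])=1$ is consistent because Corollary~\ref{not horizontal}(2) shows that every element of $\mathcal{M}_2(J;[\alpha_*])$ is a lens space connected sum, hence an L-space, so NLS detection fails there too.

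The hardest step to pin down cleanly, and the only place where real work beyond citation is required, is verifying that the hypothesis on $J$ in the theorem statement matches the restrictions used in the three separate characterisation propositions (foliation, order, NLS) in the vertical regime. Once this bookkeeping is done the theorem is essentially a direct synthesis of what precedes.
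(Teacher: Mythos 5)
Your proposal is correct and follows essentially the same route as the paper's own proof: split on whether $[\alpha_*]$ is horizontal, treat the horizontal case via Propositions~\ref{reps to orders}, \ref{dynamical detects}, and \ref{reps to foliations}, and treat the non-horizontal case by comparing Propositions~\ref{order vertical} and \ref{foliation vertical}, with NLS-detection handled by Dehn filling along the $[\alpha_j]$ for $j \in J$ and invoking Propositions~\ref{mobius piece} and \ref{planar piece}. The one small departure is that you route $(2) \Leftrightarrow (4)$ through the unnamed proposition $\mathcal{D}_{NLS}^0(M;J) = \mathcal{D}_{fol}(M;J)$ at the end of \S\ref{section: detecting via L-spaces}, whereas the paper rederives this in the course of proving Propositions~\ref{non-orientable and vertical}--\ref{orientable and vertical 2}; your citation is legitimate and actually a mild streamlining, provided you note that that proposition is implicitly restricted to rational $[\alpha_*]$. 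A minor inaccuracy: Corollary~\ref{not horizontal}(2) does not assert that every element of $\mathcal{M}_2(J;[\alpha_*])$ is a connected sum of lens spaces --- in the subcase $J' = \emptyset$ the conclusion comes from Proposition~\ref{mobius piece} rather than from a lens-space connected-sum structure --- but the corollary's actual conclusion (each element is an L-space) is what the argument needs, so this slip is immaterial.
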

Theorem \ref{theorem: special detection} is the case $J = \emptyset$ of Theorem \ref{theorem: detection}. (Note that this theorem is easy to verify when $M$ is a solid torus or the product of a torus with an interval. Theorem \ref{theorem: detection} handles the remaining cases.)  

The proof of Theorem \ref{theorem: detection} naturally splits into two cases. 

\subsection{The case that $[\alpha_*]$ is horizontal}
We must show that the following statements are equivalent:
\begin{enumerate}

\item[$\bullet$] $(J; [\alpha_*])$ is order detected. 

\vspace{.2cm} \item[$\bullet$]  $(J; [\alpha_*])$ is representation detected.

\vspace{.2cm} \item[$\bullet$] $(J; [\alpha_*])$ is foliation detected.

\vspace{.2cm} \item[$\bullet$] If $[\alpha_*]$ is rational, $(J; [\alpha_*])$ is NLS-detected.     

\end{enumerate}
The proof is a consequence of the following two propositions. 

\begin{proposition} \label{non-orientable and vertical}
Suppose that $M$ has base orbifold $Q(a_1, \ldots , a_n)$ and $[\alpha_*] \in \mathcal{S}(M)$ is horizontal. Then 

$(1)$ $(J; [\alpha_*])$ is not order detected. 

$(2)$ $(J; [\alpha_*])$ is not representation detected.

$(3)$ $(J; [\alpha_*])$ is not foliation detected.

$(4)$ If $[\alpha_*]$ is rational, $(J; [\alpha_*])$ is not NLS-detected.   
\end{proposition}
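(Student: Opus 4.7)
The plan is to dispatch the four claims in sequence, each reducing essentially to an application of a result already established earlier in the paper, with the common thread being that the non-orientability of $Q$ blocks each detection mechanism.

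First, for (1), I would simply invoke Proposition \ref{non-orientable implies not detected}(1): since $[\alpha_*]$ is horizontal, no $[\alpha_j]$ is the fibre slope, so that proposition applies and tells us $[\alpha_*]$ is not order detected; a fortiori no $(J;[\alpha_*])$ is order detected. For (2), I would quote Lemma \ref{non-orientable implies empty}(1), which asserts that $\mathcal{R}_0(M) = \emptyset$ whenever the base orbifold of $M$ is of the form $Q(a_1,\ldots,a_n)$; since representation detection requires an element of $\mathcal{R}_0(M)$, none of the pairs $(J;[\alpha_*])$ can be representation detected. For (3), I would argue by contradiction: if $(J;[\alpha_*])$ were $\mathcal{F}$-detected, horizontality of $[\alpha_*]$ combined with Proposition \ref{prop: horizontal foliation} would force $\mathcal{F}$ itself to be horizontal, and then the co-orientation of $\mathcal{F}$ would induce a coherent orientation on the Seifert fibres of $M$, which descends to an orientation of $Q$, contradicting the non-orientability of the base. (This is exactly the argument already carried out in the proof of Proposition \ref{foliation vertical}(1) in the $v([\alpha_*])=0$ case.)

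For (4), assuming $[\alpha_*]$ is rational, the goal is to exhibit, for every $t \geq 2$, some element of $\mathcal{M}_t(J;[\alpha_*])$ that is an L-space. My plan is to first use Remark \ref{filled version} to replace $M$ by the Seifert manifold $M'$ obtained by Dehn filling $M$ along $[\alpha_j]$ for each $j \in J$: since the hypothesis $[\alpha_j] \neq [h]$ for $j \in J$ guarantees that the set $J'$ of fibre-slope indices is empty, we have the identification $\mathcal{M}_t(J;[\alpha_*]) = \mathcal{M}'_t(\emptyset;[\alpha_*'])$. The Dehn fillings only add cone points to the base, so $M'$ still has non-orientable base orbifold. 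If $\partial M' \neq \emptyset$, then since $[\alpha_*']$ is horizontal on $M'$, Proposition \ref{mobius piece} applied to $M'$ delivers exactly what we need: the unique element of $\mathcal{M}'_t(\emptyset;[\alpha_*'])$ is an L-space for every $t \geq 2$.

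The one remaining case is when $M'$ happens to be closed (i.e.\ $J = \{1,\ldots,r\}$), where Proposition \ref{mobius piece} does not apply directly and $\mathcal{M}'_t(\emptyset;[\alpha_*']) = \{M'\}$. Here I would argue that $M'$ is a closed Seifert fibred manifold with base orbifold of the form $\mathbb{RP}^2(a_1,\ldots,a_n,c_1,\ldots,c_r)$, hence a rational homology $3$-sphere (since the non-orientable base orbifold has $b_1 = 0$), and therefore an L-space by \cite[Proposition 5]{BGW}. This closed subcase is the only step that requires any care beyond citing previous results: one must verify independently that $M'$ is a rational homology sphere before invoking \cite[Proposition 5]{BGW}, but this is a standard homological calculation from the Seifert presentation of $M'$, and the rest of the argument is immediate.
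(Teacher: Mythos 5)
Your proof is correct and follows essentially the same route as the paper's: parts (1) and (2) quote Proposition \ref{non-orientable implies not detected} and Lemma \ref{non-orientable implies empty} just as the paper does, part (3) makes explicit the co-orientation-forces-orientable-base argument that the paper compresses into a single sentence (and correctly traces it back to the proof of Proposition \ref{foliation vertical}), and part (4) reduces to $M'$ via Remark \ref{filled version} and then splits into the boundary case (Proposition \ref{mobius piece}) and the closed case (\cite[Proposition 5]{BGW}), exactly as the paper does. The only genuinely added content is your observation that the closed $M'$ is automatically a rational homology $3$-sphere — a worthwhile sanity check, but implicit in the paper's citation.
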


\begin{proof}
The underlying space of $Q$ is non-orientable and therefore $M$ admits no co-oriented, horizontal foliation. Thus $(J; [\alpha_*])$ is not foliation detected. It is neither order detected nor representation detected by Proposition \ref{non-orientable implies not detected} and Lemma \ref{non-orientable implies empty}. 

Finally suppose that $[\alpha_*]$ is rational and let $M'$ be the manifold obtained by performing $[\alpha_j]$-Dehn filling of $M$ for $j \in J$. Then $M'$ is a Seifert fibred manifold whose base orbifold has underlying space a (possibly) punctured projective plane. If $M'$ is closed, it is an L-space (\cite[Proposition 5]{BGW}) and so as $J = \{1, 2, \ldots , r\}$ in this case, $(J; [\alpha_*])$ is not NLS detected. Suppose then that $M'$ is not closed and let $[\alpha_*'] \in \mathcal{S}(M')$ be  the projection of $[\alpha_*]$. By construction, $[\alpha_*']$ is horizontal with respect to the induced Seifert structure on $M'$ and therefore $[\alpha_*']$ is not NLS detected by Proposition \ref{mobius piece}. But then $(J; [\alpha_*])$ is not NLS detected, which completes the proof. 
\end{proof} 

\begin{proposition} \label{orientable and vertical}
Suppose that $M$ has base orbifold $P(a_1, \ldots , a_n)$ and $[\alpha_*] \in \mathcal{S}(M)$ is horizontal. Then the following statements are equivalent:

$(1)$ $(J; [\alpha_*])$ is order detected. 

$(2)$ $(J; [\alpha_*])$ is representation detected.

$(3)$ $(J; [\alpha_*])$ is foliation detected.

$(4)$ If $[\alpha_*]$ is rational, $(J; [\alpha_*])$ is NLS-detected.     

\end{proposition}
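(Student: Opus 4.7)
The plan is to chain together results already proved in the earlier sections, establishing the cycle $(2) \Rightarrow (1) \Rightarrow (2) \Leftrightarrow (3) \Leftrightarrow (4)$. Since $[\alpha_*]$ is horizontal, the hypothesis $[\alpha_j] \neq [h]$ for $j \in J$ is automatic, and this removes several case distinctions that would otherwise arise.

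First I would observe that $(2) \Rightarrow (1)$ is a direct application of Proposition \ref{reps to orders}: given $\rho \in \mathcal{R}_0(M)$ that $\rho$-detects $(J; [\alpha_*])$, the construction there produces a left-ordering $\mathfrak{o}$ of $\pi_1(M)$ with $h$ cofinal, $\mathfrak{o}$-convex kernel $C = \ker(\rho)$, and $(J; [\alpha_*])$ $\mathfrak{o}$-detected. Conversely, to prove $(1) \Rightarrow (2)$, I would apply Proposition \ref{dynamical detects}(2), which, given that $[\alpha_*]$ is horizontal and $\mathfrak{o}$-detects $(J; [\alpha_*])$, produces a representation $\rho' \in \mathcal{R}_0(M)$ (with values in some $\widetilde{PSL}(2,\mathbb R)_k$) that $\rho'$-detects $(J; [\alpha_*])$. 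The applicability of that proposition uses Proposition \ref{non-orientable implies not detected}, which forces $M$ to have base orbifold $P(a_1,\ldots,a_n)$ and $h$ to be $\mathfrak{o}$-cofinal whenever a horizontal $[\alpha_*]$ is order detected—both hypotheses that hold here by assumption on the base orbifold.

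The equivalence $(2) \Leftrightarrow (3)$ is then just Proposition \ref{reps to foliations}, which asserts precisely that for horizontal $[\alpha_*]$, representation detection and foliation detection coincide. Nothing more is needed here.

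Finally, for $(3) \Leftrightarrow (4)$ (assuming $[\alpha_*]$ is rational), I would invoke the equality $\mathcal{D}_{NLS}^0(M; J) = \mathcal{D}_{fol}(M; J)$ just established. Since horizontality of $[\alpha_*]$ guarantees $[\alpha_j] \neq [h]$ for every $j$, membership in $\mathcal{D}_{NLS}(M; J)$ is the same as membership in $\mathcal{D}_{NLS}^0(M; J)$, so the displayed equality yields exactly the desired equivalence. I do not expect any serious obstacles in this proposition, as each implication reduces to a single citation of a previously proved result; the substantive content has all been absorbed into those earlier propositions, most notably the dynamical realisation argument of Proposition \ref{dynamical detects} and the foliation construction of Proposition \ref{reps to foliations}.
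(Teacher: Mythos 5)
Your proposal is correct and follows essentially the paper's route: the equivalences $(1)\Leftrightarrow(2)$ and $(2)\Leftrightarrow(3)$ are obtained exactly as in the paper from Propositions \ref{reps to orders}, \ref{dynamical detects}(2), and \ref{reps to foliations}. For $(3)\Leftrightarrow(4)$ the paper re-runs the Dehn-filling argument (filling $M$ along $[\alpha_j]$ for $j\in J$, then invoking \cite{LS} in the closed case and Proposition \ref{planar piece} otherwise), whereas you cite the already-established set equality $\mathcal{D}_{NLS}^0(M;J)=\mathcal{D}_{fol}(M;J)$ from \S 7 together with the observation that horizontality of $[\alpha_*]$ makes $\mathcal{D}_{NLS}^0$- and $\mathcal{D}_{NLS}$-membership coincide; this is a legitimate and slightly more economical phrasing of the same argument, since the proof of that unnamed proposition is precisely the Dehn-filling reduction and does not depend on the present result.
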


\begin{proof} 
The equivalence of (1) and (2) is contained in Propositions \ref{reps to orders} and \ref{dynamical detects} while that of (2) and (3) is contained in Proposition \ref{reps to foliations}. Finally we show that (3) is equivalent to (4) when $[\alpha_*]$ is rational. 

Suppose that $[\alpha_*]$ is rational and let $M'$ be the manifold obtained by performing $[\alpha_j]$-Dehn filling of $M$ for $j \in J$. Then $M'$ is a Seifert fibred manifold whose base orbifold has underlying space a (possibly) punctured $2$-sphere. If $M'$ is closed then $J = \{1, 2, \ldots , r\}$ and  $M'$ is Seifert fibred with base orbifold a $2$-sphere with cone points. In this case it was shown in \cite{LS} that $M'$ is not an L-space if and only if it admits a horizontal foliation. As the latter is equivalent to the foliation detectability of $(\{1, 2, \ldots , r\}; [\alpha_*])$, (3) and (4) are equivalent when $M'$ is closed. 

Suppose then that $\partial M' \ne \emptyset$ and define $[\alpha_*'] \in \mathcal{S}(M')$ to be the projection of $[\alpha_*]$. By construction, $[\alpha_*']$ is horizontal with respect to the induced Seifert structure on $M'$. It is clear from Proposition \ref{prop: horizontal foliation} that $(J; [\alpha_*])$ is foliation detected if and only if $[\alpha_*']$ is foliation detected and it follows from the definition of NLS detection that $(J; [\alpha_*])$ is NLS detected if and only if $[\alpha_*']$ is NLS detected. On the other hand, Proposition \ref{planar piece} shows that $[\alpha_*']$ is not NLS detected if and only if it is not foliation detected. Thus $(3)$ is equivalent to $(4)$ when $[\alpha_*]$ is rational and horizontal. 
\end{proof} 

\subsection{The case that $[\alpha_*]$ is not horizontal}

Assume that $v([\alpha_*]) > 0$ and $[\alpha_j] \ne [h]$ for $j \in J$. 
We must show that the following statements are equivalent:
\begin{enumerate}

\item[$\bullet$] $(J; [\alpha_*])$ is order detected. 

\vspace{.2cm} \item[$\bullet$] $(J; [\alpha_*])$ is foliation detected.

\vspace{.2cm} \item[$\bullet$] If $[\alpha_*]$ is rational, $(J; [\alpha_*])$ is NLS detected.     

\end{enumerate}
The proof is contained is the following two propositions. 

\begin{proposition} \label{non-orientable and vertical 2} 
Suppose that $M$ has base orbifold $Q(a_1, \ldots , a_n)$, $v([\alpha_*]) \geq 1$, and $[\alpha_j] \ne [h]$ for $j \in J$. Then

$(1)$ $(J; [\alpha_*])$ is order detected. 

$(2)$ $(J; [\alpha_*])$ is foliation detected.

$(3)$ If $[\alpha_*]$ is rational, $(J; [\alpha_*])$ is NLS-detected.   
\end{proposition}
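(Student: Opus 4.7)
The plan is that all three assertions follow directly from results established earlier in the paper, since the hypotheses here---namely $M$ has base orbifold $Q(a_1,\ldots,a_n)$, $v([\alpha_*]) \geq 1$, and $[\alpha_j] \neq [h]$ for $j \in J$---match exactly the hypotheses under which each detection type was previously shown to hold in the non-orientable-base case. There is no fundamentally new construction required; the proof is a matter of assembling the relevant citations.

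For (1), I would invoke Proposition \ref{order vertical}(1). The assumption $[\alpha_j] \neq [h]$ for $j \in J$ says precisely that $\{j \in J : [\alpha_j] = [h]\} = \emptyset$, matching the precondition of that proposition; combined with $v([\alpha_*]) \geq 1$ it yields order detection. For (2), the corresponding appeal is to Proposition \ref{foliation vertical}(1), whose hypothesis $j \in J \Rightarrow [\alpha_j] \neq [h]$ is exactly what we have, so $v([\alpha_*]) \geq 1$ gives foliation detection.

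For (3), assuming $[\alpha_*]$ rational, I would split on the value of $v([\alpha_*])$. If $v([\alpha_*]) \geq 2$, then for any $t \geq 2$ and any $W \in \mathcal{M}_t(J; [\alpha_*])$, the fibre of $N_t$ is identified with the Seifert fibre on $M$ along at least two components of $\partial M$, so vertical annuli in $M$ can be capped off by fibre surfaces in the copies of $N_t$ to produce a closed non-separating orientable surface in $W$. Hence $b_1(W) > 0$ and $W$ is not an L-space. If $v([\alpha_*]) = 1$, then since $M$ has non-orientable base $Q(a_1,\ldots,a_n)$, the same situation occurs using a non-separating vertical annulus from $T_1$ to itself (which exists precisely because $Q$ is non-orientable), again forcing $b_1(W) > 0$. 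Both conclusions are packaged already in Corollary \ref{not horizontal}(1), which I would cite.

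The main ``obstacle'' was dealt with well before this point: the non-orientability of the base orbifold, which is what obstructs co-orientable horizontal foliations and hence forces $v([\alpha_*]) \geq 1$ for any detection, was absorbed into the proofs of Proposition \ref{non-orientable implies not detected}, Proposition \ref{order vertical}, and Proposition \ref{foliation vertical}. Here the work is purely organisational: verify that the hypotheses of each cited result are met and assemble the conclusions.
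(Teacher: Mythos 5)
Your proof is correct and follows the paper's route for parts (1) and (2) exactly: Propositions \ref{order vertical}(1) and \ref{foliation vertical}(1) give order and foliation detection respectively, once one notes that ``$[\alpha_j] \neq [h]$ for $j \in J$'' is the same as ``$\{j \in J : [\alpha_j] = [h]\} = \emptyset$''.

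For part (3), you cite Corollary \ref{not horizontal}(1) directly, whereas the paper first Dehn fills $M$ along $[\alpha_j]$ for $j \in J$ to produce a Seifert manifold $M'$ (still with base orbifold a punctured projective plane), projects $[\alpha_*]$ to $[\alpha_*']$, observes $v([\alpha_*']) = v([\alpha_*]) \geq 1$, and then applies Proposition \ref{mobius piece} to $M'$. Both routes are valid; Corollary \ref{not horizontal}(1) is itself a packaging of Propositions \ref{mobius piece} and \ref{planar piece} together with the closed non-separating surface argument, so you are essentially invoking a downstream corollary rather than re-deriving it via the filled manifold $M'$. What your approach buys is a shorter proof; what the paper's approach buys is uniformity with the parallel Propositions \ref{non-orientable and vertical} and \ref{orientable and vertical 2}, all of which pass through the same $M'$ reduction. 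One small wording slip worth flagging: you write that ``the fibre of $N_t$ is identified with the Seifert fibre on $M$''. What is actually identified with $[\alpha_j] = [h]$ under the $\mathcal{M}_t$-gluing is the rational longitude $h_0$ of $N_t$, not its Seifert fibre $h_1$; it happens that the fibre surface of the fibration $N_t \to S^1$ has boundary of slope $[h_0]$ (\S\ref{N_t}), which is why the capping-off works, but the phrasing as written conflates these two classes. The citation of Corollary \ref{not horizontal}(1) nonetheless makes the argument airtight.
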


\begin{proof}
Statements (1) and (2) follow from Propositions \ref{order vertical} and \ref{foliation vertical}. 

Suppose that $[\alpha_*]$ is rational and let $M'$ be the manifold obtained by performing $[\alpha_j]$-Dehn filling of $M$ for $j \in J$. Then $M'$ is a Seifert fibred manifold whose base orbifold has underlying space a punctured projective plane. Let $[\alpha_*'] \in \mathcal{S}(M')$ be  the projection of $[\alpha_*]$. Then $(J; [\alpha_*])$ is NLS detected if and only if $[\alpha_*']$ is NLS-detected. By construction, $v([\alpha_*]) = v([\alpha_*'])\geq 1$. Statement (3) now follows from Proposition \ref{mobius piece} applied to $M'$ and $[\alpha_*']$. 
\end{proof}

\begin{proposition} \label{orientable and vertical 2}
Suppose that $M$ has base orbifold $P(a_1, \ldots , a_n)$, that $[\alpha_j] \ne [h]$ for $j \in J$, and that $v([\alpha_*]) \geq 1$. Then the following are
equivalent.

$(1)$ $v([\alpha_*]) \geq 2$.

$(2)$ $(J; [\alpha_*])$ is order detected.

$(3)$ $(J; [\alpha_*])$ is foliation detected.

$(4)$ If $[\alpha_*]$ is rational, $(J; [\alpha_*])$ is NLS-detected.

\end{proposition}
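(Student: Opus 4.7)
The plan is to derive each pairwise equivalence directly from results already established in the paper; the proposition is essentially an assembly statement, and no genuinely new argument is needed. I will invoke Proposition \ref{order vertical}(2), Proposition \ref{foliation vertical}(2), and the set equality $\mathcal{D}_{NLS}^0(M;J)=\mathcal{D}_{fol}(M;J)$ proved immediately before Theorem \ref{theorem: detection}, verifying in each case that the hypotheses of those results coincide with the standing assumptions of the proposition.

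First I would establish (1) $\iff$ (2) and (1) $\iff$ (3). The hypotheses of the present proposition -- that $M$ has base orbifold $P(a_1,\ldots,a_n)$, that $v([\alpha_*])\geq 1$, and that $[\alpha_j]\neq [h]$ for $j\in J$ -- are exactly those of Proposition \ref{order vertical}(2), which concludes that $(J;[\alpha_*])$ is order detected iff $v([\alpha_*])\geq 2$, and of Proposition \ref{foliation vertical}(2), which gives the same characterization for foliation detection. Thus both equivalences follow by direct citation.

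For (1) $\iff$ (4), assuming $[\alpha_*]$ rational, I would combine (1) $\iff$ (3) with the set equality $\mathcal{D}_{NLS}^0(M;J)=\mathcal{D}_{fol}(M;J)$. By definition $\mathcal{D}_{NLS}^0(M;J)$ is $\mathcal{D}_{NLS}(M;J)$ intersected with the set of tuples satisfying $[\alpha_j]\neq [h]$ for $j\in J$ -- a condition we have already assumed -- so NLS detection of $(J;[\alpha_*])$ is equivalent to $[\alpha_*]\in \mathcal{D}_{fol}(M;J)$, hence by (1) $\iff$ (3) to $v([\alpha_*])\geq 2$. The argument involves no single hard step; the only item to verify is that the hypotheses of each cited result genuinely match ours, which is immediate from inspection.
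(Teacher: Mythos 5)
Your proof is correct. For $(1)\Leftrightarrow(2)$ and $(1)\Leftrightarrow(3)$ you invoke Proposition \ref{order vertical}(2) and Proposition \ref{foliation vertical}(2), whose hypotheses exactly coincide with the standing assumptions here; this is precisely what the paper does. For $(1)\Leftrightarrow(4)$, however, you take a slightly different route. The paper Dehn fills $M$ along $[\alpha_j]$ for $j\in J$ to obtain a Seifert manifold $M'$ with planar base orbifold and nonempty boundary, observes that $(J;[\alpha_*])$ is NLS-detected iff the projected tuple $[\alpha_*']$ is, with $v([\alpha_*'])=v([\alpha_*])\geq 1$, and then cites Proposition \ref{planar piece} directly. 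You instead cite the set identity $\mathcal{D}_{NLS}^0(M;J)=\mathcal{D}_{fol}(M;J)$ established at the end of \S\ref{section: detecting via L-spaces}, noting that the hypothesis $[\alpha_j]\neq[h]$ for $j\in J$ makes membership in $\mathcal{D}_{NLS}^0(M;J)$ equivalent to NLS-detection, and then use $(1)\Leftrightarrow(3)$. This is legitimate and shorter, since it offloads the Dehn-filling reduction to the proof of that identity (which itself goes through Proposition \ref{planar piece}, so the underlying substance is the same). The one thing to keep in mind is that $\mathcal{D}_{NLS}^0(M;J)$ consists only of rational tuples, so the identity $\mathcal{D}_{NLS}^0(M;J)=\mathcal{D}_{fol}(M;J)$ is tacitly an equality of rational loci; you correctly work under the rationality hypothesis of $(4)$, so this causes no trouble, but it is worth being explicit about when invoking that proposition.
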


\begin{proof}
Propositions \ref{order vertical} and \ref{foliation vertical} imply assertions (1), (2) and (3) are equivalent. 

Suppose that $[\alpha_*]$ is rational and let $M'$ be the manifold obtained by performing $[\alpha_j]$-Dehn filling of $M$ for $j \in J$. Since $v([\alpha_*]) > 0$ and $[\alpha_j]$ is horizontal for all $j \in J$, $M'$ is a Seifert fibred manifold with non-empty boundary whose base orbifold has underlying space a punctured $2$-sphere. Let $[\alpha_*'] \in \mathcal{S}(M')$ be  the projection of $[\alpha_*]$. Then $(J; [\alpha_*])$ is NLS detected if and only if $[\alpha_*']$ is NLS-detected. By construction, $v([\alpha_*']) = v([\alpha_*]) \geq 1$. The equivalence of (1) and (4) now follows from Proposition \ref{planar piece} applied to $M'$ and $[\alpha_*']$.
\end{proof}

\section{The gluing theorem} \label{section gluing}

For $M$ a compact orientable Seifert fibred manifold as in \S \ref{assumptions seifert}, $J \subseteq \{1, 2, \ldots, r\}$, and $[\alpha_*] \in \mathcal{S}(M)$ such that $[\alpha_j] \ne [h]$ for $j \in J$ we say that $(J; [\alpha_*])$ is {\it detected} if it is foliation detected. By Theorem \ref{theorem: detection} this is the same as being order detected. (And also to representation detected or NLS detected when both notions are defined.) 

Fix a graph manifold $W$ as in \S \ref{assumptions graph} with JSJ pieces $M_1, M_2, \ldots, M_n$ and JSJ tori $T_1, T_2, \ldots ,T_m$. Recall that for $[\alpha_*] \in \mathcal{S}(W; \mathcal{T})$ and $i \in \{1, 2, \ldots , n\}$ we defined 
$$[\alpha_*^{(i)}] = \Pi_i([\alpha_*])$$ 
See \S \ref{assumptions graph}. For $K \subseteq \{1,2, \ldots, m\}$ and $i \in \{1, 2, \ldots , n\}$ set 
$$K_i = \{k \in K : T_k \subset \partial M_i\}$$ 
and define $[\alpha_*^{K_i}]$ to be the $|K_i|$-tuple of slopes $[\alpha_j]$ where $j \in K_i$. 

Before stating the gluing theorem, we introduce several notions.

\begin{definition}
{\rm Fix $K \subseteq \{1,2, \ldots, m\}$ and  $[\alpha_*] \in \mathcal{S}(W; \mathcal{T})$. We call $(K; [\alpha_*])$  {\it gluing coherent} if $(K_i; [\alpha_*^{(i)}])$ is detected for all $i$. 
}
\end{definition}

It follows from Lemma \ref{strongly rational not vertical} that if $(K; [\alpha_*])$ is gluing coherent and $k \in K$, then $[\alpha_k]$ is horizontal in each piece of $W$ containing $T_k$, at least up to assuming that the Seifert structures on pieces homeomorphic to twisted $I$-bundles over the Klein bottle have orientable base orbifolds. 

Given $K \subseteq \{1,2, \ldots, m\}$ and $[\alpha_*] \in \mathcal{S}(W; \mathcal{T})$ let $M_i([\alpha_*^{K_i}]_{rat})$ be the Seifert manifold obtained by $[\alpha_j]$-Dehn filling $M_i$ along its boundary components $T_j$ such that $j \in K_i$ and $[\alpha_j]$ is rational. Set 
$$K([\alpha_*]) = K \cup \{j : T_j = \partial M_i([\alpha_*^{K_i}]_{rat}) \hbox{ for some $i$ such that } M_i([\alpha_*^{K_i}]_{rat}) \cong S^1 \times D^2\}$$ 

\begin{definition}
{\rm Given $K \subseteq \{1,2, \ldots, m\}$ and $[\alpha_*] \in \mathcal{S}(W; \mathcal{T})$ we say that $(K; [\alpha_*])$ is {\it gluing unobstructed} if $(K([\alpha_*]); [\alpha_*])$ is gluing coherent. Otherwise we say that $(K; [\alpha_*])$ is {\it gluing obstructed}. 
}
\end{definition}

Note that when $K = \emptyset$ or $K = \{1, 2, \ldots, m\}$, $(K; [\alpha_*])$ is gluing unobstructed as long as it is gluing coherent. See Example \ref{obstructed} for an example of a $W$ and $(K; [\alpha_*])$ which is gluing coherent but gluing obstructed.

\begin{definition}
{\rm $(1)$ We say that a co-oriented taut foliation $\mathcal{F}$ on $W$ has {\it $K$-type} if $\mathcal{F}$ is transverse to $T_j$ for each $j$, it restricts to a co-oriented taut foliation on each $M_i$, and $\mathcal{F} \cap T_k$ is linear for $k \in K$. 

$(2)$ We say that a left-order $\mathfrak{o}$ on $\pi_1(W)$ has {\it $K$-type} if there is an $\mathfrak{o}$-convex normal subgroup $C$ of $\pi_1(W)$ such that $C \cap \pi_1(T_k) \cong \langle \alpha_k \rangle \cap \pi_1(T_k)$ for all $k \in K$.
}
\end{definition}

\begin{convention}
For the rest of the paper we take the convention that the parenthetical phrases in the statements of results are to be either simultaneously considered or simultaneously ignored.
\end{convention}

Here is the gluing theorem.

\begin{theorem} \label{theorem: gluing} 
Let $W$ be a graph manifold rational homology $3$-sphere with pieces $M_1, \ldots, M_n$ and JSJ tori $T_1, \ldots , T_m$. Fix $K \subseteq \{1, 2, \ldots,m\}$. 

$(1)$ $\pi_1(W)$ admits a $K$-type left-order $($for which each class represented by a Seifert fibre of a piece of $W$ is cofinal$)$ if and only if there is a $($horizontal$)$ $[\alpha_*] \in \mathcal{S}(W; \mathcal{T})$ such that $(K; [\alpha_*])$ is gluing unobstructed.

$(2)$ $W$ admits a $($horizontal$)$ $K$-type co-oriented taut foliation if and only if there is a $($horizontal$)$ $[\alpha_*] \in \mathcal{S}(W; \mathcal{T})$ such that $(K; [\alpha_*])$ is gluing unobstructed. 

\end{theorem}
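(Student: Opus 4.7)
The plan is to establish parts (1) and (2) by proving the forward and backward directions separately, handling the parenthetical ``horizontal'' cases in parallel via Proposition \ref{prop: horizontal foliation} (to convert between horizontal foliations and horizontal slope tuples on pieces) and Proposition \ref{non-orientable implies not detected} together with Remark \ref{orders versus reps} (to convert between horizontal slope tuples and cofinality of the Seifert fibre). I will use Theorem \ref{theorem: detection} throughout to move freely between order-theoretic, representation-theoretic, and foliation-theoretic detection on each individual piece whenever more than one notion is available.

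For the forward direction of (2), I would begin with a $K$-type co-oriented taut foliation $\mathcal{F}$ on $W$, whose restriction to each JSJ piece $M_i$ is a co-oriented taut foliation on $M_i$ transverse to $\partial M_i$. The slopes it detects assemble into a tuple $[\alpha_*] \in \mathcal{S}(W;\mathcal{T})$, slope-consistent across the JSJ tori by construction. Linearity of $\mathcal{F}\cap T_k$ for $k\in K$ promotes detection to strong detection, giving gluing coherence of $(K;[\alpha_*])$. To upgrade to gluing unobstructedness, I would observe that whenever $M_i([\alpha_*^{K_i}]_{\mathrm{rat}}) \cong S^1 \times D^2$, the restriction $\mathcal{F}|M_i$ extends across the associated rational fillings to a co-oriented taut foliation of the resulting solid torus transverse to its boundary; such a foliation must restrict to a linear foliation of meridional slope on the remaining boundary torus, yielding the extra strong detection needed. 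The forward direction of (1) is parallel, using the restriction $\mathfrak{o}|\pi_1(M_i)$ and the trace $C \cap \pi_1(M_i)$ of the $K$-type convex normal subgroup.

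For the backward direction of (2), I would first absorb every virtual solid torus $M_i([\alpha_*^{K_i}]_{\mathrm{rat}}) \cong S^1 \times D^2$ into its adjacent piece, reducing to the case $K = K([\alpha_*])$. On each remaining $M_i$, I would use Proposition \ref{standard realisation} together with Lemmas \ref{ready for gluing horizontal non-compact}, \ref{ready for gluing horizontal compact}, and \ref{ready for gluing vertical} to produce a detecting co-oriented taut foliation $\mathcal{F}_i$ whose restriction to every JSJ torus $T_k \subset \partial M_i$ is either a linear fibration of slope $[\alpha_k]$ (when $k \in K_i$) or $k$-interval-hyperbolic of slope $[\alpha_k]$ otherwise, for a prescribed integer $k$ chosen odd and strictly greater than $\max_i k(\mathcal{F}_i)$ so that the standardisations apply uniformly across all pieces regardless of base orbifold orientability. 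Lemma \ref{standard are isotopic}(2) then lets me isotope the boundary behaviour on each side of each gluing torus into agreement, after which the $\mathcal{F}_i$'s glue to a co-oriented taut foliation of $W$ of $K$-type. Irrational components of $[\alpha_*]$ are handled by using Proposition \ref{irrational to rational - foliations} to replace them by nearby rational slopes that remain detected and carrying out the above rational construction.

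For the backward direction of (1), I would proceed by induction along the JSJ graph using Lemma \ref{gluing strong orders}: starting from detecting orders $\mathfrak{o}_i$ on each $\pi_1(M_i)$ (again after absorbing virtual solid tori), I would amalgamate adjacent pieces along each shared JSJ torus while preserving detection on the remaining boundary tori and the convex normal subgroup structure encoding strong detection. When $[\alpha_*]$ is horizontal, cofinality of each piece's fibre class is maintained throughout by Lemma \ref{cofinal conjugation} and Proposition \ref{non-orientable implies not detected}. I expect the main obstacle to lie in the foliation construction: coordinating the boundary normal forms so that the parity constraint forcing odd $k$ on non-orientable bases in Lemma \ref{ready for gluing vertical}(2) is met simultaneously with the lower bounds $k \geq k(\mathcal{F}_i)$ from Lemma \ref{ready for gluing horizontal non-compact}, and, for the order case, handling Lemma \ref{gluing strong orders} when the common slope is irrational, which requires the Sikora-compactness argument already visible in its proof.
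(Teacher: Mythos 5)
Your treatment of part (2) is in broad strokes the paper's own approach, but it understates the work needed before the boundary-normalisation lemmas can be applied. In the paper the reduction to rational slopes (Lemma~\ref{can assume rational 1}) is followed by the delicate deformation Lemma~\ref{deform} and then Lemma~\ref{ready for gluing foliations}, which not only eliminates solid-torus fillings but also arranges two further conditions: for each piece fibring over $S^1$ with a connected-boundary fibre, the slope on that boundary torus is moved off the rational longitude and made strongly detected (this is needed because Lemma~\ref{ready for gluing horizontal compact}(3) produces no $k$ interval-hyperbolic behaviour on its own); and for each piece with $v([\alpha_*^{(i)}])>0$, the parity of $v([(\alpha_*')^{(i)}])$ is forced to match the orientability of the base orbifold, as required by Lemma~\ref{ready for gluing vertical}(1) versus (2). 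Your ``prescribed odd $k > \max_i k(\mathcal{F}_i)$'' handles the \emph{interval-hyperbolic multiplicity}, but it does not by itself give the required \emph{parity of $v$}, which is a separate constraint on $[\alpha_*]$ itself rather than on $k$. Without this preparation the construction can get stuck, e.g.\ on a piece with orientable base orbifold and $v$ odd, where Lemma~\ref{ready for gluing vertical} provides no conclusion.

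Your proposed route for the backward direction of (1) is genuinely different from the paper's and contains a gap. You want to iterate Lemma~\ref{gluing strong orders} along the JSJ graph. That lemma, however, requires that the shared slope $[\alpha_s]$ be \emph{strongly} order detected on both sides ($s \in J \cap K$ in its hypotheses), since it amalgamates the quotients $\pi_1(M_i)/C_i$ along \emph{cyclic} edge groups using~\cite[Corollary 5.3]{BG}. Gluing coherence of $(K;[\alpha_*])$ only guarantees plain detection at a JSJ torus $T_j$ with $j \notin K$ --- the convex normal subgroup $C_i$ satisfies $C_i \cap \pi_1(T_j) \leq \langle\alpha_j\rangle \cap \pi_1(T_j)$, with equality not required --- so the edge group of the amalgam need not be cyclic and the lemma does not apply. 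The paper avoids this by working instead with Chiswell's graph-of-groups criterion (Theorem~\ref{chiswell_gluing}) and the ``ready for gluing'' families of Proposition~\ref{horizontal normal family}, Lemma~\ref{composing space normal family}, and Lemma~\ref{one boundary nonorientable}: the crucial device is that $\mathcal{L}_i \subset LO(\pi_1(M_i))$ restricts onto the \emph{full set} $\mathfrak{O}(\alpha_j)$ of orderings of $\pi_1(T_j)$ detecting $[\alpha_j]$, so that compatibility can always be arranged across an edge regardless of whether the slope is strongly detected. Without some substitute for this mechanism --- e.g.\ an argument that one can always perturb within the family to force strong detection, which is not in general available --- the inductive amalgamation you describe cannot be carried out.

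You also do not address how the $K$-type condition (the global $\mathfrak{o}$-convex normal subgroup $C$ with the prescribed intersections with $\pi_1(T_k)$, $k \in K$) is produced in the order case. The paper does this by first constructing a left-order on the graph manifold obtained by Dehn filling the $T_k$ ($k\in K'$) and then lexicographically ordering $\pi_1(W)$ via the short exact sequence
\[
1 \rightarrow C \rightarrow \pi_1(W) \rightarrow \coprod \pi_1(W_j) \rightarrow 1,
\]
where the gluing unobstructedness hypothesis is used precisely to ensure that no $W_j$-piece degenerates to a solid torus, so that the $K=\emptyset$ case applies to each $W_j$. This step is absent from your outline and is where the ``gluing unobstructed'' hypothesis is consumed in part (1).
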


We prove these results in the next two sections. For now we use them to deduce Theorems \ref{equiv 1},  \ref{equiv 2}, and \ref{equiv 3}. Theorem \ref{theorem: special gluing} is the case $K = \emptyset$ of Theorem \ref{theorem: gluing}.

\begin{proof}[Proof of Theorem \ref{equiv 1}]
We remarked in the introduction that statements (2) and (3) of Theorem \ref{equiv 1} are known to be equivalent (cf. \cite{Linnell}, \cite[Theorem 1.1(1)]{BRW}). The equivalence of statements (1) and (2) are immediate consequences of the case $J = \emptyset$ of Theorem \ref{theorem: detection} and the case $K = \emptyset$ of Theorem \ref{theorem: gluing}.
\end{proof} 

\begin{proof}[Proof of Theorem \ref{equiv 2}] 
Statements (2) and (3) of Theorem \ref{equiv 2} are equivalent by Remark \ref{orders versus reps}. Suppose that statement (1) holds and let $\mathcal{F}$ be a co-oriented horizontal foliation on $W$. Brittenham has shown that $\mathcal{F}$ is $\mathbb R$-covered. Indeed, he shows that given a Seifert fibre $L$ of a piece of $W$, each leaf of the pull-back $\widetilde{\mathcal{F}}$ of $\mathcal{F}$ to the universal cover of $W$ intersects the inverse image $\widetilde L$ of $L$ in exactly one point. (See \cite[\S 3]{Br3}.) Hence the leaf space $\mathcal{L}$ of $\widetilde{\mathcal{F}}$ can be identified with  $\widetilde L$. Since $L$ is a transverse loop to $\mathcal{F}$, it carries an element of infinite order in $\pi_1(W)$. Thus $\mathcal{L}$ is a line. Now $\pi_1(W)$ acts on $\mathcal{L}$ via deck transformations and from Brittenham's work we see that the class carried by $L$ acts without fixed points. As $L$ was arbitrary, this action determines a homomorphism $\rho: \pi_1(W) \to \hbox{Homeo}_+(\mathcal{L}) \cong \hbox{Homeo}_+(\mathbb R)$ for which the image of the class carried by $L$ is conjugate to $\hbox{sh}(\pm 1)$. Thus statement (3) holds. 

Conversely suppose that statement (3) holds and let $\rho_i = \rho|\pi_1(M_i)$. There is an associated co-oriented horizontal foliation $\mathcal{F}(\rho_i)$ on $M_i$ (cf. the proof of Proposition \ref{reps to foliations}) which detects some $[\alpha_*(\rho_i)] \in \mathcal{S}(M_i)$. The $[\alpha_*(\rho_i)]$ ($1 \leq i \leq n$) piece together to yield a horizontal $[\alpha_*] \in \mathcal{S}(W; \mathcal{T})$ for which $(\emptyset, [\alpha_*])$ is gluing coherent (and therefore gluing unobstructed as $K = \emptyset$). Theorem \ref{theorem: gluing} then implies that $W$ admits a co-oriented horizontal foliation, which completes the proof.  
\end{proof} 

\begin{proof}[Proof of Theorem \ref{equiv 3}] 
Statements (2) and (3) of Theorem \ref{equiv 3} are equivalent by Remark \ref{orders versus reps}. Next observe that $W$ admits a strongly rational co-oriented taut foliation if and only if it  admits a $K$-type co-oriented taut foliation where $K = \{1, 2, \ldots , m\}$. (In both cases the foliations are horizontal by Lemmas \ref{prop: horizontal foliation}, at least up to assuming that the Seifert structures on pieces homeomorphic to twisted $I$-bundles over the Klein bottle have orientable base orbifolds.) Hence statements (1) and (2) of Theorem \ref{equiv 3} are equivalent by Theorems \ref{theorem: detection} and \ref{theorem: gluing}.
\end{proof} 

\section{Proof of the gluing theorem: the foliation case} \label{proof foliation case gluing}

Recall that $W$ is a graph manifold rational homology $3$-sphere as in \S \ref{assumptions graph} with JSJ pieces $M_1, M_2, \ldots, M_n$ and JSJ tori $\mathcal{T} = \{T_1, T_2, \ldots, T_m\}$. We work with a fixed $K \subseteq \{1, 2, \ldots, m\}$ throughout this section. We must show that $W$ admits a (horizontal) $K$-type co-oriented taut foliation if and only if there is a (horizontal) element $[\alpha_*] \in \mathcal{S}(W; \mathcal{T})$ such that $(K; [\alpha_*])$ is gluing unobstructed. 

The forward implication is straightforward: If $W$ admits a (horizontal) $K$-type co-oriented taut foliation $\mathcal{F}$, it induces a (horizontal) element $[\alpha_*] \in \mathcal{S}(W; \mathcal{T})$ such that $(K; [\alpha_*])$ is gluing coherent (\cite{BR}). We claim that $(K; [\alpha_*])$ is also gluing unobstructed. To see,  this observe that for each piece $M_i$ of $W$, $\mathcal{F}$ induces a  co-oriented taut foliation $\mathcal{F}_i$ on $M_i([\alpha_*^{K_i}]_{rat})$. As the only taut co-orientable foliations on a solid torus are $2$-disk fibrations, if $M_i([\alpha_*^{K_i}]_{rat}) \cong S^1 \times D^2$, then $\mathcal{F}$ restricts to a foliation on $T_j = \partial M_i([\alpha_*^{K_i}]_{rat})$ which strongly detects $[\alpha_j]$. It follows that $(K([\alpha_*]); [\alpha_*])$ is gluing coherent and therefore $(K; [\alpha_*])$ is gluing unobstructed.

Now we focus on the reverse implication. We suppose below that there is a (horizontal) element $[\alpha_*] \in \mathcal{S}(W; \mathcal{T})$ such that $(K; [\alpha_*])$ is gluing unobstructed. Lemma \ref{strongly rational not vertical} implies the following fact. 

\begin{lemma}
If $j \in K([\alpha_*])$ and $[\alpha_j]$ is vertical in a JSJ piece $M_i$ of $W$ incident to $T_j$, then $M_i \cong N_2$. 
\qed
\end{lemma}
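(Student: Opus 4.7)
The plan is to unpack the hypothesis that $(K;[\alpha_*])$ is gluing unobstructed and invoke Lemma \ref{strongly rational not vertical} directly on the piece $M_i$. By definition, gluing unobstructedness means $(K([\alpha_*]);[\alpha_*])$ is gluing coherent, so for every JSJ piece $M_\ell$ of $W$ the pair $(K([\alpha_*])_\ell;[\alpha_*^{(\ell)}])$ is detected. Recall that at the start of \S \ref{section gluing} we defined ``detected'' to mean foliation detected.

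First I would fix $j \in K([\alpha_*])$ and a piece $M_i$ incident to $T_j$ in which $[\alpha_j]$ is vertical. Since $T_j \subset \partial M_i$, the index $j$ belongs to $K([\alpha_*])_i$. By gluing coherence of $(K([\alpha_*]);[\alpha_*])$, the pair $(K([\alpha_*])_i;[\alpha_*^{(i)}])$ is foliation detected, so there exists a co-oriented taut foliation $\mathcal{F} \in \mathfrak{F}(M_i)$ which strongly detects $[\alpha_j]$ on $T_j$. By the definition of strong foliation detection, $\mathcal{F} \cap T_j$ is linear, that is, a foliation of $T_j$ by simple closed curves of slope $[\alpha_j]$.

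Now apply Lemma \ref{strongly rational not vertical} to $M_i$, $\mathcal{F}$, and $T_j$: since $[\alpha_j(\mathcal{F})] = [\alpha_j]$ is vertical with respect to the given Seifert structure on $M_i$, the lemma forces $M_i$ to be a twisted $I$-bundle over the Klein bottle, i.e.\ $M_i \cong N_2$, as required.

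There is no real obstacle here; the statement is a bookkeeping corollary of the definitions of $K([\alpha_*])$, gluing coherence, and strong foliation detection, combined with the dichotomy provided by Lemma \ref{strongly rational not vertical}. The only subtlety is to notice that the conclusion holds for \emph{every} piece $M_i$ incident to $T_j$, not just the distinguished piece $M_{i'}$ whose rational filling produces a solid torus when $j \in K([\alpha_*]) \setminus K$; this follows because the hypothesis $j \in K([\alpha_*])$ places $j$ in $K([\alpha_*])_i$ for both pieces containing $T_j$, so strong foliation detection on $M_i$ is available in either case.
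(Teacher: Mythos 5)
Your proof is correct and reconstructs exactly what the paper leaves implicit when it states ``Lemma \ref{strongly rational not vertical} implies the following fact'': the chain $j \in K([\alpha_*]) \Rightarrow j \in K([\alpha_*])_i \Rightarrow$ strong foliation detection of $[\alpha_j]$ on $T_j$ by some $\mathcal{F} \in \mathfrak{F}(M_i) \Rightarrow \mathcal{F}\cap T_j$ a fibration by circles of slope $[\alpha_j]$ (here you tacitly use that $[\alpha_j]=[h]$ is rational, being the slope of the integral fibre class) $\Rightarrow$ Lemma \ref{strongly rational not vertical} forces $M_i \cong N_2$. Your closing observation that the argument applies to either piece adjacent to $T_j$, since $j$ lies in $K([\alpha_*])_i$ for both, is the right thing to flag and is indeed the content that makes the statement non-trivial.
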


For $A \subseteq \{1, 2, \ldots, m\}$, let $A^\dagger = A \cup \{ j : [\alpha_j] \hbox{ is irrational}\}$. By Proposition \ref{irrational to rational - foliations}, $(K^\dagger; [\alpha_*])$ is gluing coherent. Since $M_i([\alpha_*^{K_i^\dagger}]_{rat})= M_i([\alpha_*^{K_i}]_{rat})$ for each $i$, we have 
$$K^\dagger([\alpha_*]) = K([\alpha_*])^\dagger,$$ 
and it follows that $(K, [\alpha_*])$ is gluing unobstructed if and only if $(K^\dagger; [\alpha_*])$ is gluing unobstructed. Hence, without loss of generality, we assume that 
$$K = K^{\dagger}$$ 
for the rest of this section. 

\begin{lemma} \label{can assume rational} 
Fix $K \subseteq \{1,2, \ldots, m\}$ and a $($horizontal$)$ $[\alpha_*] \in \mathcal{S}(W; \mathcal{T})$. If $(K; [\alpha_*])$ is gluing unobstructed, there is a $($horizontal$)$ rational $[\alpha_*'] \in \mathcal{S}(W; \mathcal{T})$ such that 

$(1)$ $(K; [\alpha_*'])$ is gluing unobstructed;

$(2)$ $[\alpha_j'] = [\alpha_j]$ if $[\alpha_j]$ is rational; 

$(3)$ if $[\alpha_j]$ is irrational, then $[\alpha_j']$ can be chosen to lie in an arbitrarily small neighborhood of $[\alpha_j]$ in $\mathcal{S}(T_j)$ and to have distance at least $2$ to the Seifert fibre of a piece of $W$ containing $T_j$.  
\end{lemma}

\begin{proof}
First we show that the lemma holds if we replace (1) by (1)$'$: {\it $(K([\alpha_*]); [\alpha_*'])$ is gluing coherent}. 

By hypothesis, $(K([\alpha_*]); [\alpha_*])$ is gluing coherent, so the modified lemma is immediate if Proposition \ref{irrational to rational - foliations}(1) holds for each $(K([\alpha_*])_i; [\alpha_*^{(i)}])$ for which $[\alpha_*^{(i)}]$ is not rational. Otherwise, the pieces $M_i$ of $W$ for which $[\alpha_*^{(i)}]$ is not rational and Proposition \ref{irrational to rational - foliations}(2) holds occur in linear subtrees of the JSJ graph of $W$, the vertices of which correspond to the type of pieces described in Proposition \ref{irrational to rational - foliations}(2) and whose edges correspond to tori $T_j$ such that $[\alpha_j]$ is irrational. In this situation, Proposition \ref{irrational to rational - foliations} implies that we can replace the irrational $[\alpha_j]$ which occur in this linear subtree by horizontal rational slopes of the sort claimed in the modified lemma. 

Let $[\alpha_*']$ be a (horizontal) rational element of $\mathcal{S}(W; \mathcal{T})$ which satisfies (1)$'$, (2) and (3). To complete the proof, we need only show that $K([\alpha_*']) \subseteq K([\alpha_*])$, for then the conclusion of the previous paragraph implies that $(K([\alpha_*']); [\alpha_*'])$ is gluing coherent. In other words, $(K; [\alpha_*'])$ is gluing unobstructed. 

To show that $K([\alpha_*']) \subseteq K([\alpha_*])$, fix  $j \in K([\alpha_*']) \setminus K$. Then there is an $i$ such that $M_i([(\alpha_*')^{K_i}]) \cong S^1 \times D^2$ where $T_j = \partial M_i([(\alpha_*')^{K_i}])$. If $[(\alpha_*')^{K_i}] \ne [\alpha_*^{K_i}]_{rat}$, then at least one coordinate of $[\alpha_*^{K_i}]$ is irrational. Condition (3) of the lemma implies that at most one coordinate is irrational. It follows that $M_i([\alpha_*^{K_i}]_{rat}) \cong S^1 \times S^1 \times I$. Since $(K_i; [\alpha_*^{(i)}])$ is foliation detected in $M_i$, $[\alpha_j]$ is irrational (Proposition \ref{tau fibre 1}). But then $j \in K^{\dagger} = K$, a contradiction. Hence $[(\alpha_*')^{K_i}] = [\alpha_*^{K_i}]_{rat}$, so that $j \in K([\alpha_*])$. It follows that $K([\alpha_*']) \subseteq K([\alpha_*])$, which completes the proof.
\end{proof}

\begin{proof}[Proof of the reverse implication of the foliation case of Theorem \ref{theorem: gluing}] 
First suppose that $K = \emptyset$ and fix a gluing coherent family of rational slopes $[\alpha_*]$. 

If $[\alpha_*]$ is horizontal, each $[\alpha_*^{(i)}]$ is detected by a foliation of the form $\mathcal{F}(\rho_i)$ where $\rho_i$ is chosen as in Proposition \ref{standard realisation}. It follows from the conclusions of Lemmas \ref{ready for gluing horizontal non-compact} and \ref{ready for gluing horizontal compact} that if we choose an odd integer $k \gg 0$, we can find a horizontal co-oriented taut foliation on each $M_i$ which detects $[\alpha_*^{(i)}]$ and which is $k$ interval hyperbolic on each boundary component of $M_i$. By Lemma \ref{standard are isotopic}, these foliations glue together to form a co-oriented horizontal foliation on $W$.

If $[\alpha_*]$ is not horizontal, index the pieces $M_1, \ldots, M_n$ so that each partial union $V_i = M_1 \cup \ldots \cup M_i$ is connected. For each piece $M_i$ for which $[\alpha_*^{(i)}]$ is horizontal, choose a representation $\rho_i$ as in Proposition \ref{standard realisation} so that $\mathcal{F}(\rho_i)$ detects $[\alpha_*^{(i)}]$. Fix a constant $k_0 \gg 0$ so that for each $i$ such that $\mathcal{F}(\rho_i)$ has no compact leaves, $k_0 \geq k(\mathcal{F}(\rho_i))$ (cf. Lemma \ref{ready for gluing horizontal non-compact}). We prove that each $V_i$ admits a co-oriented taut foliation $\mathcal{F}_i$ such that 
\begin{itemize}

\item for each component $T_l$ of $\partial V_i$, $\mathcal{F}_i$ detects $[\alpha_l]$ and is $k(l)$ interval hyperbolic on $T_l$ for some $k(l) > k_0$. Further, $k(l)$ is odd if $[\alpha_l]$ is horizontal in $M_i$. 

\vspace{.2cm} \item $\mathcal{F}_i$ is transverse to any predetermined finite set of Seifert fibres in the pieces of $V_i$ 

\end{itemize}
The establishment of the case $i = n$ will complete the argument for $K = \emptyset$. 

The result holds for $i = 1$ by Lemmas \ref{ready for gluing horizontal non-compact}, \ref{ready for gluing horizontal compact} and \ref{ready for gluing vertical}. Suppose that it holds for $V_i$ where $i < n$ and consider $V_{i+1} = V_i \cup M_{i+1}$. Let $T_l = V_i \cap M_{i+1}$ and let $M_j$ be the piece of $V_i$ which contains $T_l$. Fix a co-oriented taut foliation $\mathcal{F}_i$ on $V_i$ as provided by the induction hypothesis and suppose that it is $k$ interval hyperbolic on $T_l$. Since $[\alpha_l]$ cannot be vertical in both $M_j$ and $M_{i+1}$, there are three cases to consider: 
 
\begin{enumerate} 

\item $[\alpha_l]$ is horizontal in both $M_j$ and $M_{i+1}$;  

\vspace{.2cm}  \item $[\alpha_l]$ is vertical in $M_j$ and horizontal in $M_{i+1}$;  

\vspace{.2cm}  \item $[\alpha_l]$ is horizontal in $M_j$ and vertical in $M_{i+1}$. 

\end{enumerate}
Note that $M_{i+1} \not \cong N_2$ in case (2); otherwise the Seifert structure on $M_{i+1}$ with base orbifold a M\"{o}bius band would extend over $M_j \cup M_{i+1}$. 

In each case, the conclusions Lemmas \ref{ready for gluing horizontal non-compact}, \ref{ready for gluing horizontal compact} and \ref{ready for gluing vertical} allow us to conclude that $M_{i+1}$ admits a co-oriented taut foliation which is $k$ interval hyperbolic on $T_l$ and, when pieced together with $\mathcal{F}_i$, yields a co-oriented taut foliation $\mathcal{F}_{i+1}$ on $V_{i+1}$ which satisfies the inductive hypothesis. This completes the proof when $K = \emptyset$. 

Next suppose that $K \ne \emptyset$ and fix a (horizontal) rational family of slopes $[\alpha_*] \in \mathcal{S}(W; \mathcal{T})$ such that $(K; [\alpha_*])$ is gluing unobstructed. Let $K' = K([\alpha_*])$ and consider the manifold $W_0$ obtained by cutting $W$ open along the components $T_j$ of $\mathcal{T}$ for $j \in K'$. The boundary of $W_0$ is a disjoint union of tori, two for each $T_j$ where $j \in K'$. 

By construction, each of the manifolds $M_i([\alpha_*^{K'_i}])$ is Seifert fibred but none are solid tori. (This is where we use the assumption that $K$ is gluing unobstructed.) In particular, they are boundary incompressible. If some $M_i([\alpha_*^{K'_i}])$ is a product $S^1 \times S^1 \times I$, Proposition \ref{tau fibre 1} implies that the two $[\alpha_l]$ associated to the boundary components of $M_i([\alpha_*^{K'_i}])$ correspond through its $I$-bundle structure. 

Dehn fill the boundary components of $W_0$ along the slopes $[\alpha_j]$ for $j \in K'$ to produce a closed graph manifold $W'$ each component of which is a union of a certain number of the $M_i([\alpha_*^{K'_i}])$. The projection of $[\alpha_*]$ to the components of the boundaries of the $M_i([\alpha_*^{K'_i}])$ contained in such a component  form a gluing coherent family. Hence we can produce a co-oriented taut foliation on each component of $W'$ by using the arguments of the case $K = \emptyset$. Moreover, we can suppose that the resulting foliations are transverse to the cores of the filling tori of $M_i([\alpha_*^{K_i}])$. After an isotopy, they intersect each $M_i$ in co-oriented taut foliations which restrict to linear foliations of slope $[\alpha_j]$ on $T_j$ whenever $j \in K$. Thus we can piece together the resulting foliations on the pieces of $W$  to produce a $K$-type co-oriented taut foliation on $W$. This completes the proof.   
\end{proof} 

\begin{proof}[Proof of Proposition \ref{rational leaves}]
If $W$ admits a (horizontal) $K$-type co-oriented taut foliation, there is a (horizontal) $[\alpha_*] \in \mathcal{S}(W; \mathcal{T})$ such that $(K; [\alpha_*])$ is gluing unobstructed (Theorem \ref{theorem: gluing}). By Lemma \ref{can assume rational} we can suppose that $[\alpha_*]$ is rational (and horizontal) and $(K, [\alpha_*])$ is gluing unobstructed. Then Theorem \ref{theorem: gluing} implies that there is a (horizontal) $K$-type co-oriented taut foliation which intersects each JSJ torus in a foliation of rational slope. 
\end{proof}

\section{Proof of the gluing theorem: the left-order case}

We review the standard notation for graphs of groups, and the theorems available to us.  Our notation follows \cite{Se, Ch}, modified slightly since we are only concerned with trees of groups.

Given a graph $Y$ with vertices $v \in V(Y)$ and edges $e  \in E(Y)$,  there are functions $o, t : E(Y) \rightarrow V(Y)$, the origin and tail of each edge.  The notation $\bar{e}$ indicates the edge $e$ with opposite orientation, so that $o(\bar{e}) = t(e)$,  $t(\bar{e}) = o(e)$. For every graph of groups $(G, Y)$ there are edge groups $\{ G_e | e \in E(Y) \}$ and vertex groups $\{ G_v | v \in V(Y) \}$, together with injective maps $\phi_e: G_e \rightarrow G_{t(e)}$ for all $e \in E(Y)$. We require $G_{\bar{e}} = G_e$.

The fundamental group of a graph of groups $(G, Y)$ is written $\pi_1(G, Y, T)$ where $T$ is a maximal tree in the graph $Y$, but we need only consider the case where $Y$ is a tree (and hence the maximal tree $T$ is $Y$ itself).  We write $\pi_1(G,T)$ for the fundamental group of a tree of groups.  If the tree has edges $E(T)$ and vertices $V(T)$, then $\pi_1(G, T)$ has presentation
\[ \langle G_v, v \in V(T) | \mathrm{rel}(G_v),  v \in V(T) \mbox{ and } \phi_e(g) = \phi_{\bar{e}}(g) \mbox{ for all }  g \in G_e \mbox{ and } e \in E(T) \rangle
\]

If $H$ is left-orderable and $\phi:G \rightarrow H$ is injective, then every left-ordering $\mathfrak{o}$ of $H$ induces a left-ordering $\mathfrak{o}^{\phi}$ on $G$ according to the rule $g<^{\phi} h$ if and only if $\phi(g) < \phi(h)$.  
Recall that when $\phi (h) = ghg^{-1}$ is an inner automorphism,  $\mathfrak{o}^{\phi}$ is denoted by $\mathfrak{o}^g$.  The following definitions are from \cite{Ch}.

Next we extend Definition \ref{compatible families} to graphs of groups.

\begin{definition}

{\rm Suppose that $(G, Y)$ is a graph of groups, and and suppose that $\{ \mathcal{L}_v | v \in V(Y) \}$ is a family of sets of left-orderings of the vertex groups $G_v$.  The family $\{ \mathcal{L}_v | v \in V(Y) \}$ is said to be normal if  $\mathcal{L}_v$ is {\it normal} in $G_v$ for all $v \in V(Y)$.}
\end{definition}

\begin{definition}
{\rm Suppose that $(G, Y)$ is a graph of groups, and for each $v \in V(Y)$ let $\mathfrak{o}_v$ be a left-ordering of $G_v$.  The family of left-orderings $\{ \mathfrak{o}_v | v \in V(Y) \}$ is said to be {\it compatible} for $(G,Y)$ if  $\phi_e \phi_{\bar{e}}^{-1}$ is compatible for the pair $(\mathfrak{o}_{o(e)}, \mathfrak{o}_{t(e)})$ for all $e \in E(Y)$. More generally, suppose that $\{ \mathcal{L}_v | v \in V(Y) \}$ is a family of sets of left-orderings of the vertex groups $G_v$.  The family $\{ \mathcal{L}_v | v \in V(Y) \}$ is said to be {\it compatible} for $(G, Y)$ if for every $e \in E(Y)$,  $\phi_e \phi_{\bar{e}}^{-1}$ is compatible for $(\mathcal{L}_{o(e)}, \mathcal{L}_{t(e)})$.  }
\end{definition}

The main criterion we will use to show the existence of certain types of left-orderings of $\pi_1(W)$, where $W$ is a graph manifold as in Section \ref{assumptions graph}, is the following: 
\begin{theorem} \cite[Lemma 2.2]{Ch}
\label{chiswell_gluing}
Suppose that $Y$ is a finite tree, and $(G, Y)$ is a graph of groups and $\{ \mathcal{L}_v | v \in V(Y) \}$ is a normal family of left-orderings that are compatible for $(G, Y)$.  Then $\pi_1(G, Y)$ is left-orderable.  Moreover for each $v \in V(Y)$ let $\mathfrak{o}_v \in \mathcal{L}_v$ be a left-ordering of $G_v$ such that $\{ \mathfrak{o}_v | v \in V(Y) \}$ is compatible for $(G, Y)$.   Then there exists a left-ordering $\mathfrak{o}$ of $\pi_1(G, Y)$ that restricts to $\mathfrak{o}_v$ on $G_v$ for all $v \in V$.
\end{theorem}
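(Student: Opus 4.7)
The plan is to prove the result by induction on the number of edges of the finite tree $Y$. When $Y$ has a single vertex $v$, the tree of groups is just $G_v$, and $\mathcal{L}_v$ provides both the left-orderability and the restriction property trivially. For the inductive step, pick a leaf vertex $v_0$ with unique incident edge $e$ and other endpoint $v_1$, let $Y'$ denote the subtree obtained by removing $v_0$ and $e$, and observe that standard Bass--Serre theory gives an isomorphism
\[
\pi_1(G, Y) \cong \pi_1(G, Y') \ast_{G_e} G_{v_0},
\]
where $G_e$ is identified with $\phi_{\bar e}(G_e) \subset G_{v_1} \subset \pi_1(G, Y')$ on one side and with $\phi_e(G_e) \subset G_{v_0}$ on the other. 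Thus it is enough to establish the amalgamated-product version: if $A$ and $B$ are left-orderable groups sharing a common subgroup $C$ and $\mathfrak{o}_A, \mathfrak{o}_B$ are left-orderings whose restrictions to $C$ agree (via the identifying isomorphism), then $A \ast_C B$ admits a left-ordering whose restriction to $A$ is $\mathfrak{o}_A$ and whose restriction to $B$ is $\mathfrak{o}_B$.

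To prove this amalgamated-product lemma, I would use normal forms. Choose transversals for the left cosets of $C$ in $A$ and in $B$, so that every element of $A \ast_C B$ has a unique expression $g = c \cdot x_1 x_2 \cdots x_n$ where $c \in C$, each $x_i$ lies in the chosen transversal for $A \setminus C$ or $B \setminus C$, and the factors alternate. Define $g$ to be positive if either $n = 0$ and $c >_C 1$, or $n \geq 1$ and the first letter $x_1$ is positive in its respective ordering $\mathfrak{o}_A$ or $\mathfrak{o}_B$. The compatibility hypothesis (that $\mathfrak{o}_A$ and $\mathfrak{o}_B$ agree on $C$) is precisely what is needed to verify that this set is closed under multiplication and, together with its inverse, partitions $A \ast_C B \setminus \{1\}$; this gives the desired positive cone. (Equivalently, one can invoke the Bludov--Glass / Vinogradov amalgamation theorem.) By construction the resulting ordering restricts to $\mathfrak{o}_A$ on $A$ and to $\mathfrak{o}_B$ on $B$.

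To upgrade the inductive step to preserve a \emph{normal} family of compatible orderings, inductively let $\mathcal{L}$ be the set of all left-orderings on $\pi_1(G,Y)$ produced by this construction from some compatible choice $\{\mathfrak{o}_v \in \mathcal{L}_v\}$; normality of each $\mathcal{L}_v$ together with the observation that the normal-form positive cone transforms in a controlled way under conjugation by elements of $A$ or $B$ shows that $\mathcal{L}$ is closed under conjugation in $\pi_1(G,Y)$. The main obstacle, and the place where the hypotheses must be used most carefully, is verifying that after one inductive gluing the family of orderings on $\pi_1(G,Y')$ remains compatible with $\mathcal{L}_{v_0}$ along the edge $e$: a single ordering on $\pi_1(G,Y')$ restricts to only one ordering of $G_{v_1}$, so one must use the normality of $\mathcal{L}_{v_1}$ to find, after conjugating in $\pi_1(G,Y')$, an extension whose restriction to $\phi_{\bar e}(G_e)$ matches the restriction of the chosen $\mathfrak{o}_{v_0} \in \mathcal{L}_{v_0}$ to $\phi_e(G_e)$. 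Once this matching is arranged, the amalgamated-product lemma completes the induction and yields the final clause about prescribing the restrictions $\mathfrak{o}_v$.
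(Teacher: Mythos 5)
This theorem is quoted verbatim from Chiswell \cite[Lemma~2.2]{Ch}; the paper does not reprove it, so there is no in-paper proof to compare your argument against. Judged on its own terms, the outer inductive skeleton is the right one: prune a leaf vertex $v_0$ with incident edge $e$, invoke the Bass--Serre isomorphism $\pi_1(G,Y) \cong \pi_1(G,Y') \ast_{G_e} G_{v_0}$, and reduce to a statement about a single amalgamated product.

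The argument fails at precisely that step. The proposed positive cone --- ``first non-$C$ letter of the normal form is positive'' --- is not a positive cone; it fails the requirement $P \cap P^{-1} = \emptyset$ even for free products over a trivial edge group. In $F_2 = \langle a \rangle \ast \langle b \rangle$ with $a,b>1$ in the factor orderings, the element $g = ab^{-1}$ has first letter $a>1$ and so lies in $P$, while $g^{-1} = ba^{-1}$ has first letter $b>1$ and so also lies in $P$. The sign of the first letter of $g$ and the sign of the first letter of $g^{-1}$ are controlled by opposite ends of the word and are simply unrelated; the construction does not yield a left-order, and Vinogradov's theorem for free products (let alone Bludov--Glass for amalgams) requires a genuinely different argument. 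There is also a structural misplacement of the hypotheses: you treat ``if $\mathfrak{o}_A$ and $\mathfrak{o}_B$ agree on $C$ then $A \ast_C B$ admits a left-ordering extending both'' as a self-contained lemma, with the normal-family condition demoted to a matching device used only to line up orderings across the edge during the induction. But the normal-family hypothesis is exactly what the Bludov--Glass amalgamation theorem \cite{BG} requires \emph{as input} to produce \emph{any} left-order on $A \ast_C B$; a single compatible pair of orderings is not enough. So normality must already be in play to order $\pi_1(G,Y') \ast_{G_e} G_{v_0}$, and the induction additionally requires a verification (absent from the proposal) that the resulting family of orderings on $\pi_1(G,Y)$ is again a normal family, so that the next gluing can be performed. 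These two omissions are where the real content of Chiswell's lemma lives.
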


 In the context of graph manifolds, our edge groups will always be $\pi_1(T_j)$ for some JSJ torus $T_j \subset W$, so we introduce notation in order to analyze the left-orderings of $\mathbb{Z} \times \mathbb{Z}$.

Given $\alpha \in H_1(T;\mathbb{R})$, if $[\alpha]$ is rational there are four left-orderings of $\pi_1(T)$ that detect the slope $[\alpha]$, if $[\alpha]$ is irrational there are two.  We denote these left-orderings as follows: $\mathfrak{o}(\alpha)$ is the ordering whose positive cone consists of $x \in \pi_1(T)$ such that the oriented angle between $\alpha$ and $x$ lies in $(0, \pi]$, and $\overline{\mathfrak{o}}(\alpha)$ is the ordering whose positive cone consists of $x \in \pi_1(T)$ such that the oriented angle between $\alpha$ and $x$ lies in $[0, \pi)$.  Recall that if $\mathfrak{o}$ is a left-ordering, we denote the opposite ordering by $\mathfrak{o}_{op}$.  For $[\alpha] \in \mathcal{S}(T)$, set
\[ \mathfrak{O}(\alpha) = \{\mathfrak{o}(\alpha),  \mathfrak{o}_{op}(\alpha), \overline{\mathfrak{o}}(\alpha), \overline{\mathfrak{o}}_{op}(\alpha) \}
\]
Note that when $[\alpha]$ is irrational  the orderings $\mathfrak{o}(\alpha)$ and $ \overline{\mathfrak{o}}(\alpha)$ coincide.

\begin{definition}
{\rm Let $M$ be a Seifert fibred $3$-manifold with torus boundary components $T_1, \ldots, T_r$ as in Section \ref{assumptions seifert}.  A family of left-orderings $\mathcal{L}$  of $\pi_1(M)$ is said to be {\it ready for gluing along $[\alpha_*] = ([\alpha_1], \ldots, [\alpha_r])$} if $\mathcal{L}$ is normal, and for all $j \in \{1, \ldots, r\}$
\[ \{ \mathfrak{o} \in LO(\pi_1(T_j)) |  \mbox{ $\mathfrak{o}= \mathfrak{o}'|\pi_1(T_j)$ for some $\mathfrak{o}' \in \mathcal{L}$} \} =  \mathfrak{O}(\alpha_j)
\]}
\end{definition}

If $W$ is a graph manifold as in Section \ref{assumptions graph}, then the JSJ decomposition induces the structure of a graph of groups on $\pi_1(W)$.  Applying Theorem \ref{chiswell_gluing} in this setting, we have:
\begin{proposition}
\label{gluing ready pieces}
Let $[\alpha_*] \in \mathcal{S}(W; \mathcal{T})$.  If there exist families $\mathcal{L}_i \subset LO(\pi_1(M_i))$ of left-orderings that are ready for gluing along $[\alpha_*^{(i)}]$,  then $\pi_1(W)$ is left-orderable.  Moreover $\pi_1(W)$ admits a left-ordering extending $\mathfrak{o}_i \in \mathcal{L}_i$ whenever $\{ \mathfrak{o}_1, \mathfrak{o}_2, \ldots, \mathfrak{o}_n \} $ is compatible for the graph of groups structure on $\pi_1(W)$.
\end{proposition}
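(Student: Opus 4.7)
The plan is to apply Chiswell's gluing criterion (Theorem \ref{chiswell_gluing}) to the graph of groups $(G,Y)$ on $\pi_1(W)$ arising from the JSJ decomposition. Here the vertex set $V(Y)$ is indexed by the pieces $M_1,\dots,M_n$ with vertex groups $G_{v_i}=\pi_1(M_i)$, and the edge set is indexed by the JSJ tori $T_1,\dots,T_m$ with edge groups $G_e=\pi_1(T_j)$; the injections $\phi_e,\phi_{\bar e}$ record the two inclusions $\pi_1(T_j)\hookrightarrow\pi_1(M_i)$ and $\pi_1(T_j)\hookrightarrow\pi_1(M_{i'})$ determined by the gluing. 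Under this identification, $\pi_1(G,Y)\cong\pi_1(W)$ by van Kampen.

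First I would check normality: by hypothesis each $\mathcal{L}_i$ is already a normal family in $\pi_1(M_i)$, so the collection $\{\mathcal{L}_{v_i}\}$ is a normal family of left-orderings for $(G,Y)$ in the sense required by Theorem \ref{chiswell_gluing}. The main step is then verifying compatibility. Fix an edge $e$ corresponding to a JSJ torus $T_j$ shared by pieces $M_i,M_{i'}$, and fix $\mathfrak{o}\in\mathcal{L}_i$. Its restriction $\mathfrak{o}|\pi_1(T_j)$ lies in $\mathfrak{O}(\alpha_j)$ because $\mathcal{L}_i$ is ready for gluing along $[\alpha_*^{(i)}]$. By the same hypothesis applied to $M_{i'}$, every element of $\mathfrak{O}(\alpha_j)$ is realised as the restriction of some $\mathfrak{o}'\in\mathcal{L}_{i'}$. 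In particular we can choose $\mathfrak{o}'\in\mathcal{L}_{i'}$ so that the identification $\phi_e\phi_{\bar e}^{-1}:\pi_1(T_j)\to\pi_1(T_j)$ carries $\mathfrak{o}|\pi_1(T_j)$ to $\mathfrak{o}'|\pi_1(T_j)$ (the identification being orientation-compatible, and the set $\mathfrak{O}(\alpha_j)$ being preserved by both $\mathfrak{o}\mapsto\mathfrak{o}_{op}$ and by orientation reversal). This shows $\phi_e\phi_{\bar e}^{-1}$ is compatible for the pair $(\mathcal{L}_i,\mathcal{L}_{i'})$, so the family $\{\mathcal{L}_{v_i}\}$ is compatible for $(G,Y)$.

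Theorem \ref{chiswell_gluing} then immediately yields left-orderability of $\pi_1(W)\cong\pi_1(G,Y)$, proving the first assertion. For the second, if $\{\mathfrak{o}_1,\dots,\mathfrak{o}_n\}$ with $\mathfrak{o}_i\in\mathcal{L}_i$ is already compatible for $(G,Y)$ (edge by edge in the sense defined), then the second clause of Theorem \ref{chiswell_gluing} produces a left-ordering $\mathfrak{o}$ on $\pi_1(W)$ whose restriction to each $\pi_1(M_i)$ is $\mathfrak{o}_i$, as required.

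The only subtle point, and where care is needed, is the compatibility verification: one must confirm that the full orbit $\mathfrak{O}(\alpha_j)$ of orderings on $\pi_1(T_j)$ detecting $[\alpha_j]$ is realized from both sides, and that the bijection $\phi_e\phi_{\bar e}^{-1}$ respects this orbit. Once $\mathcal{L}_i$ is ready for gluing along the full tuple $[\alpha_*^{(i)}]$, both orientations and both choices of positive cone are available on each boundary torus, which is precisely what allows matching across an arbitrary orientation-preserving gluing homeomorphism. With this in hand, Chiswell's theorem does the rest.
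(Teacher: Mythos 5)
Your proposal is correct and follows essentially the same route as the paper: set up the graph-of-groups structure from the JSJ decomposition, observe that normality of each $\mathcal{L}_i$ is given, verify compatibility edge-by-edge using that readiness for gluing forces the set of restrictions to each $\pi_1(T_j)$ to be exactly $\mathfrak{O}(\alpha_j)$ from both sides, and then invoke Theorem \ref{chiswell_gluing}. Your parenthetical remark that $\mathfrak{O}(\alpha_j)$ is closed under $\mathfrak{o}\mapsto\mathfrak{o}_{op}$ and orientation reversal makes explicit the fact the paper uses implicitly when asserting the existence of $\mathfrak{o}_j$ with $\mathfrak{o}_j^{f_j}=\mathfrak{o}_i^{f_i}$.
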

\begin{proof}
Suppose that $[\alpha_*] \in \mathcal{S}(W; \mathcal{T})$  and $\mathcal{L}_i \subset LO(\pi_1(M_i))$ are ready for gluing along $[\alpha_*^{(i)}]$. We need to show that the normal families $\mathcal{L}_i$ are compatible for the graph of groups structure on $\pi_1(W)$.  
Writing $\partial M_i = T_{i1} \cup \cdots \cup T_{i r_i}$, there are maps $f_{ij}: T_{ij} \rightarrow M_i$ inducing homomorphisms $\pi_1(T_{ij}) \rightarrow \pi_1(M_i)$, these homomorphisms are the edge maps that give $\pi_1(W)$ the structure of a graph of groups.  

Compatibility of $\mathcal{L}_i$ for the graph of groups structure on $\pi_1(W)$ is a local condition, in the sense that we need only verify the conditions for an arbitrary edge.  So to simplify notation, we fix $T \in \{ T_1, \ldots, T_m \}$  in $\partial M_i \cap \partial M_j$ with corresponding slope $[\alpha]$, and denote the gluing maps by $f_i: \pi_1(T) \rightarrow \pi_1(M_i)$ and $f_j: \pi_1(T) \rightarrow \pi_1(M_j)$.

We check compatibility of the normal families $\mathcal{L}_i$ and $\mathcal{L}_j$ with the gluing maps of the torus $T$.  Given $\mathfrak{o}_i \in \mathcal{L}_i$, since $\mathcal{L}_i$ is ready for gluing along $[\alpha_*^{(i)}]$ the ordering $\mathfrak{o}_i^{f_i}$ detects the slope $[\alpha]$, and thus  $\mathfrak{o}_i^{f_i} \in \mathfrak{O}(\alpha)$.  Since $\mathcal{L}_j$ is ready for gluing along $[\alpha_*^{(j)}]$, there exists an ordering $\mathfrak{o}_j  \in \mathcal{L}_j$ such that $\mathfrak{o}_j^{f_j} = \mathfrak{o}_i^{f_i}$.   It follows that $f_j^{} f_i^{-1}$ is compatible for $(\mathcal{L}_i, \mathcal{L}_j)$, similarly we can show that $f_if_j^{-1}$ is compatible for $(\mathcal{L}_j, \mathcal{L}_i)$.  Therefore $\mathcal{L}_i \subset LO(\pi_1(M_i))$ is compatible for the graph of groups structure on $\pi_1(W)$, and the result follows by Theorem \ref{chiswell_gluing}. \end{proof}

Thus to prove the gluing theorem we will show that if $[\alpha_*] \in \mathcal{S}(M)$ is $\mathfrak{o}$-detected, then there is a family $\mathcal{L}$ of left-orderings of $\pi_1(M)$ that is ready for gluing along $[\alpha_*]$.  We begin with the horizontal case.

\begin{proposition}
\label{horizontal normal family}
With $M$ as in Section \ref{assumptions seifert}, suppose that $[\alpha_*] \in \mathcal{S}(M)$ is horizontal and $\mathfrak{o}$-detected.   Then there exists a family of left-orderings $\mathcal{L} \subset LO(\pi_1(M))$ that is ready for gluing along $[\alpha_*]$. 
\end{proposition}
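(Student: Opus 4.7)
The strategy is to build $\mathcal{L}$ as a normal closure, in $\pi_1(M)$, of a collection of orderings obtained from dynamical realizations of suitably conjugated versions of a representation associated to $\mathfrak{o}$.

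First, by Proposition \ref{non-orientable implies not detected} the horizontality of $[\alpha_*]$ forces $M$ to have orientable base $P(a_1,\ldots,a_n)$ and $h$ to be $\mathfrak{o}$-cofinal. Applying Proposition \ref{dynamical detects} I obtain a $\rho\in\mathcal{R}_0(M)$ with values in some $\widetilde{PSL}(2,\mathbb{R})_k$ that $\rho$-detects $[\alpha_*]$, and by Proposition \ref{standard realisation} I may arrange that $\rho(\pi_1(T_j))$ contains no parabolics for each $j$. Under this representation, each non-identity element of $\rho(\pi_1(T_j))$ is either elliptic (conjugate to a non-trivial translation of $\mathbb{R}$, hence fixed-point free) or hyperbolic (with fixed points in $\mathbb{R}$).

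Next I would produce, for each $j$ and each $\sigma\in\mathfrak{O}(\alpha_j)$, an ordering $\mathfrak{o}_{j,\sigma}$ on $\pi_1(M)$ restricting to $\sigma$ on $\pi_1(T_j)$. These are obtained by applying the enumeration-of-rationals construction from the proof of Proposition \ref{reps to orders} to representations of the form $\tau\rho\tau^{-1}$ for varying $\tau\in\hbox{Homeo}_+(\mathbb{R})$, and then also taking opposite orderings. The sign of an elliptic element of $\rho(\pi_1(T_j))$ at the base rational $r_1=0$ is determined by its translation number and is invariant under such conjugations, so the line $L_{[\alpha_j]}$ detected on $\pi_1(T_j)$ is preserved. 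By contrast, the sign of a hyperbolic element depends on which interval between its fixed points contains the base rational, and $\tau$ can be chosen to move $r_1$ across a fixed point of any prescribed hyperbolic element without altering the signs of the elliptic ones. This independently toggles the order on $L\cap\pi_1(T_j)$, and combined with the opposite-ordering operation exhibits all four elements of $\mathfrak{O}(\alpha_j)$ (or both elements in the irrational case) as restrictions.

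Finally, setting $\mathcal{L}$ to be the closure of $\{\mathfrak{o}_{j,\sigma}\}$ under conjugation by $\pi_1(M)$ makes the family normal by construction. The key observation is that conjugation by $g\in\pi_1(M)$ acts on the dynamical realization simply as a basepoint change from $r_k$ to $\rho(g)^{-1}(r_k)$, and by the same reasoning as above this leaves the line $L_{[\alpha_j]}$ on each $\pi_1(T_j)$ invariant. Hence every ordering in $\mathcal{L}$ restricts to an element of $\mathfrak{O}(\alpha_{j'})$ on $\pi_1(T_{j'})$ for every $j'$, and $\mathcal{L}$ is ready for gluing along $[\alpha_*]$. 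The main obstacle will be the careful verification that basepoint changes --- whether from conjugating $\rho$ by $\tau\in\hbox{Homeo}_+(\mathbb{R})$ or from conjugating $\mathfrak{o}_{j,\sigma}$ by an element of $\pi_1(M)$ --- only permute orderings inside $\mathfrak{O}(\alpha_j)$ and never shift the detected slope; this ultimately rests on the fact that elliptic elements of $\widetilde{PSL}(2,\mathbb{R})_k$ are fixed-point free and so their signs under any dynamical realization depend only on their translation numbers, not on the chosen basepoint.
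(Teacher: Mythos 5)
Your representation-theoretic strategy is natural, but there is a genuine gap in the step where you claim to toggle the sign of $\alpha_j$ independently by moving the base rational across a fixed point. The construction from the proof of Proposition \ref{reps to orders} produces a left-ordering on $\pi_1(M)$ from two separate ingredients: an ordering on the image of $\rho$ (determined by evaluating at the enumerated rationals) and an independently chosen ordering $\mathfrak{o}_0$ on $\ker(\rho)$. When $[\alpha_j]$ is rational and $\rho(\pi_1(T_j))$ is elliptic --- which, after applying Proposition \ref{standard realisation}, is precisely the case of a strongly $\rho$-detected boundary torus --- the element $\rho(\alpha_j)$ has translation number $0$ and no fixed points, hence it is the identity and $\alpha_j \in \ker(\rho)$. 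In that situation the sign of $\alpha_j$ in the resulting left-ordering is governed entirely by $\mathfrak{o}_0$, and conjugating $\rho$ by $\tau$ (i.e.\ changing the basepoint) has no effect on it, because $\ker(\tau\rho\tau^{-1}) = \ker(\rho)$. Since you only vary $\tau$ and take opposites, and since passing to the opposite ordering flips the side of the line $L_{\alpha_j}$ and the half-line simultaneously, your construction produces only two of the four elements of $\mathfrak{O}(\alpha_j)$ for such $j$. You would need to additionally vary the ordering on the kernel (or, more precisely, on a suitable convex subgroup containing $\alpha_j$) to realise all four, and you never mention doing so. Asserting that $\rho$ can always be chosen to be hyperbolic on every $\pi_1(T_j)$ simultaneously would also need justification and is not obviously compatible with the fixed choice of $[\alpha_*]$.

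The paper sidesteps this issue entirely with a purely order-theoretic construction. Starting from $\mathfrak{o}$, for each rational $[\alpha_j]$ it applies Lemma \ref{lem:convexsubgroup} to produce a new ordering $\mathfrak{o}'$ admitting a proper $\mathfrak{o}'$-convex subgroup $C$ containing $\alpha_j$ but not $h$; since $h$ is still cofinal, Lemma \ref{lem:cofinal peripheral elements} guarantees $\mathfrak{o}'$ still detects $[\alpha_*]$. The four lexicographic orderings coming from the short exact sequence $1 \to C \to \pi_1(M) \to \pi_1(M)/C \to 1$ (two choices of sign on $C$, two on the quotient) then restrict on $\pi_1(T_j)$ to exactly the four elements of $\mathfrak{O}(\alpha_j)$, because $C \cap \pi_1(T_j) = \langle \alpha_j \rangle$. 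This gives independent control over the sign of $\alpha_j$ and the sign of elements off the line, which is the degree of freedom your basepoint argument cannot reach in the elliptic case. Finally, normality of the resulting family and stability of the detected slope under conjugation are handled by Lemma \ref{cofinal conjugation}(2) together with Lemma \ref{lem:cofinal peripheral elements}, matching the spirit (but not the mechanism) of your normality observation.
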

For the proof we prepare some lemmas.  Recall that for every boundary torus $T_j$, $\pi_1(T_j) \cong H_1(T_j)$ is identified with a subgroup of $H_1(T_j; \mathbb{R})$.
\begin{lemma}
\label{lem:cofinal peripheral elements} 
Let $T_j \in  \partial M$ and suppose $[\alpha_*] \in \mathcal{S}(M)$ is horizontal and $\mathfrak{o}$-detected.  Then $g \in \pi_1(T_j)$ is cofinal in $\pi_1(M)$ if and only if it is not a power of $\alpha_j(\mathfrak{o})$. In particular, if $\mathfrak{o}'$ is a left-ordering having the same set of cofinal elements as $\mathfrak{o}$, then $\mathfrak{o}'$ detects $[\alpha_*]$.
\end{lemma}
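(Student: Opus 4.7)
The plan is to use, from Proposition \ref{non-orientable implies not detected}, that horizontality and $\mathfrak{o}$-detection of $[\alpha_*]$ force $M$ to have base orbifold of the form $P(a_1,\ldots,a_n)$ (so $h$ is central in $\pi_1(M)$) and $h$ to be $\mathfrak{o}$-cofinal in $\pi_1(M)$. These two ingredients drive the entire argument.

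For the forward implication of the equivalence, the observation is that any $g\in\pi_1(T_j)$ which is $\mathfrak{o}$-cofinal in $\pi_1(M)$ is automatically $\mathfrak{o}|\pi_1(T_j)$-cofinal in $\pi_1(T_j)\cong \mathbb Z^2$, and Proposition \ref{prop:slope} identifies the non-cofinal elements of $\pi_1(T_j)$ with the line $L_{\mathfrak{o}|\pi_1(T_j)}\supseteq \langle \alpha_j(\mathfrak{o})\rangle$, whose integer points are exactly the powers of $\alpha_j(\mathfrak{o})$. For the converse I would transfer cofinality through the central element $h$: assuming without loss of generality $g,h>1$, given $x\in\pi_1(M)$, the $\mathfrak{o}$-cofinality of $h$ in $\pi_1(M)$ provides an integer $k\ge 0$ with $h^{-k}<x<h^k$, after which the $\mathfrak{o}|\pi_1(T_j)$-cofinality of $g$ (applied to $h^k\in\pi_1(T_j)$) provides an integer $n\ge 0$ with $g^{-n}<h^{-k}$ and $h^k<g^n$; chaining the inequalities yields $g^{-n}<x<g^n$.

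For the final statement I would argue as follows. Given $\mathfrak{o}'$ with the same cofinal set as $\mathfrak{o}$, the element $h$ remains $\mathfrak{o}'$-cofinal, so Proposition \ref{non-orientable implies not detected}(2) implies that the slope tuple $[\alpha_*']:=([\alpha_j(\mathfrak{o}')])_j$ detected by $\mathfrak{o}'$ is horizontal, and the first part of the lemma may be applied to $\mathfrak{o}'$ as well. Intersecting the common cofinal set with $\pi_1(T_j)$ then forces $\langle\alpha_j(\mathfrak{o})\rangle\cap\pi_1(T_j)=\langle\alpha_j(\mathfrak{o}')\rangle\cap\pi_1(T_j)$, which in the rational case pins the slope down immediately. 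I expect the irrational case to be the only delicate point and hence the main obstacle: both sides of this identity collapse to $\{0\}$ and cofinality data on $\pi_1(T_j)$ alone no longer distinguishes the slope. My approach would be to squeeze $[\alpha_j(\mathfrak{o})]$ between nearby rational slopes and exploit the cofinality behaviour of well-chosen products $g\cdot h^m\in\pi_1(T_j)$ (whose position relative to $L_{\mathfrak{o}|\pi_1(T_j)}$ is constrained by the global cofinality data in $\pi_1(M)$, together with the already-matched cofinality data on the other peripheral subgroups $\pi_1(T_k)$) in order to force the two irrational lines to coincide.
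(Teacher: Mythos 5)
Your treatment of the equivalence ``$g\in\pi_1(T_j)$ is cofinal in $\pi_1(M)$ iff $g$ is not a power of $\alpha_j(\mathfrak{o})$'' is correct and essentially identical to the paper's: Proposition \ref{non-orientable implies not detected} supplies the planar base orbifold and the cofinality of $h$, the forward direction is bounded by $h$, and the converse chains cofinality through the central fibre class. No issues there.

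The ``In particular'' is where the substance lies, and you are right to single out the irrational slopes as the delicate point --- the paper's own proof simply asserts this clause without comment, so there is a genuine step to fill in. Your observation that $h$ remains $\mathfrak{o}'$-cofinal, forcing $[\alpha_*(\mathfrak{o}')]$ to be horizontal so that the first part of the lemma applies to $\mathfrak{o}'$, and that the rational case then follows from matching $\langle\alpha_j(\mathfrak{o})\rangle\cap\pi_1(T_j)=\langle\alpha_j(\mathfrak{o}')\rangle\cap\pi_1(T_j)$, is correct.

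However, the approach you sketch for the irrational case cannot succeed as described. The difficulty is that cofinality is an \emph{unsigned} datum: for $g\in\pi_1(T_j)$, being cofinal only records whether $g$ lies \emph{on} the line $L_{\mathfrak{o}|\pi_1(T_j)}$ or off it, and for an irrational line every nontrivial lattice point is off it. The elements $g\cdot h^m$ you propose to test are all cofinal for every irrational line, so no amount of ``global cofinality data,'' whether in $\pi_1(M)$ or in the other peripheral subgroups $\pi_1(T_k)$, distinguishes two distinct irrational lines through the origin; you would be trying to reconstruct a hyperplane from its empty intersection with the lattice. What actually determines $L_{\mathfrak{o}|\pi_1(T_j)}$ is the \emph{sign} of $\mathfrak{o}$ on the cofinal elements of $\pi_1(T_j)$: by Proposition \ref{prop:slope}, the positive cofinal lattice points fill one open half-plane bounded by $L_{\mathfrak{o}|\pi_1(T_j)}$ and the negative ones fill the other, so the line is recovered as the common boundary. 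Thus the hypothesis one genuinely wants is that $\mathfrak{o}$ and $\mathfrak{o}'$ (or $\mathfrak{o}'_{op}$) assign the same signs to cofinal elements, not merely that they have the same cofinal set. Under that strengthened hypothesis the irrational case is immediate: if $L_{\mathfrak{o}|\pi_1(T_j)}\ne L_{\mathfrak{o}'|\pi_1(T_j)}$ there is a lattice point in the open sector between them, and it would be positive for one order and negative for the other, a contradiction. The paper's actual invocations of this lemma always come with this extra signed information: Lemma \ref{lem:convexsubgroup} concludes that every \emph{positive} $\mathfrak{o}$-cofinal element is \emph{positive} and $\mathfrak{o}'$-cofinal, and Lemma \ref{cofinal conjugation}(2) likewise produces positive cofinal elements, so the applications in Proposition \ref{horizontal normal family} are sound once the lemma's ``In particular'' is read with the signed hypothesis. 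In short: you found the gap, but the fix is to keep track of the positive cone on the cofinal set rather than to squeeze by nearby rational slopes.
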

\begin{proof}
If $[\alpha_*]$ is horizontal and $\mathfrak{o}$-detected, by Proposition \ref{non-orientable implies not detected} $M$ has base orbifold $P(a_1, \ldots, a_n)$ and the fibre slope $h$ is $\mathfrak{o}$-cofinal in $\pi_1(M)$.  Then any $g \in \pi_1(T_j)$ that is is not a power of $\alpha_j(\mathfrak{o})$ is cofinal in $\pi_1(T_j)$, so for all $n$ there exists $k$ such that $g^{-k} < h^n < g^{k}$ and thus $g$ is $\mathfrak{o}$-cofinal in $\pi_1(M)$.  On the other hand if $g$ is $\mathfrak{o}$-cofinal in $\pi_1(M)$ it cannot be a power of $\alpha_j(\mathfrak{o})$, which is bounded since $[\alpha_j(\mathfrak{o})]$ is $\mathfrak{o}$-detected.
\end{proof}

\begin{lemma}
\label{lem:convexsubgroup}
Suppose that a group $G$ admits a left-ordering $\mathfrak{o}$. Let $g \in G$ be given.   If $\{ g^k \}_{k \in \mathbb{Z}}$ is bounded above by $f$, then there exists a left-ordering $\mathfrak{o}'$ of $G$ and a $\mathfrak{o}'$-convex subgroup $C \subset G$ with $g \in C$ and $f \notin C$.  Moreover, every positive $\mathfrak{o}$-cofinal element of $G$ is positive and $\mathfrak{o}'$-cofinal.
\end{lemma}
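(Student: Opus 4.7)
The plan is to construct $\mathfrak{o}'$ as a lexicographic refinement of the dynamical realisation $\rho = \rho_\mathfrak{o} : G \to \hbox{Homeo}_+(\mathbb R)$ of $\mathfrak{o}$ (Proposition \ref{prop:dynamical}), anchored at a well-chosen point $y \in \mathbb R$ that is fixed by $\rho(g)$ but not by $\rho(f)$. Setting aside the trivial case $g = 1$ (where one may take $\mathfrak{o}' = \mathfrak{o}$ and $C = \{1\}$), the candidate anchor is $y := \sup_{k \in \mathbb Z} \rho(g^k)(0)$. The boundedness hypothesis forces $t(g^k) < t(f)$ for every $k$, hence $y \leq t(f) < \infty$, while continuity of $\rho(g)$ gives $\rho(g)(y) = \sup_k \rho(g^{k+1})(0) = y$.

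A quick case split shows $y > 0$: if $g >_\mathfrak{o} 1$ then $y \geq t(g) > 0$, and if $g <_\mathfrak{o} 1$ then $y \geq t(g^{-1}) > 0$. With $y > 0$ in hand, order-preservation of $\rho(f)$ yields $\rho(f)(y) > \rho(f)(0) = t(f) \geq y$, so $\rho(f)$ does not fix $y$.

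Next I would define $\mathfrak{o}'$ by $a <' b$ iff $\rho(a)(y) < \rho(b)(y)$, with ties broken by $\mathfrak{o}$. Left-invariance is automatic because each $\rho(c)$ is orientation-preserving, while totality and transitivity follow from $\mathfrak{o}$ being a left-ordering. The subgroup $C := \{h \in G : \rho(h)(y) = y\}$ is tautologically $\mathfrak{o}'$-convex, contains $g$, and omits $f$ by the computations above.

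For the final clause, let $h$ be positive and $\mathfrak{o}$-cofinal. Lemma \ref{lemma: non-trivial}(2) identifies $\rho(h)$ as conjugate to $\hbox{sh}(1)$ in $\hbox{Homeo}_+(\mathbb R)$, so it moves every point of $\mathbb R$ strictly upward; hence $\rho(h)(y) > y$, giving $h >' 1$, while the iterates $\rho(h^n)(y)$ are unbounded above and below, yielding $\mathfrak{o}'$-cofinality of $h$. The main obstacle I anticipate is the delicate boundary case $y = t(f)$, in which $y$ is only a limit point of the orbit $\{t(g^k)\}$ and need not lie in $t(G)$; the argument above sidesteps this because the crucial inequality $\rho(f)(y) > y$ requires only the strict positivity $y > 0$ together with the non-strict bound $y \leq t(f)$, so no separation of $y$ from $t(f)$ is needed.
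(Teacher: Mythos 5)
Your argument is correct and closely parallels the paper's. The paper's proof works with the poset $\mathcal{X}$ of $\mathfrak{o}$-down-sets of $G$ under the left-multiplication action, takes $X_0 = \{x \in G : x < g^k \text{ for some } k \in \mathbb{Z}\}$, declares $h >' 1$ when $X_0 \subsetneq hX_0$ (with ties broken by $\mathfrak{o}$), and sets $C = \mathrm{Stab}_G(X_0)$. You instead work inside the dynamical realisation $\rho_{\mathfrak{o}}$; since $\rho_{\mathfrak{o}}$ is a concrete model of the order-completion of $(G, \mathfrak{o})$, your anchor $y = \sup_k t(g^k)$ is exactly $\sup t(X_0)$, your stabilizer $C = \{h : \rho_{\mathfrak{o}}(h)(y) = y\}$ is the counterpart of the paper's, and your $\mathfrak{o}'$ is the same lexicographic refinement. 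The one substantive divergence is the cofinality clause: the paper argues directly in $G$ by choosing $n$ with $xf < h^n$, so that $x^{-1}h^n$ bounds $X_0$ and hence $x <' h^n$; you cite Lemma \ref{lemma: non-trivial}(2) to conclude that $\rho_{\mathfrak{o}}(h)$ is fixed-point free with unbounded orbits. Both are sound, and yours economizes by using machinery already established in the paper. Your closing remark about the case $y = t(f)$ is correct: the chain $\rho_{\mathfrak{o}}(f)(y) > \rho_{\mathfrak{o}}(f)(0) = t(f) \geq y$ only needs $y > 0$ and $y \leq t(f)$, so no separation of $y$ from $t(f)$ is required.
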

\begin{proof}
Suppose that $\{ g^k \}_{k \in \mathbb{Z}}$ is bounded above in the ordering $\mathfrak{o}$ by $f \in G$.  Consider the family of sets $ \mathcal{X} = \{ S \subset G |  x \in S \mbox{ and } y<x \Rightarrow y \in S \}$,  ordered by inclusion.  It is not hard to check that $G$ acts on $\mathcal{X}$ in an order-preserving way by left multiplication.  Set  $X_0 = \{ x \in G | x< g^k \mbox{ for some } k \in \mathbb{Z} \}$, and define a left-ordering $\mathfrak{o}'$ of $G$ as follows.  Given $h \in G$ declare $h>'1$ if either $X_0 \subset h(X_0)$ or $h(X_0) =X_0$ and $h >1$.  One can verify that the subgroup $C=\mathrm{Stab}_G(X_0)$ is convex in the ordering $\mathfrak{o}'$, and contains $g$ but not $f$.

Now suppose that $h$ is $\mathfrak{o}$-cofinal and $h>1$.  To show that $h$ is $\mathfrak{o}'$-positive and $\mathfrak{o}'$-cofinal, let $x \in G$ be given.  Choose $n >0$ such that $xf<h^n$, so that $f<x^{-1}h^n$.  Since $f$ is an upper bound for $\{ g^k \}_{k \in \mathbb{Z}}$ this means that $x^{-1}h^n$ is also an upper bound for $X_0$.  We conclude that $X_0 \subset x^{-1}h^n(X_0)$, so that $1<'x^{-1}h^n$.  In other words, $x<'h^n $ and so $h$ is $\mathfrak{o}'$-cofinal.  Choosing $x = 1$ in the previous argument shows $h >' 1$.
\end{proof}

\begin{proof}[Proof of Proposition \ref{horizontal normal family}]
Suppose that $[\alpha_*] \in \mathcal{S}(M)$ is horizontal and $\mathfrak{o}$-detected.   We construct a family $\mathcal{L}$ of left-orderings of $\pi_1(M)$ that is ready for gluing along $[\alpha_*]$ as follows. Set $S_0 = \{ \mathfrak{o}, \mathfrak{o}_{op}\}$, and for $j =1, \ldots, r$ define the set $S_j$ inductively as follows.  

If $[\alpha_j]$ is irrational then $S_j = S_{j-1}$. Otherwise if $[\alpha_j]$ is rational we create new left-orderings of $\pi_1(M)$ as follows.   Since $\{ \alpha_j^k\}_{k \in \mathbb{Z}} \subset \pi_1(M)$ is bounded above in the ordering $\mathfrak{o}$ (by the fibre slope $h$, for example), we apply Lemma \ref{lem:convexsubgroup} to create a left-ordering $\mathfrak{o}'$ of $\pi_1(M)$ with a proper, $\mathfrak{o}'$-convex subgroup $C$ containing $[\alpha_j]$ but not $h$.  By construction, the positive $\mathfrak{o}$-cofinal elements of $\pi_1(M)$ are again positive and $\mathfrak{o}'$-cofinal, hence by Lemma \ref{lem:cofinal peripheral elements} the left-ordering $\mathfrak{o}'$ detects the tuple $[\alpha_*]$. Since $C$ is convex, the left cosets $\pi_1(M)/C$ can be given a left-invariant total order.  Define $S_j$ to be $S_{j-1}$ together with the four possible lexicographic left-orderings that arise from the sequence
\[ 1 \rightarrow C \rightarrow \pi_1(M) \rightarrow \pi_1(M)/ C \rightarrow 1
\]
By construction every left-ordering in $S_r$ has the same set of cofinal elements as $\mathfrak{o}$ and so detects $[\alpha_*]$ by Lemma  \ref{lem:cofinal peripheral elements}.
Set 
$$ \mathcal{L} = \bigcup_{g \in \pi_1(M)} g S_r g^{-1}
$$
By construction $\mathfrak{O}(\alpha_j) \subset \{ \mathfrak{o} \in \mathrm{LO}(\pi_1(T_j)) |  \mbox{ $\mathfrak{o}= \mathfrak{o}'|\pi_1(T_j)$  for some $\mathfrak{o}' \in \mathcal{L}$} \}$ and $\mathcal{L}$ is normal.  An arbitrary left-ordering of $ \mathcal{L}$ is of the form $\mathfrak{o}^g$ for some $g \in \pi_1(M)$ and $\mathfrak{o} \in S_r$.  Note that since $h$ is $\mathfrak{o}$-cofinal it is also $\mathfrak{o}^g$ cofinal, so by Lemma \ref{cofinal conjugation}(2) $\mathfrak{o}$ and $\mathfrak{o}^g$ have the same cofinal elements. By Lemma \ref{lem:cofinal peripheral elements} we conclude that $\mathfrak{o}^g$ detects $[\alpha_*]$, and so $\mathfrak{O}(\alpha_j) = \{ \mathfrak{o} \in \mathrm{LO}(\pi_1(T_j)) |  \mbox{ $\mathfrak{o}= \mathfrak{o}'|\pi_1(T_j)$  for some $\mathfrak{o}' \in \mathcal{L}$} \}$ and $\mathcal{L}$ is ready for gluing.
\end{proof}

Next we construct ready for gluing families when $[\alpha_*]$ is not horizontal.

\begin{lemma}
\label{composing space normal family} 
{\rm (cf.  Lemma \ref{composingspace2})}
Suppose that $M$ has base orbifold $P(a_1, \ldots, a_n)$ with $r \geq 2$ boundary tori.  If $r=2$ then there exists a family of left-orderings $\mathcal{L}$ ready for gluing along $([h],[h])$; if $r \geq3$ then for each $[\alpha] \in \mathcal{S}(T_r)$ there exists a family $\mathcal{L}$ of left-orderings of $\pi_1(M)$ that is ready for gluing along $([h], \ldots, [h], [\alpha])$.
\end{lemma}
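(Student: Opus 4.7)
The plan is to produce, in each case, an explicit finite set $\mathcal{S}_0$ of left-orderings of $\pi_1(M)$ whose restrictions to $\pi_1(T_j)$ already exhaust $\mathfrak{O}(\alpha_j)$ for every $j$, and then set $\mathcal{L} = \bigcup_{g \in \pi_1(M)} g\mathcal{S}_0 g^{-1}$. Normality and the inclusion $\mathfrak{O}(\alpha_j) \subseteq \{\mathfrak{o}|\pi_1(T_j) : \mathfrak{o} \in \mathcal{L}\}$ will be automatic; the substance is to verify the reverse inclusion, i.e.\ that conjugation does not produce restrictions outside $\mathfrak{O}(\alpha_j)$.

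For the case $r=2$ with tuple $([h],[h])$ I would reuse the short exact sequence from the proof of Lemma \ref{composingspace2}, namely $1 \to K \to \pi_1(M) \to \mathbb{Z} \to 1$ obtained by killing $h$ and the resulting torsion; by construction $h \in K$ while $x_1, x_2$ project to $\pm 1$ in $\mathbb{Z}$. Since $K$ is a subgroup of the left-orderable group $\pi_1(M)$, fix any $\mathfrak{o}_K \in \mathrm{LO}(K)$ and build four lex orderings of $\pi_1(M)$ by using $\mathfrak{o}_K$ or $(\mathfrak{o}_K)_{op}$ on $K$ and either orientation on $\mathbb{Z}$. These four strongly detect $([h],[h])$ and their restrictions to $\pi_1(T_j)$ sweep out $\mathfrak{O}([h])$: flipping the orientation on $\mathbb{Z}$ toggles the sign of $x_j$ (choosing the other side of the line $\langle h\rangle$), while replacing $\mathfrak{o}_K$ by $(\mathfrak{o}_K)_{op}$ toggles the sign of $h$ inside $K$. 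Let $\mathcal{S}_0$ be this quadruple. The analogous construction with $1 \to K \to \pi_1(M) \to \mathbb{Z}^{r-1} \to 1$, varying each of the $r-1$ signs independently, settles the case $r \geq 3$ and $[\alpha] = [h]$.

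For $r \geq 3$ and $[\alpha] \neq [h]$ I would follow the two-step construction of Lemma \ref{composingspace2}. Start with a left-ordering of $\pi_1(M)$ detecting $(\{r\};([h],\ldots,[h],[\alpha]))$: when $[\alpha]$ is rational and horizontal this comes by pulling back any left-ordering of the left-orderable group $\pi_1(M(\alpha))$ (use Proposition \ref{horizontal normal family} applied to $M(\alpha)$, after filling the remaining boundary components if necessary, to ensure the four restrictions in $\mathfrak{O}([\alpha])$ are all achieved); when $[\alpha]$ is irrational use the Sikora-topology argument in Lemma \ref{composingspace2} to realize both orderings of $\mathfrak{O}([\alpha])$. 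Restrict to $K$ (where $K$ is the kernel of $\pi_1(M) \to \mathbb{Z}^{r-2}$ containing $h$ and $x_r$ but no $x_j$ for $j<r$), form the corresponding lex orderings by varying the $2^{r-2}$ orientations of the $\mathbb{Z}^{r-2}$ quotient and the sign of $\mathfrak{o}_K$, and take these as $\mathcal{S}_0$. By construction every ordering in $\mathcal{S}_0$ detects the tuple and the restrictions to $\pi_1(T_j)$ realise all of $\mathfrak{O}(\alpha_j)$ for each $j$.

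The main obstacle, and the point I would spend most effort on, is showing that for each $\mathfrak{o} \in \mathcal{S}_0$ and every $g \in \pi_1(M)$ we still have $\mathfrak{o}^g|\pi_1(T_j) \in \mathfrak{O}(\alpha_j)$. The idea is that for the lex orderings in play the sign of an element is read off from its image in the abelian quotient ($\mathbb{Z}$, $\mathbb{Z}^{r-1}$, or $\mathbb{Z}^{r-2}$) first, and only then by the kernel ordering. The fibre class $h$ is central in $\pi_1(M)$, so $ghg^{-1} = h$ and its sign is invariant. For any $x \in \pi_1(T_j)$ the image of $gxg^{-1}$ in the abelian quotient coincides with the image of $x$; therefore $\mathrm{sign}_{\mathfrak{o}^g}(x) = \mathrm{sign}_{\mathfrak{o}}(gxg^{-1}) = \mathrm{sign}_{\mathfrak{o}}(x)$ for those $x$ whose image is nonzero, and the remaining $x$ (powers of $h$, together with powers of $\alpha$ on $T_r$) are handled directly using centrality of $h$ and the observation that $\langle\alpha\rangle \subset \pi_1(T_r)$ intersects the kernel $\langle\langle\alpha\rangle\rangle$ in a fixed cyclic subgroup preserved by the outer conjugation. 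It follows that $\mathfrak{o}^g|\pi_1(T_j)$ is precisely $\mathfrak{o}|\pi_1(T_j)$, hence lies in $\mathfrak{O}(\alpha_j)$, and $\mathcal{L}$ is ready for gluing along the prescribed tuple.
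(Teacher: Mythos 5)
Your handling of the cases $r=2$ and $r\geq 3$, $[\alpha]=[h]$ is fine and essentially matches the paper: there $\pi_1(T_j)\cap K = \langle h\rangle$, $h$ is central, and the image of $x_j$ in the abelian quotient is conjugation-invariant, so $\mathfrak{o}^g|\pi_1(T_j) = \mathfrak{o}|\pi_1(T_j)$ for every $g$ and the verification that $\mathcal{L}=\bigcup_g g\mathcal{S}_0 g^{-1}$ is ready for gluing reduces to checking the finite set $\mathcal{S}_0$. (The paper takes all lex orderings over all $\mathfrak{o}'\in LO(K)$ rather than conjugating a finite set; this is a cosmetic difference here.)

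The gap is in the third case, $r\geq 3$ and $[\alpha]\neq[h]$, for the torus $T_r$, and it is a genuine one. In the short exact sequence $1\to K\to\pi_1(M)\to\mathbb{Z}^{r-2}\to 1$ the whole rank-two group $\pi_1(T_r)$ is contained in $K$, not just $\langle\alpha\rangle$ or $\langle h\rangle$. For $x\in\pi_1(T_r)\setminus\langle h\rangle$ and general $g$ the conjugate $gxg^{-1}$ lies in $K$ but not in $\pi_1(T_r)$, so you cannot conclude $\mathrm{sign}_{\mathfrak{o}^g}(x)=\mathrm{sign}_{\mathfrak{o}}(x)$ by looking at $\pi_1(T_r)$ alone. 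Your closing sentence invokes the kernel $\langle\langle\alpha\rangle\rangle$ — this is the wrong kernel (it is the one for $M(\alpha)$, not for the map to $\mathbb{Z}^{r-2}$), and in any event it only controls powers of $\alpha$, not all of $\pi_1(T_r)$. The argument that actually works, and which the paper uses, is to note that if $\mathfrak{o}_K=\mathfrak{o}_0|K$ with $\mathfrak{o}_0\in\mathcal{L}_0$, then $(\mathfrak{o}_K)^g|\pi_1(T_r)=(\mathfrak{o}_0^g|K)|\pi_1(T_r)=\mathfrak{o}_0^g|\pi_1(T_r)$, and $\mathfrak{o}_0^g\in\mathcal{L}_0$ by normality of $\mathcal{L}_0$, so $\mathfrak{o}_0^g|\pi_1(T_r)\in\mathfrak{O}([\alpha])$ because $\mathcal{L}_0$ is ready for gluing. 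You have the ingredient (Proposition \ref{horizontal normal family}) but do not actually use its normality, which is the whole point. A second, smaller issue: conjugating a single $\mathfrak{o}_0$ and its opposite need not realise all four restrictions in $\mathfrak{O}([\alpha])$ on $T_r$ (the conjugate closure of one ordering is generally a proper subfamily of $\mathcal{L}_0$, which by its construction in Proposition \ref{horizontal normal family} is the conjugate closure of a set $S_r$ strictly larger than $\{\mathfrak{o},\mathfrak{o}_{op}\}$ when $[\alpha]$ is rational). So $\mathcal{S}_0$ must include lex orderings built from enough base orderings $\mathfrak{o}_0\in\mathcal{L}_0$ to sweep out $\mathfrak{O}([\alpha])$ on $T_r$. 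The paper sidesteps both problems by defining $\mathcal{L}$ to be the lexicographic orderings arising from \emph{all} pairs $(\mathfrak{o}_0|K,\mathfrak{o}_{\mathbb{Z}^{r-2}})$ with $\mathfrak{o}_0\in\mathcal{L}_0$; normality of $\mathcal{L}$ then inherits from normality of $\mathcal{L}_0$ with no further work.
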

\begin{proof}
If $r=2$ or if $r \geq 3$ and $[\alpha] = [h]$, consider the short exact sequence \[ 1 \rightarrow K \rightarrow \pi_1(M) \rightarrow \mathbb{Z}^{r-1} \rightarrow 1\]
as in Lemma \ref{composingspace2}.  Let $\mathcal{L}$ denote the set of all lexicographic left-orderings of $\pi_1(M)$ arising from pairs $(\mathfrak{o}', \mathfrak{o})$, where $\mathfrak{o}' \in LO(K)$ and $\mathfrak{o} \in LO(\mathbb{Z}^{r-1})$.  Note that every left-ordering in $\mathcal{L}$ detects $([h], \ldots, [h])$, moreover $\mathcal{L}$ is ready for gluing along $([h], \ldots, [h])$.

On the other hand suppose $[\alpha] \neq [h]$ and $r \geq 3$.  First we show that there exists a left-ordering $\mathfrak{o}$ of $\pi_1(M)$ with $[\alpha_*(\mathfrak{o})]$ horizontal and $[\alpha_r(\mathfrak{o})] = [\alpha]$. To see this, set $[\alpha] = [\alpha_r]$ and choose a tuple of horizontal slopes $([\alpha_2], \ldots, [\alpha_{r}]) \in \mathcal{S}(T_2) \times \ldots \times \mathcal{S}(T_{r})$, then applying Corollary\ref{hen and detected slopes} and Proposition \ref{reps to orders} there exists $\mathfrak{o}$ detecting $([\alpha_1], \ldots , [\alpha_r]) \in \mathcal{S}(T_1) \times \ldots \times \mathcal{S}(T_{r})$ for some slope $[\alpha_1]$.  The slope $[\alpha_1]$ is horizontal by Proposition \ref{order vertical}.  Now by Proposition \ref{horizontal normal family} there exists a family $\mathcal{L}_0$ of left-orderings of $\pi_1(M)$ that is ready for gluing along $([\alpha_1], \ldots , [\alpha_r])$.

Consider the short exact sequence $ 1 \rightarrow K \rightarrow \pi_1(M) \rightarrow \mathbb{Z}^{r-2} \rightarrow 1$
where $\pi_1(T_r) \subset K$, $h \in K$ and $\pi_1(T_j) \not \subset K$ for $j \neq r$.  Let $\mathcal{L}$ denote the family of all lexicographic left-orderings of $\pi_1(M)$ arising from pairs of orderings $(\mathfrak{o}' , \mathfrak{o})$ where $\mathfrak{o}' $ is the restriction to $K$ of an ordering in $\mathcal{L}_0$ and $\mathfrak{o} \in LO(\mathbb{Z}^{r-2})$.  By construction $\mathcal{L}$ is normal, and ready for gluing along $([h], \ldots, [h], [\alpha])$.
\end{proof}

\begin{lemma}
\label{one boundary nonorientable}
Suppose that $M$ is Seifert fibred over $Q(a_1, \ldots, a_n)$.  Then there exists a family of left-orderings $\mathcal{L} \subset LO(\pi_1(M))$ that is ready for gluing along $([h], [h], \ldots, [h])$.
\end{lemma}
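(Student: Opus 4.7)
My plan is to construct the family in two stages: first treat the base case $M \cong N_2$ by direct calculation with the Klein-bottle group, and then reduce the general case to it via a vertical splitting and a Bludov--Glass amalgamation along the splitting torus.

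For the base case $n=0$, $r=1$, we have $\pi_1(M) = \langle z, h \mid zhz^{-1}=h^{-1}\rangle$ and $\pi_1(\partial M) = \langle h, z^2\rangle$. Every element admits the unique normal form $z^m h^n$, with multiplication $(z^m h^n)(z^{m'}h^{n'}) = z^{m+m'}h^{(-1)^{m'}n + n'}$. I would introduce four orderings $\mathfrak{o}_{\epsilon_1,\epsilon_2}$ indexed by $(\epsilon_1,\epsilon_2) \in \{\pm 1\}^2$ by declaring $z^m h^n > 1$ iff $\epsilon_1 m > 0$, or $m=0$ and $\epsilon_2 n > 0$. A direct sign check using the multiplication formula verifies each $\mathfrak{o}_{\epsilon_1,\epsilon_2}$ is a left-ordering with $\langle h\rangle$ as a convex normal subgroup. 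One computes $h \cdot z^m h^n \cdot h^{-1} = z^m h^{(-1)^m + n - 1}$, whose sign in $\mathfrak{o}_{\epsilon_1,\epsilon_2}$ agrees with that of $z^m h^n$, and $z \cdot z^m h^n \cdot z^{-1} = z^m h^{-n}$, which sends $\mathfrak{o}_{\epsilon_1,\epsilon_2}$ to $\mathfrak{o}_{\epsilon_1,-\epsilon_2}$; hence $\mathcal{L}_N := \{\mathfrak{o}_{\epsilon_1,\epsilon_2} : (\epsilon_1,\epsilon_2) \in \{\pm 1\}^2\}$ is normal in $\pi_1(N_2)$. Restricting to $\langle h, z^2\rangle$ gives all four elements of $\mathfrak{O}([h])$ as $(\epsilon_1,\epsilon_2)$ varies, so $\mathcal{L}_N$ is ready for gluing along $([h])$.

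For the general case ($(n,r) \neq (0,1)$), I would split $M$ along a vertical essential torus $T$ into $N \cong N_2$ (Seifert over the M\"obius band with no exceptional fibres) and $M_0$ (Seifert over $P_0(a_1,\ldots,a_n)$ with $r+1 \geq 2$ boundary tori). Lemma~\ref{composing space normal family}, applied to $M_0$ with $[\alpha]=[h]$ in the $r+1 \geq 3$ case, yields a normal family $\mathcal{L}_0 \subset LO(\pi_1(M_0))$ ready for gluing along $([h],\ldots,[h])$. Since both $\mathcal{L}_0|\pi_1(T)$ and $\mathcal{L}_N|\pi_1(T)$ equal $\mathfrak{O}([h])$, for each $\mathfrak{o}_0 \in \mathcal{L}_0$ there is a compatible $\mathfrak{o}_N \in \mathcal{L}_N$. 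In each ordering the convex normal subgroups witnessing strong detection of $[h]$ at $T$ -- namely $K_N = \langle h\rangle$ in $\pi_1(N)$ and the lex kernel $K_0$ in $\pi_1(M_0)$ -- both meet $\pi_1(T)$ in exactly $\langle h\rangle$. Corollary~5.3 of \cite{BG}, in the form used in Lemma~\ref{gluing strong orders}, then supplies a left-ordering $\mathfrak{o}$ of $\pi_1(M) = \pi_1(N) *_{\pi_1(T)} \pi_1(M_0)$ extending both $\mathfrak{o}_N$ and $\mathfrak{o}_0$, whose convex normal subgroup $C = \ker\bigl(\pi_1(M) \to \pi_1(N)/K_N *_{\mathbb{Z}} \pi_1(M_0)/K_0\bigr)$ satisfies $C \cap \pi_1(T_j) = \langle h\rangle$ for every boundary torus $T_j$ of $M$, since $\pi_1(M_0)/K_0$ embeds into the amalgamated quotient and the image of $\pi_1(T_j)$ there is the infinite cyclic subgroup generated by $x_j$.

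Finally I would set $\mathcal{L}$ to be the union of all conjugates in $\pi_1(M)$ of the glued orderings constructed above; normality is immediate. Because $C$ is normal in $\pi_1(M)$, it remains a convex subgroup of every conjugate $\mathfrak{o}^g$, and since $C \cap \pi_1(T_j) = \langle h\rangle$ is independent of $g$, the rank-one subgroup $\langle h\rangle$ is convex in $\mathfrak{o}^g|\pi_1(T_j)$ -- equivalently, $\mathfrak{o}^g|\pi_1(T_j) \in \mathfrak{O}([h])$. Conversely, every element of $\mathfrak{O}([h])$ on each $T_j$ is already realised as the restriction of some glued $\mathfrak{o} \in \mathcal{L}$, because $\mathcal{L}_0$ is ready for gluing on $M_0$. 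Thus $\mathcal{L}$ is ready for gluing along $([h],\ldots,[h])$. The main obstacle is exactly this conjugation step: one must secure a convex subgroup of $\pi_1(M)$ that is \emph{normal} in the full group and meets each peripheral subgroup in $\langle h\rangle$, because neither normality nor the correct intersection is automatic from a left-ordering that merely detects $[h]$ on each boundary. The Bludov--Glass amalgamation provides precisely such a $C$, which is why the splitting-and-gluing strategy succeeds.
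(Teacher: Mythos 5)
Your proof is correct, but it takes a genuinely different and considerably longer route than the paper's. The paper's argument is a one-liner mirroring Lemma~\ref{composing space normal family}: from the presentation in \S\ref{presentation seifert mobius}, killing $h_0$, the torsion $y_i$, and then abelianizing yields a short exact sequence $1 \to K \to \pi_1(M) \to \mathbb{Z}^r \to 1$ with $h_0 \in K$ (so $K$ is a normal subgroup, hence gives a normal family automatically) while no dual class $x_j$ is annihilated, since the unique relation $x_1 + \cdots + x_r + 2z = 0$ among the images of $x_1,\ldots,x_r,z$ in $\mathbb{Z}^{r+1}$ is primitive and does not kill any single $x_j$; one then takes all lexicographic orderings through this sequence exactly as in Lemma~\ref{composing space normal family}. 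You instead build the family by decomposition: an explicit positive-cone computation in the Klein-bottle group for the base case $M\cong N_2$, then a vertical splitting $M = N \cup_T M_0$ with $N \cong N_2$ and $M_0$ planar, followed by a Bludov--Glass amalgamation along $T$ and closure under conjugation. Your approach buys a self-contained, "hands-on" construction that exhibits the same amalgamation machinery used in the gluing theorem itself, which is pedagogically illuminating; the paper's approach buys economy and sidesteps the amalgamation bookkeeping entirely, because the normality of $K$ makes the family automatically normal and forces $K\cap\pi_1(T_j)=\langle h\rangle$ for every $j$ without any peripheral-subgroup tracking. Two small points to tighten in your write-up: when $r=1$ and $n\geq 1$ you correctly need $M_0$ to have two boundary tori and at least one cone point (so $M_0 \not\cong S^1\times S^1 \times I$) before invoking Lemma~\ref{composing space normal family}; and you should make explicit that the edge group in your amalgamated quotient is $\pi_1(T)/\langle h\rangle \cong \mathbb{Z}$, injecting as $\langle z^2\rangle \subset \langle z\rangle \cong \pi_1(N)/K_N$ and as a primitive direct summand in $\pi_1(M_0)/K_0 \cong \mathbb{Z}^r$, so the van Kampen amalgam descends to a genuine free product with amalgamation of left-orderable groups, which is what licenses the Bludov--Glass extension.
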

\begin{proof}
Note that there is a short exact sequence $1 \rightarrow K \rightarrow \pi_1(M) \rightarrow \mathbb{Z}^r \rightarrow 1$ where $h \in K$ and no dual class $x_j$ is killed by the quotient map $\pi_1(M) \rightarrow \mathbb{Z}^r$.  Now proceed as in the proof of Lemma \ref{composing space normal family}.
\end{proof}

Next we include two lemmas that are necessary to show cofinality of the fibre class in certain left-orderings.  The proof of the next lemma is straightforward and so we omit it.

\begin{lemma}
\label{subgroup_cofinal}
 Let $G$ be a group with left-ordering $\mathfrak{o}$ and $H$ a subgroup of $G$.  If there exists $h \in H$ that is $\mathfrak{o}$-cofinal in $G$, then every element of $H$ that is $\mathfrak{o}$-cofinal in $H$ is also $\mathfrak{o}$-cofinal in $G$.
\end{lemma}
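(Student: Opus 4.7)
The plan is a direct chase through the cofinality definitions. Let $k \in H$ be $\mathfrak{o}$-cofinal in $H$, and fix an arbitrary $g \in G$. The goal is to find $n \in \mathbb{Z}$ with $k^{-n} < g < k^n$.

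First I would exploit the hypothesis that $h \in H$ is $\mathfrak{o}$-cofinal in $G$ to obtain an integer $m$ with $h^{-m} < g < h^m$. This reduces the problem to sandwiching $h^{\pm m}$ inside powers of $k$, and crucially both $h^m$ and $h^{-m}$ now lie in $H$, which is precisely the group where the cofinality of $k$ is available. Applying the cofinality of $k$ in $H$ to the element $h^m \in H$ yields an integer $n_1$ with $k^{-n_1} < h^m < k^{n_1}$, and applying it to $h^{-m} \in H$ yields $n_2$ with $k^{-n_2} < h^{-m} < k^{n_2}$.

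Setting $n = \max(n_1, n_2)$ and concatenating the inequalities gives $k^{-n} < h^{-m} < g < h^m < k^n$, which is exactly the required $\mathfrak{o}$-cofinality of $k$ in $G$. The only minor point is that the definition of cofinality used in the paper is sign-symmetric (bounding on both sides by $k^{\pm n}$), so no case analysis on the sign of $k$ or $h$ is needed; the argument goes through uniformly.

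There is no genuine obstacle here: the lemma is a general fact about left-ordered groups that has nothing to do with the Seifert or graph manifold setting, and the authors themselves remark that its proof is straightforward. The content is simply that cofinality composes along chains of subgroup inclusions, provided a single cofinal witness bridges the inner subgroup to the ambient group.
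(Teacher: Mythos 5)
The paper omits a proof of this lemma, remarking only that it is straightforward, so there is no published argument to compare against. Your proof is correct and is the natural one: use $h$'s cofinality in $G$ to trap $g$ between $h^{\pm m}$, observe those bounds land in $H$, then use $k$'s cofinality in $H$ to trap them between powers of $k$, and chain the inequalities.

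One small inaccuracy in the write-up: the claim that ``no case analysis on the sign of $k$ or $h$ is needed'' is slightly too quick. The step where you take $n = \max(n_1, n_2)$ and conclude $k^{-n} \le k^{-n_2}$ and $k^{n_1} \le k^{n}$ implicitly uses that the map $a \mapsto k^a$ is order-\emph{preserving}, which holds exactly when $k > 1$; if $k < 1$ the powers are ordered the other way and $\max$ should be $\min$. The clean fix is to observe that $k$ is $\mathfrak{o}$-cofinal in $H$ (respectively $G$) if and only if $k^{-1}$ is, so one may assume $k > 1$ without loss of generality before taking the max. With that one-line remark inserted, the argument is airtight.
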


\begin{lemma}
\label{cofinal_lemma_2}
Suppose that $G_1, G_2$ are groups with a common subgroup $H$, and let $h_1, h_2 \in H$ be given. Suppose that $\mathfrak{o}$ is a left-ordering of $G_1 *_H G_2$ and that each $h_i$ is $\mathfrak{o}$-cofinal in $G_i$.   If $g \in G_i$ is $\mathfrak{o}$-cofinal in $G_i$, then $g$ is $\mathfrak{o}$-cofinal in $G_1 *_H G_2$.\end{lemma}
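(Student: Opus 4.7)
The plan is to reduce to showing that $h_1$ (which, lying in $H$, belongs to $G_i$) is $\mathfrak{o}$-cofinal in $G_1 *_H G_2$: once this is established, Lemma \ref{subgroup_cofinal} applied with subgroup $G_i$ immediately yields that every element of $G_i$ cofinal in $G_i$ is cofinal in $G_1 *_H G_2$, and in particular $g$ is. Without loss of generality I assume $h_1, h_2 > 1$ in $\mathfrak{o}$ (otherwise replace with inverses).

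The first main step is to promote cofinality of $h_1$ from $G_1$ to $G_2$. Since $h_2 \in H \subseteq G_1$ and $h_1$ is cofinal in $G_1$, there is an integer $M$ with $h_2 < h_1^M$. An induction on $n$ using left-invariance gives $h_2^{n+1} = h_2 \cdot h_2^n < h_1^M h_2^n$, and since $h_1^M h_2^n$ lies in $G_1$, a further application of cofinality in $G_1$ produces an integer $N_{n+1}$ with $h_1^M h_2^n < h_1^{N_{n+1}}$. Iterating, one obtains constants $N_n$ with $h_2^n < h_1^{N_n}$ for every $n \geq 1$. Since $h_2$ is cofinal in $G_2$, every $x \in G_2$ satisfies $|x| < h_2^m < h_1^{N_m}$ for some $m$, so $h_1$ is cofinal in $G_2$ as well. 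By the symmetric argument, $h_2$ is cofinal in $G_1$.

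The second step is to show $h_1$ is cofinal in the full amalgam. Given $w \in G_1 *_H G_2$, write $w = g_1 g_2 \cdots g_k$ in reduced normal form and induct on $k$. When $k \leq 1$, $w$ lies in one of the $G_j$ and the bound $|w| < h_1^N$ follows from the first step. For the inductive step, write $w = g \cdot u$ with $g \in G_j$ a single syllable and $u$ of length $k-1$, so that by the inductive hypothesis $|u| < h_1^m$ and by step one $|g| < h_1^p$. Bounding $g u$ from these two bounds is where the difficulty lies, and I would handle it by invoking Lemma \ref{cofinal conjugation}(1) inside $G_j$ --- where $h_1$ is cofinal --- to control the conjugates $u^{-1}(g^{-1}h_1^p)u$ that arise when converting the inequality $g < h_1^p$ into an inequality on $gu$. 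Combining this conjugation bound with the inductive hypothesis yields an integer $N$ with $|w| < h_1^N$.

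The principal obstacle is the inductive step of this final argument: the absence of right-invariance in a left-ordering means that the bounds $|g| < h_1^p$ and $|u| < h_1^m$ do not directly yield a bound on the product $gu$. The resolution is the interplay between cofinality of $h_1$ in each piece $G_j$ and Lemma \ref{cofinal conjugation}(1), which together allow one to bootstrap from cofinality in the two vertex subgroups to cofinality in the amalgam.
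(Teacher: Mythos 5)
Your high-level plan matches the paper's: first show that $h_1$ (and $h_2$) is cofinal in both vertex groups, then prove $h_1$ is cofinal in the whole amalgam by inducting on the syllable length of a normal form, and finally invoke Lemma \ref{subgroup_cofinal}. Two remarks on the first step, which are minor. First, the inequality $h_2\cdot h_2^n < h_1^M h_2^n$ is not a consequence of left-invariance applied to $h_2 < h_1^M$; that would require right-multiplication by $h_2^n$ to preserve order. The correct move is to left-multiply the inductive bound $h_2^n < h_1^{N_n}$ by $h_2$ to get $h_2^{n+1} < h_2 h_1^{N_n}\in G_1$, then rebound by a power of $h_1$ using cofinality in $G_1$. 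Second, this whole step is precisely the content of Lemma \ref{subgroup_cofinal}, which the paper simply cites: $h_1$ is cofinal in $G_1\supseteq H$, hence in $H$, hence (by that lemma applied to $H\leq G_2$ with cofinal element $h_2$) in $G_2$.

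The genuine gap is in the inductive step of the normal-form induction. You correctly identify the obstruction—two one-sided bounds $g<h_1^p$ and $u<h_1^m$ do not multiply in a left-order—but the proposed repair does not work. You suggest applying Lemma \ref{cofinal conjugation}(1) "inside $G_j$" to control conjugates $u^{-1}(g^{-1}h_1^p)u$. However, $u$ begins with a syllable from the \emph{other} vertex group, so $u\notin G_j$, and Lemma \ref{cofinal conjugation}(1) applied with ambient group $G_j$ cannot say anything about conjugation by $u$. Applying it instead in the full amalgam $G_1 *_H G_2$ would require knowing that $g^{-1}h_1^p$ (or $h_1$) is already cofinal there, which is exactly what the induction is trying to establish—this is circular. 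The missing idea is simpler and requires no conjugation lemma at all, and in fact the bound $g<h_1^p$ plays no role: from the inductive hypothesis $u<h_1^m$, left-invariance gives $gu < gh_1^m$, and since $g\in G_j$ and $h_1\in H\subseteq G_j$ the element $gh_1^m$ lies in $G_j$, where $h_1$ is cofinal, so $gh_1^m < h_1^r$ for some $r$. The lower bound $h_1^{-r'} < gu$ is obtained symmetrically. This is precisely the paper's argument.
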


\begin{proof} We will begin by showing that $h_1$ is $\mathfrak{o}$-cofinal in $G_1 *_H G_2$, the case of $h_2$ is identical. 

  Every element of $G_1 *_H G_2$ can be represented by a word $w$ of the form
\[ w = g_1 g_2 \ldots  g_k
\]
where $g_j$ and $g_{j+1}$ are never elements of the same $G_i$.   We need to show that there exists $n \in \mathbb{Z}$ such that $h_1^{-n} < w < h_1^n$, we proceed by induction on $k$.  From Lemma \ref{subgroup_cofinal} we know that $h_1$ is cofinal in both $G_1$ and $G_2$, so if $k = 1$ then $g_1$ is an element of either $G_1$ or $G_2$ and such an $n$ exists.  

Assume for induction that such an $n$ exists for all words of length $k-1$ or less.  Set $w = gw'$ where $w'$ is length $k-1$.  If $g \in G_1$ then choose $n$ such that $h_1^{-n} < w' < h_1^n$ and $r$ such that $gh_1^n < h_1^r$, the latter choice is possible since $gh_1^n \in G_1$ and $h_1$ is cofinal in $G_1$.  Then we get an upper bound $w = gw'< gh_1^n < h_1^r$.  On the other hand, if $g \in G_2$ then $gh_1^n \in G_2$.  Since $h_1$ is cofinal in $G_2$ we can make an identical argument to bound $w$ above in this case. Similar arguments allow us to bound $w$ below by a power of $h_1$ and cofinality of $h_1$ follows.

Now let $g \in G_i$ be given.  Since $g$ is cofinal in $G_i$ for all $k \in \mathbb{Z}$ there exists $n \in \mathbb{Z}$ such that $g^{-n} < h_i^k < g^n$.  Since $h_i$ is $\mathfrak{o}$-cofinal in $G_1 *_H G_2$ it follows that $g$ is as well.
   \end{proof}

\begin{proof}[Proof of the order gluing theorem when $K = \emptyset$]
If $\pi_1(W)$ is left-orderable with ordering $\mathfrak{o}$, then $\mathfrak{o}$ detects some $[\alpha_*]  \in \mathcal{S}(W; \mathcal{T})$.  For each $i$, the restriction ordering $\mathfrak{o}_i$ of $\pi_1(M_i)$ detects $[\alpha_*^{(i)}]$. Moreover if every class represented by the Seifert fibre of a piece is cofinal in $\pi_1(W)$, then the class of the fibre of $M_i$ is cofinal in $\pi_1(M_i)$.  Thus $[\alpha_*^{(i)}]$ is horizontal for all $i$, so $[\alpha_*]$ is horizontal.

Conversely suppose we are given $[\alpha_*] \in \mathcal{S}(W; \mathcal{T})$ and for each $i$ the ordering $\mathfrak{o}_i$  of $\pi_1(M_i)$ detects  $[\alpha_*^{(i)}]$.  We refine the decomposition of $W$ into Seifert fibred pieces by cutting each $M_i$ along vertical tori, depending on whether or not $[\alpha_*^{(i)}]$ is horizontal. If $M_i$ is a Seifert fibred piece for which $[\alpha_*^{(i)}]$ is horizontal or vertical, we make no refinement.  Otherwise when $[\alpha_*^{(i)}]$ is neither horizontal or vertical there are two cases (cf. Proposition \ref{order vertical}):

\noindent{\textbf{Case 1.}} $M_i$ has base orbifold $P(a_1, \ldots, a_n)$ and $r_i \geq 3$.  Set $v = v([\alpha_*^{(i)}]) \geq 2$ and index the boundary tori so that $\mathfrak{o}_i$ detects $[h]$ on  $T_{i1}, \ldots, T_{iv}$ and horizontal slopes on $T_{i(v+1)}, \ldots, T_{ir_i}$. Choose an essential vertical torus $T$ cutting $M_i$ into two Seifert fibred pieces $M_{i1}$ and $M_{i2}$ where $\partial M_{i1} = T_{i1}, \ldots, T_{iv}, T$ and $\partial M_{i2}=T_{i(v+1)}, \ldots, T_{ir_i}, T$.  Consider the restrictions $\mathfrak{o}_{ij}$ of $\mathfrak{o}_i$ to $\pi_1(M_{ij})$ for $j=1,2$.  By construction $\mathfrak{o}_{i1}$ detects $([h], \ldots, [h], [\alpha])$ for some $[\alpha] \in \mathcal{S}(T)$, while $\mathfrak{o}_{i2}$ detects $[\alpha]$ on $T$ and the same horizontal slopes as $\mathfrak{o}_i$ on $T_{i(v+1)}, \ldots, T_{ir_i}$.  By Proposition  \ref{order vertical} (2), $[\alpha] \neq [h]$ so $\mathfrak{o}_{i2}$ detects only horizontal slopes.

\noindent{\textbf{Case 2.}} $M_i$ has base orbifold $Q(a_1, \ldots, a_n)$ and $r_i \geq 2$.
Choose an essential vertical torus $T$ cutting $M_i$ into $M_{i1}$, a Seifert fibred manifold over $Q_0(a_1, \ldots, a_n)$, and $M_{i2}$, a Seifert fibred manifold with base orbifold a planar surface. Consider the restrictions $\mathfrak{o}_{ij}$ of $\mathfrak{o}_i$ to $\pi_1(M_{ij})$ for $j=1,2$. The ordering $\mathfrak{o}_{i1}$ of $\pi_1(M_{i1})$ detects $[h]$  by Proposition  \ref{non-orientable implies not detected}.  If all slopes detected by $\mathfrak{o}_{i2}$ are vertical, make no further refinement.  Otherwise cut $M_{i2}$ into pieces as in Case 1.

Thus by refining the decomposition of $W$, and associating the restriction ordering to each piece in the refined decomposition, we can assume $[\alpha_*] \in \mathcal{S}(W; \mathcal{T})$ is gluing coherent and each piece $M_i$ with ordering $\mathfrak{o}_i$ satisfies one of the following:
\begin{enumerate}
\item $M_i$ is Seifert fibred over $Q_0(a_1, \ldots, a_n)$ and $\mathfrak{o}_i$ detects $([h])$.
\item $M_i$ is Seifert fibred over $P(a_1, \ldots, a_n)$, $r_i \geq2$ and $\mathfrak{o}_i$ detects $([h], \ldots, [h])$.
\item $M_i$ is Seifert fibred over $P(a_1, \ldots, a_n)$, $r_i \geq 3$ and $\mathfrak{o}_i$ detects $([h], \ldots, [h], [\alpha])$.
\item $M_i$ is Seifert fibred over $P(a_1, \ldots, a_n)$, $r_i \geq 3$ and $\mathfrak{o}_i$ detects $[\alpha_*^{(i)}] $, which is horizontal.
\end{enumerate}

With a decomposition into pieces of type (1)--(4), by Lemmas \ref{one boundary nonorientable}, \ref{composing space normal family} and by Proposition \ref{horizontal normal family} respectively, there are families $\mathcal{L}_i \subset LO(\pi_1(M_i))$ that are ready for gluing along $[\alpha_*^{(i)}]$.  The result now follows from Proposition \ref{gluing ready pieces}.  Moreover, if $[\alpha_*]$ is horizontal then the class of the fibre in $M_i$ is cofinal in $\pi_1(M_i)$. By  applying Lemmas \ref{subgroup_cofinal} and \ref{cofinal_lemma_2}, an induction on the number of pieces in $W$ shows that the class of each fibre is cofinal in $W$.
\end{proof}

\begin{proof}[Proof of the order gluing theorem when $K \ne \emptyset$]
Now suppose $K \neq \emptyset$.  Suppose $\pi_1(W)$ admits a $K$-type left-ordering $\mathfrak{o}$ detecting the tuple $[\alpha_*] \in \mathcal{S}(W; \mathcal{T})$, and let $C \subset \pi_1(W)$ denote the $\mathfrak{o}$-convex normal subgroup such that $C \cap \pi_1(T_k) = \langle \alpha_k \rangle \cap \pi_1(T_k)$ for $k \in K$.  Suppose there exists $i$ such that $M_i([\alpha_*^{K_i'}]_{rat}) \cong S^1 \times D^2$, and let $T_j = \partial M_i([\alpha_*^{K_i'}]_{rat})$.  Observe that if $[\alpha_j]$ is rational, this forces $\alpha_j \in C$.  Thus with $K' = K'([\alpha_*])$, observe that $\mathfrak{o}_i = \mathfrak{o}| \pi_1(M_i)$ detects $(K'_i; [\alpha_*^{(i)}])$ with $C_i = \pi_1(M_i) \cap C$ the required convex subgroup (appealing to Remark \ref{irrational implies strong order detection}(1) if there are any strongly detected irrational slopes).  Thus $(K' ; [\alpha_*])$ is gluing unobstructed.  Note that if we assume the class of every fibre is cofinal in $\pi_1(W)$, then $[\alpha_*]$ is horizontal as in the case $K = \emptyset$.

Conversely suppose that $(K;[\alpha_*])$ is gluing unobstructed, we may assume $[\alpha_*]$ is rational (Lemma  \ref{can assume rational}) and proceed as in the proof of the foliation gluing theorem.  Cut open $W$ along $T_j$ for $j \in K'$ and Dehn fill the boundary components along the slopes $[\alpha_j]$ for $j \in K'$ to produce a graph manifold $W'$ with two or more components $W_j$ whose pieces are of the form $M_i([\alpha_*^{K_i}]_{rat})$ for some $i$ (here we use gluing unobstructed). 
By Lemma \ref{order_fill_lemma} each $W_j$ admits a gluing coherent family of left-orderings, one for each piece in $W_j$, and so $W_j$ has left-orderable fundamental group by the gluing theorem in the case $K = \emptyset$.  There is a short exact sequence
\[ 1 \rightarrow C \rightarrow \pi_1(W) \rightarrow \coprod \pi_1(W_j)  \rightarrow 1
\]
where the free product is amalgamated along cyclic subgroups $\pi_1(T_j)/\langle \alpha_j \rangle$ for $j \in K'$, and is therefore left-orderable \cite[Corollary 5.3]{BG}. The subgroup $C$ is left-orderable since $\pi_1(W)$ is left-orderable, by the gluing theorem with $K = \emptyset$.  Thus we can construct a $K$-type left-ordering by lexicographically ordering $\pi_1(W)$ using the short exact sequence above. As in the case $K = \emptyset$, Lemmas \ref{subgroup_cofinal} and \ref{cofinal_lemma_2} can be used inductively to show that if the initial $[\alpha_*]$ was horizontal, then the class of a fibre in any piece is cofinal in the constructed $K$-type left-ordering.
\end{proof}

\section{Examples and remarks on smoothness} \label{examples and smoothness}

\subsection{Examples}

Brittenham, Naimi and Roberts provided examples of various phenomena concerning the existence and non-existence of taut foliations in non-Seifert fibred graph manifolds in their paper \cite{BNR}. In particular, they found examples of such manifolds which do not admit taut foliations using methods similar to those found in this paper. Theorem \ref{theorem: gluing} combined with the results of the Appendix can be used to construct many examples of such graph manifolds. 

The next two examples show that the hypotheses of (i) admitting a co-oriented taut foliation, (ii) admitting a horizontal co-oriented taut foliation, and (iii) admitting a strongly rational co-oriented taut foliation are successively more constraining on a graph manifold $W$ (cf. Theorems \ref{equiv 1}, \ref{equiv 2}, \ref{equiv 3}). 

\begin{example}
{\rm Let $W$ be the union of three pieces: 
 
\begin{itemize}
\item $M_1$: a twisted $I$-bundle over the Klein bottle with rational longitude $h_0$; 

\vspace{.2cm} \item $M_2$: a cable space where $[h_0]$ is identified with the Seifert fibre slope $[h]$ of $M_2$;

\vspace{.2cm} \item $M_3$: a trefoil exterior where $[h]$ is identified with a foliation detected slope chosen so that $W$ is a rational homology $3$-sphere.

\end{itemize}
 
Theorem \ref{theorem: gluing} implies that $W$ admits a co-oriented taut foliation but note that it does not admit a horizontal co-oriented taut foliation since any such foliation $\mathcal{F}$ can be isotoped so that it intersects $M_1$ in a foliation detecting $[h] \equiv [h_0] = \mathcal{D}_{fol}(M_1)$ (Proposition \ref{tau fibre 2}) and therefore could not be horizontal in $M_2$.}
\end{example}

\begin{example}
{\rm Let $W$ be the union of two pieces:
 
\begin{itemize}
\item $M_1$: a twisted $I$-bundle over the Klein bottle with rational longitude $h_0$; 

\vspace{.2cm} \item $M_2$: a trefoil exterior where $[h_0]$ is identified with the meridional slope $[\mu]$ of $M_2$.
\end{itemize}
 
We claim that $[\mu]$ is representation detected, and therefore foliation detected, in $M_2$. To see this recall that $\pi_1(M_2) = \langle y_1, y_2, h : y_1^2 = h, y_2^3 = h^2, h \hbox{ central} \rangle$ where the meridional class corresponds to $y_1y_2h^{-1}$. According to \cite[Proposition 2.2(a)]{JN1} we can find elements $A$ and $B$ of $\widetilde{PSL}(2,\mathbb R)$ such that $A$ is conjugate to $\hbox{sh}(1/2)$, $B$ is conjugate to translation by $\hbox{sh}(2/3)$ and $AB$ is a hyperbolic element of $\widetilde{PSL}(2,\mathbb R)$ of translation number 1. We obtain a representation $\rho \in \mathcal{R}_0(M_2)$ by sending $y_1$ to $A$, $y_2$ to $B$ and $h$ to $\hbox{sh}(1)$. The meridional class is sent to a hyperbolic element of translation number $0$, which proves the claim. Theorem \ref{equiv 1} implies that $W$ admits a co-oriented taut foliation which is in fact horizontal since $[h_0]$ is horizontal in $M_1$ and $[\mu]$ is horizontal in $M_2$ (cf. Proposition \ref{prop: horizontal foliation}). But note that there is no strongly rational co-oriented taut foliation in $W$ since it would have to intersect the torus $M_1 \cap M_2$ in a circle fibration of slope $[h_0] = [\mu]$. Thus $[\mu]$ would be strongly foliation detected in $M_2$, which is impossible since then $M_2(\mu_2) \cong S^3$ would admit a taut foliation.}
\end{example}

In our next example we construct a graph manifold rational homology $3$-sphere $W$ with JSJ tori $T_1, \ldots , T_m$ and a subset $K \subseteq \{1, 2, \ldots,m\}$ such that no $(K; [\alpha_*])$ is gluing unobstructed even though there are $[\alpha_*]$ such that $(K; [\alpha_*])$ is gluing coherent (cf. Theorem \ref{theorem: gluing}). 

\begin{example} \label{obstructed}
{\rm Let $W$ be the union of three pieces: 
 
\begin{itemize}
\item $M_1$: a trefoil exterior with meridional slope $[\mu_1]$; 

\vspace{.2cm} \item $M_2$: a cable space glued to $M_1$ so that $M_1 \cup M_2$ is the exterior of a cable on the trefoil with meridional slope $[\mu_2]$; 

\vspace{.2cm} \item $M_3$: a twisted $I$-bundle over the Klein bottle with rational longitude $[h_0]$ identified to $[\mu_2]$. 

\end{itemize}
 
The JSJ tori of $W$ are $T_1 = M_1 \cap M_2$ and $T_2 = M_2 \cap M_3$ and we take $K = \{2\}$. We noted in the previous example that $[\mu_1]$ is foliation detected in $M_1$. It is not hard to see that $(\{1,2\}; ([\mu_1], [\mu_2]))$ is foliation detected in $M_2$ while $(\{2\}; [h_0] = [\mu_2])$ is foliation detected in $M_3$. Thus $(\{2\}; ([\mu_1], [h_0]))$ is gluing coherent. In fact, if $\{2\} \subseteq K \subseteq \{1, 2\}$, the only $[\alpha_*] \in \mathcal{S}(W; \mathcal{T})$ for which $(K; [\alpha_*])$ is gluing coherent is $([\mu_1], [h_0])$. On the other hand, since $M_2(\mu_2) \cong S^1 \times D^2$, any $\{2\}$-type foliation on $W$ necessarily intersects $T_1$ in a circle fibration of slope $[\mu_1]$, which is impossible since $[\mu_1]$ is not strongly detected in $M_1$ (cf. the last sentence of the previous example). Thus $(K; [\alpha_*])$ is gluing obstructed. }
\end{example}

\subsection{Remarks on smoothness} \label{subset: smoothness}

Let $M$ be a compact orientable Seifert fibred manifold as in \S \ref{assumptions seifert} and $J \subseteq \{1, 2, \ldots, r\}$. If $(J; [\alpha_*])$ is foliation-detected, it is $\mathcal{F}$-detected where $\mathcal {F}$ is analytic by Proposition \ref{standard realisation}. It follows that the foliations on a graph manifold rational homology $3$-sphere $W$ constructed in Theorem \ref{equiv 3} can be taken to be smooth. On the other hand, we cannot expect the foliations constructed in the proof of Theorems \ref{equiv 1} and \ref{equiv 2} to be smooth as the operation of thickening leaves used in their proofs does not preserve this property. That being said, the main results of this paper combine with those of the Appendix to imply that in terms of the gluing of its pieces, a generic graph manifold rational homology $3$-sphere $W$ which admits a co-oriented taut foliation also admits a smooth strongly rational co-oriented taut foliation. 

\appendix

\section{The results of Eisenbud, Hirsch, Neumann, Jankins and Naimi} \label{sec: ehnjn}

Let $M$ be a Seifert manifold with base orbifold $P(a_1, \ldots, a_n)$ as in \S \ref{assumptions seifert} and recall that for $b \in \mathbb Z$, $J \subseteq \{1, 2, \ldots , r\}$, and $(\tau_1, \ldots , \tau_r) \in \mathbb R^r$, we defined the notion of JN-realisability in \S \ref{section: detecting via reps}.

For $(\tau_1, \ldots , \tau_r) \in \mathbb R^r$ set 
$$\bar \tau_j = \tau_j - \lfloor \tau_j \rfloor \in [0, 1)  \hbox{ for } j = 1, \ldots , r$$
and 
$$b = b(\tau_1, \ldots , \tau_r) =  -(\lfloor \tau_1 \rfloor + \ldots + \lfloor \tau_r \rfloor)$$ 
The reader can verify that $(J; 0; \gamma_1, \ldots , \gamma_n; \tau_1 \ldots , \tau_r)$ is JN-realisable if and only if $(J; b; \gamma_1, \ldots , \gamma_n;$ $\bar \tau_1, \ldots , \bar \tau_r)$ is JN-realisable.

First we consider the case when some of the $\tau_j$ are integers.  For a fixed tuple $\tau_*$ we introduce the notation:
\begin{itemize}
\item $r_1 = | \{ j  : \tau_j \notin \mathbb{Z} \} |$, the number of non-integral $\tau_j$;
\item $s_0 = | \{ j : j \notin J \mbox{ and } \tau_j \in \mathbb{Z} \}|$, the number of integral $\tau_j$ whose indices are not in $J$;
\item $r_2 = r_1 + s_0$.
\end{itemize}
We also use $J^0$ to denote $J \setminus \{j : \tau_j \in \mathbb Z\}$. 

Since JN-realisability of $(J; b; \gamma_1, \ldots , \gamma_n;$ $\bar \tau_1, \ldots , \bar \tau_r)$ is invariant under every permutation of the $\tau_j$, we may assume that the $\tau_j$ are indexed so that $\tau_1, \ldots, \tau_{r_1}$ are not integers, $\tau_{r_1 +1} \ldots ,\tau_r$ are integers and $J \cap \{ r_1 +1 , \ldots, r\} = \{ r_2+1, \ldots, r\}$.  Then $(J; b; \gamma_1, \ldots , \gamma_n;$ $\bar \tau_1, \ldots , \bar \tau_r)$ is JN-realisable if and only if $(J^0; b; \gamma_1, \ldots , \gamma_n;$ $\bar \tau_1, \ldots , \bar \tau_{r_2})$ is JN-realisable since $\tau_j \in \mathbb{Z}$ and $j \in J$ forces the function $g_j$ corresponding to $\bar{\tau}_j$ to be the identity.  

Therefore, in the case when some of the $\tau_j$ are integers it suffices to consider the case where $j \in J$ implies $\tau_j \not \in \mathbb Z$.  For this case we have the following theorem:

\begin{theorem} \label{zeros} {\rm (\cite[Theorem 1]{JN2})}
\label{JN_theorem_1}
Suppose that if $j \in J$ then $\tau_j \not \in \mathbb Z$ and let $s$ be the number of $\tau_j$ which are integers. If $s > 0$, then $(J; b; \gamma_1, \ldots , \gamma_n; \bar \tau_1, \ldots , \bar \tau_r)$ is JN-realisable if and only if $2 - s \leq b \leq n + r -2$. It is then even JN-realisable in $\widetilde{PSL_2}(\mathbb R)$.
\qed  
\end{theorem}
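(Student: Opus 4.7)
The plan is to prove the ``if'' and ``only if'' directions separately, with the forward direction also providing realisation in $\widetilde{PSL}(2,\mathbb R)$. Throughout, the key role played by the hypothesis $s \geq 1$ is to furnish at least one factor $g_j$ (with $\bar\tau_j = 0$ and $j \notin J$) which is unconstrained except for having translation number zero; this factor absorbs the slack in both the necessary quasimorphism estimate and the sufficiency construction.

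For necessity, I would apply the translation number $\tau$ to the relation $f_1\cdots f_n g_1\cdots g_r = \hbox{sh}(b)$ and use the fact (recalled in Section 3) that $\tau$ is a homogeneous quasimorphism of defect one on $\widetilde{\hbox{Homeo}}_+(S^1)$. A direct iteration of $|\tau(fg) - \tau(f) - \tau(g)| \le 1$ over the $n+r$ factors gives $b$ within $n+r-1$ of $\sum\gamma_i + \sum\bar\tau_j$, which is not quite sharp enough. The sharpening to $b \le n+r-2$ comes from the observation that each $\gamma_i \in (0,1)$ and each of the $r-s$ nonzero $\bar\tau_j$ lies strictly in $(0,1)$, while the $s$ factors with $\bar\tau_j = 0$ can be taken to have actual fixed points (since $\tau(g_j)=0$ is equivalent to the existence of a fixed point). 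Tracking the quasimorphism defect carefully at each of the $s$ insertions of such a fixed-point factor reduces the overall estimate by one unit, forcing $b < n+r-1$ and hence $b \le n+r-2$ since $b \in \mathbb Z$. The lower bound $b \ge 2 - s$ follows by applying the same argument to the inverse relation $g_r^{-1}\cdots g_1^{-1} f_n^{-1}\cdots f_1^{-1} = \hbox{sh}(-b)$.

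For sufficiency and the realisation in $\widetilde{PSL}(2,\mathbb R)$, I would proceed by explicit construction inductive on $n+r$ and on $b$ within the allowed range. The elementary building blocks are (i) lifts of rotations in $\widetilde{PSL}(2,\mathbb R)$, which are conjugate to $\hbox{sh}(\gamma)$ for any prescribed $\gamma \in (0,1)$, and (ii) lifts of parabolics or hyperbolics of integer translation number, which realise the flexible factors $g_j$ with $\bar\tau_j = 0$. The strategy is to first establish one extremal value of $b$ (say $b = n+r-2$) by an explicit product of elliptic elements and parabolics, verified via the trace identities in $SL(2,\mathbb R)$ lifted to the universal cover; then to decrease $b$ by one at a time by modifying a single parabolic slot, replacing a parabolic of translation number zero by a hyperbolic--parabolic combination with the same translation number but shifting the total integer $b$ by one. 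Iterating covers every integer in $[2-s, n+r-2]$.

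The principal obstacle will be the extremal base case, which requires showing that the slack in the quasimorphism inequality is actually attained by some concrete product in $\widetilde{PSL}(2,\mathbb R)$. This is precisely where the constructive parts of the work of Eisenbud--Hirsch--Neumann \cite{EHN}, Jankins--Neumann \cite{JN2}, and Naimi \cite{Na} enter, and where the choice of $\widetilde{PSL}(2,\mathbb R)$ (rather than a larger cover $\widetilde{PSL}(2,\mathbb R)_k$) becomes crucial: the extremal relation must hold in $\widetilde{PSL}(2,\mathbb R)$ itself, without the need to pass to a cyclic cover, and it is the freedom to use parabolics of translation number zero---provided exactly by the hypothesis $s \ge 1$---that makes this possible.
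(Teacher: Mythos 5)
The paper does not prove this theorem: it is stated as a black-box citation of \cite[Theorem~1]{JN2}, with the terminal \verb|\qed| merely closing the theorem environment. So there is no in-paper argument to compare against; your proposal is an attempt to reprove a result the authors take as known. That said, your overall template --- a Milnor--Wood-type rotation-number estimate for necessity and an explicit $\widetilde{PSL}(2,\mathbb R)$ construction for sufficiency --- is indeed the shape of the Eisenbud--Hirsch--Neumann and Jankins--Neumann arguments, so the plan is pointed in the right direction.

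The necessity direction as you describe it, however, has a real gap. Iterating the quasimorphism bound $|\tau(fg)-\tau(f)-\tau(g)|\le 1$ over $n+r$ factors gives $|b-(\sum\gamma_i+\sum\bar\tau_j)|\le n+r-1$, whose upper bound on $b$ is $\sum\gamma_i+\sum\bar\tau_j+(n+r-1)$. Since $\sum\gamma_i+\sum\bar\tau_j$ can be arbitrarily close to $n+r-s$, this naive bound can exceed $n+r-2$ by roughly $n+r-s$ units --- not one unit, as you claim --- so ``tracking the defect carefully at the $s$ insertions'' does not close the gap. The correct argument is not a quasimorphism-defect argument at all but a pointwise displacement one. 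Key facts: an element $g$ with $\tau(g)\in(0,1)$ has no fixed point, so $g(x)-x$ is single-signed and positive, and $<1$ everywhere since $\tau(g)<1$; hence its displacement lies in $(0,1)$. (This applies to every $f_i$, every $g_j$ with $j\in J$, and every $g_j$ with $j\notin J$ and $\bar\tau_j\ne 0$ --- and note these need not be conjugates of a shift, so the bound is genuinely stronger than the quasimorphism estimate.) An element with $\tau=0$ has a fixed point and displacement in $(-1,1)$. Now use $s\ge 1$: pick one zero-factor, say $g_r$, with fixed point $x_0$, and evaluate the relation there to obtain $f_1\cdots g_{r-1}(x_0)=x_0+b$. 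The left side displaces $x_0$ by a sum of $n+r-1$ single-factor displacements, of which $n+r-s$ lie in $(0,1)$ and $s-1$ lie in $(-1,1)$; hence $-(s-1)<b<n+r-1$, giving both $2-s\le b$ and $b\le n+r-2$ simultaneously. (Your detour through the inverse relation for the lower bound is not needed.) The bound is uniform in the values of $\gamma_i,\bar\tau_j$, which is a structural clue that a purely defect-based estimate --- whose slack depends on those values --- cannot be the right mechanism.

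For sufficiency your sketch glosses over the part that makes this theorem nontrivial. The proposed ``decrease $b$ by one by modifying a single parabolic slot'' does not make sense as stated: replacing one $g_j$ of translation number $0$ by another of translation number $0$ changes the product but will not in general land on $\hbox{sh}(b-1)$; you would have to readjust the remaining factors simultaneously, and you give no mechanism for doing so while preserving their conjugacy classes. Moreover, the assertion of realisability in $\widetilde{PSL}(2,\mathbb R)$ itself (rather than some $\widetilde{PSL}(2,\mathbb R)_k$) is exactly the content of \cite{JN2} and \cite{Na}; Lemma 2.2 and Corollary 2.3 of \cite{JN1} (used elsewhere in the paper) contain the relevant explicit two- and three-element product calculations in $\widetilde{PSL}(2,\mathbb R)$, and a genuine proof would have to build on those rather than on an unspecified induction on $b$.
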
 

Next we consider the case where no $\tau_j$ is an integer. If $n + r \leq 2$, the reader will verify that $(J; b; \gamma_1, \ldots , \gamma_n;  \bar \tau_1, \ldots , \bar \tau_r)$ is JN-realisable if and only $\gamma_1+ \ldots + \gamma_n + \tau_1 + \ldots +  \tau_r = 0$. For $n + r \geq 3$ we have the following theorem. 

\begin{theorem} \label{when realized 2} {\rm (\cite{EHN}, \cite{JN2}, \cite{Na})}
\label{JN_theorem_2}
Suppose that $n + r \geq 3, J \subseteq \{1, 2, \ldots , r\}, b \in \mathbb Z$ and $0 < \gamma_1, \gamma_2, \ldots , \gamma_n, 
\bar \tau_1, \ldots , \bar \tau_r < 1$.  

$(1)$ If $(J; b; \gamma_1, \ldots , \gamma_n;  \bar \tau_1, \ldots , \bar \tau_r)$ is JN-realisable, then $1 \leq b \leq n+ r - 1$.

$(2)$ If $2 \leq b \leq n + r -2$, then $(J; b; \gamma_1, \ldots , \gamma_n;  \bar \tau_1, \ldots , \bar \tau_r)$ is JN-realisable in $\widetilde{PSL_2}(\mathbb R)$.

$(3)$ $(J; n + r -1; \gamma_1, \ldots , \gamma_n;  \bar \tau_1, \ldots , \bar \tau_r)$ is JN-realisable if and only if $(J; 1; 1 - \gamma_1, \ldots , 1 - \gamma_n; 1 - \bar \tau_1, \ldots , 1 - \bar \tau_r)$ is JN-realisable. 

$(4)$ $(J; 1; \gamma_1, \ldots , \gamma_n;  \bar \tau_1, \ldots , \bar \tau_r)$  is JN-realisable if and only if there are coprime integers $0 < A < N$ and some permutation $(\frac{A_1}{N}, \frac{A_2}{N}, \ldots , \frac{A_{n+r}}{N})$ of $(\frac{A}{N}, 1 - \frac{A}{N}, \frac{1}{N}, \ldots , \frac{1}{N})$ such that 
\vspace{-.3cm} 
\begin{itemize}

\vspace{.3cm} \item $\gamma_i < \frac{A_i}{N} \hbox{ for } 1 \leq i \leq n$;

\vspace{.3cm} \item $\bar \tau_j < \frac{A_{j}}{N}$ for all $j \in J$; 

\vspace{.3cm} \item $\bar \tau_j \leq \frac{A_{j}}{N}$ for all $j \not \in J$.
\qed
\end{itemize}
\end{theorem}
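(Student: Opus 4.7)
The plan is to assemble the four parts from the existing literature (Eisenbud--Hirsch--Neumann \cite{EHN}, Jankins--Neumann \cite{JN2}, and Naimi \cite{Na}), organized so that each statement reduces to a concrete claim about lifts to $\widetilde{\hbox{Homeo}}_+(S^1)$ or to $\widetilde{PSL}(2,\mathbb R)$.

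First, I would dispose of part (1) by a direct translation-number estimate. If the tuple is JN-realisable by $f_1,\dots,f_n,g_1,\dots,g_r \in \widetilde{\hbox{Homeo}}_+(S^1)$ with product $\hbox{sh}(b)$, then by the defining property of the quasimorphism $\tau$ and its conjugation invariance,
\[
\tau(f_i) = \gamma_i \in (0,1), \qquad \tau(g_j) = \bar\tau_j \in (0,1).
\]
Using the standard estimate $\bigl|\tau(f_1 \cdots f_N) - \sum \tau(f_i)\bigr| \le N-1$ for the homogeneous quasimorphism $\tau$ (cf.\ \cite[\S 5]{Ghys}), the equation $\tau(\hbox{sh}(b)) = b$ combined with $0 < \sum \gamma_i + \sum \bar\tau_j < n+r$ yields $1 \le b \le n+r-1$. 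This is essentially \cite[Theorem 2.2]{EHN}.

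Next I would handle part (3), the duality between $b=n+r-1$ and $b=1$, by the inversion trick. If $f_1 \cdots f_n g_1 \cdots g_r = \hbox{sh}(n+r-1)$ with $\tau(f_i) = \gamma_i$, $\tau(g_j) = \bar\tau_j$, then setting $f_i' = f_i^{-1} \cdot \hbox{sh}(1)$ (so $\tau(f_i') = 1-\gamma_i$) and similarly $g_j' = g_j^{-1} \cdot \hbox{sh}(1)$, a short rearrangement---reversing the order of the product and conjugating by an appropriate power of $\hbox{sh}(1)$ (which is central in $\widetilde{\hbox{Homeo}}_+(S^1)$)---produces a realisation of $(J; 1; 1-\gamma_1,\dots,1-\gamma_n; 1-\bar\tau_1,\dots,1-\bar\tau_r)$. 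The converse is symmetric. One must check that conjugacy to $\hbox{sh}(\bar\tau_j)$ is preserved under this symmetry, which is immediate since $g_j^{-1}\hbox{sh}(1)$ is conjugate to $\hbox{sh}(1-\bar\tau_j)$ whenever $g_j$ is conjugate to $\hbox{sh}(\bar\tau_j)$.

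For part (2), the sufficiency in the interior range $2 \le b \le n+r-2$, I would invoke the explicit hyperbolic-element constructions of \cite[Corollary 2.3]{JN1}, which produce the required $f_i,g_j$ inside $\widetilde{PSL}(2,\mathbb R)$ by taking appropriately chosen elliptic elements and absorbing the residual translation into a single hyperbolic factor whose translation number can be adjusted freely within the allowed range. The point is that one has two units of ``slack'' in the target $b$, enough to cover any choice of $\gamma_i$, $\bar\tau_j$.

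The main obstacle is part (4), which is the boundary case $b=1$. This is precisely the content of Jankins--Neumann's conjecture on horizontal foliations and Seifert fibrations, which remained open until settled in full generality by Naimi in \cite{Na}. I would appeal directly to his classification: the necessity of finding coprime $0<A<N$ and a permutation satisfying the strict and non-strict inequalities comes from analyzing the rotation numbers and fixed-point combinatorics of the hypothetical realising tuple, while the sufficiency is Naimi's construction, which glues together explicit elements of some $\widetilde{PSL}(2,\mathbb R)_k$ (for $k = N$) realising the prescribed rotation data. The subtle point---and the reason the conjecture was hard---is that $b=1$ is the extremal case where the translation-number quasimorphism estimate becomes an equality, so one loses the freedom enjoyed in part (2) and the realisation problem becomes genuinely arithmetic, forcing the Farey-type condition in terms of $\frac{A}{N}$. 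Once Naimi's theorem is quoted, the strict/non-strict dichotomy between $j \in J$ and $j \notin J$ follows from Lemma~\ref{trans}(2), since requiring $g_j$ to be conjugate to $\hbox{sh}(\bar\tau_j)$ (rather than merely to have translation number $\bar\tau_j$) rules out the boundary value $\bar\tau_j = \frac{A_j}{N}$ when $\bar\tau_j$ is rational.
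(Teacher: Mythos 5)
The paper offers no proof of this theorem; it is a direct citation to \cite{EHN}, \cite{JN2}, and \cite{Na}, so there is no in-house argument to compare against. Your survey of the literature is accurate for parts (2), (3), and (4): the duality argument in (3) via $h \mapsto \hbox{sh}(1)h^{-1}$ (valid because $\hbox{sh}(1)$ is central, so the reversed product can be cyclically reordered by conjugation) is exactly the standard reduction; part (4) is indeed the content of the Jankins--Neumann conjecture as settled by Naimi; and part (2) is the ``easy'' interior range.

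However, your derivation of part (1) does not work as stated. The quasimorphism-defect estimate $\left|\tau(f_1\cdots f_N) - \sum_i \tau(f_i)\right| \le N-1$, combined with $0 < \sum\gamma_i + \sum\bar\tau_j < n+r$, only yields $-(n+r-2) \le b \le 2(n+r)-2$, which is far weaker than the claimed $1 \le b \le n+r-1$. The correct argument, which is what \cite{EHN} actually does, uses the quantities $m(f) = \min_x(f(x)-x)$ and $M(f) = \max_x(f(x)-x)$. One first observes that if $\tau(g) \in (0,1)$ then $g$ has no fixed point and $g\cdot\hbox{sh}(-1)$ has none either, forcing $0 < m(g) \le M(g) < 1$; this applies to every $f_i$ and every $g_j$ in the realising tuple. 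Then the superadditivity $m(fg) \ge m(f)+m(g)$ and subadditivity $M(fg) \le M(f)+M(g)$ give $b = m(\hbox{sh}(b)) > 0$ and $b = M(\hbox{sh}(b)) < n+r$, hence $1 \le b \le n+r-1$. These strict inequalities at each factor are essential and are not recoverable from the translation-number defect bound alone; you should replace the quasimorphism estimate by this $m$--$M$ argument (or simply say ``by the Milnor--Wood argument of \cite[Theorem 2.2]{EHN}'' without the erroneous intermediate claim).
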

We refer the reader to \cite{CW} for an alternate and more direct approach to this result. 

Now fix $J \subseteq \{1, 2, \ldots , r-1\}$ and $\tau_* = (\tau_1, \ldots , \tau_{r-1}) \in \mathbb R^{r-1}$. Define
$$\mathcal{T}(M; J; \tau_* ) = \{\tau' : (J; 0; \gamma_1, \ldots , \gamma_n; \tau_1 \ldots , \tau_{r-1}, \tau') \hbox{ is JN-realisable}\}$$
$$\mathcal{T}_{str}(M; J; \tau_* ) = \{\tau' : (J \cup \{r\}; 0; \gamma_1, \ldots , \gamma_n; \tau_1 \ldots , \tau_{r-1}, \tau') \hbox{ is JN-realisable}\}$$
Both of these intervals are related to the set $\mathcal{D}_{rep}(M ;J)$. The relationship is described in \S \ref{section: detecting via reps}.

Theorems \ref{JN_theorem_1} and \ref{JN_theorem_2} allow us to determine $\mathcal{T}(M; J; \tau_* )$ and $\mathcal{T}_{str}(M; J; \tau_* )$ precisely. 
As above we will reindex the tuple $\tau_*$ and define $r_1 = r_1(\tau_*)$, $s_0 = s_0(\tau_*)$ and $r_2 = r_2(\tau_*) = r_1(\tau_*) + s_0(\tau_*)$ so that $\tau_1, \ldots, \tau_{r_1}$ are not integers, $\tau_{r_1 +1} \ldots, \tau_{r-1}$ are integers and $J \cap \{ r_1 +1 , \ldots, r-1\} = \{ r_2+1, \ldots, r-1\}$.  We also introduce the notation:
\begin{itemize}

\item $b_0 = -(\lfloor \tau_1 \rfloor + \ldots + \lfloor \tau_{r-1} \rfloor )$;

 \item $b(\tau') = b_0 - \lfloor \tau' \rfloor$;

\item $m_0 = b_0 - (n + r_1 + s_0 -1)$; 

\item  $m_1 = b_0 + s_0 -1$.

\end{itemize}
 
We take $J^0$ to denote either $J \setminus \{j : \tau_j \in \mathbb Z\}$ or $(J \setminus \{j : \tau_j \in \mathbb Z\}) \cup \{r\}$. We have already observed in the discussion before Theorem \ref{JN_theorem_1} that $(J; 0; \gamma_1,\ldots , \gamma_n; \tau_1, \ldots , \tau_{r-1}, \tau')$ is JN-realisable if and only if
$(J^0; b(\tau'); \gamma_1,\ldots , \gamma_n;  \bar \tau_1, \ldots , \bar \tau_{r_2}, \overline{\tau'})$ is JN-realisable.   Having reduced to this case we may then apply our previous observation that when no $\tau_j$ is an integer and $n+r \leq 2$, $(J; b; \gamma_1, \ldots , \gamma_n;  \bar \tau_1, \ldots , \bar \tau_r)$ is JN-realisable if and only $\gamma_1+ \ldots + \gamma_n + \tau_1 + \ldots +  \tau_r = 0$.  This yields

\begin{proposition}  \label{tau fibre 1}
Fix $J \subseteq \{1, 2, \ldots , r-1\}$ and $\tau_* = (\tau_1, \ldots , \tau_{r-1}) \in \mathbb R^{r-1}$ where $r \geq 1$. 
Suppose that $n + r_1 + s_0 \leq 1$. Then
$$\mathcal{T}_{str}(M; J; \tau_* )  = \mathcal{T}(M; J; \tau_* ) = \{-\big[(\gamma_1 + \ldots + \gamma_n) + (\tau_1 + \ldots + \tau_{r-1})\big]\}$$ 
\qed 
\end{proposition}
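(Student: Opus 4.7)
The plan is to exploit the reductions described in the opening of the Appendix and then to use the elementary observation that a product of at most two non-identity elements of $\widetilde{\hbox{Homeo}}_+(S^1)$ which equals the central element $\hbox{sh}(b)$ forces those elements to commute, pinning down the translation numbers via the homomorphism property of $\tau$ on abelian subgroups.

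First I would apply the preliminary reductions: a JN-realisation of $(J; 0; \gamma_1, \ldots, \gamma_n; \tau_1, \ldots, \tau_{r-1}, \tau')$ exists if and only if one exists for $(J^0; b(\tau'); \gamma_1, \ldots, \gamma_n; \bar\tau_1, \ldots, \bar\tau_{r_2}, \overline{\tau'})$. The hypothesis $n + r_1 + s_0 \leq 1$ yields $n + r_2 + 1 \leq 2$, so this reduced system involves at most two generators, each either an $f_i$ conjugate to $\hbox{sh}(\gamma_i)$ or a $g_j$ of translation number in $[0,1)$, whose product must equal $\hbox{sh}(b(\tau'))$.

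Next I would extract the key equation. If $uv = \hbox{sh}(b)$ with $\hbox{sh}(b)$ central, then $v = u^{-1}\hbox{sh}(b) = \hbox{sh}(b)u^{-1}$, hence $uv = vu$, so $\langle u, v\rangle$ is abelian, $\tau$ restricts to a homomorphism there, and $\tau(u) + \tau(v) = b$. (The cases of zero or one factor are even more immediate.) This yields the single linear equation
\[
\gamma_1 + \ldots + \gamma_n + \bar\tau_1 + \ldots + \bar\tau_{r_2} + \overline{\tau'} \; = \; b(\tau').
\]
Since each $\gamma_i \in (0,1)$, each non-integral $\bar\tau_j \in (0,1)$, and each integral $\bar\tau_j$ equals $0$, checking the four sub-cases of $n + r_1 + s_0 \in \{0, 1\}$ rigidly pins down $b(\tau')$ and hence $\overline{\tau'}$; a routine manipulation using $b_0 = -\sum_{j<r}\lfloor \tau_j\rfloor$ and $\bar\tau_j = \tau_j - \lfloor \tau_j\rfloor$ then collapses $\tau'$ to $-\bigl[(\gamma_1 + \ldots + \gamma_n) + (\tau_1 + \ldots + \tau_{r-1})\bigr]$.

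For the reverse inclusion I would realise this $\tau'$ explicitly by taking every $f_i := \hbox{sh}(\gamma_i)$ and every $g_j := \hbox{sh}(\bar\tau_j)$, as well as $g_r := \hbox{sh}(\overline{\tau'})$; all are central, so their composition is the translation by the sum of their translation numbers, which equals $\hbox{sh}(b(\tau'))$ by the equation above. Because each such $g_j$ is already a translation, it is conjugate (by the identity) to $\hbox{sh}(\bar\tau_j)$, so this single realisation witnesses strong JN-realisability at index $r$ as well. Combined with the trivial inclusion $\mathcal{T}_{str}(M; J; \tau_*) \subseteq \mathcal{T}(M; J; \tau_*)$ and the uniqueness established in the previous paragraph, this gives the claimed equality of sets. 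The main nuisance I foresee is bookkeeping across the sub-cases: tracking when an integral $\bar\tau_j$ with $j \in J$ is absorbed as an identity factor versus when it survives as an element of translation number $0$, and confirming the range constraints in each case; this should be routine once the reduction and the commutativity principle above are in hand.
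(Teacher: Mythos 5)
Your proposal is correct and follows exactly the route the paper sketches: reduce to $(J^0; b(\tau'); \gamma_1, \ldots, \gamma_n; \bar\tau_1, \ldots, \bar\tau_{r_2}, \overline{\tau'})$, observe that with at most two generators the product relation forces commutativity and hence the translation numbers sum to $b(\tau')$, and realise the resulting unique $\tau'$ by translations (the paper leaves this to the reader as the ``$n+r\leq 2$'' verification). One small imprecision: translations $\hbox{sh}(\gamma)$ for non-integral $\gamma$ are not central in $\widetilde{\hbox{Homeo}}_+(S^1)$; what your realisation actually uses is that they lie in the abelian translation subgroup, which is all that is needed.
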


In general,  $\mathcal{T}_{str}(M; J; \tau_* )$ and $ \mathcal{T}(M; J; \tau_* )$ are determined by the following proposition.

\begin{proposition}  \label{tau fibre 2}
Fix $J \subseteq \{1, 2, \ldots , r-1\}$ and $\tau_* = (\tau_1, \ldots , \tau_{r-1}) \in \mathbb R^{r-1}$ where $r \geq 1$. 
Suppose that $n + r_1 + s_0 \geq 2$.  

$(1)$ $(a)$ $(m_0, m_1) \subseteq \mathcal{T}_{str}(M; J; \tau_* ) \subseteq \mathcal{T}(M; J; \tau_* ) \subset (m_0 - 1, m_1 + 1)$.

\indent \hspace{5mm} $(b)$ $[m_0, m_1] \subseteq \mathcal{T}(M; J; \tau_* )$.

\indent \hspace{5mm} $(c)$ If $s_0 > 0$ then $m_0 < m_1$ and $(m_0, m_1) = \mathcal{T}_{str}(M; J; \tau_* ) \subset \mathcal{T}(M; J; \tau_* ) = [m_0 , m_1]$.

$(2)$ $(a)$ If $\mathcal{T}(M; J; \tau_* ) \cap (m_0 - 1, m_0) \ne \emptyset$, then \\
\indent \hspace{11mm} $($i$)$ $s_0 = 0$;  \\
\indent \hspace{11mm} $($ii$)$ $|\{ i : \gamma_i \leq \frac12\}| + |\{ j \in J : 0 < \bar \tau_j \leq \frac12\}| + | \{ j \not \in J : 0 <  \bar \tau_j < \frac12\}| \leq 1$;  \\
\indent \hspace{11mm} $($iii$)$ there is some $\eta \in (m_0 - 1, m_0] \cap \mathbb Q$ such that $\mathcal{T}_{str}(M; J; \tau_* ) \cap (m_0 - 1, m_0] = (\eta, m_0]$ \\ \indent  \hspace{11mm} and $\mathcal{T}(M; J; \tau_* ) \cap (m_0 - 1, m_0] = [\eta, m_0]$.

\indent \hspace{5mm} $(b)$ If $n + r_1 = 2$, then $\mathcal{T}(M; J; \tau_* ) \cap (m_0 - 1, m_0) \ne \emptyset$ if and only if either \\
\indent \hspace{11mm} $($i$)$ $-\big[(\gamma_1 + \ldots + \gamma_n) + (\tau_1 + \ldots + \tau_{r-1})\big] < m_0$, or  \\
\indent \hspace{11mm} $($ii$)$ $n = 0, -\sum_j \tau_j = m_0, J \cap \{j : \tau_j \not \in \mathbb Z\} = \emptyset$, and $\tau_j  \in \mathbb Q$ for all $j$.

$(3)$ $(a)$ If $\mathcal{T}(M; J; \tau_* ) \cap (m_1, m_1 + 1) \ne \emptyset$, then\\
\indent \hspace{11mm} $($i$)$ $s_0 = 0$; \\
\indent \hspace{11mm} $($ii$)$  $|\{ i : \gamma_i \geq \frac12\}| + |\{ j \in J : \bar \tau_j \geq \frac12\}| + |\{ j \not \in J : \bar \tau_j > \frac12\}| \leq 1$. \\
\indent \hspace{11mm} $($iii$)$ there is some $\xi \in (m_1, m_1 + 1)\cap \mathbb Q$ such that $\mathcal{T}_{str}(M; J; \tau_* ) \cap [m_1, m_1 + 1) = [m_1, \xi)$  \\ \indent \hspace{1.1cm} and $\mathcal{T}(M; J; \tau_* ) \cap [m_1, m_1 + 1) = [m_1, \xi]$.

\indent \hspace{5mm} $(b)$ If $n + r_1 = 2$, then $\mathcal{T}(M; J; \tau_* ) \cap (m_1, m_1 + 1) \ne \emptyset$ if and only if either \\
\indent \hspace{11mm} $($i$)$ 
$m_0 < -\big[(\gamma_1 + \ldots + \gamma_n) + (\tau_1 + \ldots + \tau_{r-1})\big]$, or \\
\indent \hspace{11mm} $($ii$)$ $n = 0, -\sum_j \tau_j = m_0, J \cap \{j : \tau_j \not \in \mathbb Z\} = \emptyset$, and $\tau_j  \in \mathbb Q$ for all $j$.  

$(4)$ $(a)$ $\mathcal{T}(M; J; \tau_* )$ is a closed subinterval of $(m_0 - 1, m_1 + 1)$ whose endpoints are rational  \\ \indent \hspace{1.1cm} numbers. 

\indent \hspace{5mm} $(b)$ Either $\mathcal{T}_{str}(M; J; \tau_* )$ is the interior of $\mathcal{T}(M; J; \tau_* )$ or $s_0 = 0, n + r_1 = 2$ and \\ \indent \hspace{1.1cm} $\mathcal{T}_{str}(M; J; \tau_* )  = \mathcal{T}(M; J; \tau_* ) = \{m_0\}$. 

\indent \hspace{5mm} $(c)$  $\mathcal{T}_{str}(M; J; \tau_* ) = \{m_0\}$ if and only if $s_0 = 0$, $n + r_1 = 2, m_0 = -\big[(\gamma_1 + \ldots + \gamma_n) + (\tau_1 + $ \\ \indent \hspace{1.1cm} $ \ldots + \tau_{r-1})\big]$, and either $n \ne 0$, or $J \cap \{j : \tau_j \not \in \mathbb Z\} \ne \emptyset$, or $\tau_j  \not \in \mathbb Q$ for some $j$.
\end{proposition}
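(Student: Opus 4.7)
The plan is to reduce every sub-statement to one of the two Theorems \ref{zeros} and \ref{when realized 2}, and to track carefully how the reduced parameters $(b,s,r)$ depend on whether $\tau'$ is an integer and on where it sits relative to the integers $m_0,m_1$. First I would apply the reduction recorded before Proposition \ref{tau fibre 1}: $\tau'\in \mathcal{T}(M;J;\tau_*)$ is equivalent to JN-realisability of $(J^{0};b(\tau');\gamma_1,\ldots,\gamma_n;\bar\tau_1,\ldots,\bar\tau_{r_2},\overline{\tau'})$, with $b(\tau')=b_0-\lfloor\tau'\rfloor$, and analogously for $\mathcal{T}_{str}$ by forcing $r\in J^{0}$. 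A direct computation of $\lfloor\tau'\rfloor$ on each of the intervals $(m_0-1,m_0),[m_0,m_1),(m_1,m_1+1)$, and at integers inside $[m_0,m_1]$, then translates the range of $\tau'$ into the range of $b(\tau')$: one finds $b(\tau')\in\{2-s_0,\ldots,n+r_1+s_0-1\}$ for $\tau'\in(m_0,m_1)$, $b(\tau')=n+r_1+s_0$ for $\tau'\in(m_0-1,m_0)$, and $b(\tau')=1-s_0$ for $\tau'\in(m_1,m_1+1)$. The upper bound $\mathcal{T}(M;J;\tau_*)\subset(m_0-1,m_1+1)$ then drops out of Theorem \ref{when realized 2}(1) (or of Theorem \ref{zeros}, in the presence of integer entries).

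For part~$(1)$ I would handle $\tau'\in(m_0,m_1)$ in two cases. If $s_0=0$ and $\tau'\notin\mathbb Z$, the reduced tuple has no integer entries, $n+(r_1+1)\geq 3$ by hypothesis, and $2\leq b(\tau')\leq n+r_1-1$, so Theorem \ref{when realized 2}(2) realises the data in $\widetilde{PSL}(2,\mathbb R)$; since $\overline{\tau'}\in(0,1)$ is non-integral, the element $g_r$ forced by translation number must be elliptic, giving strong realisation for free (and by the same token for every $\bar\tau_j\in(0,1)$, explaining the ``strong'' in $(1)(a),(c)$). If $s_0>0$ and $\tau'\notin\mathbb Z$, Theorem \ref{zeros} applies with $s=s_0$ and $b(\tau')\in[2-s_0,n+r_1+s_0-1]$; again strong detection is automatic because the integral $\bar\tau_j$ lie outside $J^{0}$. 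The integer values $\tau'\in[m_0,m_1]$ (relevant only for $\mathcal{T}$, which does not contain $r$ in $J^0$) are absorbed by one more application of Theorem \ref{zeros}, now with $s=s_0+1$; this produces $(1)(b)$ and closes the endpoints of $\mathcal{T}$ on $[m_0,m_1]$ while forbidding the endpoints for $\mathcal{T}_{str}$ when $s_0>0$, yielding $(1)(c)$.

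Parts $(2)$ and $(3)$ are dual via the symmetry $(b,\gamma_i,\bar\tau_j)\mapsto(n+r-b,1-\gamma_i,1-\bar\tau_j)$ of Theorem \ref{when realized 2}(3); I would prove $(2)$ and deduce $(3)$ by this duality. For $\tau'\in(m_0-1,m_0)$ the value $b(\tau')=n+r_1+s_0$ exceeds the Theorem \ref{zeros} upper bound $n+r_2-1=n+r_1+s_0-1$ whenever $s_0>0$, forcing $s_0=0$ as in $(2)(a)(i)$. With $s_0=0$ the reduced problem has $b(\tau')=n+r_1$, which is the ``extremal'' $b=n+r-1$ case of Theorem \ref{when realized 2}; the duality in~(3) of that theorem then reduces JN-realisability to the $b=1$ case~(4), giving the permutation/coprime integer condition on $(A,N)$. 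Expanding that condition with $A_i/N$ applied to the $\gamma_i$, the $\bar\tau_j$ for $j\in J$ (strict) and $j\notin J$ (non-strict) produces the count inequality $(2)(a)(ii)$: if three or more of the weights were $\leq\tfrac12$, the tuple $(A/N,1-A/N,1/N,\ldots,1/N)$ could not dominate them since at most two of its entries are $\geq\tfrac12$. The endpoint $\eta$ (and by duality $\xi$) is rational because Theorem \ref{when realized 2}(4) only allows finitely many $(A,N)$-choices, and each defines a rational threshold on $\overline{\tau'}$; the open/closed status at $\eta$ matches the strict/non-strict inequality attached to index $r$ in~(4), which is precisely the distinction between $\mathcal{T}_{str}$ and $\mathcal{T}$. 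Statement $(2)(b)$ is the special case $n+r_1=2$, where the permutation data reduces to the two entries $A/N$ and $1-A/N$ and the condition collapses either to the strict inequality $-\sum\gamma_i-\sum\tau_j<m_0$ or to the degenerate equality in $(2)(b)(ii)$; this case must be treated by hand.

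Part~$(4)$ is a synthesis of $(1)$--$(3)$. The interval description and rationality of the endpoints come from the previous three parts, since inside $[m_0,m_1]$ everything is detected and outside the endpoints are cut off by rational thresholds from Theorem \ref{when realized 2}(4). That the $\mathcal{T}_{str}$-interior equals the interior of $\mathcal{T}$ follows by comparing open vs closed behaviour at each boundary, with the single exception $s_0=0,\,n+r_1=2,\,m_0=m_1$, and in that case a last application of the $b=1$ clause of Theorem \ref{when realized 2}(4) with $n+r=2$ shows the degenerate singleton $\mathcal{T}=\{m_0\}$ coincides with $\mathcal{T}_{str}$ precisely under the condition stated in $(4)(c)$. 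The main obstacle is the content of $(2)(a)(ii)$: translating the permutation-existence clause of Theorem \ref{when realized 2}(4) into the clean cardinality inequality on the number of weights $\leq\tfrac12$ requires a careful enumeration of which weight can be paired with the large entry $A/N$ versus $1-A/N$, and this enumeration is what drives the entire boundary analysis of both $\mathcal T$ and $\mathcal T_{str}$.
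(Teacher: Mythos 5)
Your overall architecture matches the paper's: normalize via the reduction preceding Proposition \ref{tau fibre 1}, track $b(\tau')=b_0-\lfloor\tau'\rfloor$ on the intervals $(m_0-1,m_0)$, $[m_0,m_1]$, $(m_1,m_1+1)$, apply Theorem \ref{zeros} when integer entries are present and Theorem \ref{when realized 2} otherwise, and use the duality of Theorem \ref{when realized 2}(3) to derive part (3) from part (2). That is the paper's strategy. However, there are three genuine gaps.

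First, your parenthetical assertion that integer $\tau'\in[m_0,m_1]$ are ``relevant only for $\mathcal{T}$, which does not contain $r$ in $J^0$'' is false: establishing $(m_0,m_1)\subseteq\mathcal{T}_{str}$ in part~(1)(a) requires you to treat integer $\tau'$ with $r\in J^0$, and these are nontrivial. (The paper isolates this as Subcases 1.3 and 2.3; e.g.\ Subsubcase 2.3.2 shows that when $s_0=0$ and $n+r_1\geq3$ the integers $m_0+1,\ldots,m_1-1$ \emph{are} in $\mathcal{T}_{str}$, while the endpoints $m_0$, $m_1$ are decided by the permutation criterion.) After normalization, $\tau'\in\mathbb Z$ and $r\in J^0$ simply kills the factor $g_r$, leaving a realisability question for $(J^0\setminus\{r\};b(\tau');\gamma_i;\bar\tau_j)$ — that still has to be answered.

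Second, the counting argument for $(2)(a)(ii)$ as written produces a bound of $2$, not $1$. Saying ``at most two entries of $(A/N,1-A/N,1/N,\ldots,1/N)$ are $\geq\frac12$'' is the wrong observation: the constraints after applying Theorem \ref{when realized 2}(3) are $1-\tfrac{A_k}{N}<\sigma_k$ (or $\leq$), and what one needs is that at most \emph{one} of the complementary values $\tfrac{A}{N}$, $1-\tfrac{A}{N}$, $1-\tfrac{1}{N}$ is less than $\tfrac12$. Only one of $\tfrac{A}{N},1-\tfrac{A}{N}$ can be $>\tfrac12$ (or both equal $\tfrac12$), and $1-\tfrac1N\geq\tfrac12$ always; that is what yields the sharp bound $\leq 1$.

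Third, the rationality of the endpoint $\eta$ in $(2)(a)(iii)$ cannot be deduced from ``Theorem \ref{when realized 2}(4) only allows finitely many $(A,N)$-choices.'' That finiteness holds only when $n+r_1\geq3$ (forced by $N\leq D:=\bigl(1-\max\{\gamma_i,\bar\tau_j\}\bigr)^{-1}$). When $n+r_1=2$ there may be infinitely many admissible pairs $(A,N)$; the paper handles this by observing that for $N>D$ the intervals degenerate to $[m_0-\tfrac1N,m_0]$, and a largest one is determined by $N'=\min\{N>D: \text{a solution exists}\}$. Without this separate argument the rationality claim, and hence $(4)(a)$, is unproved in exactly the case $n+r_1=2$ where the degenerate phenomena of $(2)(b)$, $(3)(b)$, and $(4)(b)$–$(4)(c)$ live.
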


\begin{remark}
\label{inequalities_remark}
The conditions 
$$-\big[(\gamma_1 + \ldots + \gamma_n) + (\tau_1 + \ldots + \tau_{r-1})\big] < m_0, > m_0, = m_0$$ 
appearing in parts (2), (3), and (4) of the proposition are equivalent (under the assumptions of that particular subcase), respectively, to the conditions 
$$(\gamma_1 + \ldots + \gamma_n) + (\bar \tau_1 + \ldots + \bar \tau_{r-1} )> 1, < 1, = 1$$
\end{remark}

\begin{proof}[Proof of Proposition \ref{tau fibre 2}]
We use the notation introduced in the discussion preceding Proposition \ref{tau fibre 1}.
As we saw there, we need only consider JN-realisability of $$(J^0; b(\tau'); \gamma_1,\ldots , \gamma_n;  \bar \tau_1, \ldots , \bar \tau_{r_2}, \overline{\tau'})$$ which we examine through consideration of the two cases $s_0 > 0$ and $s_0 = 0$. 

{\bf Case 1}. $s_0  > 0$ 

In this case we show that $(m_0, m_1) =  \mathcal{T}_{str}(M; J; \tau_* )$ and $\mathcal{T}(M; J; \tau_* ) = [m_0, m_1]$; note that when $s_0>0$ we have $m_1 - m_0 = (n + r_1 + s_0)+(s_0 - 2) \geq 2+s_0-2 >0$. Hence $m_1 > m_0$. 

{\bf Subcase 1.1}. $\tau' \not \in \mathbb Z$

In this case, $(J; 0; \gamma_1,\ldots , \gamma_n; \tau_1 \ldots , \tau_{r-1}, \tau') $ is JN-realisable if and only if  
$(J^0; b(\tau'); \gamma_1,\ldots , \gamma_n; \bar \tau_1,\ldots , \bar \tau_{r_1}, $\\ $ 0, \ldots, 0,  \overline{\tau'})$ is JN-realisable 
where there are $s_0$ zeros in the latter. Since $n + r_1 + s_0 + 1 \geq 3$ we can apply Theorem \ref{zeros} to see that $(J^0; b(\tau'); \gamma_1, \ldots , \gamma_n; \bar \tau_1,\ldots , \bar \tau_{r_1}, 0 , \ldots, 0,  \overline{\tau'})$ is JN-realisable if and only if 
$2 - s_0 \leq  b(\tau') \leq (n + r_1 + s_0 +1)- 2$. (Here the $r$ of the theorem corresponds to $r_1 + s_0 +1$.) Equivalently, when $s_0 > 0$ and $\tau' \not \in \mathbb Z$, $(J; 0; \gamma_1,\ldots , \gamma_n; \tau_1 \ldots , \tau_{r-1}, \tau') $ is JN-realisable if and only if $\tau' \in (m_0, m_1) \setminus \mathbb Z$.  

{\bf Subcase 1.2}. $\tau' \in \mathbb Z$ and $r \not \in J^0$

In this case, $(J; 0; \gamma_1,\ldots , \gamma_n; \tau_1 \ldots , \tau_{r-1}, \tau') $ is JN-realisable if and only if  
$(J^0; b(\tau'); \gamma_1,\ldots , \gamma_n;  \bar \tau_1,\ldots , \bar \tau_{r_1}, $\\$ 0 , \ldots, 0)$ is JN-realisable 
where there are $s_0 + 1$ zeros in the latter. Since $n + r_1 + s_0 + 1 \geq 3$ we can apply Theorem \ref{zeros} to see that $(J^0; b(\tau'); \gamma_1,\ldots , \gamma_n;$ $\bar \tau_1,\ldots , \bar \tau_{r_1}, 0 , \ldots, 0,  \overline{\tau'})$ is JN-realisable if and only if 
$1 - s_0 \leq  b(\tau') \leq n + r_1 + s_0 - 1$. Equivalently, when $s_0 > 0, \tau' \in \mathbb Z$ and $r \not \in J$, $(J; 0; \gamma_1,\ldots , \gamma_n; \tau_1 \ldots , \tau_{r-1}, \tau') $ is JN-realisable if and only if $\tau' \in [m_0, m_1] \cap \mathbb Z$.   

{\bf Subcase 1.3}. $\tau' \in \mathbb Z$ and $r \in J^0$

In this case, $(J; 0; \gamma_1,\ldots , \gamma_n; \tau_1 \ldots , \tau_{r-1}, \tau') $ is JN-realisable if and only if
$(J^0; b(\tau'); \gamma_1,\ldots , \gamma_n;  \bar \tau_1,\ldots , \bar \tau_{r_1}, $\\$ 0 , \ldots, 0)$ is JN-realisable 
where there are $s_0$ zeros in the latter. Theorem \ref{zeros} implies that when $n + r_1+ s_0 \geq 3$, $(J^0; b(\tau'); \gamma_1,\ldots , \gamma_n,$ $\bar \tau_1,\ldots , \bar \tau_{r_1}, 0 , \ldots, 0)$ is JN-realisable if and only if $2 - s_0 \leq  b(\tau') \leq n + r_1+ s_0 - 2$. Equivalently, if and only if $\tau' \in (m_0, m_1)  \cap \mathbb Z$. We claim that the same holds when $n + r_1+ s_0 = 2$. To see this, first note that in this case $(n, r_1, s_0)$ is either $(0,0,2), (0, 1, 1)$ or $(1,0,1)$ and $m_1 - m_0 = s_0 \in \{1, 2\}$. We must show that $(J; 0; \gamma_1,\ldots , \gamma_n; \tau_1 \ldots , \tau_{r-1}, \tau')$ is not JN-realisable when $s_0 = 1$ and is only JN-realisable for $\tau = m_0 + 1$ when $s_0 = 2$.   

If $(n, r_1, s_0) = (0,0,2)$, then $(J; 0; \gamma_1,\ldots , \gamma_n; \tau_1 \ldots , \tau_{r-1}, \tau') $ is JN-realisable if and only if  
$(\emptyset; b_0 - \tau'; \emptyset; 0, 0)$ is JN-realisable which, the reader will verify, occurs if and only if $\tau' = m_0 + 1$. If $(n, r_1, s_0) = (0, 1, 1)$, $(J; 0; \gamma_1,\ldots , \gamma_n; \tau_1 \ldots , \tau_{r-1}, \tau') $ is JN-realisable if and only if $(J^0; b_0 - \tau'; \emptyset; \bar \tau_1, 0)$ is JN-realisable where $\bar \tau_1 \not \in \mathbb Z$, which is impossible. Similarly if $(n, r_1, s_0) = (1, 0, 1)$, then $(J; 0; \gamma_1,\ldots , \gamma_n; \tau_1 \ldots , \tau_{r-1}, \tau') $ is JN-realisable if and only if $(\emptyset; b_0 - \tau'; \gamma_1; 0)$ is JN-realisable, which is impossible since $\gamma_1 \not \in \mathbb Z$.

\begin{proof}[Proof of Proposition \ref{tau fibre 2} when $s_0 > 0$] 
Combining the three subcases above we see that 
$$(m_0, m_1) =  \mathcal{T}_{str}(M; J; \tau_* ) \subset \mathcal{T}(M; J; \tau_* ) = [m_0, m_1]$$ 
Hence the proposition holds when $s_0 > 0$. 
\end{proof} 

{\bf Case 2}. $s_0  = 0$ 

Here $r_1 = r_2$, so
{\small$$(J; 0; \gamma_1,\ldots , \gamma_n; \tau_1 \ldots , \tau_{r-1}, \tau') \hbox{ is JN-realisable } \Leftrightarrow 
(J^0; b(\tau'); \gamma_1,\ldots , \gamma_n;  \bar \tau_1,\ldots , \bar \tau_{r_1}, \overline{\tau'}) \hbox{ is JN-realisable}$$} 
\hspace{-1mm}Note that $m_1 - m_0 = n + r_1 - 2 \geq 0$.  By examining subcases we prepare the necessary results.

{\bf Subcase 2.1}. $\tau' \not \in \mathbb Z$

Here, $(J; 0; \gamma_1,\ldots , \gamma_n; \tau_1, \ldots , \tau_{r-1}, \tau')$ is JN-realisable if and only if  
$(J^0; b(\tau'); \gamma_1,\ldots , \gamma_n;$ $\bar \tau_1,\ldots , \bar \tau_{r_1}, \overline{\tau'})$ is JN-realisable. 
We can apply statement (1) of Theorem \ref{when realized 2} to see that if $(J^0; b(\tau'); \gamma_1,\ldots , \gamma_n;$ $\bar \tau_1,\ldots , \bar \tau_{r_1}, \overline{\tau'})$ is JN-realisable then $m_1 +1  > \tau' > m_0 -1$.   As such, $(J^0; b(\tau'); \gamma_1,\ldots , \gamma_n;$ $\bar \tau_1,\ldots , \bar \tau_{r_1}, \overline{\tau'})$ is JN-realisable if and only if one of the following holds:
\begin{enumerate}

\item  $m_0 < \tau' < m_1$;  this follows from applying statement (2) of Theorem \ref{when realized 2} with $r = r_1 +1$ and $b = b(\tau')$.

\vspace{.3cm} \item  $m_0 - 1 < \tau' < m_0$, in this case $b(\tau') = n+r_1$  and $(J^0; b(\tau'); \gamma_1,\ldots , \gamma_n;$ $\bar \tau_1,\ldots , \bar \tau_{r_1}, \overline{\tau'})$ is JN-realisable if and only if $(J^0; 1; 1-\gamma_1,\ldots , 1-\gamma_n;$ $1-\bar \tau_1,\ldots , 1-\bar \tau_{r_1},1- \overline{\tau'})$ is JN-realisable by (3) of  Theorem \ref{when realized 2}.  By (4) of  Theorem \ref{when realized 2}, this happens if and only if there are coprime integers $0 < A < N$ and a permutation $(\frac{A_1}{N}, \ldots, \frac{A_n}{N}, \frac{B_1}{N}, \ldots, \frac{B_{r_1}}{N},  \frac{C}{N})$ of $(\frac{A}{N}, 1 - \frac{A}{N}, \frac{1}{N}, \ldots , \frac{1}{N})$ such that  
\begin{enumerate}

\vspace{.3cm} \item  $1 - \frac{A_i}{N} < \gamma_i \hbox{ for } 1 \leq i \leq n$; 

\vspace{.2cm} \item $1 - \frac{B_{j}}{N} < \bar \tau_j$ for all $j \in J^0$ and $1 - \frac{B_{j}}{N} \leq \bar \tau_j$ for all $j \not \in J^0$;

\vspace{.2cm} \item $1 - \frac{C}{N} < \overline{\tau'}$ if $r  \in J^0$ and $1 - \frac{C}{N} \leq \overline{\tau'}$ if $r \not \in J^0$.

\end{enumerate}

\vspace{.3cm} \item $m_1 < \tau' < m_1 + 1 $, in this case $b(\tau')=1$ and there are coprime integers $0 < A < N$ and a permutation $(\frac{A_1}{N}, \ldots, \frac{A_n}{N}, \frac{B_1}{N}, \ldots, \frac{B_{r_1}}{N},  \frac{C}{N})$ of $(\frac{A}{N}, 1 - \frac{A}{N}, \frac{1}{N}, \ldots , \frac{1}{N})$ such that  
\begin{enumerate}

\vspace{.3cm} \item  $\gamma_i < \frac{A_i}{N} \hbox{ for } 1 \leq i \leq n$; 

\vspace{.2cm} \item $\bar \tau_j < \frac{B_{j}}{N}$ for all $j \in J^0$ and $\bar \tau_j \leq \frac{B_{j}}{N}$ for all $j \not \in J^0$;

\vspace{.2cm} \item $\overline{\tau'} < \frac{C}{N}$ if $r \in J^0$ and $\overline{\tau'} \leq \frac{C}{N}$ if $r \not \in J^0$.

\end{enumerate}

\end{enumerate}

{\bf Subcase 2.2}. $\tau' \in \mathbb Z$ and $r \not \in J^0$

Here we repeat the argument of Subcase 1.2 with $s_0 =0$ and conclude that when
$s_0 = 0, \tau' \in \mathbb Z$ and $r \not \in J$, $(J; 0; \gamma_1,\ldots , \gamma_n; \tau_1 \ldots , \tau_{r-1}, \tau') $ is JN-realisable if and only if $\tau' \in [m_0, m_1] \cap \mathbb Z$.

{\bf Subcase 2.3}. $\tau' \in \mathbb Z$ and $r \in J^0$

Here, $(J; 0; \gamma_1,\ldots , \gamma_n; \tau_1 \ldots , \tau_{r-1}, \tau') $ is JN-realisable if and only if  
$(J^0; b(\tau'); \gamma_1,\ldots , \gamma_n;  \bar \tau_1,\ldots , $ $\bar \tau_{r_1})$ is JN-realisable. 

{\bf Subsubcase 2.3.1}. $n + r_1 = 2$ 

In this case, the remarks preceding Proposition \ref{tau fibre 1} imply that $(J^0; b(\tau'); \gamma_1,\ldots , \gamma_n;$ $\bar \tau_1,\ldots , \bar \tau_{r_1})$ is JN-realisable if and only if  
$\tau' = -[(\gamma_1 + \ldots + \gamma_n) + (\tau_1 + \ldots + \tau_{r-1})]$.

{\bf Subsubcase 2.3.2}. $n + r_1 \geq 3$

In this case we can apply Theorem \ref{when realized 2} to see that $(J^0; b(\tau'); \gamma_1,\ldots , \gamma_n;$ $\bar \tau_1,\ldots , \bar \tau_{r_1})$ is JN-realisable if and only if one of the following three conditions hold:
\begin{enumerate}

\item  $\tau' \in [m_0 + 1, m_1 - 1] \cap \mathbb Z$;

\vspace{.3cm} \item  $\tau' = m_0$ and there are coprime integers $0 < A < N$ and a permutation \\ $(\frac{A_1}{N}, \ldots, \frac{A_n}{N}, \frac{B_1}{N}, \ldots, \frac{B_{r_1}}{N})$ of $(\frac{A}{N}, 1 - \frac{A}{N}, \frac{1}{N}, \ldots , \frac{1}{N})$ such that  
\begin{enumerate}

\vspace{.3cm} \item  $1 - \frac{A_i}{N} < \gamma_i \hbox{ for } 1 \leq i \leq n$; 

\vspace{.2cm} \item $1 - \frac{B_{j}}{N} < \bar \tau_j$ for all $j \in J^0$ and $1 - \frac{B_{j}}{N} \leq \bar \tau_j$ for all $j \not \in J^0$.

\end{enumerate}

\vspace{.3cm} \item $\tau' = m_1$ and there are coprime integers $0 < A < N$ and a permutation \\ $(\frac{A_1}{N}, \ldots, \frac{A_n}{N}, \frac{B_1}{N}, \ldots, \frac{B_{r_1}}{N})$ of $(\frac{A}{N}, 1 - \frac{A}{N}, \frac{1}{N}, \ldots , \frac{1}{N})$ such that 
\begin{enumerate}

\vspace{.3cm} \item $\gamma_i < \frac{A_i}{N} \hbox{ for } 1 \leq i \leq n$; 

\vspace{.2cm} \item $\bar \tau_j < \frac{B_{j}}{N}$ for all $j \in J^0$ and $\bar \tau_j \leq \frac{B_{j}}{N}$ for all $j \not \in J^0$.

\end{enumerate}

\end{enumerate} 

\begin{proof}[Proof of Proposition \ref{tau fibre 2} when $s_0 = 0$] 

Assertion (1) of the proposition holds by Subcases 2.1, 2.2, and 2.3. 

Next we prove assertion (2). Assertion (2)(a)(i) follows from the fact that $\mathcal{T}(M; J; \tau_* ) = [m_0, m_1]$ when $s_0 > 0$, as proven in Case 1. 

Next suppose that $\tau' \in \mathcal{T}(M; J; \tau_* ) \cap (m_0 - 1, m_0)$. Then $\tau'$ satisfies the condition Subcase 2.1(2). Hence there are coprime integers $0 < A < N$ and a permutation $(\frac{A_1}{N}, \ldots, \frac{A_n}{N}, \frac{B_1}{N}, \ldots, \frac{B_{r_1}}{N},  \frac{C}{N})$ of $(\frac{A}{N}, 1 - \frac{A}{N}, \frac{1}{N}, \ldots , \frac{1}{N})$ satisfying the inequalities of the subsubcase. Since at most one of $\frac{A}{N}, 1 - \frac{A}{N}$ and  $1 - \frac{1}{N}$ is less than $\frac12$, we have $|\{ i : \gamma_i \leq \frac12\}| + |\{ j \in J : 0 < \bar \tau_j \leq \frac12\}| + |\{ j \not  \in J : 0 < \bar \tau_j < \frac12\}| \leq 1$. Thus assertion (2)(a)(ii) holds.

For assertion (2)(a)(iii), observe that by Subcase 2.1, 
$$(m_0 - 1, m_0)  \cap \mathcal{T}_{str}(M; J; \tau_*) = \bigcup \big(m_0 - \frac{C}{N}, m_0\big)$$ 
and 
$$(m_0 - 1, m_0)  \cap \mathcal{T}(M; J; \tau_*) = \bigcup \big[m_0 - \frac{C}{N}, m_0\big)$$ 
where the union is over coprime pairs $0 < A < N$ satisfying the constraints of Subcase 2.1(2). 

Set $$D = \frac{1}{1-\max\{\gamma_1, \ldots , \gamma_n,  \bar \tau_1, \ldots , \bar \tau_{r_1}\}}$$
If $n + r_1 \geq 3$ then for every coprime pair $0 < A < N$ satisfying the constraints of Subcase 2.1(2) either $1-\frac{1}{N} < \gamma_i$ for some $1 \leq i \leq n$ or $1- \frac{1}{N} \leq \bar \tau_j$ for some $1 \leq j \leq r_1$, in particular $N \leq D$.  Hence the unions above are over a finite index set, and assertion (2)(a)(iii) follows.

Suppose $n + r_1 = 2$.  For every coprime pair $0 < A < N$ satisfying the constraints of Subcase 2.1(2), if $N >D$ then $1 - \frac{1}{N} > \max \{ \gamma_1, \ldots, \gamma_n, \bar \tau_1, \ldots , \bar \tau_{r_1}\}$ and hence $1 - \frac{1}{N} < \overline{ \tau'}$ and  $\tau' \in [m_0 - \frac{1}{N}, m_0]$.  In particular
for coprime pairs $0<A<N$ with $N>D$ the intervals $[m_0 - \frac{C}{N}, m_0]$ (resp. $(m_0 - \frac{C}{N}, m_0]$)  in the union above are of the form $[m_0 - \frac{1}{N}, m_0]$ (resp. $(m_0 - \frac{1}{N}, m_0]$), and so there is a largest interval $[m_0 - \frac{1}{N'}, m_0]$ (resp. $(m_0 - \frac{1}{N'}, m_0]$), where
$$N' = \min\{N > D : \mbox{ a solution $0 < A < N$ to the condition of Subcase 2.1(2) exists}\}$$
Hence assertion (2)(a)(iii) holds in this case as well.

Next we prove assertion (2)(b). Suppose that $n + r_1 = 2$ and write $(\gamma_1, \ldots, \gamma_n,\bar \tau_1, \ldots, \bar \tau_{r_1}) = (\sigma_1, \sigma_2)$. If $\mathcal{T}(M; J; \tau_* ) \cap (m_0 - 1, m_0) \ne \emptyset$, Subcase 2.1 implies that there are coprime integers $0 < A < N$ and a permutation  $(\frac{A_1}{N}, \frac{A_2}{N}, \frac{A_3}{N})$ of $(\frac{A}{N}, 1 - \frac{A}{N}, \frac{1}{N})$ such that for $k = 1, 2$, 
\begin{itemize}
\item $1 - \frac{A_k}{N} < \sigma_k$ if $\sigma_k = \gamma_i$ for some $i$ or $\sigma_k = \bar \tau_j$ for some $j \in J^0$; 

\vspace{.2cm} \item $1 - \frac{A_k}{N} \leq \sigma_k$ otherwise. 

\end{itemize}
Hence $\sigma_1 + \sigma_2 \geq 1$ and if $\sigma_1 + \sigma_2 = 1$ then $n= 0$, $J \cap \{1, 2\} = \emptyset$, and $\sigma_j = 1 - \frac{A_k}{N}$ for both $k$. Remark \ref{inequalities_remark} then implies that $-\big[(\gamma_1 + \ldots + \gamma_n) + (\tau_1 + \ldots + \tau_{r-1})\big] \leq m_0$ with equality implying that $n= 0$, $J \cap \{1, 2\} = \emptyset$, and $\tau_j \in \mathbb Q$ for all $j$. 

Conversely suppose that $\sigma_1 + \sigma_2 \geq 1$. If $\sigma_1 + \sigma_2 > 1$, choose coprime integers $0 < A < N$ such that $1 - \sigma_1 < \frac{A}{N} <  \sigma_2$. Then $1 - \frac{A}{N} < \sigma_1$  and $1 - (1 - \frac{A}{N}) = \frac{A}{N} < \sigma_2$. Hence $[m_0 - \frac{1}{N}, m_0) \subset \mathcal{T}(M; J; \tau_*)$. On the other hand, if $n = 0, J \cap \{1,2\} = \emptyset$, $\{\sigma_1, \sigma_2\} \subset \mathbb Q$ and $\sigma_1 + \sigma_2 = 1$, choose coprime $0 < A < N$ such that $\sigma_1 = \frac{A}{N}$. Then $\sigma_2 = 1 -  \frac{A}{N}$ and $(1 - \frac1N, m_0)  \subset \mathcal{T}(M; J; \tau_*)$. This completes the proof of assertion (2)(b).  
      
Assertion (3) of the proposition follows similarly.   

Assertion (4)(a) is a consequence of assertions (1), (2) and (3). 

Consider Assertion (4)(b) and write $\mathcal{T}(M; J; \tau_*) = [\eta, \xi]$ where $\xi, \eta \in \mathbb Q$. We will show that if $\xi > \eta$, then neither $\xi$ nor $\eta$ is contained in $\mathcal{T}_{str}(M; J; \tau_*)$. Assertion (1) shows that $\eta \leq m_0$ and $\xi \geq m_1$. Assertions (1), (2) and (3) show that we are done as long as $\eta < m_0$ and $\xi > m_1$. Assume otherwise, say $\xi = m_1 \in \mathcal{T}_{str}(M; J; \tau_*)$. Then when $\tau' = \xi$ we are in the situation described in Subcase 2.3. If $n + r_1 \geq 3$, Subsubcase 2.3.2(3) implies that there are coprime integers $0 < A < N$ and a permutation $(\frac{A_1}{N}, \ldots, \frac{A_n}{N}, \frac{B_1}{N}, \ldots, \frac{B_{r_1}}{N})$ of $(\frac{A}{N}, 1 - \frac{A}{N}, \frac{1}{N}, \ldots , \frac{1}{N})$ such that 
\begin{itemize}

\item $\gamma_i < \frac{A_i}{N} \hbox{ for } 1 \leq i \leq n$; 

\vspace{.2cm} \item $\bar \tau_j < \frac{B_{j}}{N}$ for all $j \in J^0$ and $\bar \tau_j \leq \frac{B_{j}}{N}$ for all $j \not \in J^0$.

\end{itemize}
But then by Subcase 2.1(3), we have $(m_1, m_1 + \frac{1}{N}] \subset \mathcal{T}(M; J; \tau_*)$, contrary to hypothesis. Thus we must have $n + r_1 = 2$ and therefore, $m_0 = m_1 = b_0 - 1$. Subsubcase 2.3.1 then shows that $m_1= \xi = -\big[(\gamma_1 + \ldots + \gamma_n) + (\tau_1 + \ldots + \tau_{r-1})\big]$. By hypothesis, $\mathcal{T}(M; J; \tau_*) \cap (m_1, m_1 + 1) = \emptyset$, so now assertions (2)(b) and (3)(b) imply that $\mathcal{T}(M; J; \tau_*) \cap (m_0 - 1, m_0) =\emptyset$ and therefore $\mathcal{T}(M; J; \tau_*)= \{m_0\}$, contrary to our assumptions. We conclude that $\mathcal{T}_{str}(M; J; \tau_* )$ is the interior of $\mathcal{T}(M; J; \tau_* )$ when the latter is a non-degenerate interval. 

To complete the proof of (4)(b) suppose that $\eta = \xi$, that is, $\mathcal{T}(M; J; \tau_* ) = \{m_0\}$. Then $m_0 = m_1$ so that $n + r_1 = 2$. On the other hand since the base orbifold of $M$ is orientable, the rational longitude $\lambda_M$ of $M$ is horizontal. It follows that $[\lambda_M]$ is strongly representation detected. Thus $\emptyset \ne \mathcal{T}_{str}(M; J; \tau_* ) \subseteq \mathcal{T}(M; J; \tau_* ) = \{m_0\}$. Hence $\mathcal{T}_{str}(M; J; \tau_* ) = \mathcal{T}(M; J; \tau_* ) = \{m_0\}$.

Consider (4)(c). If $\mathcal{T}_{str}(M; J; \tau_*)= \{m_0\}$ then by (4)(b) we see that $n + r_1 = 2$ and therefore Assertions (2)(b) and (3)(b) combine with Remark \ref{inequalities_remark} to show that $m_0 = -\big[(\gamma_1 + \ldots + \gamma_n) + (\tau_1 + \ldots + \tau_{r-1})\big]$ and either $n \ne 0$, or $J \cap \{j : \tau_j \not \in \mathbb Z\} \ne \emptyset$, or $\tau_j  \not \in \mathbb Q$ for some $j$.  

Conversely, if $n + r_1 = 2, -\big[(\gamma_1 + \ldots + \gamma_n) + (\tau_1 + \ldots + \tau_{r-1})\big] = m_0$, and either $n \ne 0$, or $J \cap \{j : \tau_j \not \in \mathbb Z\} \ne \emptyset$, or $\tau_j  \not \in \mathbb Q$ for some $j$, then Assertions (2)(b) and (3)(b) together with Subsubcase 2.3.1 imply that $\mathcal{T}_{str}(M; J; \tau_*)= \{m_0\}$. 
\end{proof} 
This completes the proof of Proposition \ref{tau fibre 2}.
\end{proof}

\begin{corollary} \label{hen and detected slopes} 
Let $M$ be a Seifert manifold with base orbifold $P(a_1, \ldots, a_n)$ as in \S \ref{assumptions seifert} and fix horizontal $[\alpha_j] \in \mathcal{S}(T_j)$ for $1 \leq j \leq r-1$. For each $J \subseteq \{1, 2, \ldots , r\}$, $\{[\alpha] \in \mathcal{S}(T_r) : ([\alpha_1], \ldots , [\alpha_{r-1}], [\alpha]) \in \mathcal{D}_{rep}(M; J)\} $ is a non-empty subinterval of the set of horizontal slopes in $\mathcal{S}(T_r)$ which is closed when $r \not \in J$. Further, either 

$(1)$ the endpoints of this subinterval are rational, or 

$(2)$ $M$ has no singular fibres and, after reindexing, $J \supseteq \{2, 3, \ldots,  r-1\}$, $[\alpha_j] = [\tau_j h - h_j^*]$ where $\tau_1$ is irrational and $\tau_2, \ldots, \tau_{r-1} \in \mathbb Z$. Moreover, $\mathcal{D}_{rep}(M; J) = \mathcal{D}_{rep}(M; J \cup \{r\})$ and $\{[\alpha] \in \mathcal{S}(T_r) : ([\alpha_1], \ldots , [\alpha_{r-1}], [\alpha]) \in \mathcal{D}_{rep}(M; J)\}$ consists of a single irrational slope $[\alpha] = [(\tau_1 + \tau_2 + \ldots + \tau_{r-1})h + h_r^*]$
\end{corollary}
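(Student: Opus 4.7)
The plan is to translate the stated set into the intervals $\mathcal{T}(M; J'; \tau_*)$ and $\mathcal{T}_{str}(M; J'; \tau_*)$ studied in Propositions \ref{tau fibre 1} and \ref{tau fibre 2}, and then read both alternatives directly off those results.

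First I would observe that any representation-detected slope on $T_r$ is automatically horizontal: for $\rho \in \mathcal{R}_0(M)$ the identity $[\alpha_j(\rho)] = [\tau(\rho(h_j^*)) h - h_j^*]$ holds by definition, and $\mathcal{R}_0(M) \ne \emptyset$ forces $M$ to have base orbifold $P(a_1,\ldots,a_n)$ by Lemma \ref{non-orientable implies empty}. Thus every horizontal slope can be written uniquely as $[\tau_j h - h_j^*]$ for $\tau_j \in \mathbb R$; write $[\alpha_j] = [\tau_j h - h_j^*]$ for $1 \leq j \leq r-1$, parameterise horizontal $[\alpha] \in \mathcal{S}(T_r)$ as $[\tau h - h_r^*]$, and set $J' = J \cap \{1, \ldots, r-1\}$. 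Then by the definitions, $([\alpha_1], \ldots, [\alpha_{r-1}], [\tau h - h_r^*])$ lies in $\mathcal{D}_{rep}(M; J)$ if and only if $\tau \in \mathcal{T}(M; J'; \tau_*)$ (when $r \notin J$) or $\tau \in \mathcal{T}_{str}(M; J'; \tau_*)$ (when $r \in J$).

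Next I would split on the numerical invariant $n + r_1 + s_0$. When $n + r_1 + s_0 \geq 2$, Proposition \ref{tau fibre 2}(4)(a)(b) gives that $\mathcal{T}(M; J'; \tau_*)$ is a non-empty closed subinterval of $\mathbb R$ with rational endpoints and that $\mathcal{T}_{str}(M; J'; \tau_*)$ is its interior, except in the degenerate situation $s_0 = 0$, $n + r_1 = 2$ where both reduce to the single rational point $\{m_0\}$. Non-emptiness is guaranteed in every subcase by the inclusions $(m_0, m_1) \subseteq \mathcal{T}_{str} \subseteq \mathcal{T}$ and $[m_0, m_1] \subseteq \mathcal{T}$. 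Under the translation above, this proves alternative (1) of the corollary (and the closedness when $r \notin J$) in this range.

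The remaining case $n + r_1 + s_0 \leq 1$ is handled by Proposition \ref{tau fibre 1}: the set is the single point $-[(\gamma_1 + \ldots + \gamma_n) + (\tau_1 + \ldots + \tau_{r-1})]$, independently of whether $r \in J$, so in particular $\mathcal{D}_{rep}(M; J) = \mathcal{D}_{rep}(M; J \cup \{r\})$ throughout this regime. This single point is irrational precisely when some $\tau_j$ is irrational; since $r_1 \leq 1$ this forces $n = 0$, $r_1 = 1$, $s_0 = 0$ and the unique non-integer $\tau_j$ (which, after reindexing, we may take to be $\tau_1$) is irrational. The vanishing $s_0 = 0$ then forces $J \supseteq \{2, 3, \ldots, r-1\}$, and the slope is $[-(\tau_1 + \ldots + \tau_{r-1}) h - h_r^*] = [(\tau_1 + \ldots + \tau_{r-1}) h + h_r^*]$, which is irrational. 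This is exactly alternative (2); in every other configuration of Proposition \ref{tau fibre 1} the point $-[(\gamma_1 + \ldots + \gamma_n) + (\tau_1 + \ldots + \tau_{r-1})]$ is rational and alternative (1) holds with the interval degenerating to a single rational point.

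The main obstacle is not mathematical but bookkeeping: one must carefully carry the dependence of $r_1$ and $s_0$ on which indices appear in $J$ through the translation, in order to identify the exceptional configuration (2) precisely and verify that the equality $\mathcal{D}_{rep}(M; J) = \mathcal{D}_{rep}(M; J \cup \{r\})$ is exactly the content of $\mathcal{T}_{str}(M; J'; \tau_*) = \mathcal{T}(M; J'; \tau_*)$ in the Proposition \ref{tau fibre 1} regime. Once this is set up correctly, every remaining assertion of the corollary is an immediate transcription of Propositions \ref{tau fibre 1} and \ref{tau fibre 2}.
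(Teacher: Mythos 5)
Your proof is correct and takes the same route as the paper: the paper's own argument (which is a single paragraph) consists precisely of translating the fixed horizontal slopes into $\tau$-coordinates, observing that membership in the relevant slice of $\mathcal{D}_{rep}(M;J)$ is equivalent to $\tau_r$ lying in $\mathcal{T}(M; J\setminus\{r\}; \tau_*)$ or $\mathcal{T}_{str}(M; J\setminus\{r\}; \tau_*)$ depending on whether $r \in J$, and then invoking Propositions \ref{tau fibre 1} and \ref{tau fibre 2}. You carry out the same reduction and additionally make explicit the case analysis on $n + r_1 + s_0$ and the identification of alternative (2) with the regime $n = 0$, $r_1 = 1$, $s_0 = 0$, $\tau_1 \notin \mathbb Q$, which the paper leaves to the reader as "the corollary now follows from the previous two propositions."
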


\begin{proof}
Suppose that $[\alpha_*] = ([\alpha_1], \ldots , [\alpha_r])$ is horizontal, say $[\alpha_j] = [\tau_j h - h_j^*]$ for some $\tau_j \in \mathbb R$ and each $j$. Then $([\alpha_*]; J)$ is representation detected if and only if $\tau_r \in \mathcal{T}_{str}(M; J ; (\tau_1, \ldots, \tau_{r-1}))$ when $r \in J$ and if and only if $\tau_r \in \mathcal{T}(M; J \setminus\{r\}; (\tau_1, \ldots, \tau_{r-1}))$ when $r \not \in J$. The corollary now follows from the previous two propositions. 
\end{proof}

Note that when $r = 1$, $M$ fibres over the circle with fibre slope $[\lambda_M]$. Thus Corollary \ref{hen and detected slopes}  immediately implies the following result.   

\begin{corollary} \label{hen and r = 1} 
Let $M$ be a Seifert manifold with base orbifold $P(a_1, \ldots, a_n)$ as in \S \ref{assumptions seifert} and suppose that $\partial M$ is connected. Then $\mathcal{D}_{fol}(M)$ is a closed subinterval with rational endpoints in the set of slopes in $\mathcal{S}(\partial M)$. Further, $[\lambda_M] \in \mathcal{D}_{fol}(M)$.  
\qed
\end{corollary}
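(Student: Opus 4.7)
The plan is to show that this is a direct specialization of Corollary \ref{hen and detected slopes} to the case $r=1$, combined with the equivalence of foliation and representation detection for horizontal slopes and a direct geometric construction for the rational longitude. First I would dispose of the possibility of vertical slopes lying in $\mathcal{D}_{fol}(M)$: since $M$ has base orbifold $P(a_1,\ldots,a_n)$ and $\partial M$ is a single torus, the only vertical slope is $[h]$, which gives $v([\alpha_*]) = 1$; by Proposition \ref{foliation vertical}(2) such a slope is not foliation detected (we need $v([\alpha_*]) \geq 2$). Hence every element of $\mathcal{D}_{fol}(M)$ is horizontal, and by Proposition \ref{reps to foliations} it coincides with the set $\mathcal{D}_{rep}(M)$ of representation-detected horizontal slopes on $\partial M$.

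Next I would invoke Corollary \ref{hen and detected slopes} with $r=1$ and $J = \emptyset$: the hypothesis ``fix horizontal $[\alpha_j]$ for $1 \leq j \leq r-1$'' is vacuous, and the conclusion asserts that the resulting set of representation-detected slopes is a non-empty closed subinterval of the horizontal slopes in $\mathcal{S}(\partial M)$. Case (2) of that corollary cannot occur here, since it explicitly requires $\tau_1$ irrational among the coordinates $[\alpha_1], \ldots, [\alpha_{r-1}]$, and there are no such coordinates when $r = 1$. Therefore case (1) applies and the endpoints of the interval are rational.

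Finally, for the assertion $[\lambda_M] \in \mathcal{D}_{fol}(M)$, I would use the observation already stated in the excerpt: when $\partial M$ is a single torus, $M$ fibres over $S^1$ with fibre a horizontal surface whose boundary components all have slope $[\lambda_M]$ (the number of components being the order $d$ of $\lambda_M$ in $H_1(M)$). The fibres of this fibration form a co-oriented taut foliation $\mathcal{F}$ of $M$ whose intersection with $\partial M$ is a fibration by simple closed curves of slope $[\lambda_M]$, so $[\lambda_M] = [\alpha_1(\mathcal{F})] \in \mathcal{D}_{fol}(M)$. The main technical point in all of this is the applicability of Corollary \ref{hen and detected slopes}; once that is in hand, everything else is essentially a matter of unpacking definitions, which is why the authors marked the statement with \qed.
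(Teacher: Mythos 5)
Your proof is correct and follows essentially the same route as the paper: the remark immediately preceding the corollary in the paper says exactly that ``when $r=1$, $M$ fibres over the circle with fibre slope $[\lambda_M]$'' and that the rest follows immediately from Corollary~\ref{hen and detected slopes}. You have simply unpacked the implicit details (that vertical slopes are excluded by Proposition~\ref{foliation vertical}(2), that horizontal foliation detection agrees with representation detection via Proposition~\ref{reps to foliations}, and that alternative~(2) of Corollary~\ref{hen and detected slopes} is vacuously impossible when $r=1$), which is why the authors marked the statement with $\square$.
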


\begin{proposition} \label{no parabolics} 
Suppose that $(J; b; \gamma_1, \ldots , \gamma_n; \tau_1, \ldots , \tau_r)$ is JN-realisable in $\widetilde{PSL}(2, \mathbb R)_k$ by $f_1, \ldots , f_n,$ \\ $ g_1, \ldots , g_r$ (cf. \S \ref{section: detecting via reps}), then it is JN-realisable in $\widetilde{PSL}(2, \mathbb R)_k$ by $f_1', \ldots , f_n', g_1', \ldots , g_r'$ where no $g_j'$ is parabolic. 
\end{proposition}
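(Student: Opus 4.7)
Proof plan. First I would observe that if $j \in J$ then by hypothesis $g_j$ is conjugate to $\hbox{sh}(\tau_j)$, which is a translation; in particular $g_j$ is not parabolic. So the only $g_j$ that can be parabolic are those with $j \notin J$. Moreover, since every parabolic element of $\widetilde{PSL}(2,\mathbb R)_k$ has translation number in $\frac{1}{k}\mathbb Z$, only those $j \notin J$ for which $\tau_j \in \frac{1}{k}\mathbb Z$ require attention; for all other $j \notin J$ the corresponding $g_j$ is automatically non-parabolic.

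The plan is then to proceed by induction on the number of parabolic $g_j$'s, reducing it by one at each step. Suppose $g_{j_0}$ is parabolic with $j_0 \notin J$. Consider the level set $L = \{g \in \widetilde{PSL}(2,\mathbb R)_k : \tau(g) = \tau_{j_0}\}$, which is $2$-dimensional. From the classification of conjugacy classes in $PSL(2,\mathbb R)$ (lifted to $\widetilde{PSL}(2,\mathbb R)_k$ via $F_k$) the parabolic elements of $L$ form a subset of codimension at least $1$, while the hyperbolic elements form an open dense subset. Hence I can choose an element $g_{j_0}' \in L$ which is hyperbolic and arbitrarily close to $g_{j_0}$, so that $\epsilon = g_{j_0}^{-1} g_{j_0}'$ lies in an arbitrarily small neighbourhood of the identity.

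To maintain the product relation $f_1 \circ \cdots \circ f_n \circ g_1 \circ \cdots \circ g_r = \hbox{sh}(b)$, I would absorb $\epsilon$ into a neighbouring generator via a simultaneous deformation. Concretely, when $j_0 < r$ I would look for a smooth $1$-parameter family $(g_{j_0}(t), g_{j_0+1}(t))$ with $(g_{j_0}(0), g_{j_0+1}(0)) = (g_{j_0}, g_{j_0+1})$ satisfying $g_{j_0}(t)\, g_{j_0+1}(t) = g_{j_0} g_{j_0+1}$ for all $t$, such that $g_{j_0}(t)$ is hyperbolic in $L$ for small $t > 0$ and $g_{j_0+1}(t)$ stays in the constraint set for $g_{j_0+1}$ (the level set $\{\tau = \tau_{j_0+1}\}$ if $j_0+1 \notin J$, the conjugacy class of $\hbox{sh}(\tau_{j_0+1})$ if $j_0+1 \in J$). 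Linearising the constraints at $t=0$ in the Lie algebra $\mathfrak{sl}_2(\mathbb R)$ and counting dimensions shows that such a family exists generically. When $j_0 = r$ the same strategy applies, replacing $g_{j_0+1}$ by $f_n$ and using the $2$-dimensional conjugacy class of $\hbox{sh}(\gamma_n)$ as the constraint set.

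The hardest part will be verifying the transversality needed by the dimensional argument in the tangent space, especially in potentially degenerate configurations where the constraint set of the neighbouring generator meets the orbit $t \mapsto (g_{j_0}(t), g_{j_0+1}(t))$ only tangentially. In these borderline cases I would fall back on the explicit JN-realisability criteria of Theorems \ref{JN_theorem_1} and \ref{JN_theorem_2} to produce a hyperbolic realisation directly; for instance, Theorem \ref{JN_theorem_2}(2) shows that in the generic range $2 \leq b \leq n+r-2$ one may always realise in $\widetilde{PSL}(2,\mathbb R)$ itself with enough freedom to choose non-parabolic $g_j$'s from the outset.
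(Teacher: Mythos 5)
Your plan shares the paper's basic strategy — perturb a parabolic $g_{j_0}$ into a nearby hyperbolic of the same translation number while absorbing the discrepancy into an adjacent generator, iterating to eliminate parabolics one at a time — but it leaves the genuinely hard cases unresolved, and the fallback you propose does not work.

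The concrete gap is in the situation where there is no hyperbolic generator to absorb into, i.e.\ every $g_j$ with $j \notin J$ is parabolic and all remaining $f_i$, $g_j$ are elliptic. In that case the neighbour's constraint set is a $2$-dimensional conjugacy class in a $3$-dimensional group, and the naive dimension count (``$3-1-1=1>0$'') only establishes that the two constraint sets \emph{generically} intersect in a curve; it does not show that the curve passes through the given starting point \emph{and} immediately enters the hyperbolic locus. You flag this as a transversality issue and then defer to Theorems \ref{JN_theorem_1} and \ref{JN_theorem_2}. But Theorem \ref{JN_theorem_2}(2) only asserts \emph{that} a JN-realisation in $\widetilde{PSL}(2,\mathbb R)$ exists; it says nothing about the elliptic/parabolic/hyperbolic type of the resulting $g_j$'s. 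Invoking it to ``choose non-parabolic $g_j$'s from the outset'' assumes precisely the statement being proved. Moreover, the borderline values $b=1$ and $b=n+r-1$ — exactly where parabolics are forced to show up — are not covered by Theorem \ref{JN_theorem_2}(2) at all.

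The paper resolves the ``all parabolic'' case in two distinct ways you do not have. When there are two or more parabolics it performs an explicit matrix computation: writing $g_{s+1},g_{s+2}$ as lifts of $\pm\left(\begin{smallmatrix}1&\epsilon\\0&1\end{smallmatrix}\right)$ and $\pm\left(\begin{smallmatrix}a&b\\c&2-a\end{smallmatrix}\right)$, it inserts $\left(\begin{smallmatrix}x&y\\0&1/x\end{smallmatrix}\right)$ between them, chooses $x$ near $1$ and $y$ small with $cy\leq 0$, and checks by a trace computation that both factors become hyperbolic with unchanged translation numbers. When there is exactly one parabolic ($r=s+1$), the argument is different again: the paper first shows that at least two of $\gamma_1,\dots,\gamma_n,\tau_1,\dots,\tau_{r-1}$ are non-integral (otherwise $g_r$ would be conjugate to a translation, contradicting parabolicity), and then appeals to the \emph{geometric} classification in \cite[Corollary 2.3]{JN1}, reading off from the figures there that whenever a parabolic element is realised, a hyperbolic of the same translation number is also realised. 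This last step is a genuine external input that your dimension counting does not reproduce. In the remaining case, where some hyperbolic $g_t$ is present, the paper's perturbation (vary $g_t$ freely over the open set of hyperbolics of its translation number, letting the product formula drag $g_{t+1}$) avoids any transversality discussion altogether, since the constraint on the ``absorber'' $g_t$ is open rather than codimension one. You should adopt a similar organisation — deal with the open case first, then the explicit two-parabolic split, then the single-parabolic case via \cite[Corollary 2.3]{JN1} — rather than attempting a uniform transversality argument with a fallback that begs the question.
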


\begin{proof}
Without loss of generality we suppose that $g_1, \ldots, g_s$ are elliptic, $g_{s+1}, \ldots, g_t$ are hyperbolic, and $g_{t+1}, \ldots, g_r$ are parabolic. Then $J \subseteq \{1, \ldots, s\}$.  

If $r = t$ we are done, so assume otherwise.  If $t > s$ write
$$g_{t+1} = g_t^{-1} \circ  h \circ h' \circ \hbox{sh}(b)$$
where $h = (f_1 \circ \cdots \circ f_n \circ g_1 \circ \cdots \circ g_{t-1})^{-1}$ and $h' = (g_{t+2} \circ \cdots \circ g_{r})^{-1}$. There is an open neighbourhood of $g_t$ in $\widetilde{PSL}(2, \mathbb R)_k$ consisting entirely of hyperbolics of the same translation number. As $g_t$ varies in this neighbourhood, the product $g_t^{-1} \circ  h \circ h' \circ \hbox{sh}(b)$ varies over an open neighbourhood of $g_{t+1}$. As such a neighbourhood contains hyperbolics of the same translation number as $g_{t+1}$, we can arrange for $g_{t+1}$ to be hyperbolic up to replacing $g_t$ as above and leaving the $f_i$ and remaining $g_j$ alone. An induction on $r - t$ then completes the proof when $t > s$.

Assume then that $t = s$, so $g_{s+1}, \ldots , g_r$ are parabolic. After conjugating in $\hbox{Homeo}_+(\mathbb R)$ by $F(x) = \frac{x}{k}$ we have $f_1, \ldots , f_n, g_1, \ldots , g_r \in \widetilde{PSL}(2, \mathbb R)$, $f_i$ is conjugate to $\hbox{sh}(k\gamma_i)$ for $1 \leq i \leq n$, $g_j$ is conjugate to $\hbox{sh}(k\tau_j)$ for $j \in J$ and has translation number $k \tau_j$ otherwise, and $f_1 \circ \ldots \circ f_n \circ g_1 \circ \ldots \circ g_r = \hbox{sh}(kb)$. 

If $r = s+1$ there are at least two non-integers among $\gamma_1, \ldots, \gamma_n, \tau_1, \ldots, \tau_{r-1}$ as otherwise the identity $f_1 \circ \cdots \circ f_n \circ g_1 \circ \cdots \circ g_{r} = \hbox{sh}(kb)$ would imply that $g_r$ is conjugate to $\hbox{sh}(x)$ for some real number $x$.  This is not possible since $g_r$ is parabolic. The result is then a straightforward consequence of \cite[Corollary 2.3]{JN1}. Examination of the figures in \cite[Corollary 2.3]{JN1} shows that for each parabolic element of the shaded regions, there is a hyperbolic element in that region of the same translation number. 

If $r > s + 1$, then up to conjugation we can suppose that $g_{s+1}$ and $g_{s+2}$ are lifts of the elements $\pm \left(\begin{smallmatrix} 1  &   \epsilon \\  0   &  1 \end{smallmatrix}\right)$ and $\pm \left(\begin{smallmatrix} a  &   b \\ c   &  2 -a \end{smallmatrix}\right)$ in $PSL(2, \mathbb R)$ where $\epsilon \in \{\pm 1\}$.  For $x$ arbitrarily close to $1$ and $y$ arbitrarily close but not equal to $0$ such that $cy \leq  0$, consider 
\begin{eqnarray} \pm \left(\begin{matrix}1  &   \epsilon \\  0   &  1 \end{matrix}\right) \left(\begin{matrix} a  &   b \\ c   &  2 -a \end{matrix}\right) &=& \pm \left(\begin{matrix} 1  &   \epsilon \\  0   &  1 \end{matrix}\right) \left(\begin{matrix} x  &   y \\  0   &  \frac{1}{x} \end{matrix}\right) \left(\begin{matrix} \frac{1}{x}  &  -y \\  0   & x  \end{matrix}\right)  \left(\begin{matrix} a  &   b \\ c   &  2 -a \end{matrix}\right) \nonumber \\ 
&=& \pm \left(\begin{matrix} x  &  y +  \frac{\epsilon}{x} \\  0   &  \frac{1}{x} \end{matrix}\right) \left(\begin{matrix} \frac{a}{x}  - cy &   \frac{b}{x} - (2-a)y\\ cx   &  (2 -a)x \end{matrix}\right) \nonumber 
\end{eqnarray} 

If $c=0$ then note that $a=1$ and both matrices on the second line are hyperbolic.  On the other hand if $c \neq 0$, then the matrix with trace $1/x +x$ is clearly hyperbolic, whereas the other matrix has trace $t(x) = a/x-cy+(2-a)x$.  Since $t(1)  = 2-cy >2$, for $x$ sufficiently close to $1$ this matrix is hyperbolic as well.  It follows that we can always find an element $h \in \widetilde{PSL}(2, \mathbb R)$ arbitrarily close to the identity so that if we set $g_{s+1}' = g_{s+1} h$ and $g_{s+2}' = h^{-1} g_{s+2}$, then $g_{s+1}'g_{s+2}' = g_{s+1}g_{s+2}$ where $g_{s+1}'$, resp. $g_{s+2}'$, is hyperbolic of the same translation number as $g_{s+1}$, resp. $g_{s+2}$. We can now apply the case $t > s$ to complete the proof.
\end{proof}

\end{document}